\documentclass[11pt,a4paper]{article}
\usepackage{amsmath,amsthm,amssymb,amsfonts,amstext,graphics,dsfont,euscript,enumerate,multirow}
\usepackage[curve]{xypic}
\usepackage[latin1]{inputenc}
\usepackage[colorlinks=true,pdfstartview=FitV,linkcolor=blue,citecolor=blue,urlcolor=blue]{hyperref}
\usepackage[usenames,dvipsnames,svgnames,table]{xcolor}

\usepackage{layout}

\usepackage{makeidx}
\usepackage{url}

\usepackage[nottoc,numbib]{tocbibind}

\setlength{\hoffset}{-0.5in}
\setlength{\textwidth}{15.0cm}

\theoremstyle{definition}

\newcommand{\mb}{\mathds}

\newcommand{\mf}{\mathfrak}
\newcommand{\mc}{\mathcal}
\newcommand{\ra}{\rightarrow}
\newcommand{\lra}{\longrightarrow}

\newcommand{\A}{\mathbf{A}}
\newcommand{\B}{\mathbf{B}}

\newcommand{\D}{\mathbf{D}}

\newcommand{\Dt}{\widetilde{\mathbf{D}}}
\newcommand{\E}{\mathbf{E}}

\newcommand{\N}{\mathbf{N}}
\newcommand{\W}{\mathbf{W}}

\newcommand{\TD}{\widetilde{D}}
\newcommand{\TDlog}{\widetilde{D}_{\textrm{log}}}

\newcommand{\Fil}{\textrm{Fil}}

\newcommand{\Be}{\B_e}
\newcommand{\We}{\W_e}
\newcommand{\Wpluse}{\W^+_e}
\newcommand{\Wdr}{\W_{\textrm{dR}}}
\newcommand{\Wplusdr}{\W^+_{\textrm{dR}}}

\newcommand{\Dplusdifn}{\D^+_{\mbox{\scriptsize dif}, n}}
\newcommand{\Dplusdifneins}{\D^+_{\mbox{\scriptsize dif}, n+1}}
\newcommand{\Ddifn}{\D_{\mbox{\scriptsize dif}, n}}
\newcommand{\Ddifneins}{\D_{\mbox{\scriptsize dif}, n+1}}
\newcommand{\Dplusdif}{\D^+_{\mbox{\scriptsize dif}}}
\newcommand{\Ddif}{\D_{\mbox{\scriptsize dif}}}

\newcommand{\Dcris}{\D_{\mbox{\scriptsize cris}}}

\newcommand{\Dst}{\D_{\mbox{\scriptsize st}}}

\newcommand{\Dplusst}{\D^+_{\mbox{\scriptsize st}}}
\newcommand{\Ddr}{\D_{\mbox{\scriptsize dR}}}

\newcommand{\Ddagger}{\D^\dagger}

\newcommand{\Ddaggerrig}{\D_{\textrm{rig}}^\dagger}

\newcommand{\Ddaggerrigk}{\D_{\textrm{rig},K}^\dagger}

\newcommand{\Dinftye}{\D_{\infty, e}}
\newcommand{\Dinftyf}{\D_{\infty, f}}
\newcommand{\Dinftyg}{\D_{\infty, g}}
\newcommand{\Dinftyast}{\D_{\infty, *}}

\newcommand{\Ndr}{\N_{\textrm{dR}}}
\newcommand{\Hiw}{H^1_{\textrm{Iw}}}

\newcommand{\Bcris}{\B_{\mbox{\scriptsize cris}}}
\newcommand{\Bst}{\B_{\mbox{\scriptsize st}}}
\newcommand{\Bdr}{\B_{\mbox{\scriptsize dR}}}
\newcommand{\Bplusdr}{\B_{\mbox{\scriptsize dR}}^+}
\newcommand{\Bmax}{\B_{\mbox{\scriptsize max}}}
\newcommand{\Bplusmax}{\B_{\mbox{\scriptsize max}}^+}
\newcommand{\Bpluscris}{\B_{\mbox{\scriptsize cris}}^+}

\newcommand{\Btrig}{\widetilde{\B}_{\mbox{\scriptsize rig}}}

\newcommand{\Et}{\widetilde{\E}}
\newcommand{\Etplus}{\widetilde{\E}^+}
\newcommand{\At}{\widetilde{\A}}
\newcommand{\Atplus}{\widetilde{\A}^+}
\newcommand{\Aplus}{\A^+}
\newcommand{\Bt}{\widetilde{\B}}
\newcommand{\Btplus}{\widetilde{\B}^+}

\newcommand{\Bk}{\B_K}
\newcommand{\Ak}{\A_K}

\newcommand{\Ek}{\E_K}
\newcommand{\Eplusk}{\E^+_K}
\newcommand{\Ef}{\E_F}

\newcommand{\Aqp}{\A_{\mb{Q}_p}}
\newcommand{\Bqp}{\B_{\mb{Q}_p}}
\newcommand{\Eqp}{\E_{\mb{Q}_p}}
\newcommand{\Eplus}{\E^+}

\newcommand{\Btdagger}{\widetilde{\B}^\dagger}
\newcommand{\Btdaggerr}{\widetilde{\B}^{\dagger, r}}
\newcommand{\Atdagger}{\widetilde{\A}^\dagger}
\newcommand{\Atdaggerr}{\widetilde{\A}^{\dagger, r}}
\newcommand{\Btdaggern}{\widetilde{\B}^{\dagger, n}}
\newcommand{\Btdaggerrig}{\widetilde{\B}^\dagger_{\textrm{rig}}}
\newcommand{\Btdaggerrigk}{\widetilde{\B}^\dagger_{\textrm{rig}, K}}
\newcommand{\Atdaggerrig}{\widetilde{\A}^\dagger_{\mbox{\scriptsize rig}}}
\newcommand{\Btdaggerlog}{\widetilde{\B}^\dagger_{\mbox{\scriptsize log}}}

\newcommand{\Btplusrig}{\widetilde{\B}^+_{\mbox{\scriptsize rig}}}
\newcommand{\Btpluslog}{\widetilde{\B}^+_{\mbox{\scriptsize log}}}
\newcommand{\Btdaggerrrig}{\widetilde{\B}^{\dagger, r}_{\mbox{\scriptsize rig}}}
\newcommand{\Btdaggerrnrig}{\widetilde{\B}^{\dagger, r_n}_{\textrm{rig}}}

\newcommand{\Btdaggerrk}{\widetilde{\B}^{\dagger,r}_K}
\newcommand{\Atdaggerrk}{\widetilde{\A}^{\dagger,r}_K}
\newcommand{\Adaggerrk}{\A^{\dagger,r}_K}

\newcommand{\Bdagger}{\B^{\dagger}}
\newcommand{\Adagger}{\A^\dagger}
\newcommand{\Adaggerr}{\A^{\dagger,r}}
\newcommand{\Bdaggerr}{\B^{\dagger, r}}
\newcommand{\Bdaggerk}{\B^{\dagger}_K}
\newcommand{\Bdaggerf}{\B^{\dagger}_F}

\newcommand{\Bdaggerrk}{\B^{\dagger, r}_K}

\newcommand{\Bdaggerrigk}{\B^{\dagger}_{\mbox{\scriptsize rig}, K}}
\newcommand{\Bdaggerrigl}{\B^{\dagger}_{\mbox{\scriptsize rig}, L}}
\newcommand{\Bdaggerrigf}{\B^{\dagger}_{\mbox{\scriptsize rig}, F}}

\newcommand{\Bdaggerrig}{\B^{\dagger}_{\mbox{\scriptsize rig}}}
\newcommand{\Bdaggerlog}{\B^{\dagger}_{\mbox{\scriptsize log}}}

\newcommand{\Bdaggerrnrigk}{\B^{\dagger, r_n}_{\textrm{rig}, K}}
\newcommand{\Bdaggerrndrigk}{\B^{\dagger, r_{n(D)}}_{\mbox{\scriptsize rig}, K}}
\newcommand{\Bdaggerrneinsrigk}{\B^{\dagger, r_{n + 1}}_{\mbox{\scriptsize rig}, K}}

\newcommand{\Bdaggerlogk}{\B^{\dagger}_{\mbox{\scriptsize log}, K}}
\newcommand{\Bdaggerlogl}{\B^{\dagger}_{\mbox{\scriptsize log}, L}}

\newcommand{\Bdaggernlogk}{\B^{\dagger, n}_{\mbox{\scriptsize log}, K}}

\makeatletter

\makeindex



\newtheorem{thm}{Theorem}[section]
\newtheorem*{thm*}{Theorem}
\newtheorem{cor}[thm]{Corollary}
\newtheorem{prop}[thm]{Proposition}
\newtheorem{lem}[thm]{Lemma}
\newtheorem{rem}[thm]{Remark}
\newtheorem{defi}[thm]{Definition}

\let\tmp\oddsidemargin
\let\oddsidemargin\evensidemargin
\let\evensidemargin\tmp
\reversemarginpar

\makeatletter
\def\blfootnote{\gdef\@thefnmark{}\@footnotetext}
\makeatother

\title{On Perrin-Riou's exponential map for $(\varphi, \Gamma)$-modules}

\author{Andreas Riedel}

\date{November 1, 2014}

\begin{document}

\maketitle

\begin{abstract}

Let $K / \mb{Q}_p$ be a finite Galois extension and $D$ a 
$(\varphi, \Gamma)$-module over the Robba-ring $\Bdaggerrigk$.
We give a generalization of the Bloch-Kato exponential
map for $D$ using continuous Galois-cohomology groups $H^i(G_K, \W(D))$
for the $B$-pair $\W(D)$ associated to $D$. We construct a big exponential map 
$\Omega_{D,h}$ ($h \in \mb{N}$)
for cyclotomic extensions of $K$ for $D$ in the style of Perrin-Riou 
using the theory of Berger's $B$-pairs, which interpolates the generalized 
Bloch-Kato exponential maps on the finite levels.

\end{abstract}

\thispagestyle{empty}

\normalsize
\tableofcontents


\section{Introduction}
%

\blfootnote{2010 Mathematical Subject Classification: 11F80 (primary), 11F85, 
11S25 (secondary). Keywords: $p$-adic Hodge theory, $B$-pairs, big exponential 
map, $(\varphi, \Gamma)$-modules.}

We fix some notation. Let $K$ be a
finite extension of $\mb{Q}_p$ and denote by $F$ the biggest
subextension of $K$ that is unramified over $\mb{Q}_p$. Let
$\mu_{p^n}$ denote the roots of unity in a fixed algebraic closure
$\overline{K}$ of $K$ and set $K_n = K(\mu_{p^n})$ and $K_\infty =
\bigcup_n K_n$.  As usual $G_K$ denotes the absolute Galois group of
$K$, and we set $H_K = \textrm{Gal}(\overline{K} / K_\infty)$ and
$\Gamma_K = G_K / H_K$. Perrin-Riou considers a
distribution algebra $\mc{H}(\Gamma_K)$ that contains the usual 
Iwasawa algebra $\Lambda(\Gamma_K)$.


Recall that by the theory of Fontaine one 
may then associate to any $p$-adic representation of $V$ of $G_K$ finite 
dimensional $F$-vector spaces
\[
  \Dcris(V)  \subset \Dst(V) \subset \Ddr(V),
\] 
via the $\mb{Q}_p$-algebras $\Bcris, \Bst, \Bdr$,
where the first two come equipped with an action of a Frobenius
$\varphi$ and a nilpotent monodromy operator $N$, and the third one
is equipped with a filtration.

Bloch and Kato constructed
the exponential map $\exp: \Ddr(V) \lra H^1(K, V)$, which is nothing
but a transition morphism arising from a long exact sequence
of continuous Galois cohomology. They showed
that there exists a deep connection between this map and the special 
values of the complex $L$-function attached to $V$.

Perrin-Riou set out to adapt this construction to the theory of
$p$-adic $L$-functions.  Explicitly, for $K / \mb{Q}_p$ unramified
and $V$ crystalline (i.e. $\dim_F \Dcris(V) = \dim_{\mb{Q}_p} V$) she
constructed a map $\Omega_{V(j), h}$ that fits into the following
diagram
\begin{equation}
\label{introinter}
  \xymatrix{ 
    \mc{H}(\Gamma_K) \otimes_{\mb{Q}_p} \Dcris(V(j)) \ar[r]^-{\Omega_{V(j), h}} 
    \ar[d]^{\Xi_{n, j}} & \mc{H}(\Gamma_K)
    \otimes_{\Lambda} \Hiw(K, V(j)) / V(j)^{G_{\mb{Q}_{p, n}}} \ar[d]^{\textrm{pr}_n} \\
    K_n \otimes \Dst(V(j)) \ar[r]^{(h-1)! \exp_{K_nn, V(j)}} & 
    H^1(K_n, V(j))
  }
\end{equation}
for $h \gg 0$, $j \gg 0$ and all $n$, where $\Xi_{n,j}$ and $\textrm{pr}_n$
are certain canonical projections and
$\Hiw$ denotes Iwasawa cohomology with respect to the tower $(K_n)_n$. 
The point here is that $\Omega_{V, h}$
interpolates infinitely many Bloch-Kato exponential maps on the finite
levels.

In \cite{perrin02}, Perrin-Riou extended her construction to
semi-stable representations over unramified extensions. She gave a
definition of a free $\mc{H}(\Gamma_K)$-module $\Dinftyg(V)$ 
and a map
\[
  \Omega_{V, h}: \Dinftyg(V) \lra \mc{H}(\Gamma_{\mb{Q}_p}) \otimes_\Lambda
  \Hiw(K, V) / V^{G_{K_\infty}} 
\]
that has a similar interpolation property as (\ref{introinter}) 
for $j \gg 0$ and $n \gg 0$. 

It was Berger who gave an explicit description of a ``big exponential
map'' for crystalline representations using these modules not only on
the finite level, but on the whole of $\mc{H}(\Gamma_K) \otimes
\Dcris(V)$ and $\Hiw(K, V)$. His fundamental insight is a
comparison isomorphism depending on 
the construction of another ring $\Bdaggerrigk$ resp. 
$\Bdaggerlogk$.

Berger considered in the crystalline case
the element $\nabla_{h-1} \circ \ldots \circ \nabla_0$, where
$\nabla_i \in \mc{H}(\Gamma_K)$ is Perrin-Riou's differential
operator, and showed that one obtains a map
\begin{align*}
  \nabla_{h-1} \circ \ldots \circ \nabla_0: 
  (\varphi - 1)(\Bdaggerrigk \otimes \Dcris(V(j)))^{\psi = 1}  \lra& 
  (\varphi - 1)\Ddaggerrig(V(j))^{\psi = 1} \\
  &= \mc{H}(\Gamma_{\mb{Q}_p})
  \otimes_\Lambda \Hiw(K, V(j)) / V(j)^{G_{K_\infty}}
\end{align*}
that actually coincides with Perrin-Riou's $\Omega_{V(j), h}$ 
(see \cite{berger03}, Theorem II.13).

Since one has an embedding of the category of $p$-adic representations
into the category of all $(\varphi, \Gamma)$-modules over
$\Bdaggerrigk$ via the functor $\Ddaggerrig(~)$, one might be inclined
to generalize the framework of exponential maps to this setting.  
Similarly as in the \'etale case, one may define
finite-dimensional vector spaces $\Dcris(D)$, $\Dst(D)$ and $\Ddr(D)$,
generalized Bloch-Kato exponential maps
\[
  \exp: \Ddr(D) \ra H^1(K,D),
\]
and develop the notion of a $(\varphi, \Gamma)$-module being
crystalline, semi-stable or de Rham. We define a
$\mc{H}(\Gamma_K)$-module $\Dinftyg(D)$ and show that there exists a
map for $h \gg 0$
\[
  \Omega_{D, h} := \nabla_{h-1} \circ \ldots \circ \nabla_{0}: 
  \Dinftyg(D) \lra (\varphi - 1) D^{\psi = 1}.
\]
The main result of the third section is then the following interpolation 
property (see Theorem \ref{thmsecond} for the precise statement):
\begin{thm*}
  Let $D$ be a de Rham $(\varphi, \Gamma_K)$-module over
  $\Bdaggerrigk$, $g \in \Dinftyg(D)$ and $G$ a ``complete solution''
  (cf. Definition \ref{deficomplesol})
  for $g$ in $L$ and let $h \gg 0$.  Then for $k \geq 1 - h$ and $n
  \gg 1$ one has
  \begin{align*}
    h^1_{K_n, D(k)}( &\nabla_{h-1} \circ \ldots \circ \nabla_0(g)
    \otimes e_k) \\
    &= p^{-n(K_n)} (-1)^{h+k-1}(h+1-k)!
    \frac{1}{[L_n:K_n]}\textrm{Cor}_{L_n/K_n}\exp_{K_n, D(k)}(\Xi_{n,
      k}(G)).
  \end{align*}
\end{thm*}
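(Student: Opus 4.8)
The strategy is to reduce the statement to Berger's known interpolation result for crystalline/semi-stable $p$-adic representations by a dévissage argument, exploiting the fact that a de Rham $(\varphi,\Gamma_K)$-module over $\Bdaggerrigk$ becomes semi-stable — hence, after twisting and passing to a finite extension $L/K$, essentially comes from the $(\varphi,\Gamma)$-module of a semi-stable representation on the level of $\Btdaggerrig$-modules (the role of the ``complete solution'' $G$ in $L$ and the correction term $\frac{1}{[L_n:K_n]}\mathrm{Cor}_{L_n/K_n}$ is precisely to make this base change). First I would recall the definition of the pairing $h^1_{K_n,D(k)}(-\otimes e_k)$ and of $\Xi_{n,k}$, and recast both sides of the claimed identity as elements living in a space where $\varphi$, $\Gamma$ and the differential operators $\nabla_i$ act compatibly with the comparison isomorphism $D\otimes_{\Bdaggerrigk}\Btdaggerrig \cong \W(D)$-type identification; this is where the $B$-pair formalism developed in the earlier sections does the heavy lifting.

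Next I would establish the identity \emph{on the level of the big modules before specialization}, i.e. show that $\nabla_{h-1}\circ\cdots\circ\nabla_0$ applied to $g$ maps into $(\varphi-1)D^{\psi=1}$ and that its image, under the specialization map $\mathrm{pr}_n$ to level $K_n$, is computed by the generalized Bloch-Kato exponential $\exp_{K_n,D(k)}$ composed with $\Xi_{n,k}$. The key computational input is the behaviour of $\nabla_i$ under specialization: each $\nabla_i$ contributes a factor related to $(h+1-k)!$ and a sign, exactly as in Perrin-Riou's and Berger's computations (the constants $(-1)^{h+k-1}(h+1-k)!$ and the power $p^{-n(K_n)}$ arise from tracking the twist $e_k$ and the normalization of $\exp$ through the $\log$-pole of the Robba ring — I would compute these by comparing the action of $\nabla_i$ on $t^{-i}\otimes(\text{basis})$ with the transition maps in the Bloch-Kato exact sequence). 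For $k$ in the range $k\geq 1-h$ (so $h+1-k\geq 0$ and the factorial makes sense) and $n\gg 1$ (so that $K_n/K$ is large enough that all the comparison isomorphisms between $\D^+_{\mathrm{dif},n}$-level data and de Rham data are exact — this is Berger's overconvergence threshold $n(D)$), the formula should drop out.

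The main obstacle will be Step two in the de Rham (non-semi-stable) generality: while a $p$-adic representation that is de Rham is automatically potentially semi-stable (Berger's $p$-adic monodromy theorem), for an \emph{abstract} de Rham $(\varphi,\Gamma_K)$-module over the Robba ring one must invoke the analogue of the $p$-adic monodromy theorem for $(\varphi,\Gamma)$-modules — that a de Rham $D$ is potentially semi-stable, so that over some finite $L/K$ the module $D|_L$ admits a description via $\Dst^L(D)$ with its $(\varphi,N,\mathrm{Gal}(L/\mathbb{Q}_p))$-structure. The descent from $L_n$ back to $K_n$ then forces the corestriction and the factor $[L_n:K_n]^{-1}$, and one must check that the ``complete solution'' $G$ in $L$ produces, via $\Xi_{n,k}$, precisely the element whose corestriction matches the left-hand side; verifying this compatibility — that $\mathrm{Cor}_{L_n/K_n}$ intertwines the $L$-level big exponential with the $K$-level one up to the stated constants — is the genuinely delicate point, and it is where I expect most of the work (and any hidden normalization subtleties) to concentrate. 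The remaining steps — continuity/density to extend from a dense subset of $\Dinftyg(D)$, and checking that both sides are $\mc{H}(\Gamma_K)$-equivariant so that it suffices to check on generators — are routine given the machinery already set up.
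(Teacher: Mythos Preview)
Your reduction from the general de Rham case to the semi-stable (PR-type) case over a finite extension $L/K$ via the $p$-adic monodromy theorem, followed by restriction/corestriction and the factor $[L_n:K_n]^{-1}$, is correct and is exactly how the paper handles the final step. But the core of your plan --- reducing the PR-type case to Berger's interpolation theorem for semi-stable $p$-adic \emph{representations} --- has a genuine gap. A semi-stable $(\varphi,\Gamma_K)$-module over $\Bdaggerrigk$ is in general \emph{not} \'etale, so it does not arise from any Galois representation, and neither \cite{berger03}, Theorem~II.13 nor Theorem~II.16 applies. Your phrase ``essentially comes from the $(\varphi,\Gamma)$-module of a semi-stable representation on the level of $\Btdaggerrig$-modules'' is false: being \'etale is precisely the slope-zero condition, and no passage to $L$ or twist turns a module of nonzero slope into an \'etale one.

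What the paper actually does in the PR-type case is a dichotomy along Kedlaya's slope filtration, and this is the essential new content. For $D$ pure of slope $\le 0$ one has $X^1(\TD)=0$, the exponential map is the connecting map from the short exact sequence of Lemma~\ref{lembkeins}, and $h^1$ has the explicit cocycle description of Proposition~\ref{propbkeins}; the computation then proceeds by writing down $w_{n,h}$ and a lift $b_{n,h}$ and running Berger's recursion (\cite{berger02}, Theorem~II.3) on $\varphi^{-n}(G_{-k})$, $\varphi^{-n}(L_{-k})$, $\varphi^{-n}(M_{-k})$ separately. For $D$ pure of slope $> 0$ one has instead $X^0(\TD)=0$, and by Proposition~\ref{propbkzwei} the exponential map factors through $X^1(\TD)=\TD/(\varphi-1)$; here $h^1_{K_n,D}$ is identified with the projection to $(D/(\varphi-1))^{\Gamma_{K_n}}$ (Lemma~\ref{leminfres}), and one matches it against $\exp$ using the explicit map $\TD/(\varphi-1)\to \Wdr(D)/\Wplusdr(D)$ of \cite{berger09}, Remark~3.4. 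The two pure cases are then glued via the exact sequence $0\to D_{\le 0}\to D\to D_{>0}\to 0$ from the slope filtration, using the cone description (\ref{eqheins}) of $h^1$. None of this slope dichotomy is visible in your outline, and without it there is no way to carry out the computation for non-\'etale $D$.
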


If one is interested in the construction of $p$-adic $L$-functions,
one needs to construct a certain ``inverse'' of the map $\Omega_h$.
This construction depends on the so-called reciprocity law for 
$(\varphi, \Gamma)$-modules, 
which we will return to in a future paper, using the results of this article.

%
%

We remark that during this work learned of the results of K.
Nakamura \cite{naka12}, who gave a description of a ``big exponential map''
for $(\varphi, \Gamma)$-modules. We briefly outline how our constructions differ
from \cite{naka12}. Firstly, we show the existence of a fundamental exact 
sequence
\[
    0 \lra X^0(\TD) \lra \TDlog[1/t] \lra
    X \lra X^1(\TD) \lra 0 
\]
of continuous $G_K$-modules associated to 
any $(\varphi, \Gamma)$-module $D$, generalizing the Bloch-Kato 
fundamental exact sequence (cf. p. \pageref{setx} for the definition of $X$). 
Taking continuous Galois-cohomology one
obtains, in a completely analoguous fashion to the \'etale case,
a generalized Bloch-Kato exponential map as the transition map for 
cohomology, which is automatically functorial by construction.

Secondly, we introduce certain finitely generated $\mc{H}(\Gamma)$-submodules
$\D_{\infty, *}(D)$ of the free $\mc{B}(\Gamma)$-module $\Ndr(D)^{\psi = 0}$ such that 
$X$ arises in a natural way after projecting to some finite level $K_n$ and 
looking at the Bloch-Kato exponential map on this level. Using these two different 
ingredients we are able to show the main theorem above.

Some important facts about these modules are:
\begin{itemize}
\item the $\D_{\infty, *}(D)$ are invariant under Tate-twists (as opposed to 
  $(1 - \varphi) \Ndr(D)^{\psi = 1}$), and
\item the $\D_{\infty, *}(D)$ remove the ambiguity in the statements \cite{naka12}, 
  Theorem 3.10 (1) and \cite{berger03}, Theorem II.16 about the existence of an 
  element $y$ such that $(1 - \varphi)(y) = x$.
\end{itemize}
These points and further examples suggest that, in order to study 
reciprocity laws and the connection of exponential maps with $p$-adic $L$-functions,
one should look at these modules instead of $(1-\varphi)\Ndr(D)^{\psi = 1}$.
We also refer to the introduction of \cite{perrin01} in the \'etale unramified case for some
further motivation.

\vspace{5mm}

\noindent \textbf{Acknowledgements}. 
This article is based on a part of my thesis, and I would like to thank my 
advisor Otmar Venjakob for his encouragement, patience and advice.

\section{Rings and Modules}


\subsection{General notations}

The general strategy of Fontaine is to study $p$-adic representations
by certain \textit{admissibility} conditions. Recall that if $V$ is a
finite dimensional $\mb{Q}_p$-vectorspace endowed with a continuous
action of a topological group $G$ and if $B$ is a topological
$\mb{Q}_p$-algebra which also carries an action of $G$, then Fontaine
considers the $B^G$-modules $D_B(V) = (B \otimes_{\mb{Q}_p} V)^G$. It
inherits actions from $B$ and $V$. One says that $V$ is
\textbf{$B$-admissible} if $B \otimes_{\mb{Q}_p} V \cong B^d$ as
$G$-modules.

Let $k$ be a perfect field of characteristic $p$. We denote by $W(k)$
the ring of Witt-vectors for $k$ and set $F =
\textrm{Quot}(W(k))$. Let $K/F$ be a totally ramified extension of
$F$.  Fix an algebraic closure $\overline{F}$ of $F$ and denote by
$\mb{C}_p = \widehat{\overline{F}}$ the $p$-adic completion of this
closure. Let $G_K = \mbox{Gal}(\overline{K} / K)$\index{GK@$G_K$} be the
group of automorphisms of $\overline{K}$ which fix $K$. By continuity
these are also the $K$-linear automorphisms of $\mb{C}_p$. Let
$\mc{O}_{\mb{C}_p}$ be the ring of integers of $\mb{C}_p$ and
$\mf{m}_{\mb{C}_p}$ its maximal ideal. We have $\mc{O}_{\mb{C}_p} /
\mf{m}_{\mb{C}_p} = \overline{k}$.

We denote by $\mu_{p^n}$ the group of roots of unity of $p^n$-order in
$\mb{C}_p$ and set $K_n = K(\mu_{p^n})$\index{Kn@$K_n$}. Further we
pose $K_\infty = \bigcup_n K_n$\index{Kinfty@$K_\infty$}.  We fix once
and for all a compatible set of primitive $p$-th roots of unity $\{
\zeta_{p^n} \}_{n \geq 0}$ such that $\zeta_1 = 1$, $\zeta_p \not= 1$,
$\zeta^p_{p^{n+1}} = \zeta_{p^n}$.  One has the cyclotomic character
$\chi: G_K \ra \mb{Z}_p^\times$ which is defined by the formula
$g(\zeta_{p^n}) = \zeta^{\chi(g)}_{p^n}$ for $n \geq 1$ and $g \in
G_K$.We set $H_K = \ker(\chi)$\index{HK@$H_K$} and $\Gamma_K = G_K /
H_K$\index{GammaK@$\Gamma_K$}, which is the Galois group 
of $K_\infty / K$. We know that this can also be identified via the cyclotomic
character with an open subgroup of $\mb{Z}_p^\times$.

If $K / \mb{Q}_p$ is a finite extension denote by $F =
K_0$\index{Kzero@$K_0$} the maximal unramified extension of $\mb{Q}_p$
in $K$.  Further denote by $K_0'$\index{Kzeroprime@$K_0'$} the biggest
unramified subextension of $K_0$ in $K_\infty$.

By a $p$-adic representation we mean a finite dimensional
$\mb{Q}_p$-vectorspace endowed with a continuous and linear action of
$G_K$. A $\mb{Z}_p$-representation is a free $\mb{Z}_p$-module of
finite rank equipped with a linear and continuous action of $G_K$. It
is known that if $V$ is a $p$-adic representation then there exists a
$\mb{Z}_p$-lattice $T$ in $V$ that is stable under the action of
$G_K$.

If $C^\bullet(-)$ denotes complex of $R$-modules for some commutative ring 
(for example, $C^\bullet(G_K, M)$) $R$ we denote as usual $R\Gamma(-)$
the complex which we regard as an object in the derived category of
$R$-modules.


\subsection{Rings in $p$-adic Hodge theory}

We first recall certain rings constructed by Fontaine, see for instance 
\cite{fontouy}. Let
\[
	\Et = \varprojlim_{x \mapsto x^p} \mb{C}_p = \{ (x^{(0)},
        x^{(1)}, \ldots) | ~ x^{(i)} \in \mb{C}_p, (x^{(i+1)})^p =
        x^{(i)} ~\forall i \}.
\]\index{Etilde@$\Et$}
Similarly, let
\begin{align*}
	\Etplus = \varprojlim_{x \mapsto x^p} \mc{O}_{\mb{C}_p} &= \{
        (x^{(0)}, x^{(1)}, \ldots) | ~ x^{(i)} \in \mc{O}_{\mb{C}_p},
        (x^{(i+1)})^p = x^{(i)} ~\forall i \} \\ &\cong \{ (x_n)_{n \in
          \mb{N}} |~ x_n \in \mc{O}_{\mb{C}_p} / p \mc{O}_{\mb{C}_p},
        x^p_{n+1} = x_n ~ \forall n \}. \index{Etildeplus@$\Etplus$}
\end{align*}
This is the set of elements of $\Et$ 
such that $x^{(0)} \in
\mc{O}_{\mb{C}_p}$.  One can define multiplication and addition on
these sets. Also, one knows that $\Et$ is the fraction field of $\Etplus$.

With the choices of the primitive $p^n$-th roots of unity one defines
the elements $\varepsilon = (1, \zeta_p, \ldots ) \in 
\Etplus$\index{epsilon@$\varepsilon$} and $\overline{\pi} = \varepsilon - 1 
\in
\Etplus$\index{pioverline@$\overline{\pi}$}.  One has the usual 
commuting actions of a Frobenius $\varphi$ and the 
Galois group $G_{\mb{Q}_p}$ on $\Et$, which restrict to actions of $\Etplus$.
For $K / \mb{Q}_p$ finite we set 
\[
	\Eplusk = \{ (x_n) \in \Etplus|~ x_n \in \mc{O}_{K_n} / p \mc{O}_{K_n} ~
	\forall n \geq n(K) \},
\]\index{Eplusk@$\Eplusk$}
where $n(K)$\index{nK@$n(K)$} is some constant depending on $K$ which
arises in the fields of norm theory of Fontaine-Wintenberger (cf.
\cite{fonwinten}). We put $\Ek = \Eplusk[1 /
  \overline{\pi}]$\index{Ek@$\Ek$}. One can show that that $\Ef =
\kappa((\overline{\pi}))$\index{Ef@$\Ef$} and one defines
$\E$\index{E@$\E$} as the seperable closure of $\Ef$ in $\Et$. Let
$\Eplus = \E \cap \Etplus$\index{Eplus@$\Eplus$} and $\mf{m}_E = \E
\cap \mf{m}_{\Et}$\index{mE@$\mf{m}_E$}. One can show that $\Ek =
\E^{H_K}$ and one knows that $\mbox{Gal}(\E / \Ek) \cong H_K$.
%


\label{secringscharnull}

Let $W$ be the Witt functor. We set
\[
	\Atplus = W(\Etplus),~~~ \At = W(\Et) = W(\mbox{Frac}(\Etplus)),~~~
	\Btplus = \Atplus [1/p]. \index{Atildeplus@$\Atplus$} 
\index{Atilde@$\At$} \index{Btildeplus@$\Btplus$}
\]
We write elements $x \in \Btplus$ as $x = \sum_{k \gg -\infty}^\infty p^k [x_k]$
where $x_k \in \Etplus$ and $[x_k]$ is its Teichm\"uller
representative. The commuting actions of $\varphi$ and $G_{\mb{Q}_p}$
on $\Etplus$ extend to an action of $\Btplus$ (and $\At, \Bt,
\ldots$).\index{phi@$\varphi$}

We have a ring homomorphism
\[
	\theta: \Btplus \longrightarrow \mb{C}_p,~~  \sum_{k \gg -\infty}^\infty p^k 
        [x_k] \longmapsto 		  \sum_{k \gg -\infty}^\infty p^k x_k^{(0)}.
\] \index{theta@$\theta$}
We set $\pi = [\overline{\pi}] = [\varepsilon] - 1$\index{pi@$\pi$},
$\pi_n = [\varepsilon^{p^{-n}}] -1$\index{pin@$\pi_n$}, $\omega = \pi
/ \pi_1$\index{omega@$\omega$} and $q = \varphi(\omega) = \varphi(\pi)
/ \pi$\index{q@$q$}. Then $\ker(\theta)$ is a principal ideal generated by
$\omega$.

The ring $\Bplusdr$ is defined by completing $\Btplus$ with the 
$\ker(\theta)$-adic topology, i.e.,
$\Bplusdr = \varprojlim_{n \geq 0} \Btplus / (\ker(\theta)^n)$.
This gives a complete discrete valuation ring with maximal ideal
$\ker(\theta)$.  One can show that $\log([\varepsilon])$ converges in
$\Bplusdr$, and we denote this element by $t$\index{t@$t$}.  It is a
generator of the maximal ideal, hence we can form the field $\Bdr =
\Bplusdr[1/t]$\index{Bdr@$\Bdr$}. This field is equipped with an
action of $G_{\mb{Q}_p}$ and a canonical filtration defined by
$\mbox{Fil}^i (\Bdr) = t^i \Bplusdr,~ i \in \mb{Z}$.

We say that a $p$-adic representation $V$ of $G_K$ is \textbf{de Rham}
if it is $\Bdr$-admissible. We put
\[
	\Ddr(V) = (\Bdr \otimes_{\mb{Q}_p} V)^{G_K},~~~ 
	\mbox{Fil}^i \Ddr(V) = (\mbox{Fil}^i \Bdr \otimes_{\mb{Q}_p} V)^{G_K}.
\]
From Fontaine's theory it is known that $\Ddr(V)$ is finite dimensional  
$K$-vectorspace which we endow with the above (exhaustive, seperated and 
decreasing) filtration. 

We say that a $p$-adic representation $V$ is \textbf{Hodge-Tate} with
Hodge-Tate weights $h_1, \ldots, h_d$ if one has a decomposition
$\mb{C}_p \otimes_{\mb{Q}_p} V \cong \bigoplus_{i = 1}^d
\mb{C}_p(h_i)$. We say that $V$ is \textbf{positive} if its Hodge-Tate
weights are negative. It is known that every de Rham representation is
Hodge-Tate and that the Hodge-Tate weights are those integers $h$ such
that there is a jump in the filtration at $-h$, i.e.  $\mbox{Fil}^{-h}
\Ddr(V) \not= \mbox{Fil}^{-h+1} \Ddr(V)$. With this
convention the representation $\mb{Q}_p(1)$ is of weight $1$. 

Let
\[
	\Aqp = \widehat{\mb{Z}[[\pi]][1/\pi]} = \left\{ \sum_{k \in
          \mb{Z}} a_k \pi^k \left| ~ a_k \in \mb{Z}_p, \lim_{k \ra -
          \infty} v_p(a_k) = +\infty \right. \right\} \hookrightarrow
        \At, \index{Aqp@$\Aqp$}
\]
and set $\Bqp = \Aqp[1/p]$\index{Bqp@$\Bqp$}. Then $\Bqp$ is a field,
complete for the $p$-adic valuation with ring of integers $\Aqp$ and
residue field $\Eqp$.  Let $\B$\index{B@$\B$} be the $p$-adic completion 
of the maximal unramified extension of $\Bqp$ in $\Bt$. We define $\A = \B \cap 
\At$\index{A@$\A$}, $\Aplus
= \A \cap \Atplus$\index{Aplus@$\Aplus$}. These rings still have the
commuting action of $\varphi$ and $G_{\mb{Q}_p}$. We put $\Ak =
\A^{H_K}$\index{Ak@$\Ak$} and $\Bk = \Ak[1/p]$\index{Bk@$\Bk$}.  By
Hensel's Lemma there exists a unique lift $\pi_K \in \Ak$
\index{piK@$\pi_K$} such that the reduction mod $p$ is equal to
$\overline{\pi}_K$, viewed as an element in $\At$.

Colmez has defined the ring 
\[
\Bplusmax = \{ \sum_{n \geq 0} a_n { \omega^n \over p^n}|~ a_n \in
\Btplus,~ a_n \ra 0 \mbox{ for } n \ra \infty \}
\]\index{Bplusmax@$\Bplusmax$}
which is "very close" to $\Bpluscris$. We set $\Bmax = \Bplusmax
[1/t]$ \index{Bmax@$\Bmax$}. There is a canonical injection of $\Bmax$
into $\Bdr$ and it is therefore equipped with a canonical
filtration. There are actions of $\varphi$ and $G_{\mb{Q}_p}$
on $\Bmax$, which extend the actions on $\Atplus \ra
\Atplus$. Let
\[
	\Btplusrig = \bigcap_{n=0}^\infty \varphi^n(\Bplusmax)
\index{Btildeplusrig@$\Btplusrig$}
\]
and $\Btrig = \Btplusrig [1/t]$. We remark that one has
$\Btrig = \bigcap_{n=0}^\infty \varphi^n(\Bcris)$
and hence in particular $\Btrig^{\varphi = 1} = \Bmax^{\varphi=1} = \Bcris^{\varphi=1}$.
We say that a representation is \textbf{crystalline} if it is 
$\Bmax$-admissible, which is the same as asking that it be
$\Btplusrig[1/t]$-admissible. We put
\[
	\Dcris(V) = (\Bmax \otimes_{\mb{Q}_p} V)^{G_K} = 
	(\Btplusrig[1/t] \otimes_{\mb{Q}_p} V)^{G_K}.
\]\index{DcrisV@$\Dcris(V)$}
This is a $K_0$-vectorspace of dimension $d$, equipped with a
filtration induced by $\Bdr$ and an action of Frobenius induced by
$\Bmax$. If $V$ is crystalline we have $\Ddr(V) = K
\otimes_{K_0} \Dcris(V)$ which shows that a crystalline representation
is also de Rham.

Following Berger the series $\log (\overline{\pi}^{(0)}) + \log
(\overline{\pi} / \overline{\pi}^{(0)})$, after a choice of $\log p$,
converges in $\Bplusdr$, and we denote the limit by $\log
[\overline{\pi}] $. This element is transcendent over
$\mbox{Frac}(\Bplusmax)$, and we set $\Bst = \Bmax[ \log [
    \overline{\pi} ] ]$ and $\Btpluslog = \Btplusrig [ \log [
    \overline{\pi} ]]$. We say that a representation is
\textbf{semistable} if it is $\Bst$-admissible, which is the same as
asking it being $\Btpluslog [1/t]$-admissible. Similarly, as in the
crystalline case we put
\[
	\Dst(V) = (\Bst \otimes_{\mb{Q}_p} V)^{G_K} = (\Btpluslog[1/t] 
	\otimes_{\mb{Q}_p} V)^{G_K}.
\]\index{DstV@$\Dst(V)$}
Again this is a $K_0$-vectorspace of dimension $d$, equipped with a
filtration and an action of Frobenius induced by $\Bst$. As before we
have in this case $\Ddr(V) = K \otimes_{K_0} \Dst(V)$. Additionally
one can define the monodromy operator $N = - d/ d\log [ \overline{\pi}
]$ on $\Bst$ which induces a nilpotent endomorphism on $\Dst(V)$ and
satisfies the relation $N \varphi = p \varphi N$. We also make use of
the finite dimensional $K_0$-vectorrspace $\Dplusst(V) = (\Btpluslog
\otimes_{\mb{Q}_p} V)^{G_K}$.

Recall that elements $x \in \Bt$ may be written in the form 
$x = \sum_{k \gg -\infty} p^k [x_k]$ with $x_k \in \Et$. For $r > 0$ we set
\[
	\Btdaggerr = \left\{ x \in \Bt \left| ~ \lim_{k \ra +\infty} v_{\E}(x_k) +
	\frac{pr}{p-1} k = + \infty
	\right. \right\}.
\]\index{Btildedaggerr@$\Btdaggerr$}
We note that $x$ as above converges in $\Bdr$ if and only if $\sum_{k
  \gg -\infty} p^k x_k^{(0)}$ converges in $\mb{C}_p$. 

For $n \geq 0$ we set once and for all $r_n = (p-1)p^{n-1}$.
Colmez and Cherbonnier showed that for $n$ big enough such that $r_n
\geq r$ there is an injection
\[
	\iota_n = \varphi^{-n}: \Btdaggerr \longrightarrow \Bplusdr,~~~
	\sum_{k \gg -\infty} p^k [x_k] \longmapsto \sum_{k \gg -\infty} p^k [x_k^{p^{-n}}].
\]\index{iotan@$\iota_n$}
We put $\Btdaggern = \widetilde{\B}^{\dagger, r_n}$. Let $\Bdaggerr =
\B \cap \Btdaggerr$, $\Btdagger = \bigcup_{r \geq 0} \Btdaggerr$,
$\Bdagger = \bigcup_{r \geq 0} \Bdaggerr$. Let $\Atdaggerr$ be the
elements of $\Btdaggerr \cap \At$ such that $v_E(x) + \frac{pr}{p-1} k
\geq 0$ for all $k \geq 0$.  Let $\Adaggerr = \Atdaggerr \cap \A$,
$\Adagger = \Atdagger \cap \A$, $\Atdagger = \Btdagger \cap \At$. Let
$\Bdaggerrk = (\Bdaggerr)^{H_K}$, $\Adaggerrk = (\Adaggerr)^{H_K}$,
$\Btdaggerrk = (\Btdaggerr)^{H_K}$, $\Atdaggerrk =
(\Atdaggerr)^{H_K}$.
\index{Btildedaggern@$\Btdaggern$}
\index{Bdaggern@$\widetilde{\B}^{\dagger, r_n}$}
\index{Btildedagger@$\Btdagger$}
\index{Bdagger@$\Bdagger$}
\index{Atildedaggerr@$\Atdaggerr$}
\index{Adaggerr@$\Adaggerr$}
\index{Adagger@$\Adagger$}
\index{Atildedagger@$\Atdagger$}
\index{Bdaggerrk@$\Bdaggerrk$}
\index{Adaggerrk@$\Adaggerrk$}
\index{Atildedaggerrk@$\Atdaggerrk$}
\index{Btildedaggerrk@$\Btdaggerrk$}


\begin{prop}
  \label{propbdaggergal}
  If $L / K$ be a finite extension then $\Bdagger_L$ is a finite field 
  extension of $\Bdaggerk$ of degree $[L_\infty : K_\infty] = [H_K : H_L]$,
  and if $L / K$ is Galois, then the same holds for $\Bdagger_L / \Bdaggerk$,
  which then has galois group $\textrm{Gal}(L_\infty / K_\infty)$.
\end{prop}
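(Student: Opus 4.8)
The plan is to descend everything to residue characteristic $p$, where the statement becomes classical Galois theory of (henselian) discretely valued fields, and then to lift back. First I would record the group theory. Since $\overline{L}=\overline{K}$ and $L\supseteq K$ we have $K_\infty\subseteq L_\infty$ with $[L_\infty:K_\infty]\leq[L:K]<\infty$, and $H_L=\mathrm{Gal}(\overline{K}/L_\infty)\subseteq H_K=\mathrm{Gal}(\overline{K}/K_\infty)$; the Galois correspondence for $\overline{K}/K_\infty$ then gives $[H_K:H_L]=[L_\infty:K_\infty]$. If moreover $L/K$ is Galois, then, $K_\infty/K$ being abelian, the compositum $L_\infty=LK_\infty$ is Galois over $K$, hence $L_\infty/K_\infty$ is Galois, $H_L$ is normal in $H_K$, and $H_K/H_L\xrightarrow{\ \sim\ }\mathrm{Gal}(L_\infty/K_\infty)$. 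On the residue side I use (as recalled above) that $\mathrm{Gal}(\E/\E_K)\cong H_K$ and $\E_L=\E^{H_L}$, so $\E_L/\E_K$ is finite separable of degree $[H_K:H_L]$, and is Galois with group $H_K/H_L$ precisely when $L/K$ is.

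Next I would lift across the overconvergent rings. The input here is the theorem of Cherbonnier--Colmez (together with Matsuda's henselianity of $\Adagger_{\mb{Q}_p}$): $\Adagger_K$ is a henselian discrete valuation ring with uniformizer $p$ and residue field $\E_K$, one has $\Bdagger=\Adagger[1/p]$ and hence $\Bdaggerk=\mathrm{Frac}(\Adagger_K)$ (so $\Bdagger_L$ is indeed a field), and $\Adagger=\bigcup_{M/\mb{Q}_p\ \mathrm{finite}}\Adagger_M$ is the maximal unramified extension of $\Adagger_{\mb{Q}_p}$ inside $\A$, realizing the action of $\mathrm{Gal}(\E/\E_{\mb{Q}_p})=H_{\mb{Q}_p}$, with $(\Adagger)^{H_M}=\Adagger_M$ for all finite $M/\mb{Q}_p$. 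Granting this, reduction modulo $p$ commutes with $H_L$-invariants on $\Adagger$ (equivalently $H^1(H_L,\Adagger)$ has trivial $p$-torsion), so $\Adagger_L=(\Adagger)^{H_L}$ is again a henselian discrete valuation ring with uniformizer $p$ and residue field $\E^{H_L}=\E_L$. Thus $\Adagger_L/\Adagger_K$ is an unramified extension of henselian discrete valuation rings with residue extension $\E_L/\E_K$, which is finite separable (hence \'etale) of degree $[H_K:H_L]$; by the unique lifting of \'etale residue extensions across henselian pairs, $\Adagger_L$ is finite free over $\Adagger_K$ of rank $[\E_L:\E_K]=[H_K:H_L]$, and inverting $p$ gives $[\Bdagger_L:\Bdaggerk]=[H_K:H_L]=[L_\infty:K_\infty]$.

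For the Galois assertion, when $L/K$ is Galois the residue extension $\E_L/\E_K$ is Galois with group $H_K/H_L$; since $\Adagger_L$ is the unique unramified lift of $\E_L$ over $\Adagger_K$, every $\E_K$-automorphism of $\E_L$ lifts uniquely to an $\Adagger_K$-automorphism of $\Adagger_L$, so $\Adagger_L/\Adagger_K$ — and therefore, after inverting $p$, $\Bdagger_L/\Bdaggerk$ — is Galois with group $H_K/H_L\cong\mathrm{Gal}(L_\infty/K_\infty)$ (alternatively one checks directly that $\mathrm{Gal}(L_\infty/K_\infty)$ already acts faithfully on $\E_L$, hence faithfully on $\Bdagger_L$, fixes exactly $\Bdaggerk$, and has order equal to the degree computed above, which forces the extension to be Galois). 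The step I expect to be the crux is the passage from characteristic $p$ back to characteristic $0$: one must know that $\Adagger_K$ carries this henselian discrete valuation structure and, crucially, that $\Adagger_L$ is the \emph{unramified} lift of $\E_L/\E_K$ — equivalently that reduction modulo $p$ is compatible with taking $H_L$-invariants on $\Adagger$. Once that compatibility is secured, the degree count and the Galois property are formal consequences of henselian discrete valuation ring theory.
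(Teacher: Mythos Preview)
Your argument is correct, but note that the paper's own ``proof'' is simply the one-line citation ``See \cite{cc98}, Proposition II.4.1.'' What you have written is essentially a reconstruction of the argument behind that reference: the reduction to the residue fields $\E_L/\E_K$ via the field-of-norms isomorphism $\mathrm{Gal}(\E/\E_K)\cong H_K$, and the lift back through the henselian DVR structure on $\Adagger_K$ (with uniformizer $p$ and residue field $\E_K$), are precisely the ingredients Cherbonnier--Colmez use. So there is nothing to compare --- your proposal supplies the details the paper delegates to the literature.
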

\begin{proof}
  See \cite{cc98}, Proposition II.4.1.
\end{proof}

If $A$ is a ring which is complete for the $p$-adic topology and $X, Y$ are indeterminantes we let
\[
	A\{X, Y\} = \varprojlim_n A[X,Y] / p^n A[X,Y],
\]
that is, $A\{X, Y\}$ is the $p$-adic completion of $A[X,Y]$. Every
element of $A\{X, Y\}$ can be written as $\sum_{i, j \geq 0} a_{ij}
X^i Y^j$ where $a_{ij}$ is a sequence in $A$ tending to $0$ 
in the $p$-adic topology. We let
$r,s \in \mb{N}[1/p] \cup \{ +\infty \}$ such that $r \leq s$. 
By definition one has (in Fr$(\Bt)$) $p /
[\overline{\pi}]^{+\infty} = 1 / [ \overline{\pi}]$ and
$[\overline{\pi}]^{+\infty} / p = 0$. Let
\begin{align*}
	\At_{[r;s]} &= \Atplus \{ \frac{ p }{ [\overline{\pi}]^{r}},
        \frac{[\overline{\pi}]^{s}}{ p} \} \\ &= \Atplus \{X, Y \} /
        ([\overline{\pi}]^{r}X - p, pY - [\overline{\pi}]^{s}, XY -
             [\overline{\pi}]^{s-r}), \\ \Bt_{[r;s]} &=
             \At_{[r;s]}[1/p].
\end{align*}
If $I$ is any interval of $\mb{R} \cup \{ +\infty \}$ we let
$\Bt_I = \bigcap_{[r;s] \subset I} \Bt_{[r;s]}$.
It is clear that if $I \subset J$ are two closed intervals then $\Bt_J
\subset \Bt_I$.  One has a $p$-adic valuation $V_I$ on $\Bt_I$ defined
by the condition $V_I(x) = 0$ if and only if $x \in \At_I \setminus p
\At_I$ and such that the image of $V_I$ is $\mb{Z}$. With this
valuation $\Bt_I$ becomes a $p$-adic Banach space.

The action of $G_F$ on $\Atplus$ extends to $\Atplus [p /
  [\overline{\pi}]^r, [ \overline{\pi} ]^s / p]$ and by continunity
further extends to $\At_I$ and $\Bt_I$.  The Frobenius $\varphi$
extends to a morphism
\[
	\varphi: \Atplus[ \frac{ p }{ [\overline{\pi}]^{r}},
          \frac{[\overline{\pi}]^{s}}{ p}] \longrightarrow \Atplus [
          \frac{ p }{ [\overline{\pi}]^{pr}},
          \frac{[\overline{\pi}]^{ps}}{ p } ]
\]
and finally to a map $\varphi : \At_I \ra \At_{pI}$ for every $I$. 

Berger defines $\Btdaggerrrig = \Bt_{[r, +\infty[},~ \Btdaggerrig =
            \bigcup_{r \geq 0 } \Btdaggerrrig$.
$\Btdaggerrrig$ is endowed with the Fr\'echet topology defined by the
family of valuations $V_I$ for closed subsets $I \subset [r,
  +\infty[$, and subsequently $\Btdaggerrig$ is an LF-space.  One can define 
$\At^{\dagger,r}_{\textrm{rig}}$ as the
    ring of integers of $\Bt^{\dagger,r}_{\textrm{rig}}$ with respect
    to the valuation $V_{[r;r]}$.  We put $\Atdaggerrig = \bigcup_{r
      \geq 0} \At^{\dagger,r}_{\textrm{rig}}$.
One defines $\Bdaggerrigk$ to be the LF-space arising from the completion of the
$\Bdaggerrk$ with
respect to the Fr\'echet topology induced by the $V_I$. 
Further, let $\Bdaggerrig = \Bdaggerrigf \otimes_{\Bdaggerf} \Bdagger$.



\begin{lem}
  \label{lembdaggerrigk}
  \begin{enumerate}
    \item $\Bdaggerrigk = \Bdaggerrigf \otimes_{\Bdaggerf} \Bdaggerk$.
    \item $\Bdaggerrig = \Bdaggerrigk \otimes_{\Bdaggerk} \Bdagger$.
    \item $(\Bdaggerrig)^{H_K} = \Bdaggerrigk$.
  \end{enumerate}
\end{lem}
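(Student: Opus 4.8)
The plan is to obtain (a) from the structure of $\Bdaggerk$ as a finite extension of $\Bdaggerf$ together with the overconvergence theorem of Cherbonnier--Colmez, to deduce (b) formally, and to get (c) by Galois descent along $\Bdaggerrig/\Bdaggerrigk$.

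For (a), I would begin with the structure statement (contained in \cite{cc98}): there is $r(K)\geq 0$ so that for every $r\geq r(K)$ the ring $\Bdaggerrk$ is a free $\B^{\dagger,r}_F$-module of finite rank, and one can choose a basis $e_1,\dots,e_d$ that is independent of $r$ --- that is, simultaneously a basis of $\B^{\dagger,r'}_K$ over $\B^{\dagger,r'}_F$ for all $r'\geq r$ --- in such a way that, under the identification $\Bdaggerrk=\bigoplus_i \B^{\dagger,r}_F e_i$, the Fr\'echet topology defined by the valuations $V_I$ (for closed $I\subset[r,+\infty[$) is the product topology. Granting this, completion commutes with the base change, because the completion of a finite product is the product of the completions: the $V_I$-completion of $\Bdaggerrk$ is $\bigoplus_i\B^{\dagger,r}_{\mathrm{rig},F}e_i$, i.e. $\Bdaggerrrigk=\B^{\dagger,r}_{\mathrm{rig},F}\otimes_{\B^{\dagger,r}_F}\Bdaggerrk$. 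Passing to the union over $r\geq r(K)$ and using $\Bdaggerrigf=\bigcup_r\B^{\dagger,r}_{\mathrm{rig},F}$, $\Bdaggerf=\bigcup_r\B^{\dagger,r}_F$ and $\Bdaggerk=\bigcup_r\Bdaggerrk=\bigoplus_i\Bdaggerf e_i$, one gets $\Bdaggerrigk=\bigoplus_i\Bdaggerrigf e_i=\Bdaggerrigf\otimes_{\Bdaggerf}\Bdaggerk$, which is (a).

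Part (b) is then formal: by definition $\Bdaggerrig=\Bdaggerrigf\otimes_{\Bdaggerf}\Bdagger$, so (a) and associativity of the tensor product give
\[
  \Bdaggerrigk\otimes_{\Bdaggerk}\Bdagger
  =(\Bdaggerrigf\otimes_{\Bdaggerf}\Bdaggerk)\otimes_{\Bdaggerk}\Bdagger
  =\Bdaggerrigf\otimes_{\Bdaggerf}\Bdagger=\Bdaggerrig .
\]
For (c), I would first observe that the argument of (a) applies verbatim with $K/F$ replaced by an arbitrary finite extension $L/K$ (now using Proposition \ref{propbdaggergal} for the degree), so that $\Bdaggerrigl=\Bdaggerrigk\otimes_{\Bdaggerk}\Bdagger_L$ for every finite $L/K$; when $L/K$ is Galois this identification carries the natural action of $G:=\mathrm{Gal}(L_\infty/K_\infty)=H_K/H_L$, and since $H_L$ acts trivially on $\Bdagger_L$ and hence on its completion $\Bdaggerrigl$, the group $H_K$ acts on $\Bdaggerrigl$ through the finite quotient $G$. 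Writing $\Bdagger=\bigcup_L\Bdagger_L$ over the directed family of finite Galois $L/K$, applying $\Bdaggerrigk\otimes_{\Bdaggerk}(-)$ and using (b) give $\Bdaggerrig=\bigcup_L\Bdaggerrigl$, an increasing union of $\Bdaggerrigk$-subalgebras compatible with the $H_K$-actions, whence $(\Bdaggerrig)^{H_K}=\bigcup_L(\Bdaggerrigl)^{H_K}$. Finally, $\Bdagger_L/\Bdaggerk$ is a finite extension of fields (Proposition \ref{propbdaggergal}), so $\Bdaggerrigk$ is flat over the field $\Bdaggerk$ and hence $\Bdaggerrigk\otimes_{\Bdaggerk}(-)$ commutes with the kernel of $m\mapsto(gm-m)_{g\in G}$ that computes $G$-invariants; therefore
\[
  (\Bdaggerrigl)^{H_K}=(\Bdaggerrigk\otimes_{\Bdaggerk}\Bdagger_L)^{G}
  =\Bdaggerrigk\otimes_{\Bdaggerk}(\Bdagger_L)^{G}=\Bdaggerrigk
\]
by Proposition \ref{propbdaggergal}. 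Taking the union over $L$ yields $(\Bdaggerrig)^{H_K}=\Bdaggerrigk$, which is (c).

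The only non-formal input is the structure statement used in (a), and in its variant for $L/K$ in (c): that $\Bdaggerrk$ is finite free over $\B^{\dagger,r}_F$ with an $r$-independent basis, and that the $V_I$-topology on it is the product topology --- equivalently, that Fr\'echet completion commutes with this finite free base change. This is precisely the overconvergence and descent result of Cherbonnier--Colmez (\cite{cc98}); I would cite it rather than reprove it, and granting it, the rest of the proof is bookkeeping with finite tensor products, filtered unions, and flatness over a field.
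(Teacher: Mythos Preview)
Your argument is correct and follows the standard route; the paper itself does not give a proof but simply refers to \cite{berger02}, section~3.4, where the reasoning is essentially the one you outline (finite-freeness of $\Bdaggerrk$ over $\B^{\dagger,r}_F$ plus compatibility with Fr\'echet completion for (a), the formal tensor identity for (b), and descent through the union $\Bdagger=\bigcup_L\Bdagger_L$ for (c)). The only point you use without comment is the identity $\Bdagger=\bigcup_L\Bdagger_L$, which is also part of the standard theory in \cite{cc98} and \cite{berger02}; once that is granted, your bookkeeping is exactly what the cited reference does.
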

\begin{proof}
  See \cite{berger02}, section 3.4.
\end{proof}

Berger has shown the existence of  unique map $\log: \Atplus \ra
  \Btdaggerrig[X]$ such that $\log([x]) = \log[x],~ \log(p) = 0$ and
  $\log (xy) = \log (x) + \log(y)$. Hence one defines $\log \pi :=
  \log (\pi)$ and sets  $\Btdaggerlog = \Btdaggerrig[\log \pi]$,
$\Bdaggerlog = \Bdaggerrig[\log
  \pi]$\index{\Bdaggerlogk@$\Bdaggerlogk$} and $\Bdaggerlogk =
\Bdaggerrigk[\log \pi]$\index{Bdaggerlog@$\Bdaggerlog$}. One defines a
monodromy operator $N$ on $\Btdaggerlog$ by extending $N \log \pi := -
p/(p-1)$ in the usual way.

\subsection{$(\varphi, \Gamma_K)$-modules over $\Bdaggerrigk$}



We describe how to extend certain results of \cite{berger02} to 
(in general non-\'etale) $(\varphi, \Gamma_K)$-modules, cf. also 
\cite{berger08b}. 

We make use of the following notation: Suppose
$R$ is a commutative ring equipped with an endomorphism $f: R \ra R$, and 
$M$ is a $R$-module. We may then consider the $R$-module
$R \otimes_{f, R} M$, where $R$ is considered as an $R$-module via 
$r \cdot s := f(r) s$ ($r,s \in R$).

\begin{enumerate}
  \item A \textbf{$(\varphi, \Gamma_K)$-module} $D$ over
    $\Bdaggerrigk$ is a free, finitely generated $\Bdaggerrigk$-module
    with a semi-linear continuous map $\varphi_D$
    (i.e. $\varphi_D(\lambda x) = \varphi(\lambda) \varphi_D(x)$ for
    $\lambda \in \Bdaggerrigk, x \in D$) and a continuous action of
    $\Gamma_K$ which commutes with $\varphi_D$, such that 
    the map
    \[
      \varphi^* : \Bdaggerrigk \otimes_{\varphi, \Bdaggerrigk} D \lra D,~~~
      a \otimes x \longmapsto a \varphi(x)
    \]
    is an isomorphism of $\Bdaggerrigk$-modules.
  \item $(\varphi, \Gamma_K)$-module $D$ over $\Bdaggerrigk$ is
    \textbf{\'etale} (or \textbf{of slope 0}) if there exists $p$-adic 
    representation $V$ such that $D = \Ddaggerrigk(V)$.
\end{enumerate}

For example, for a $p$-adic representation $V$ we set 
$\Ddaggerrigk(V) := (\Bdaggerrigk \otimes_{\mb{Q}_p} V)^{H_K}$. Furthermore,
let us define  $\Ddagger_{\textrm{log}, K}(V) :=  (\Bdaggerlog 
\otimes_{\mb{Q}_p} V)^{H_K}$ and $\D^+_{\textrm{rig}, K} :=  
(\B^+_{\textrm{rig}} \otimes_{\mb{Q}_p} V)^{H_K}$.
Then $\Ddaggerrigk(V)$ is a $(\varphi, \Gamma)$-module over $\Bdaggerrigk$.

Let $D$ be a $(\varphi, \Gamma_K)$-module over $\Bdaggerrigk$. $\varphi_D$ will 
henceforth simply be denoted by $\varphi$.
For the ring $\Bdaggerrigk$ we have a decomposition $\Bdaggerrigk = 
\bigoplus_{i =  0}^{p-1} (1 + \pi)^i \varphi(\Bdaggerrigk)$ so that one may 
define
an operator $\psi$ on $\Bdaggerrigk$ by sending 
$\sum_{i=0}^{p-1} (1+\pi)^i \varphi(x_i)$ to $x_0$, that
extends a similarly defined operator $\psi$ on $\Bdaggerk$. More generally, 
if $D$ is
a $(\varphi, \Gamma_K)$-module over $\Bdaggerrigk$ we have thanks to
condition a) in the definition of $(\varphi, \Gamma)$-modules that there exists 
a unique operator $\psi$ on $D$ that is defined by the same formula and that
and commutes with the action of $\Gamma_K$.

\begin{prop}
  \label{psiexact}
  If $0 \ra D' \ra D \ra D'' \ra 0$ is an exact sequence of 
  $(\varphi, \Gamma_K)$-modules over $\Bdaggerrigk$ then 
  $0 \ra D'^{\psi = 0} \ra D^{\psi = 0} \ra D''^{\psi = 0} \ra 0$
  is an exact sequence of $\Gamma_K$-modules.
\end{prop}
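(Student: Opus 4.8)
The plan is to reduce everything to three properties of the operator $\psi$ discussed just above the statement: that it is \emph{functorial} for morphisms of $(\varphi, \Gamma_K)$-modules over $\Bdaggerrigk$, that it is a \emph{left inverse} of $\varphi$ (hence surjective on every such module), and that it \emph{commutes with $\Gamma_K$}. Once these are in place, the exactness claim is a short diagram chase, and the only nonformal input is the surjectivity on the right.

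First I would record functoriality. If $f : D_1 \to D_2$ is a morphism of $(\varphi,\Gamma_K)$-modules, then $f$ is $\Bdaggerrigk$-linear and commutes with $\varphi$, so it respects the decomposition $D_j = \bigoplus_{i=0}^{p-1}(1+\pi)^i\varphi(D_j)$ underlying the definition of $\psi$: writing $x = \sum_{i=0}^{p-1}(1+\pi)^i\varphi(x_i)$ in $D_1$ one gets $f(x) = \sum_{i=0}^{p-1}(1+\pi)^i\varphi(f(x_i))$, and applying the defining formula $\psi(\sum_i (1+\pi)^i\varphi(x_i)) = x_0$ on both sides gives $\psi(f(x)) = f(x_0) = f(\psi(x))$. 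Applied to the inclusion $D' \hookrightarrow D$ this shows $\psi_{D'} = \psi_D|_{D'}$, so $D'^{\psi=0} = D' \cap D^{\psi=0}$; applied to $D \to D''$ it shows the two maps of the sequence send $\psi=0$ parts to $\psi=0$ parts. Since $\psi$ commutes with $\Gamma_K$, the submodules $D'^{\psi=0}, D^{\psi=0}, D''^{\psi=0}$ are $\Gamma_K$-stable and the induced maps are $\Gamma_K$-equivariant. Left exactness and exactness in the middle are then immediate: $D' \to D$ is injective, hence so is its restriction, and $\ker(D^{\psi=0} \to D''^{\psi=0}) = D^{\psi=0}\cap \ker(D \to D'') = D^{\psi=0}\cap D' = D'^{\psi=0}$.

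The one point that needs an argument is surjectivity of $D^{\psi=0} \to D''^{\psi=0}$, and here I would use $\psi\circ\varphi = \mathrm{id}$ on each module (immediate from the defining formula, taking $x_0 = x$ and $x_i = 0$ for $i > 0$), which makes $\psi$ surjective on $D'$ in particular. Given $\bar x \in D''^{\psi=0}$, lift it to some $x \in D$; functoriality gives $g(\psi(x)) = \psi(\bar x) = 0$, where $g\colon D \to D''$, so $\psi(x)$ lies in $\ker g = \operatorname{im}(D' \to D)$, say $\psi(x)$ corresponds to $\psi_{D'}(y)$ with $y \in D'$ by surjectivity of $\psi$ on $D'$. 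Then $x - y \in D^{\psi=0}$ because $\psi_D(x-y) = \psi_D(x) - \psi_{D'}(y) = 0$, and $x-y$ still maps to $\bar x$ in $D''$ since $y$ maps to $0$. This finishes the proof.

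I do not anticipate a genuine obstacle; the only delicate bookkeeping is the identity $\psi_{D'} = \psi_D|_{D'}$ used in the surjectivity step, which is exactly the functoriality remark. As an alternative one can be fully explicit: since $(1+\pi)$ is a unit in $\Bdaggerrigk$ and $\varphi$ is (faithfully) flat, the sequence $0 \to \varphi(D') \to \varphi(D) \to \varphi(D'') \to 0$ is exact, and $D^{\psi=0} = \bigoplus_{i=1}^{p-1}(1+\pi)^i\varphi(D)$ (and likewise for $D'$ and $D''$), so the sequence of $\psi=0$ parts is a finite direct sum of exact sequences, hence exact; the $\Gamma_K$-equivariance is again supplied by the fact that $\psi$ commutes with $\Gamma_K$.
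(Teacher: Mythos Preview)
Your proof is correct. The paper's own argument is precisely your ``alternative'' at the very end: it observes that any $x \in D^{\psi=0}$ has the unique expression $x = \sum_{i=1}^{p-1}(1+\pi)^i\varphi(x_i)$, so $D^{\psi=0} = \bigoplus_{i=1}^{p-1}(1+\pi)^i\varphi(D)$ and right-exactness is immediate summand by summand; the $\Gamma_K$-compatibility is dismissed with the remark that $\psi$ commutes with $\Gamma_K$. Your \emph{primary} route---lift, correct by an element of $D'$ using surjectivity of $\psi$ on $D'$---is a clean diagram chase that avoids writing out the decomposition and would work verbatim in any setting where $\psi$ is a natural left inverse of $\varphi$; the paper's decomposition argument is more explicit but depends on the particular shape $\Bdaggerrigk = \bigoplus_i (1+\pi)^i\varphi(\Bdaggerrigk)$. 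Both arguments ultimately rest on the same functoriality of $\psi$ you isolate at the start, so the difference is one of packaging rather than substance.
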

\begin{proof}
  For the proof of the right-exactness one just uses the fact that if
  $x \in D^{\psi = 0}$ then (uniquely) $x = \sum_{i = 1}^{p-1} (1 +
  \pi)^i \varphi(x_i)$ with $x_i \in D$. The compatibility with the
  action of $\Gamma_K$ is clear since it commutes with $\psi$.
\end{proof}

If $L/K$ is a finite extension, we denote the \textbf{restriction} $D|_L$ 
by
\[
  D|_L := \Bdaggerrigl \otimes_{\Bdaggerrigk} D, 
\]
with actions of $\varphi$ and $\Gamma_L$ defined diagonally. Hence, 
$D|_L$ is a $(\varphi, \Gamma_L)$-module over $\Bdaggerrigl$.

The \textbf{dual} $D^*$ of a $(\varphi, \Gamma_K$)-module $D$ over $\Bdaggerrigk$
is defined by
\[
  D^* := \textrm{Hom}_{\Bdaggerrigk} (D, \Bdaggerrigk),
\]
where for $f \in D^*$ the actions of $\Gamma_K$ and $\varphi$ are defined via
\[
  \gamma(f)(x) := \gamma (f(\gamma^{-1} x)),~ \gamma \in \Gamma_K, x \in D,~~~
  \varphi(f)(x) := \sum a_i \varphi(f(x_i)),~ x = \sum a_i \varphi(x_i) \in D.
\]

If $D_1, D_2$ are two $(\varphi, \Gamma_K)$-modules over $\Bdaggerrigk$ then 
the \textbf{tensor product} of $D_1$ and $D_2$ is defined by
\[
  D_1 \otimes D_2 := D_1 \otimes_{\Bdaggerrigk} D_2,
\]
where $\varphi$ and $\Gamma_K$ act diagonally. Note that this does \emph{not}
imply that $\psi$ acts diagonally.

Let $D$ be a $(\varphi, \Gamma_K)$-module over $\Bdaggerrigk$ of rank
$d$. By \cite{berger08b}, Theorem I.3.3 there exists an
$n(D)$\index{nD@$n(D)$} and a unique finite free
$\Bdaggerrndrigk$-module $D^{(n(D))} \subset D$ of rank $d$ with
\begin{enumerate}
  \item $\Bdaggerrigk \otimes_{\Bdaggerrndrigk} D^{(n(D))} = D$,
  \item Let $D^{(n)} = \Bdaggerrnrigk \otimes_{\Bdaggerrndrigk}
    D^{(n(D))} $ for each $n \geq n(D)$. Then $\varphi(D^{(n)})
    \subset D^{(n+1)}$ and the map
    \[
      \Bdaggerrneinsrigk \otimes_{\varphi, \Bdaggerrnrigk} D^{(n)} \ra D^{(n+1)},~~~
      a \otimes x \mapsto a \varphi(x),
    \]
    is an isomorphism.
\end{enumerate}

\subsection{$\Btdaggerrig$-modules and $B$-pairs}

Let us collect some facts about $\varphi$-modules over $\Btdaggerrig$.

\begin{defi}
  Let $h \geq 1$ and $a \in \mb{Z}$. The \textbf{elementary
    $\varphi$-module $M_{a,h}$} is the $\varphi$-module over
  $\Btdaggerrig$ with basis $e_0, \ldots, e_{h-1}$ and $\varphi(e_0) =
  e_1, \ldots, \varphi(e_{h-2}) = e_{h-1}, \varphi(e_{h-1}) = p^a
  e_0$.\index{Mah@$M_{a,h}$}
\end{defi}

\begin{prop}
  If $M$ is a $\varphi$-module over $\Btdaggerrig$ 
  then there exist integers $a_i, h_i$ such that $M \cong
  \bigoplus_i M_{a_i, h_i}$.
\end{prop}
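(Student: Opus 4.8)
The plan is to follow the Dieudonné–Manin style classification argument in the $\varphi$-module setting over $\Btdaggerrig$, proving the statement by induction on the rank of $M$. First I would reduce to the case where $M$ is indecomposable: if $M$ admits a nontrivial $\varphi$-stable direct summand, we are done by induction, so we may assume $M$ has no such summand and must show $M \cong M_{a,h}$ for a single pair $(a,h)$. The key structural input is the stability of $\Btdaggerrig$ under $\varphi$ together with the fact that the slopes of $\varphi$ on a $\varphi$-module over $\Btdaggerrig$ lie in $\frac{1}{h}\mb{Z}$; concretely, after possibly twisting $M$ by an $M_{a,1}$ (i.e. scaling $\varphi$ by a power of $p$) and passing to a suitable iterate $\varphi^h$, one arranges that $M$ becomes ``étale'' (slope zero), at which point one can invoke the slope-filtration machinery of Kedlaya, or equivalently Berger's results in \cite{berger08b}, to split off a unit-root part.

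The main steps, in order, are: (1) show that for any $M$ the Newton polygon of $\varphi$ is well defined and that $M$ carries a canonical filtration by $\varphi$-submodules whose graded pieces are isoclinic — this is the Harder–Narasimhan / slope filtration, and its existence over $\Btdaggerrig$ is exactly the content of the slope theory one cites; (2) reduce to the isoclinic case, where all slopes equal some $a/h$ in lowest terms; (3) in the isoclinic case, twist by $M_{-a,h}$ (more precisely tensor with a rank-one object so as to kill the slope) to reduce to the étale isoclinic-of-slope-zero case; (4) there, use that a slope-zero $\varphi$-module over $\Btdaggerrig$ descends to $\Btplusrig$ and then, via $\Btrig^{\varphi=1} = \Bcris^{\varphi=1}$ noted in the excerpt, that its Frobenius-fixed vectors form a $\mb{Q}_p$-vector space of the right dimension spanning $M$, so $M$ is a successive extension of trivial $\varphi$-modules; (5) observe that over $\Btdaggerrig$ such extensions split (the relevant $H^1$ vanishes, or directly: $1-\varphi$ is surjective on $\Btdaggerrig$), giving $M \cong M_{0,1}^{\oplus d}$ in that case, and then untwist to get $M \cong M_{a,h}^{\oplus(d/h)}$; (6) assemble the isoclinic pieces, noting that the slope filtration splits because extensions between $\varphi$-modules of distinct slopes over $\Btdaggerrig$ are trivial (again a vanishing-of-cohomology statement, or: an extension of a larger slope by a smaller slope splits by the slope filtration, and the other direction splits because the sub of smallest slope is a direct summand). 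This yields the desired decomposition $M \cong \bigoplus_i M_{a_i,h_i}$.

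The hard part will be step (4)–(5), namely the passage from ``slope zero over $\Btdaggerrig$'' to ``trivial as a $\varphi$-module'', and the splitting of the slope filtration in step (6); these are precisely where the special properties of the Robba-type ring $\Btdaggerrig$ (as opposed to a field) are used, and where one genuinely needs Kedlaya's slope-filtration theorem (or Berger's reworking of it in the $B$-pairs framework, \cite{berger08b}). Everything else — the definition of the Newton polygon, the reduction to the indecomposable/isoclinic case, and the twisting by elementary modules $M_{a,h}$ — is formal and I would not expect to dwell on it. I would therefore structure the write-up so that the two genuinely substantive points are isolated as the core of the argument and the rest is dispatched quickly by the inductive bookkeeping.
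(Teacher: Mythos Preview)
The paper does not prove this proposition at all: its entire proof is the single line ``See \cite{kedlaya05}, Theorem 4.5.7.'' Your proposal is therefore not so much a different approach as an outline of the content of the cited theorem itself. The outline is broadly correct and captures the standard Dieudonn\'e--Manin strategy adapted to $\Btdaggerrig$: Harder--Narasimhan filtration, reduction to the isoclinic case, twist to slope zero, trivialisation of slope-zero modules, and splitting of extensions.

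Two small cautions on the details. In step (4), the claim that a slope-zero $\varphi$-module over $\Btdaggerrig$ ``descends to $\Btplusrig$'' is not the right formulation; what one actually uses is that pure slope-zero modules over $\Btdaggerrig$ are \'etale in the sense that they arise from $\varphi$-modules over $\Btdagger$, and then one produces enough $\varphi$-invariants (this is where the algebraically-closed nature of the residue field of $\Btdaggerrig$ is essential and distinguishes it from $\Bdaggerrigk$, over which one only gets a filtration, not a splitting). In step (6), your argument for splitting the slope filtration is slightly circular as written: the statement that ``the sub of smallest slope is a direct summand'' is exactly what needs proving. The correct mechanism is that $\mathrm{Hom}_\varphi(M_{a,h}, M_{a',h'}) = 0$ whenever $a/h > a'/h'$, together with the vanishing of the relevant $\mathrm{Ext}^1$ between pure modules of distinct slopes over $\Btdaggerrig$; both are consequences of the explicit structure of the elementary modules and the surjectivity of $p^s\varphi - 1$ on $\Btdaggerrig$ for suitable $s$.
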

\begin{proof}
    See \cite{kedlaya05}, Theorem 4.5.7.
\end{proof}

\begin{defi}
  Let $M$ be a $\varphi$-module over $\Btdaggerrig$. If $M = M_{a,h}$
  is elementary one defines the \textbf{slope} of $M$ as $\mu(M) =
  a/h$ and one says that $M$ is \textbf{pure} of this slope. In
  general if $M \equiv \bigoplus M_{a_i, h_i}$ one define $\mu(M) = \sum
  \mu(M_{a_i,h_i})$, so that $\mu$ is compatible with short exact
  sequences.
\end{defi}

Let $D$ now be a $(\varphi, \Gamma)$-module over $\Bdaggerrigk$.
One sets $\Be := (\Btdaggerrig[1/t])^{\varphi = 1}$.
\index{Be@$\Be$}
From \cite{berger08a}, Proposition 2.2.6, we know that 
\begin{enumerate}
  \item $\We(D) := (\Btdaggerrig[1/t] \otimes_{\Bdaggerrigk} D)^{\varphi = 1}$
    \index{WeD@$\We(D)$} is a
    free $\Be$-module of rank $d$ which inherits an action of $G_K$,
  \item $\Wplusdr(D) := \Bplusdr \otimes_{\iota_n, \Bdaggerrnrigk}
    D^{(n)}$\index{WplusdrD@$\Wplusdr(D)$} does not depend on $n \gg
    0$ and is a free $\Bplusdr$-module of rank $d$ which inherits an
    action of $G_K$.
\end{enumerate}
With this in mind, Berger defined:
\begin{defi}
  A tuple $W = (W_e, W^+_{\textrm{dR}})$, where $W_e$ is a free
  $\B_e$-module of finite rank equipped with an semi-linear action of
  $G_K$ and $W^+_{\textrm{dR}}$ is a $\Bplusdr$-lattice in $\Bdr
  \otimes_{\B_e} W_e$ that is stable under the action of $G_K$, is
  called a \textbf{B-pair}.
\end{defi}

From \cite{berger08a}, Proposition 2.2.6 it follows that the
  tuple $\W(D) = (\We(D), \Wplusdr(D))$\index{WD@$\W(D)$} actually is
  a $B$-pair.  Furthermore, Berger proved: 

\begin{thm}
  The functor $D \mapsto \W(D)$ gives rise to an equivalence of
  categories between the category of $(\varphi, \Gamma_K)$-modules
  over $\Bdaggerrigk$ and the category of B-pairs.
\end{thm}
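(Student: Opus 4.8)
The plan is to construct an explicit quasi-inverse of $\W$ and to check that the two composites are naturally isomorphic to the identity; full faithfulness then follows automatically, although it can also be obtained separately by reducing — via internal Hom and the dual $D^*$ — to the identity on $H^0$. We already know from \cite{berger08a}, Proposition~2.2.6, that $\W(D)=(\We(D),\Wplusdr(D))$ is a $B$-pair, and functoriality in $D$ is immediate from the definitions, so the content lies entirely in essential surjectivity.

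I would first pass to the ``big'' Robba ring. Extension of scalars $D\mapsto\Btdaggerrigk\otimes_{\Bdaggerrigk}D$ is an equivalence from $(\varphi,\Gamma_K)$-modules over $\Bdaggerrigk$ onto $(\varphi,\Gamma_K)$-modules over $\Btdaggerrigk$ (overconvergence and descent: Cherbonnier--Colmez in the \'etale case, Kedlaya's slope theory in general; cf.\ \cite{cc98}, \cite{berger02}); likewise one passes between a $(\varphi,\Gamma_K)$-module $\widetilde{\D}$ over $\Btdaggerrigk$ and the $\varphi$-module $\Btdaggerrig\otimes_{\Btdaggerrigk}\widetilde{\D}$ over $\Btdaggerrig$ carrying a compatible diagonal semilinear $G_K$-action, by Galois (``Tate--Sen'') descent along the $H_K$-extension $\Btdaggerrigk\subset(\Btdaggerrig)^{H_K}$. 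Thus it suffices to establish the equivalence between $\varphi$-modules over $\Btdaggerrig$ equipped with a compatible semilinear $G_K$-action and $B$-pairs; in one direction such an $M$ goes to $\bigl((M[1/t])^{\varphi=1},\,\Bplusdr\otimes_{\iota_n}M^{(n)}\bigr)$, exactly as in the definition of $\W$ and $\We,\Wplusdr$.

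For the reverse functor, given a $B$-pair $W=(W_e,W^+_{\textrm{dR}})$ write $\Wdr:=\Bdr\otimes_{\Be}W_e$ and form the free $\Btdaggerrig[1/t]$-module $\widetilde W_e:=\Btdaggerrig[1/t]\otimes_{\Be}W_e$, which carries a semilinear $\varphi$ (with $\varphi^{*}$ an isomorphism) and a compatible $G_K$-action; since $\iota_m=\varphi^{-m}$ restricts on $\Be$ to the canonical inclusion $\Be\hookrightarrow\Bdr$, each $\iota_m$ extends to $\widetilde W_e\to\Wdr$. The candidate is
\[
  M(W):=\bigl\{\,x\in\widetilde W_e \ :\ \iota_m(x)\in W^+_{\textrm{dR}}\ \text{ for all }m\gg 0\,\bigr\},
\]
a sub-$\Btdaggerrig$-module on which $\varphi$ and $G_K$ are inherited (it is $G_K$-stable because $W^+_{\textrm{dR}}$ is, and $\iota_m$ commutes with the actions). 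The assertions to be proved are that $M(W)$ is a \emph{free $\Btdaggerrig$-module of rank $\operatorname{rk}_{\Be}W_e$}, that $\varphi^{*}\colon\Btdaggerrig\otimes_{\varphi,\Btdaggerrig}M(W)\to M(W)$ is an isomorphism, that $M(W)[1/t]=\widetilde W_e$, that the $\iota_n$-localization $\Bplusdr\otimes_{\iota_n,\Btdaggerrnrig}M(W)^{(n)}$ of the natural finite-level lattice recovers $W^+_{\textrm{dR}}$, and that the construction is $G_K$-equivariant and continuous. Granting this, $M$ is a functor, and one reads off $(M(W)[1/t])^{\varphi=1}=W_e$ (using $\Btdaggerrig[1/t]^{\varphi=1}=\Be$ and the vanishing of $H^1(\varphi,-)$ on $\Btdaggerrig[1/t]$ as in the proof of \cite{berger08a}, Prop.~2.2.6) together with $\Bplusdr\otimes_{\iota_n}M(W)^{(n)}=W^+_{\textrm{dR}}$, so that the analogue of $\W$ applied to $M(W)$ gives back $W$; conversely $M(\W(D))=D$ after the above reductions, because the finite-level module $\widetilde D^{(n)}$ of \cite{berger08b}, Theorem~I.3.3, is cut out inside $\widetilde D[1/t]$ by precisely those $\iota_m$-integrality conditions, which is a level-by-level check.

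The main obstacle is the freeness-and-rank statement for $M(W)$ together with the compatibility of $\varphi$ across the levels: this is where the hard input enters, namely a gluing argument over the family of Banach rings $\Bt_{[r,s]}$ — a Beauville--Laszlo-type descent that reconstructs a vector bundle on the ``annulus'' from $W_e$ away from $t=0$ and from the lattice $W^+_{\textrm{dR}}$ near $t=0$ — combined with Kedlaya's classification of $\varphi$-modules over $\Btdaggerrig$ to force freeness and control the matching of $\varphi$ with the localizations. Once $M(W)$ is known to be a genuine $\varphi$-module over $\Btdaggerrig$, the $G_K$-equivariance, the descent back to $\Btdaggerrigk$ and then to $\Bdaggerrigk$, and the verification that both composites are the identity are comparatively routine bookkeeping with the rings introduced above.
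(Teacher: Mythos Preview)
The paper does not actually supply a proof of this theorem: it is stated as a result of Berger, with the surrounding text pointing to \cite{berger08a}, Proposition~2.2.6, for the fact that $\W(D)$ is a $B$-pair, and to \cite{berger08b}, section~2.2, for the inverse functor $\Dt$ (and its descent $\D(W)$ to $\Bdaggerrigk$). So there is no in-paper argument to compare against beyond this citation.

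Your proposal is a reasonable and essentially correct expansion of Berger's own strategy in those references: pass from $\Bdaggerrigk$ to $\Btdaggerrigk$ and then to $\Btdaggerrig$ with a $G_K$-action (this is the content of the paper's Theorem~\ref{thmbteq} and the paragraph preceding it), build the inverse $M(W)$ by imposing $\iota_m$-integrality with respect to the given $\Bplusdr$-lattice, and invoke a gluing argument together with Kedlaya's slope theory for the freeness and rank. This is exactly how $\Dt(W)$ is produced in \cite{berger08b}. One small caution: your first reduction step, the equivalence $D\mapsto\Btdaggerrigk\otimes_{\Bdaggerrigk}D$, is not literally Cherbonnier--Colmez (which concerns the \'etale case over $\Bdaggerk$); the relevant descent in the non-\'etale setting over the Robba rings is the Tate--Sen/Colmez section machinery (the operators $R_m$) as in \cite{berger02}, \S2.6, together with Kedlaya's results, and it would be cleaner to cite those directly. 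Otherwise your outline matches the cited sources and hence the paper's intended ``proof''.
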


One knows (cf. \cite{berger08b}, section 2.2.) how to construct a
functor $\Dt$ from the category of $B$-pairs to the category of
$(\varphi, G_K)$-modules over $\Btdaggerrig$ such that there exists a
unique $(\varphi, \Gamma_K)$-module $\D(W)$ over $\Bdaggerrigk$ with
$\Btdaggerrig \otimes_{\Bdaggerrigk} \D(W) = \Dt(W)$.  Hence, one
has, similarly as in the preceding theorem:
\begin{thm}
  \label{thmbteq}
  The functor $D \mapsto \TD := \Btdaggerrig \otimes_{\Bdaggerrigk} D$
  gives rise to an equivalence of categories between the category of
  $(\varphi, \Gamma_K)$-modules over $\Bdaggerrigk$ and the category
  of $(\varphi, G_K)$-modules over $\Bdaggerrig$.
\end{thm}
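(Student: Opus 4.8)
The plan is to deduce this equivalence from the already-stated equivalence between $(\varphi, \Gamma_K)$-modules over $\Bdaggerrigk$ and $B$-pairs, together with the functor $\Dt$ from $B$-pairs to $(\varphi, G_K)$-modules over $\Btdaggerrig$ and the descent statement recalled just before the theorem. First I would make precise what a $(\varphi, G_K)$-module over $\Btdaggerrig$ is: a free finitely generated $\Btdaggerrig$-module with a semilinear Frobenius whose linearization $\varphi^*$ is an isomorphism, together with a continuous semilinear action of $G_K$ commuting with $\varphi$; the condition that it descends means there is a $(\varphi,\Gamma_K)$-module $D$ over $\Bdaggerrigk$ with $\Btdaggerrig \otimes_{\Bdaggerrigk} D$ isomorphic to it compatibly with all structures. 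Since $(\Bdaggerrig)^{H_K} = \Bdaggerrigk$ (Lemma \ref{lembdaggerrigk}) and, by the remark preceding the theorem, every $B$-pair $W$ yields a $(\varphi, G_K)$-module $\Dt(W)$ over $\Btdaggerrig$ that descends to a unique $(\varphi,\Gamma_K)$-module $\D(W)$ over $\Bdaggerrigk$, the composite $D \mapsto \TD := \Btdaggerrig \otimes_{\Bdaggerrigk} D$ agrees with $W \mapsto \Dt(W)$ under the equivalence $D \leftrightarrow \W(D)$. So I would factor $D \mapsto \TD$ as $D \mapsto \W(D) \mapsto \Dt(\W(D)) = \Btdaggerrig \otimes_{\Bdaggerrigk} \D(\W(D))$, using that $\D(\W(D)) = D$ because $D \mapsto \W(D)$ is already known to be an equivalence.

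The key steps, in order: (i) the functor is well defined, i.e. $\TD$ really is a $(\varphi, G_K)$-module over $\Btdaggerrig$ — freeness and the isomorphism condition on $\varphi^*$ are preserved by base change along $\Bdaggerrigk \to \Btdaggerrig$, and the $\Gamma_K$-action extends to a $G_K$-action because $\Btdaggerrig$ carries one and $H_K$ acts trivially on $D$ and naturally on the ring (continuity of the extended action needs a short argument using that $D^{(n)}$ is defined over $\Bdaggerrnrigk$ and $\Btdaggerrnrig$ has a $G_K$-action); (ii) essential surjectivity: given a $(\varphi, G_K)$-module $\widetilde{D}$ over $\Btdaggerrig$, forget the Frobenius-equivariant de Rham lattice — actually, one must first produce a $B$-pair from it, then apply $\D(-)$. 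The cleanest route is: every $(\varphi, G_K)$-module over $\Btdaggerrig$ arises as $\Dt(W)$ for a unique $B$-pair $W$ (this is the content of \cite{berger08b}, section 2.2, which one invokes), and then $\widetilde{D} \cong \Btdaggerrig \otimes_{\Bdaggerrigk} \D(W)$ by the descent statement; (iii) full faithfulness: a morphism $\TD_1 \to \TD_2$ of $(\varphi, G_K)$-modules over $\Btdaggerrig$ is in particular $H_K$-equivariant, hence by $(\Btdaggerrig)^{H_K} = \Bdaggerrigk$ and faithfully flat (or at least injective, hence descent-theoretic) properties of $\Bdaggerrigk \to \Btdaggerrig$ it is induced by a unique $\Bdaggerrigk$-linear $\varphi$- and $\Gamma_K$-equivariant map $D_1 \to D_2$; concretely one checks that $\textrm{Hom}(\TD_1, \TD_2)^{H_K} = \textrm{Hom}(D_1, D_2)$ by writing a morphism as an element of $\TD_1^* \otimes \TD_2 = \Btdaggerrig \otimes_{\Bdaggerrigk}(D_1^* \otimes D_2)$ and taking $H_K$-invariants.

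The main obstacle I expect is step (ii), essential surjectivity — specifically, extracting a $B$-pair (equivalently, showing directly that $\widetilde{D}$ descends to $\Bdaggerrigk$) from an abstract $(\varphi, G_K)$-module over $\Btdaggerrig$ without simply citing \cite{berger08b}. The point is that a priori one only has a $G_K$-action on a $\Btdaggerrig$-module, and to descend the coefficients to the $H_K$-fixed ring $\Bdaggerrigk$ one needs an analogue of Sen/Tate-Sen theory or the decompletion results of Berger for such modules; the ring $\Btdaggerrig$ is not Galois-$p$-adically complete in the naive sense, so the descent is the substantive input. In practice I would handle this by invoking the construction in \cite{berger08b}, section 2.2, which attaches to any $(\varphi, G_K)$-module over $\Btdaggerrig$ a $B$-pair via the functors $W_e(-) = (\widetilde{D}[1/t])^{\varphi=1}$ and $W_{\textrm{dR}}^+(-) = \Bplusdr \otimes_{\iota_n} \widetilde{D}^{(n)}$, and then cite the equivalence $D \mapsto \W(D)$ to conclude; full faithfulness and well-definedness, steps (i) and (iii), are then straightforward descent-along-$H_K$-invariants arguments as sketched above.
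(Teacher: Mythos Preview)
Your proposal is correct and follows essentially the same route the paper indicates: the paper does not give a detailed proof but simply deduces the statement from the preceding $B$-pair equivalence together with Berger's construction (\cite{berger08b}, section 2.2) of the functor $\Dt$ and the descent $\Dt(W) = \Btdaggerrig \otimes_{\Bdaggerrigk} \D(W)$, which is exactly your factorization $D \mapsto \W(D) \mapsto \Dt(\W(D))$. Your additional explicit arguments for full faithfulness via $H_K$-invariants and for well-definedness are fine elaborations of what the paper leaves implicit.
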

We shall also abbreviate $\TDlog = \Btdaggerlog \otimes_{\Bdaggerrigk} D$ and
\index{Dtildedaggerlog@$\TDlog$} $\Wdr(D) := \Bdr 
  \otimes_{\iota_n \Bdaggerrnrigk} D^{(n)}$,\index{WdrD@$\Wdr(D)$}
  which is independent of the choice of $n$ for $n \gg 0$.

It is known that the canonical map
\[
  \Btdaggerrnrig \otimes_{\Be} \We(D) \ra \Btdaggerrnrig
  \otimes_{\Bdaggerrnrigk} D^{(n)},
\]
induced by $a \otimes x \mapsto ax$, is an isomorphism of
$G_K$-modules for every $n \geq n(D)$. One defines the
following map of $G_K$-modules:
\begin{align}
  \label{mapexact}
  \beta: \We(D) \hookrightarrow \Bdr \otimes_{\Be} \We(D) \cong \Bdr
  \otimes_{\iota_n, \Bdaggerrnrigk}(\Btdaggerrnrig \otimes_{\Be}
  \We(D)) \cong \Wdr(D).
\end{align}
We use the same symbol for the map $\beta : \We(D) \ra \Bdr / \Bplusdr 
\otimes_{\Be}
\We(D)$. Set 
$\Wpluse(D) = (\Btdaggerrig \otimes_{\Bdaggerrigk} D)^{\varphi = 1}$.
$\We(D)$.

Let now $W$ be a $B$-pair and set $X^0(W) =
W_e \cap W^+_{\textrm{dR}} \subset W_{\textrm{dR}}$\index{X0@$X^0(W)$} and 
$X^1(W) =
W_{\textrm{dR}} / (W_e + W^+_{\textrm{dR}})$\index{X1@$X^1(W)$}, which are 
nothing but
the kernel and cokernel respectively of the natural map $W_e \ra
W_{\textrm{dR}} / W^+_{\textrm{dR}}$.  Hence, one has
(\cite{berger09}, Theorem 3.1):

\begin{thm}
  \label{thmx0x1}
  If $W$ is a $B$-pair and $\TD = \Dt(W)$, there are natural identifications 
  \begin{enumerate}
    \item $X^0(W) \cong \Wpluse(D)$ and 
      $X^1(W) \cong \TD / (1 - \varphi)$,
    \item $X^0(W) = 0$ if and only if all slopes of $\TD$ are $> 0$;
      $X^1(W) = 0$ if and only if all slopes of $\TD$ are $\leq 0$.
  \end{enumerate}
\end{thm}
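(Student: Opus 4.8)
The plan is to work directly with the definitions of $X^0(W)$ and $X^1(W)$ as the kernel and cokernel of the natural map $\alpha\colon W_e \to W_{\mathrm{dR}}/W^+_{\mathrm{dR}}$, and to translate everything through the dictionary $W \leftrightarrow \TD = \Dt(W)$ and the descriptions of $\We(D)$, $\Wplusdr(D)$, $\Wpluse(D)$ recalled above. For part (a), first I would observe that $X^0(W) = W_e \cap W^+_{\mathrm{dR}}$ is exactly the set of elements of $\We(D) = (\Btdaggerrig[1/t] \otimes_{\Bdaggerrigk} D)^{\varphi = 1}$ that already lie in $\Bplusdr \otimes_{\iota_n,\Bdaggerrnrigk} D^{(n)} = \Wplusdr(D)$ under $\beta$; using the isomorphism $\Btdaggerrnrig \otimes_{\Be} \We(D) \cong \Btdaggerrnrig \otimes_{\Bdaggerrnrigk} D^{(n)}$ and the fact that $\Btdaggerrig \cap \Bplusdr$ (intersection inside $\Bdr$, via all the $\iota_n$) is $\Btplusrig$ — this is the key ring-theoretic input, essentially Berger's $t$-divisibility lemma — one identifies the intersection with $(\Btdaggerrig \otimes_{\Bdaggerrigk} D)^{\varphi=1} = \Wpluse(D)$. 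That gives $X^0(W) \cong \Wpluse(D)$.

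For the cokernel identification $X^1(W) \cong \TD/(1-\varphi)$, I would use the fundamental exact sequence relating $\Btdaggerrig[1/t]$, $\Btdaggerrig$ and $\Bdr/\Bplusdr$: namely the sequence
\begin{equation*}
  0 \lra \Btplusrig \lra \Btdaggerrig[1/t] \oplus \Bplusdr \lra \Bdr \lra 0
\end{equation*}
(and its $\varphi$-equivariant refinements), tensored with $D^{(n)}$ over the appropriate ring. Taking the long exact sequence for $(1-\varphi)$-cohomology (i.e. applying the snake lemma to the map $1-\varphi$ on the relevant short exact sequences) produces a connecting map whose cokernel piece is $\TD/(1-\varphi)\TD$ and which surjects onto $W_{\mathrm{dR}}/(W_e + W^+_{\mathrm{dR}}) = X^1(W)$; one then checks the map is an isomorphism by comparing kernels with the already-identified $X^0$. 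The bookkeeping here — making sure the $\varphi = 1$ invariants and the $(1-\varphi)$-coinvariants match up on the nose across the various completions — is where most of the work lies, but it is all formal once the exact sequence of rings is in hand.

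Part (b) then follows from part (a) together with Kedlaya's slope theory (the classification $M \cong \bigoplus_i M_{a_i,h_i}$ recalled above). Indeed $\Wpluse(D) = (\TD)^{\varphi=1}$ and $\TD/(1-\varphi)\TD$ can both be computed slope-by-slope on the elementary pieces $M_{a,h}$: for $M_{a,h}$ one has $M_{a,h}^{\varphi=1} = 0$ precisely when $a > 0$ (positive slope) and $M_{a,h}/(1-\varphi) = 0$ precisely when $a \leq 0$ — these are direct computations with the Frobenius matrix, using that $(1-\varphi)$ is invertible on $\Btdaggerrig$-modules of negative slope (Frobenius has "small" eigenvalues so the geometric series $\sum \varphi^k$ converges) and that $\varphi$ has no fixed vectors when the slope is positive. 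Since $X^0$ and $X^1$ are additive in short exact sequences by construction, summing over the elementary summands gives the stated equivalences. The main obstacle I anticipate is part (a): specifically, pinning down the precise form of the $\varphi$-equivariant fundamental exact sequence of rings over the correct base (dealing with the dependence on $n$ and the passage $D^{(n)} \rightsquigarrow D$) so that the snake-lemma argument yields $\TD/(1-\varphi)$ with no spurious correction terms — the rest is diagram-chasing and slope bookkeeping.
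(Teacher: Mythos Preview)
The paper does not prove this theorem: it is quoted verbatim as \cite{berger09}, Theorem 3.1, with no argument given. So there is no in-paper proof to compare your proposal against; what you have written is in effect an attempted reconstruction of Berger's original argument.

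Your plan for part (b) is correct and is exactly how Berger proceeds: reduce to elementary modules $M_{a,h}$ via the slope decomposition and compute $M_{a,h}^{\varphi=1}$ and $M_{a,h}/(1-\varphi)$ directly. Your plan for $X^0$ in part (a) is also essentially right --- the key input is indeed the $t$-divisibility statement that an element of $\TD[1/t]$ which is $\varphi$-fixed and lands in $\Wplusdr(D)$ under $\iota_n$ already lies in $\TD$.

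Your plan for $X^1$, however, has a gap as written. The exact sequence you display,
\[
  0 \lra \Btplusrig \lra \Btdaggerrig[1/t] \oplus \Bplusdr \lra \Bdr \lra 0,
\]
does not carry a Frobenius on the terms $\Bplusdr$ and $\Bdr$, so ``taking $(1-\varphi)$-cohomology'' or ``applying the snake lemma to $1-\varphi$'' on this sequence is not a well-posed operation. Berger's actual argument (see \cite{berger09}, Remark 3.4, which the present paper invokes just after the theorem) is more direct: one first shows $1-\varphi$ is surjective on $\TD[1/t]$, then defines explicitly the map
\[
  \beta: \TD \lra \Wdr(D)/\Wplusdr(D),\qquad x \longmapsto \iota_n\bigl((\varphi-1)^{-1}(x)\bigr),
\]
checks it is well-defined (independent of $n$ and of the choice of preimage, the ambiguity lying in $\We(D)$ and in $\Wplusdr(D)$ respectively), and verifies by hand that it factors through $\TD/(1-\varphi)$ and induces an isomorphism onto $X^1(W)$. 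If you want to salvage a snake-lemma approach, the correct sequence to use is $0 \to \TD \to \TD[1/t] \to \TD[1/t]/\TD \to 0$, on which $\varphi$ genuinely acts, together with the (nontrivial) identification $\TD[1/t]/\TD \cong \Wdr(D)/\Wplusdr(D)$; but establishing that identification is itself essentially equivalent to constructing $\beta$.
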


We recall the following definition, introduced by Fontaine (see
\cite{fontaine03}):
\begin{defi}
  An \textbf{almost $\mb{C}_p$-representation} is a $p$-adic Banach space
  $X$ equipped with a linear and continuous action of $G_K$ such that 
  there exists a $d \geq 0$ and two (finite-dimensional) $p$-adic
  representations $V_1 \subset X,~ V_2 \subset \mb{C}_p^d$ such that 
  $X / V_1 \cong \mb{C}_p^d / V_2$.
\end{defi}

Berger has shown that $X^0(W)$ and $X^1(W)$ are almost 
$\mb{C}_p$-representations, cf. \cite{berger09}.

\subsection{Cohomology of $(\varphi,\Gamma_K)$-modules}
\label{subsectioncohomology}

Liu (cf. \cite{liu08}) has worked out reasonable definitions for
cohomology of (in general non-\'etale)
$(\varphi, \Gamma_K)$-modules over $\Bk,
\Bdaggerk$ and $\Bdaggerrigk$.

Let $D$ be a $(\varphi, \Gamma_K)$-module over one of these rings and
let $\Delta_K$\index{DeltaK@$\Delta_K$} be a torsion subgroup of
$\Gamma_K$. $\Gamma_K$ is an open subgroup of $\mb{Z}_p^\times$ and
$\Delta_K$ is a finite group of order dividing $p-1$ (or $2$ if $p =
2$). Define the idempotent operator
$p_{\Delta_K}$\index{pDeltaK@$p_{\Delta_K}$} by $p_{\Delta_K} = (1 /
|\Delta_K|) \sum_{\delta \in \Delta_K} \delta$, so that $p_{\Delta_K}$
is the projection from $D$ to $D' := D^{\Delta_K}$\index{D'@$D'$}.
  If $\Gamma'_K := \Gamma_K / \Delta_K$\index{Gamma'K@$\Gamma'_K$} is
procyclic with generator $\gamma_K$ define the exact sequence
\begin{equation}
  C^\bullet_{\varphi, \gamma_K}(D): 
  \xymatrix{
  0 \ar[r] & D'  \ar[r]^<<<<{d_1} & D' \oplus D'  \ar[r]^<<<<<{d_2}  
  & D' \ar[r]		& 0
} \label{herrcomplexphi}
\end{equation}\index{Cbulletphigamma@$C^\bullet_{\varphi, \gamma_K}(D)$}
with
\[
  d_1(x) = ((\varphi - 1)x, (\gamma_K - 1) x),~~~
  d_2(x,y) = (\gamma_K - 1)x - (\varphi - 1)y.
\]
Define for $i \in \mb{Z}$
\[
  H^i(K, D) := H^i(C^\bullet_{\varphi, \gamma_K}(D)),
\]
which is, up to canonical isomorphism, independent of the choice of $\gamma_K$
(cf. \cite{liu08}, section 2), so that we shall now fix 
a choice of $\Delta_K$ and $\gamma_K$.

For applications in Iwasawa-theory one also considers the following complex:
\begin{equation}
  C^\bullet_{\psi, \gamma_K}(D): 
  \xymatrix{
  0 \ar[r] & D'  \ar[r]^<<<<{d_1} & D' \oplus D'  \ar[r]^<<<<<{d_2}  
  & D' \ar[r]		& 0
} \label{herrcomplexpsi}
\end{equation}\index{Cbulletpsigamma@$C^\bullet_{\psi, \gamma_K}(D)$}
with
\[
  d_1(x) = ((\psi - 1)x, (\gamma_K - 1) x),~~~
  d_2(x,y) = (\gamma_K - 1)x - (\psi - 1)y.
\]

If $D_1$ and $D_2$ are two $(\varphi, \Gamma_K)$-modules over
$\Bdaggerrigk$ one may, following Herr (\cite{herr01}), define the
following cup products (we always mean classes where appropriate): 
\begin{equation}
  \begin{array}{llcc}
  &H^0(K, D_1) \times H^0(K, D_2) \lra  H^0(K, D_1 \otimes D_2), ~~~
  (x, y) \mapsto (x \otimes y), \\
  &H^0(K, D_1) \times H^1(K, D_2) \lra  H^1(K, D_1 \otimes D_2), ~~~
  (x, (y, z)) \mapsto (x \otimes y, x \otimes z), \\
  &H^0(K, D_1) \times H^2(K, D_2) \lra  H^2(K, D_1 \otimes D_2), ~~~
  (x, y) \mapsto (x \otimes y), \\
  &H^1(K, D_1) \times H^1(K, D_2) \lra  H^2(K, D_1 \otimes D_2), ~~~
  ((x, y), (w, v)) \mapsto y \otimes \gamma_K(w) - x \otimes \varphi(v).
  \end{array} \!\!\!\!\!\!\!\!\! \label{cupproducts}
\end{equation}
We note that some authors swap the maps of the sequence 
$C^\bullet_{\varphi, \gamma_K}(D)$ so that of course one has to adjust the 
definition of the cup-product. We adhere to the conventions made in 
\cite{herr01}.



Liu's result is then (\cite{liu08}, Theorem 0.1 and Theorem 0.2):
\begin{thm}
  Let $D$ be a $(\varphi, \Gamma_K)$-module over $\Bdaggerrigk$.
  \begin{enumerate}
    \item If $D = \Ddaggerrig(V)$ is \'etale one has canonical
      functorial isomorphisms $H^i(K, \Ddaggerrig(V)) \cong H^i(G_K,
      V)$ for all $i \in \mb{Z}$ that are compatible with cup-products.
    \item $H^i(K, D)$ is a finite dimensional $\mb{Q}_p$-vectorspace 
      and vanishes for $i \not= 0, 1, 2$.
    \item For $i = 0, 1, 2$ the pairing 
      \begin{align*}
        H^i(K, D) \times H^{2-i}(K, D^*(1)) \lra H^2(K, D \otimes D^*(1)) &= 
        H^2(K, \Bdaggerrigk(1)) \\
        &= H^2(K, \mb{Q}_p(1)) \cong \mb{Q}_p
      \end{align*}
      where $D \otimes D^*(1) \ra \Bdaggerrigk(1)$ is the map 
      $x \otimes f \mapsto f(x)$, is perfect.
  \end{enumerate}
\end{thm}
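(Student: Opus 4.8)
The plan is to follow the approach of Herr in the \'etale case and of Liu in general; the three main ingredients are the comparison of the Herr complex $C^\bullet_{\varphi,\gamma_K}(D)$ with continuous Galois cohomology, Cherbonnier--Colmez overconvergence together with base change along $\Bdaggerk\hookrightarrow\Bdaggerrigk$, and Kedlaya's slope filtration theorem, which lets one reduce the non-\'etale statements to the \'etale (and isoclinic) case.

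For part (a) I would establish the comparison $H^i(K,\Ddaggerrig(V))\cong H^i(G_K,V)$ in three steps. First, over $\Bk$: fixing a $G_K$-stable lattice $T\subset V$, Fontaine's acyclicity of $\B\otimes_{\mb{Z}_p}T$ under $H_K$, together with surjectivity of $\varphi-1$, shows via Hochschild--Serre for $1\to H_K\to G_K\to\Gamma_K\to 1$ that $R\Gamma(G_K,V)$ is computed by $C^\bullet_{\varphi,\gamma_K}(D(V))$, where $D(V)=(\B\otimes V)^{H_K}$ is the classical $(\varphi,\Gamma_K)$-module, and one checks that the cup products correspond to those of $(\ref{cupproducts})$. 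Second, by Cherbonnier--Colmez overconvergence $D(V)$ descends to $\Ddaggerk(V)$ over $\Bdaggerk$, and the inclusion $\Ddaggerk(V)\hookrightarrow D(V)$ is a quasi-isomorphism on the $(\varphi,\gamma_K)$-complexes; this is seen most cleanly after passing to the quasi-isomorphic $\psi$-complexes (the comparison map $C^\bullet_{\psi,\gamma_K}\to C^\bullet_{\varphi,\gamma_K}$ being a quasi-isomorphism since $1-\varphi$ is bijective on $D^{\psi=0}$), where one controls $D(V)^{\psi=0}/\Ddaggerk(V)^{\psi=0}$ and the cokernel of $\psi-1$. Third, one shows that the further inclusion $\Ddaggerk(V)\hookrightarrow\Ddaggerrig(V)$ is a quasi-isomorphism on the $(\varphi,\gamma_K)$-complexes, again working with the $\psi$-complexes and using the surjectivity of $\psi$ on both modules and an acyclicity statement for the quotient $\Ddaggerrig(V)/\Ddaggerk(V)$. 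This last step is the technically most demanding point of (a) and is exactly where Liu's analysis of the operator $\psi$ on the Robba ring is needed.

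For part (b) I would first replace $C^\bullet_{\varphi,\gamma_K}(D)$ by the quasi-isomorphic $C^\bullet_{\psi,\gamma_K}(D)$ for arbitrary $D$; the vanishing of $H^i(K,D)$ for $i\neq 0,1,2$ is then immediate since this complex has length two. Now $C^\bullet_{\psi,\gamma_K}(D)$ is the total complex of $\gamma_K-1$ acting on the two-term complex $D'\xrightarrow{\psi-1}D'$ (with $D'=D^{\Delta_K}$), so it suffices to show that its cohomology groups $(D')^{\psi=1}$ and $D'/(\psi-1)D'$ are finitely generated over the relevant Iwasawa algebra of $\Gamma_K$: passing to $\gamma_K-1$ invariants and coinvariants then yields finite-dimensional $\mb{Q}_p$-spaces. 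This finite-generation statement is the crux. In the \'etale case $D=\Ddaggerrig(V)$ it follows from part (a) and the classical finiteness of Iwasawa cohomology; for general $D$ I would invoke Kedlaya's slope filtration theorem to reduce to $D$ isoclinic, treating the slope-$0$ pieces as \'etale and the isoclinic pieces of nonzero slope via Theorem \ref{thmx0x1} (the vanishing of $X^0$, resp.\ of $X^1$) together with Fontaine's finiteness for the $G_K$-cohomology of almost-$\mb{C}_p$-representations.

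For part (c) the pairing is well defined because $H^2(K,\Bdaggerrigk(1))=H^2(K,\mb{Q}_p(1))\cong\mb{Q}_p$ by (a), and the cup products of $(\ref{cupproducts})$ induce the displayed map. For perfectness I would argue by d\'evissage: given a short exact sequence $0\to D'\to D\to D''\to 0$, the long exact cohomology sequence for $D',D,D''$ and the $\mb{Q}_p$-dual of the long exact sequence for $(D'')^*(1),D^*(1),(D')^*(1)$ fit into a commutative ladder, so by part (b) and the five lemma, duality for $D'$ and $D''$ implies it for $D$. Running Kedlaya's slope filtration as in (b) reduces to $D$ isoclinic; the slope-$0$ case is $D=\Ddaggerrig(V)$, where by the cup-product compatibility of (a) the pairing becomes the classical cup product $H^i(G_K,V)\times H^{2-i}(G_K,V^*(1))\to H^2(G_K,\mb{Q}_p(1))\cong\mb{Q}_p$, which is perfect by local Tate duality, while the isoclinic pieces of nonzero slope are handled by Fontaine's duality for almost-$\mb{C}_p$-representations. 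The main obstacles I expect are the base change along $\Bdaggerk\hookrightarrow\Bdaggerrigk$ in (a), the $\psi$-operator and almost-$\mb{C}_p$-representation input behind the isoclinic nonzero-slope cases in (b) and (c), and the bookkeeping of the cup-product signs and compatibilities needed to make the final reduction to Tate duality legitimate.
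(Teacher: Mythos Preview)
The paper does not actually prove this theorem: it is quoted as Liu's result and attributed to \cite{liu08}, Theorems~0.1 and~0.2, with no argument given. So your proposal is being compared to Liu's original proof rather than to anything in the paper.

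Your outline is broadly faithful to Liu for part~(a): the chain $\Bk \to \Bdaggerk \to \Bdaggerrigk$ via Herr, Cherbonnier--Colmez, and then the $\psi$-complex comparison is exactly the structure of Liu's argument. For parts~(b) and~(c), however, your treatment of the isoclinic pieces of nonzero slope diverges from Liu. Liu does not pass through $B$-pairs, Theorem~\ref{thmx0x1}, or Fontaine's finiteness/duality for almost $\mb{C}_p$-representations. Instead, he stays entirely inside $(\varphi,\Gamma)$-module theory: an isoclinic $D$ of slope $a/h$ with $\gcd(a,h)=1$ is, after regarding $\varphi^h$ as the Frobenius relative to the unramified extension of degree~$h$ and twisting, an \'etale $(\varphi,\Gamma)$-module over that extension, and a Shapiro-type comparison then reduces both finiteness and duality to the \'etale case already established. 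This is more elementary in that it avoids importing the theory of almost $\mb{C}_p$-representations; your route, by contrast, is closer in spirit to the machinery the present paper develops afterwards (cf.\ Corollaries~\ref{corwd} and~\ref{remnaka}), and would also work, but it is not Liu's argument and it front-loads more external input.
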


Recall that $D|_L = \Bdaggerrigl \otimes_{\Bdaggerrigk} D$.  Let $m =
[\Delta_K : \Delta_L]$ and $n$ be such that $\gamma_K^{p^n} =
\gamma_L$. Define $\tau_{L/K} = \sum_{i=0}^{p^n-1} \gamma_K^i$ and
$\sigma_{L/K} = \sum_{g \in \Gamma_K / \Gamma_L} g$.
We define the restriction maps $\textrm{Res}: H^i(K, D) \ra H^i(L,
D|_L)$\index{Res@$\textrm{Res}$} via the map induced by the following
map on complexes (where $*'$ means the invariants with respect to the
``right'' $\Delta$):
\[
  \xymatrix{
  0 \ar[r] & D' \ar[d]^{\textrm{id}} \ar[r]^<<<<{d_1} & D' \oplus D' \ar[d]^{\textrm{id} \oplus
    (m \cdot \tau_{L/K}) }
  \ar[r]^<<<<<{d_2}  
  & D' \ar[r]	\ar[d]^{\textrm{id}}	& 0 \\
  0 \ar[r] & D|_L'  \ar[r]^<<<<{d_1} & D|_L' \oplus D|_L'  \ar[r]^<<<<<{d_2}  
  & D|_L' \ar[r]		& 0 
  }
\]
Similarly, we define the corestriction map
$\textrm{Cor}: H^i(K, D) \ra H^i(L, D|_L)$\index{Cor@$\textrm{Cor}$} via the 
map induced by the following map on complexes:
\[
  \xymatrix{
  0 \ar[r] & D|_L' \ar[d]^{\sigma_{L/K}} \ar[r]^<<<<{d_1} & D|_L' \oplus D|_L' 
  \ar[d]^{\sigma_{L/K} \oplus \textrm{id}} \ar[r]^<<<<<{d_2}  
  & D|_L' \ar[r]	\ar[d]^{\textrm{id}}	& 0 \\
  0 \ar[r] & D' \ar[r]^<<<<{d_1} & D' \oplus D'  \ar[r]^<<<<<{d_2}  
  & D' \ar[r]		& 0 
  }
\]

\begin{prop}
  \label{proprescor}
  The map $\textrm{Cor} \circ \textrm{Res}$ on $H^i(K, D)$ is nothing but 
  multiplication by $[L:K]$. 
\end{prop}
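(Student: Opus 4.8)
The plan is to work entirely at the level of the explicit complexes $C^\bullet_{\varphi,\gamma_K}$, since $\textrm{Res}$ and $\textrm{Cor}$ were defined above by concrete morphisms of such complexes. Then $\textrm{Cor}\circ\textrm{Res}$ is induced by the composite morphism of complexes $\Phi^\bullet\colon C^\bullet_{\varphi,\gamma_K}(D)\to C^\bullet_{\varphi,\gamma_K}(D)$ obtained by stacking the two commutative diagrams that define $\textrm{Res}$ and $\textrm{Cor}$. First I would read $\Phi^\bullet$ off degree by degree from those diagrams: denoting by $\iota\colon D'\hookrightarrow D|_L'$ the inclusion $x\mapsto 1\otimes x$ (the outer vertical maps of the restriction diagram) and by $\pi\colon D|_L'\to D'$ the outer vertical map of the corestriction diagram, one obtains $\Phi^0=\sigma_{L/K}\circ\iota$, $\Phi^1=(\sigma_{L/K}\circ\iota)\oplus(m\,\pi\circ\tau_{L/K}\circ\iota)$ and $\Phi^2=\pi\circ\iota$. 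It then suffices to show that $\Phi^\bullet$ is chain homotopic to multiplication by $[L:K]$ on $C^\bullet_{\varphi,\gamma_K}(D)$.

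The next step is the numerical input. With $m=[\Delta_K:\Delta_L]$ and $\gamma_K^{p^n}=\gamma_L$ one has $[\Gamma_K:\Gamma_L]=m p^n$, while Proposition~\ref{propbdaggergal} and Lemma~\ref{lembdaggerrigk} say that $\Bdaggerrigl$ is finite over $\Bdaggerrigk$ of degree $[L_\infty:K_\infty]=[H_K:H_L]$; a short comparison of Galois groups gives $[L:K]=[\Gamma_K:\Gamma_L]\cdot[L_\infty:K_\infty]$. The algebraic facts feeding the homotopy are then: (i) $\sigma_{L/K}=\sum_{g\in\Gamma_K/\Gamma_L}g$ is the full norm, so it equals $[\Gamma_K:\Gamma_L]$ modulo the augmentation ideal; (ii) $\tau_{L/K}=\sum_{i=0}^{p^n-1}\gamma_K^i$ satisfies $(\gamma_K-1)\tau_{L/K}=\tau_{L/K}(\gamma_K-1)=\gamma_L-1$, so it interpolates $\gamma_K$ and $\gamma_L$ and equals $p^n$ modulo $(\gamma_K-1)$; (iii) the outer maps $\iota$, $\pi$ between cochains over the different rings $\Bdaggerrigk$ and $\Bdaggerrigl$ contribute the remaining factor $[L_\infty:K_\infty]$, since $\pi$ incorporates the trace $\textrm{Tr}_{\Bdaggerrigl/\Bdaggerrigk}$ (equivalently, $\sigma_{L/K}$ genuinely operates on $D|_L=\Bdaggerrigl\otimes_{\Bdaggerrigk}D$ as a sum over a transversal of $\Gamma_L$ in $\textrm{Gal}(L_\infty/K)$, of which there are $[L:K]$). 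Using (i)--(iii) I would exhibit explicit homotopy operators $h^i\colon C^i\to C^{i-1}$, built from partial power sums of $\gamma_K$ (and of lifts to $\textrm{Gal}(L_\infty/K)$), and verify $\Phi^i-[L:K]\cdot\textrm{id}=d^{i-1}h^i+h^{i+1}d^i$ in each degree.

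The step I expect to be the main obstacle is the degree-$1$ verification together with the correct interpretation of the outer maps. In degree $1$ one must absorb the $\gamma_K$- and $\varphi$-twist appearing in $d_2$ while matching the two summands of $\Phi^1$ — a $\Gamma$-norm on one coordinate, a $\varphi$- and $\Gamma$-twisted partial norm times $m$ on the other — against the single scalar $[L:K]$, and at the same time keep precise track of how $\gamma_K$ acts on $D|_L$ (only after a choice of lift along $\textrm{Gal}(L_\infty/K)\twoheadrightarrow\Gamma_K$, the ambiguity being $\textrm{Gal}(L_\infty/K_\infty)$) and how this interacts with $\pi$; this is exactly the point where the three numerical factors $m$, $p^n$ and $[L_\infty:K_\infty]$ must be made to combine consistently across all three degrees. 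As a consistency check one can observe that in the étale case $D=\Ddaggerrig(V)$ the statement is immediate from Liu's isomorphism $H^i(K,D)\cong H^i(G_K,V)$ and the classical identity $\textrm{Cor}\circ\textrm{Res}=[G_K:G_L]=[L:K]$ for Galois cohomology, but the general $(\varphi,\Gamma_K)$-module case seems to require the chain-level argument above.
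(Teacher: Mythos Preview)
Your approach is different from the paper's and considerably more laborious. The paper avoids chain-level computations entirely: it simply verifies the identity on $H^0(K,D)=D^{\varphi=1,\Gamma_K}$, where $\textrm{Cor}\circ\textrm{Res}$ is visibly the trace map and hence multiplication by $[L:K]$, and then invokes the fact that the $H^i(K,-)$ form a (universal) cohomological $\delta$-functor (by the finiteness/effaceability results of Kedlaya, \cite{kedlaya09}, Theorem~8.1). Since $\textrm{Cor}\circ\textrm{Res}$ and $[L:K]\cdot\textrm{id}$ are both morphisms of $\delta$-functors that agree on $H^0$, they agree in all degrees. This sidesteps precisely the degree-$1$ bookkeeping you flagged as the main obstacle.

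Your explicit-homotopy plan is in principle workable for these three-term complexes, and would give more: an actual chain-level identity rather than just equality on cohomology. But the point you identify as delicate really is delicate. In particular, your item~(iii) claims that the outer maps contribute a factor $[L_\infty:K_\infty]$ via a trace $\textrm{Tr}_{\Bdaggerrigl/\Bdaggerrigk}$; however, the paper's corestriction diagram labels the degree-$2$ map and the second degree-$1$ component as ``$\textrm{id}$'', not a trace, and $\sigma_{L/K}$ is defined as a sum over $\Gamma_K/\Gamma_L$, not over all of $\textrm{Gal}(L_\infty/K)/\Gamma_L$. So either the paper is implicitly restricting to the case $L_\infty=K_\infty$ (where $D|_L=D$ as modules and your factor $[L_\infty:K_\infty]=1$), or the ``$\textrm{id}$'' labels tacitly incorporate the Galois trace; you would have to resolve this before the homotopy can be written down consistently. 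The $\delta$-functor argument is indifferent to this ambiguity, which is one reason it is cleaner here.
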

\begin{proof}
  It is clear that on $H^0(K, D) = D^{\varphi = 1, \gamma_K = 1}$
  (thus $\gamma_K$ acts trivially) the map $\textrm{Cor} \circ
  \textrm{Res}$ is just the trace map and equal to multiplication by
  $[L:K]$.  Since the $H^i(K, D)$ are cohomological $\delta$-functors
  (see \cite{kedlaya09}, Theorem 8.1)
  we get the claim.
\end{proof}


\subsection{$(\varphi, N, \mbox{Gal}(L/K))$-modules 
associated to $(\varphi, \Gamma_K)$-modules}

We begin with a series of definitions (see \cite{berger02}, section 5, and
\cite{berger08b}).

\begin{defi}
  Let $D$ be $(\varphi, \Gamma_K)$-module and $n \geq n(D)$. Set
  \[
    \Dplusdifn(D) := K_n[[t]] \otimes_{\iota_n, \Bdaggerrnrigk} D^{(n)},~~~
    \Ddifn(D) := K_n((t)) \otimes_{\iota_n, \Bdaggerrnrigk} D^{(n)}
  \]\index{Dplusdifn@$\Dplusdifn(D)$}\index{Ddifn@$\Ddifn(D)$}
  and, via the transition maps $ \Dplusdifn(D) \hookrightarrow \Dplusdifneins,~ f(t) 
  \otimes x \mapsto f(t) \otimes \varphi(x)$ (and similarly for
  $\Ddifn(D) \hookrightarrow \Ddifneins$)
  \[
    \Dplusdif(D) := \varinjlim_n \Dplusdifn(D),~~~
    \Ddif(D) := \varinjlim_n \Ddifn(D).
  \]\index{Dplusdif@$\Dplusdif(D)$}\index{Ddif@$\Ddif(D)$}
\end{defi}

Note that $\Dplusdif(D)$ (resp. $\Ddif(D)$) is a free
$K_\infty[[t]] := \bigcup_{n = 1}^\infty K_n[[t]]$- (resp. 
$K_\infty((t)) = K_\infty[[t]][1/t]$-)module of rank $d$ with 
a semi-linear action of $\Gamma_K$. One defines a $\Gamma_K$-equivariant
injection
\[
  \iota_n : D^{(n)} \hookrightarrow \Dplusdifn(D),~~~ x \mapsto 1 \otimes x.
\]\index{iotan@$\iota_n$}

\begin{defi}
  Let $D$ be a $(\varphi, \Gamma_K)$-module. Set
  \begin{align*}
    \Dcris^K(D) &:= (\Bdaggerrigk[1/t] \otimes_{\Bdaggerrigk} 
    D)^{\Gamma_K}, \\
    \Dst^K(D) &:= (\Bdaggerlogk[1/t] \otimes_{\Bdaggerrigk} D)^{\Gamma_K}, \\
    \Ddr^K(D) &:= (\Ddif(D))^{\Gamma_K},
  \end{align*}\index{DcrisD@$\Dcris(D)$}\index{DstD@$\Dst(D)$}
  \index{DdrD@$\Ddr(D)$}
  and 
  \[
    \mbox{Fil}^i \Ddr^K(D) := \Ddr^K(D) \cap t^i \Dplusdif(D) \subset
    \Ddif(D),~ i \in \mb{Z}.
  \]\index{FiliDdrD@$\textrm{Fil}^i \Ddr(D)$}
  The filtration $\Fil^i \Ddr^K(D)$ is decreasing, separated and
  exhaustive.
  We also set $\Ddr^{K,+}(D) := \Fil^0(\Ddr^K(D)) = \Dplusdif(D)^{\Gamma_K}$.
\end{defi}

One has canonical maps which we will denote by $\alpha_\ast$ for
$\ast \in \{ \textrm{cris}, \textrm{st}, \textrm{dR}\}$, induced by 
$a \otimes d \mapsto ad$:
\begin{align*}
  \Bdaggerrigk[1/t] \otimes \Dcris^K(D) \ra D[1/t] \\
  \Bdaggerlogk[1/t] \otimes \Dst^K(D) \ra \Bdaggerlogk[1/t] \otimes D, \\
  K_\infty((t)) \otimes \Ddr^K(D) \ra \Ddif(D).
\end{align*}

\begin{prop}
  All maps $\alpha_\ast$ above are injective. Hence, one always has inequalities
  \[
  \dim_{K_0} \Dcris^K(D) \leq \dim_{K_0} \Dst^K(D) \leq 
  \dim_K \Ddr^K(D) \leq \mbox{rank}_{\Bdaggerrigk} D,
  \]
  and equalities $\dim \D^K_*(D) = \mbox{rank}_{\Bdaggerrigk} D$ for 
  $* \in \{ \textrm{cris}, \textrm{st}, \textrm{dR} \}$ 
  if and only if the corresponding $\alpha$ is an isomorphism.
\end{prop}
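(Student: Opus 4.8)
The plan is to handle the three cases uniformly. Write $R_{\textrm{cris}} := \Bdaggerrigk[1/t]$, $R_{\textrm{st}} := \Bdaggerlogk[1/t]$, $R_{\textrm{dR}} := K_\infty((t))$, and let $E_\ast := R_\ast^{\Gamma_K}$, a field ($K_0$ in the first two cases, $K$ in the last). In each case $\D^K_\ast(D)$ is an $E_\ast$-vector space sitting $\Gamma_K$-stably inside an ambient $R_\ast$-module which is free of rank $d := \textrm{rank}_{\Bdaggerrigk} D$ (namely $D[1/t]$, $\Bdaggerlogk[1/t] \otimes_{\Bdaggerrigk} D$, and $\Ddif(D)$ respectively), and $\alpha_\ast$ is the $R_\ast$-linear, $\Gamma_K$-equivariant map out of the free $R_\ast$-module $R_\ast \otimes_{E_\ast} \D^K_\ast(D)$ induced by $a \otimes x \mapsto ax$. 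The heart of the matter is injectivity of $\alpha_\ast$, and this I would establish by the standard Fontaine argument: each $R_\ast$ is an integral domain, so it suffices to check injectivity after passing to $\textrm{Frac}(R_\ast)$, and there a relation $\sum_{i=1}^{m} \lambda_i\, \alpha_\ast(1 \otimes d_i) = 0$ of minimal length $m \geq 1$, with the $d_i$ linearly independent over $E_\ast$ and (say) $\lambda_{i_0} = 1$, leads — upon applying $\gamma - 1$ for $\gamma \in \Gamma_K$ and using that the $d_i$ are $\Gamma_K$-fixed in the ambient module while $\alpha_\ast$ is $\Gamma_K$-equivariant — to the shorter relation $\sum_{i \neq i_0}(\gamma \lambda_i - \lambda_i)\, \alpha_\ast(1 \otimes d_i) = 0$; minimality forces $\gamma \lambda_i = \lambda_i$ for all $i$ and $\gamma$, hence $\lambda_i \in \textrm{Frac}(R_\ast)^{\Gamma_K} = E_\ast$, contradicting the choice of the $d_i$. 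One needs here that $\textrm{Frac}(R_\ast)^{\Gamma_K} = E_\ast$; for $R_{\textrm{dR}} = K_\infty((t))$ this is already a field and one checks directly that $(K_\infty((t)))^{\Gamma_K} = K$, while for the other two it follows from the corresponding fact about $\Bdaggerrigk$, resp. $\Bdaggerlogk$.

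Granting injectivity, the outermost bound is immediate: $\alpha_{\textrm{dR}}$ is an injection of free $K_\infty((t))$-modules of ranks $\dim_K \Ddr^K(D)$ and $d$, so $\dim_K \Ddr^K(D) \leq d$, and the same rank comparison over the domains $\Bdaggerrigk[1/t]$ and $\Bdaggerlogk[1/t]$ gives $\dim_{K_0}\Dcris^K(D) \leq d$ and $\dim_{K_0}\Dst^K(D) \leq d$. For the two inner inequalities I would use the natural comparison maps coming from Berger's constructions: the inclusion $\Bdaggerrigk \subseteq \Bdaggerlogk = \Bdaggerrigk[\log \pi]$ induces a $\Gamma_K$-equivariant inclusion $R_{\textrm{cris}} \otimes_{\Bdaggerrigk} D \hookrightarrow R_{\textrm{st}} \otimes_{\Bdaggerrigk} D$, hence on $\Gamma_K$-invariants an injection $\Dcris^K(D) \hookrightarrow \Dst^K(D)$; and the maps $\iota_n$, extended to $\log \pi$ (which by \cite{berger02} lands in $K_n[[t]]$), assemble into a $\Gamma_K$-equivariant map $R_{\textrm{st}} \otimes_{\Bdaggerrigk} D \to \Ddif(D)$ restricting to an injective $K_0$-linear map $\Dst^K(D) \hookrightarrow \Ddr^K(D)$, which extends to a $K$-linear injection $K \otimes_{K_0} \Dst^K(D) \hookrightarrow \Ddr^K(D)$. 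Taking dimensions yields $\dim_{K_0}\Dcris^K(D) \leq \dim_{K_0}\Dst^K(D) \leq [K:K_0]\,\dim_{K_0}\Dst^K(D) \leq \dim_K \Ddr^K(D)$, which is the claimed chain.

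For the equivalence, the direction ``$\alpha_\ast$ an isomorphism $\Rightarrow$ $\dim_{E_\ast}\D^K_\ast(D) = d$'' is trivial by comparing ranks of free modules. For the converse, assume $\dim_{E_\ast}\D^K_\ast(D) = d$. In the de Rham case $R_{\textrm{dR}} = K_\infty((t))$ is a field, so the injective $K_\infty((t))$-linear map $\alpha_{\textrm{dR}}$ between spaces of the same finite dimension $d$ is automatically an isomorphism. In the crystalline and semistable cases I would reduce to rank one by taking $d$-th exterior powers: $\wedge^d D$ is a rank-one $(\varphi, \Gamma_K)$-module, $\wedge^d \alpha_\ast$ is, under the canonical identifications, the map $\alpha_\ast$ attached to $\wedge^d D$, and $\wedge^d_{E_\ast}\D^K_\ast(D) \hookrightarrow \D^K_\ast(\wedge^d D)$ then becomes an equality of one-dimensional spaces (using the inequality already proved for the rank-one module $\wedge^d D$). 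So it suffices to show: if $D$ has rank one and $\dim_{E_\ast}\D^K_\ast(D) = 1$, the element $\delta \in R_\ast$ defined by $\alpha_\ast(1 \otimes v) = \delta\, e$ — for $v$ a generator of $\D^K_\ast(D)$ and $e$ a basis of $D$ over $\Bdaggerrigk$ — is a unit of $R_\ast$. Injectivity already gives $\delta \neq 0$. Writing $\varphi(v) = c\, v$ with $c \in E_\ast^\times$ (the Frobenius on $\D^K_\ast(D)$ is injective, hence nonzero on a one-dimensional space) and $\varphi(e) = \mu\, e$ with $\mu \in \Bdaggerrigk^\times$ (since $\varphi^\ast$ is an isomorphism and $D$ has rank one), one obtains $\varphi(\delta) = c\mu^{-1}\,\delta$ with $c\mu^{-1} \in R_\ast^\times$; and a nonzero element of $R_{\textrm{cris}}$ (resp. $R_{\textrm{st}}$) carried by $\varphi$ into a unit multiple of itself must be a unit of $R_\ast$, which one deduces from the structure of the Robba ring — discreteness of zero sets and the effect of $\varphi$ on them (cf. \cite{kedlaya05}, \cite{berger02}). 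I expect this last step, upgrading ``$\delta \neq 0$'' to ``$\delta$ a unit'', to be the only point requiring more than soft linear algebra, and hence the main obstacle.
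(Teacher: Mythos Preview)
Your argument is the standard one the paper has in mind (its own proof is literally ``Standard proof''): Fontaine's minimal-relation trick for injectivity of $\alpha_\ast$, rank comparison for the inequalities, and exterior-power reduction to rank one for the converse. One small slip: in your chain $\dim_{K_0}\Dst^K(D) \leq [K:K_0]\,\dim_{K_0}\Dst^K(D) \leq \dim_K \Ddr^K(D)$ the middle term is spurious and the last comparison mixes $K_0$- and $K$-dimensions; what you actually want (and what your $K$-linear injection $K\otimes_{K_0}\Dst^K(D)\hookrightarrow \Ddr^K(D)$ gives) is directly $\dim_{K_0}\Dst^K(D)=\dim_K(K\otimes_{K_0}\Dst^K(D))\leq \dim_K\Ddr^K(D)$.
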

\begin{proof}
  Standard proof.
\end{proof}

\begin{defi}
  The \textbf{Hodge-Tate weights} of a $(\varphi, \Gamma_K)$-module are those 
  integers $h$ such that $\Fil^{-h} \Ddr^K(D) \not= \Fil^{-h+1} \Ddr^K(D)$.
  We say that $D$ is \textbf{positive} if $h \leq 0$ for all weights $h$, and 
  that $D$ is \textbf{negative} if $h \geq 0$ for all weights $h$.
\end{defi}

\begin{prop}
  \label{propdpositive}
  Let $D$ be a de Rham $(\varphi, \Gamma_K)$-module over $\Bdaggerrigk$.
  If $D$ is positive then $\Ddr^{K,+}(D) = \Ddr^{K}(D)$. 
  More generally, let $h \geq 0$ be such that
  $\Fil^{-h} \Ddr^K(D) = \Ddr^K(D)$. Then $t^h \Ddr^K(D) = \Ddr^{K,+}(D(-h))$ 
  (in $\Ddif(D)$).
\end{prop}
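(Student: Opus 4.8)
The plan is to reduce both assertions to elementary manipulations with the filtration and the operator multiplication by $t$, using only the definitions of $\Ddr^K(D)$, $\Ddr^{K,+}(D)$ and $\Fil^i\Ddr^K(D)$ given above, together with the fact that multiplication by $t$ on $\Ddif(D)$ is a bijection that shifts the $t$-adic filtration by one (i.e.\ $t\cdot t^i\Dplusdif(D)=t^{i+1}\Dplusdif(D)$), and that twisting $D\mapsto D(-h)$ is implemented on the $\Ddif$-level by multiplying a suitable basis element of $\Bdr\otimes\mb{Q}_p(-h)$, so that $\Ddif(D(-h))=t^{-h}\Ddif(D)$ compatibly with the $\Gamma_K$-action (this is where de Rham-ness of $D$, hence of $D(-h)$, makes $\Ddr^K(D(-h))$ have the expected dimension). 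First I would observe that the first statement is the special case $h=0$ of the second once one unwinds definitions: if $D$ is positive then by the definition of Hodge-Tate weights every jump of the filtration $\Fil^i\Ddr^K(D)$ occurs at some $i=-h$ with $h\leq 0$, hence at $i\geq 0$; since the filtration is exhaustive and decreasing, this forces $\Fil^0\Ddr^K(D)=\Fil^{-\infty}\Ddr^K(D)=\Ddr^K(D)$, and by definition $\Fil^0\Ddr^K(D)=\Ddr^{K,+}(D)$. So I would state the first claim as a corollary and concentrate on the general one.

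For the general statement I would argue as follows. Assume $h\geq 0$ with $\Fil^{-h}\Ddr^K(D)=\Ddr^K(D)$; equivalently, $\Ddr^K(D)\subset t^{-h}\Dplusdif(D)$, i.e.\ $t^h\Ddr^K(D)\subset\Dplusdif(D)$ inside $\Ddif(D)$. Now fix a generator $e_{-h}$ of $\mb{Q}_p(-h)$ and identify, via $\alpha_{\mathrm{dR}}$ and the standard comparison, $\Ddif(D(-h))=t^{-h}\Ddif(D)\otimes e_{-h}$ as $K_\infty((t))$-modules with $\Gamma_K$-action, under which $\Dplusdif(D(-h))$ corresponds to $t^{-h}\Dplusdif(D)\otimes e_{-h}$ (the generator $t$ of $\Fil$ on $\Bdr$ carries a $\chi$-twist that exactly accounts for the Tate twist). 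Multiplication by $t^h$ gives a $\Gamma_K$-equivariant $K_\infty((t))$-isomorphism $\Ddif(D)\xrightarrow{\sim}\Ddif(D(-h))$, $x\mapsto t^h x\otimes e_{-h}$. Taking $\Gamma_K$-invariants yields $\Ddr^K(D(-h))=t^h\Ddr^K(D)$ (suppressing $e_{-h}$), and under this identification $\Ddr^{K,+}(D(-h))=\Ddr^K(D(-h))\cap\Dplusdif(D(-h))=t^h\Ddr^K(D)\cap\bigl(t^h\Dplusdif(D)\otimes e_{-h}\bigr)$. Since we have already shown $t^h\Ddr^K(D)\subset\Dplusdif(D)\subset t^h\Dplusdif(D)$ (using $h\geq 0$, so $\Dplusdif(D)\subset t^{-h}\Dplusdif(D)$ fails in the wrong direction—so instead I note $t^h\Ddr^K(D)\subset\Dplusdif(D)$ directly and $\Dplusdif(D)\subset\Dplusdif(D)$ trivially, hence the intersection is all of $t^h\Ddr^K(D)$). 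Thus $\Ddr^{K,+}(D(-h))=t^h\Ddr^K(D)$, which is the claim.

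The main obstacle I expect is getting the Tate-twist bookkeeping on $\Ddif$ exactly right, in particular verifying that $\Ddif(D(-h))=t^{-h}\Ddif(D)$ as $\Gamma_K$-modules (not merely as $K_\infty((t))$-modules), and that the filtration on the twist shifts correctly; this is the point where one must be careful that the $\Gamma_K$-action on $t$ is via $\chi$, so that $t^h\cdot(\text{generator of }\mb{Q}_p(-h))$ is $\Gamma_K$-fixed up to the relevant twist and the identification is genuinely equivariant. Everything else is formal once this normalization is pinned down. I would also remark that the hypothesis that $D$ is de Rham is used only to ensure $\Ddr^K(D)$ is large enough for the statement to be non-vacuous (via the preceding proposition), but the inclusions themselves do not need it; so I might phrase the proof so that the de Rham hypothesis is invoked at the very start and then the rest is pure filtration combinatorics.
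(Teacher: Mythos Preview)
Your proposal is correct and follows essentially the same route as the paper. The paper's own proof is extremely terse---it says the first part is ``obvious from the definitions'' and the second ``follows similarly from Lemma~\ref{lemtatetwist}''---and what you have done is simply unpack those two sentences: you deduce $\Fil^0\Ddr^K(D)=\Ddr^K(D)$ directly from the definition of positivity, and you carry out on $\Ddif$ the Tate-twist bookkeeping (the $\Gamma_K$-equivariant isomorphism $x\mapsto t^h x\otimes e_{-h}$) that Lemma~\ref{lemtatetwist} records on $\Dst$. Your only stumble is the momentary confusion over whether $\Dplusdif(D(-h))$ corresponds to $\Dplusdif(D)\otimes e_{-h}$ or to $t^h\Dplusdif(D)\otimes e_{-h}$; it is the former (the underlying lattice is unchanged, only the action is twisted), and once you use that, the inclusion $t^h\Ddr^K(D)\subset\Dplusdif(D)$ coming from $\Fil^{-h}\Ddr^K(D)=\Ddr^K(D)$ immediately gives the desired equality.
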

\begin{proof}
  The first part is obvious from the definitions and can be shown
  the same way as in the \'etale case. 
  The second follows similarly from Lemma \ref{lemtatetwist}.
\end{proof}

One can define the \textbf{Tate-twist} for a $(\varphi,
\Gamma_K)$-module D:  if $k \in \mb{Z}$, then $D(k)$ is the $(\varphi,
\Gamma_K)$-module with $D$ as $\Bdaggerrigk$-module, but with
\[
  \varphi|_{D(k)} = \varphi|_D,~~~ \gamma x = \chi^k(\gamma) \gamma x,~ x \in D.
\]
Analoguously one define a \textbf{Tate-twist} for a filtered
$(\varphi, N)$-module $D$ over $K_0$.  If $k \in \mb{Z}$, then $D[k]$
is the filtered $(\varphi, N)$-module with $D$ as $K_0$-vectorspace
and filtration $\Fil^r (D[k])_K = \Fil^{r-k} D_K$ and
\[
  N|_{D[k]} = N|_D,~~~ \varphi|_{D[k]} = p^{k} \varphi|_{D}.
\]

\begin{lem}
  \label{lemtatetwist}
  One has $\Dst^K(D(k)) = \Dst^K(D)[-k]$.
\end{lem}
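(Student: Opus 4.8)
The plan is to trace through the definitions on both sides and match them term by term, using the fact that the Tate twist $D(k)$ has exactly the same underlying $\Bdaggerrigk$-module and the same $\varphi$ as $D$, while the $\Gamma_K$-action is multiplied by $\chi^k$. First I would recall that $\Dst^K(D) = (\Bdaggerlogk[1/t] \otimes_{\Bdaggerrigk} D)^{\Gamma_K}$, and that the ring $\Bdaggerlogk[1/t]$ carries its own $\varphi$, $N$ and $\Gamma_K$-action; the element $t \in \Bplusdr$ (equivalently the image of $t$ in the relevant ring) satisfies $\gamma(t) = \chi(\gamma) t$ for $\gamma \in \Gamma_K$, $\varphi(t) = pt$, and $N(t) = 0$ (or the appropriate normalization). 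These are the three facts that drive everything.

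The core computation is then the following identification. Multiplication by $t^{-k}$ gives a bijection $\Bdaggerlogk[1/t] \otimes D(k) \to \Bdaggerlogk[1/t] \otimes D$ (both are the same module, and $t$ is invertible after inverting $t$), $x \mapsto t^{-k} x$. I would check that this bijection intertwines the $\Gamma_K$-action on the source — where $\gamma$ acts by $\chi^k(\gamma)$ times its action on $\Bdaggerlogk[1/t] \otimes D$ — with the plain $\Gamma_K$-action on the target: indeed $\gamma(t^{-k} x) = \chi(\gamma)^{-k} t^{-k} \cdot \chi^k(\gamma) \gamma_D(x) = t^{-k} \gamma_D(x)$, so the twist is exactly cancelled, and hence multiplication by $t^{-k}$ carries $(\Bdaggerlogk[1/t]\otimes D(k))^{\Gamma_K} = \Dst^K(D(k))$ isomorphically onto $\Dst^K(D)$ as $K_0$-vector spaces. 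So as $K_0$-vector spaces the two sides of the lemma agree; it remains to check that this identification respects the Frobenius and monodromy structures in the way prescribed by the definition of $D[k]$, i.e. that under $t^{-k}\cdot(-)$ the operator $\varphi$ on $\Dst^K(D(k))$ becomes $p^k \varphi$ on $\Dst^K(D)$ and $N$ stays $N$. For $\varphi$: $\varphi(t^{-k} x) = \varphi(t)^{-k}\varphi(x) = p^{-k} t^{-k} \varphi(x)$, so an element fixed-type-structure shifts by $p^{-k}$ going one way, equivalently by $p^k$ the other way, matching $\varphi|_{D[k]} = p^k \varphi|_D$; for $N$: since $N$ kills $t$ and is a derivation, $N(t^{-k} x) = t^{-k} N(x)$, so $N$ is unchanged, matching $N|_{D[k]} = N|_D$. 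Finally I would note that the filtration statement $\Fil^r(D[k])_K = \Fil^{r-k} D_K$ is compatible too, since multiplication by $t^{-k}$ shifts the $t$-adic filtration on $\Ddif$ by $-k$; this is essentially the content already used in Proposition \ref{propdpositive}.

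The main obstacle is bookkeeping rather than conceptual: one must be careful that the $\varphi$- and $N$-structure on $\Dst^K(D)$ is the one \emph{induced} from $\Bdaggerlogk[1/t] \otimes D$ (not a separately-defined one), so that the twist by $t^{-k}$ genuinely transports it as claimed, and one must make sure the normalization of $N(t)$ and $\varphi(t) = pt$ is the one consistent with the relation $N\varphi = p\varphi N$ already fixed for $\Bst$ earlier in the paper — getting a sign or a power of $p$ wrong here would invert the direction of the twist. Modulo that, the proof is the short direct verification sketched above, and I would present it as such.
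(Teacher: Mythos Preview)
Your approach is exactly the paper's: exhibit the isomorphism by multiplying by a power of $t$ and then check $\Gamma_K$-, $\varphi$-, and $N$-compatibility. However, you have the sign of the exponent wrong---precisely the pitfall you flagged in your last paragraph. With the direction you state, $\Dst^K(D(k)) \to \Dst^K(D)$, the equivariant map is multiplication by $t^{k}$, not $t^{-k}$. Concretely: if $x$ is $\Gamma_K$-invariant for the $D(k)$-action, then $(\gamma\otimes\gamma_D)(x)=\chi(\gamma)^{-k}x$, and hence $(\gamma\otimes\gamma_D)(t^k x)=\chi(\gamma)^k t^k\cdot \chi(\gamma)^{-k}x=t^k x$, so $t^k x\in\Dst^K(D)$; with $t^{-k}$ you get $\chi(\gamma)^{-2k}t^{-k}x$ instead. (Your displayed computation ``$\gamma(t^{-k}x)=\chi(\gamma)^{-k}t^{-k}\cdot\chi^k(\gamma)\gamma_D(x)$'' in fact checks equivariance of $t^{-k}\cdot(-)$ in the \emph{opposite} direction $\Dst^K(D)\to\Dst^K(D(k))$, which is the direction the paper writes.)

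This sign error then propagates to the Frobenius check: with the correct map $t^k\cdot(-)$ one has $\varphi(t^k x)=p^k t^k\varphi(x)$, so $\varphi$ on $\Dst^K(D(k))$ transports to $p^{-k}\varphi$ on $\Dst^K(D)$, which is exactly $\varphi|_{\Dst^K(D)[-k]}$. Your version produced $p^k\varphi$ and then matched it against $\varphi|_{D[k]}$ rather than $\varphi|_{D[-k]}$, which is the wrong twist for the statement. Once the exponent is corrected, the $N$- and filtration checks go through as you wrote, and the argument is complete and identical in spirit to the paper's (the paper phrases the map as $d\mapsto (e_{-k}\otimes e_k)\,d$ going $\Dst^K(D)[-k]\to\Dst^K(D(k))$, which is your map read backwards).
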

\begin{proof}
  One has $D(k) = D \otimes_{\mb{Z}_p} \mb{Z}_p(k)$, and if $e_k$
  is a generator of $\mb{Z}_p(k)$, the isomorphism
  \[
  (\Bdaggerlogk[1/t] \otimes_{\Bdaggerrigk} D)^{\Gamma_K}[-k] \ra
  (\Bdaggerlogk[1/t] \otimes_{\Bdaggerrigk} D(k))^{\Gamma_K}
  \]
  is given by
  \[
    d = \sum a_n \otimes d_n \mapsto \sum a_n e_{-k} \otimes (d_n
    \otimes e_k) = (e_{-k} \otimes e_k) d.
  \]
\end{proof}

\begin{defi}
  \label{defisemicrys}
  A $(\varphi, \Gamma_K)$-module $D$ is defined to be 
  \textbf{crystalline} (resp. \textbf{semi-stable}, resp. \textbf{de Rham}) if 
  $\dim_{K_0} \Dcris^K(D) = \mbox{rank}_{\Bdaggerrigk} D$ 
  (resp. $\dim_{K_0} \Dst^K(D) = \mbox{rank}_{\Bdaggerrigk} D$, resp. 
   $\dim_K \Ddr^K(D)$ $= \mbox{rank}_{\Bdaggerrigk} D$).

  Similarly, we define $D$ to be \textbf{potentially crystalline}
  (resp.  \textbf{potentially semi-stable}) if there exists a finite
  extension $L/K$ such that $D|_L$ is cristalline (resp. semistable).
\end{defi}

\begin{defi}
  \label{defi_ndr}
  Let $D$ be a de Rham $(\varphi, \Gamma_K)$-module of rank $d$. If $n \geq
  n(D)$, set
  \[
    \Ndr^{(n)}(D) := \{ x \in D^{(n)}[1/t]|~ \iota_m(x) \in K_m[[t]] \otimes_K
    \Ddr^K(D) \mbox{ for any } m \geq n \}
  \]
  and $\Ndr(D) = \varinjlim_n \Ndr^{(n)}(D)$.\index{NdrD@$\Ndr(D)$}
\end{defi}

\begin{defi}
  \begin{enumerate}
  \item For a torsion free element $\gamma_K$ of $\Gamma_K$ 
    Perrin-Riou's differential operator $\nabla$ is defined as
    \[
    \nabla = -\frac{\log(\gamma)}{\log_p (\chi(\gamma_K))} = -
    \frac{1}{\log_p (\chi(\gamma_K))} \sum_{n \geq 1} \frac{(1-
      \gamma_K)^n}{n} \in \mc{H}(\Gamma_K).
    \]\index{nabla@$\nabla$}
  \item The operator $\partial$ (on $\Bdaggerrigk[1/t]$) is defined as
    $\partial := 1/t \cdot \nabla$.\index{partial@$\partial$}
  \end{enumerate}
\end{defi}

We remark that $\nabla$ is independent of the choice of $\gamma$,
  which may be checked with the series properties of $\log$.  The
module $\Ndr(D)$ is denoted by $\D$ in \cite{berger10}, Theorem
III.2.3. This theorem also implies:

\begin{thm}
  Let $D$ be a de Rham $(\varphi, \Gamma_K)$-module of rank $d$. Then
  $\Ndr(D)$ is a $(\varphi, \Gamma_K)$-module of rank $d$ with the 
  following properties:
  \begin{enumerate}
    \item $\Ndr(D)[1/t] = D[1/t]$,
    \item $\nabla_0(\Ndr(D)) \subset t \Ndr(D)$.
  \end{enumerate}
\end{thm}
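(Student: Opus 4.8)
The plan is to recall the structure of Berger's construction --- the assertion that $\Ndr(D)$ is a $(\varphi,\Gamma_K)$-module of rank $d$ with $\Ndr(D)[1/t] = D[1/t]$ is exactly the content of \cite{berger10}, Theorem III.2.3, where $\Ndr(D)$ appears as the module $\D$ --- and then to deduce property (2) by a short direct computation. First I would work at finite level: for $n \geq n(D)$ the module $\Ndr^{(n)}(D) \subseteq D^{(n)}[1/t]$ is, by definition, cut out by the conditions $\iota_m(x) \in K_m[[t]] \otimes_K \Ddr^K(D)$ for all $m \geq n$, one has $\Ndr^{(n)}(D) \subseteq \Ndr^{(n+1)}(D)$, and $\Ndr(D) = \bigcup_n \Ndr^{(n)}(D)$. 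The de Rham hypothesis enters through the fact that the map $\alpha_{\mathrm{dR}}$ is an isomorphism, so that $\Ddif(D) = K_\infty((t)) \otimes_K \Ddr^K(D)$ and, for $m \gg 0$, $K_m[[t]] \otimes_K \Ddr^K(D)$ is a $K_m[[t]]$-lattice in $K_m((t)) \otimes_{\iota_m} D^{(m)}$.

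The key point --- and the step I expect to be the main obstacle --- is to show that each $\Ndr^{(n)}(D)$ is a finitely generated $\Bdaggerrnrigk$-submodule of $D^{(n)}[1/t]$ that is genuinely a lattice, i.e.\ that the defining infinite family of conditions does not force $\Ndr^{(n)}(D)$ to collapse. The mechanism is that, for a single $h \geq 0$ depending only on the Hodge--Tate weights of $D$, the two $K_m[[t]]$-lattices $K_m[[t]] \otimes_{\iota_m} D^{(m)}$ and $K_m[[t]] \otimes_K \Ddr^K(D)$ inside $K_m((t)) \otimes_{\iota_m} D^{(m)}$ satisfy $t^h \bigl(K_m[[t]] \otimes_{\iota_m} D^{(m)}\bigr) \subseteq K_m[[t]] \otimes_K \Ddr^K(D) \subseteq t^{-h} \bigl(K_m[[t]] \otimes_{\iota_m} D^{(m)}\bigr)$ with $h$ \emph{independent of} $m$, while the $\iota_m$ are compatible with one another through $\varphi$; hence one power of $t$ simultaneously controls all the conditions $m \geq n$, giving $t^h D^{(n)} \subseteq \Ndr^{(n)}(D) \subseteq t^{-h} D^{(n)}$. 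As in \cite{berger10}, being finitely generated and torsion-free of full rank $d$ over the B\'ezout domain $\Bdaggerrnrigk$, $\Ndr^{(n)}(D)$ is free of rank $d$; inverting $t$ gives $\Ndr^{(n)}(D)[1/t] = D^{(n)}[1/t]$, and extending scalars to $\Bdaggerrigk$ then yields statement (1) and the freeness of rank $d$ of $\Ndr(D)$. The remaining $(\varphi,\Gamma_K)$-module axioms are inherited from $D$: $\varphi(\Ndr^{(n)}(D)) \subseteq \Ndr^{(n+1)}(D)$ because the $\iota_m$ intertwine $\varphi$ with the transition maps, which preserve the lattices $K_m[[t]] \otimes_K \Ddr^K(D)$; $\varphi^*$ is an isomorphism because it is so on $D$; and $\Ndr^{(n)}(D)$ is $\Gamma_K$-stable because each $\iota_m$ is $\Gamma_K$-equivariant and $K_m[[t]] \otimes_K \Ddr^K(D)$ is $\Gamma_K$-stable, $\Ddr^K(D) = \Ddif(D)^{\Gamma_K}$ being $\Gamma_K$-fixed.

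For (2) I would compute with $\nabla_0 = \nabla$. On $\Ddif(D)$ the operator $\nabla_0$ is a $\mb{Q}_p$-linear derivation commuting with every $\iota_m$, since the $\iota_m$ are $\Gamma_K$-equivariant and $\nabla_0$ is built from the $\Gamma_K$-action. It acts trivially on $\Ddr^K(D) = \Ddif(D)^{\Gamma_K}$, and trivially on $K_\infty$: $\Gamma_K$ acts on each $K_m$ through a finite quotient, so a generator acts there semisimply with roots-of-unity eigenvalues, and $\log\gamma$ --- being semisimple with eigenvalues $\log\zeta = 0$ --- vanishes on every $K_m$, hence on $K_\infty$. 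Finally $\nabla_0(t) = t$ up to the normalizing constant, because $\gamma(t) = \chi(\gamma) t$. Hence $\nabla_0\!\left(\sum_{k \geq 0} a_k t^k\right) = \sum_{k \geq 1} k\,a_k t^k$ for $a_k \in K_m \otimes_K \Ddr^K(D)$, so $\nabla_0$ carries $K_m[[t]] \otimes_K \Ddr^K(D)$ into $t \cdot \bigl(K_m[[t]] \otimes_K \Ddr^K(D)\bigr)$. Therefore, for $x \in \Ndr^{(n)}(D)$ and every $m \geq n$, $\iota_m(\nabla_0 x) = \nabla_0(\iota_m x) \in t \cdot \bigl(K_m[[t]] \otimes_K \Ddr^K(D)\bigr)$, so $\iota_m(\nabla_0 x / t) \in K_m[[t]] \otimes_K \Ddr^K(D)$; as $\nabla_0$ preserves $D^{(n)}[1/t]$, this shows $\nabla_0 x / t \in \Ndr^{(n)}(D)$, i.e.\ $\nabla_0 x \in t \cdot \Ndr^{(n)}(D)$. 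Taking the union over $n$ yields $\nabla_0(\Ndr(D)) \subseteq t\,\Ndr(D)$, which is (2).
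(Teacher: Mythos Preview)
Your proposal is correct and follows the same route as the paper: the paper simply remarks that $\Ndr(D)$ is the module denoted $\D$ in \cite{berger10}, Theorem III.2.3, and states that this theorem implies the result, giving no further argument. You cite the same reference for the structural assertions and add a direct verification of property (2) via the $\iota_m$ and the action of $\nabla_0$ on $K_m[[t]]\otimes_K \Ddr^K(D)$, which is a correct and welcome elaboration but not a different approach.
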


The following proposition is analoguous to \cite{berger02}, Theorem 3.6.
\begin{prop}
  \label{propgkinvariant}
  Let $D$ be a semistable $(\varphi, \Gamma_K$)-module. Then 
  one has 
  \begin{align*}
    (\Btdaggerrig \otimes_{\Bdaggerrigk} D)^{G_K} &= 
    D^{\Gamma_K},\\
    (\Btdaggerrig[1/t] \otimes_{\Bdaggerrigk} D)^{G_K} &= 
    (\Bdaggerrigk[1/t] \otimes_{\Bdaggerrigk} D)^{\Gamma_K},\\
    (\Btdaggerlog[1/t] \otimes_{\Bdaggerrigk} D)^{G_K} &= 
    (\Bdaggerlogk[1/t] \otimes_{\Bdaggerrigk} D)^{\Gamma_K}.
  \end{align*}
\end{prop}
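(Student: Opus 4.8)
The plan is to reduce the three claimed equalities to the fundamental fact that for a semistable (in fact de Rham) $(\varphi,\Gamma_K)$-module the natural map $\Btdaggerrig \otimes_{\Bdaggerrigk} D \to \Dt(\W(D))$ respects $G_K$-invariants in the appropriate sense, together with the descent statement in Lemma~\ref{lembdaggerrigk}(c), namely $(\Bdaggerrig)^{H_K} = \Bdaggerrigk$, and its analogues for $\Btdaggerrig$ and $\Btdaggerlog$. More precisely, the first step is to compute $H_K$-invariants: since taking $H_K$-invariants is exact on the relevant categories and $D = \Ddaggerrigk$-module, one has $(\Btdaggerrig \otimes_{\Bdaggerrigk} D)^{H_K} = (\Btdaggerrig)^{H_K} \otimes_{\Bdaggerrigk} D$, and I would cite the fact (implicit in Theorem~\ref{thmbteq} and the constructions of Berger, \cite{berger02}) that $(\Btdaggerrig)^{H_K} = \Bdaggerrigk$, so that $(\Btdaggerrig \otimes_{\Bdaggerrigk} D)^{H_K} = D$. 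Then the outer $G_K$-invariants become $\Gamma_K$-invariants, which gives the first equality directly. The same argument applied to $\Btdaggerrig[1/t]$ and $\Btdaggerlog[1/t]$ — using that $t$ is $H_K$-fixed and that $(\Btdaggerlog[1/t])^{H_K} = \Bdaggerlogk[1/t]$ — yields the second and third equalities once one knows the invariants can be passed through the tensor product.

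The second step, which is where the semistability hypothesis actually enters, is to justify commuting $H_K$-invariants past the completed tensor products. The subtlety is that $\Btdaggerrig \otimes_{\Bdaggerrigk} D$ is not literally $(\Btdaggerrig)^{H_K} \otimes_{\Bdaggerrigk} D$ as a $G_K$-module in a way that makes invariants obvious, because $D$ carries only a $\Gamma_K$-action, not a full $G_K$-action, and the $G_K$-action on the tensor product is the diagonal one with $G_K$ acting on $\Btdaggerrig$. The way I would handle this is to pick a basis of $D^{(n)}$ over $\Bdaggerrndrigk$ and argue that an element $\sum \lambda_i \otimes e_i$ (with $\lambda_i \in \Btdaggerrig$) fixed by $H_K$ must have all $\lambda_i \in (\Btdaggerrig)^{H_K} = \Bdaggerrigk$, since the $e_i$ can be taken $H_K$-fixed by descent of $D^{(n)}$ from a module over $\Bdaggerrndrigk = (\Btdaggerrnrig)^{H_K}$. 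This is the analogue of Berger's \cite{berger02}, Theorem 3.6, as the proposition already advertises, and the proof there transposes essentially verbatim; the role of semistability is to guarantee that $\Dst^K(D)$ has full dimension so that the relevant comparison isomorphisms are available, but in fact for the bare invariance statements only the $H_K$-descent is needed.

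The third step is to assemble the pieces: for the first equality, take $H_K$-invariants to get $D$ with its $\Gamma_K$-action, then $G_K/H_K = \Gamma_K$-invariants to get $D^{\Gamma_K}$. For the second and third, I would first invert $t$ (which is compatible with all the maps and with taking invariants since $t$ is a unit after localization and is $G_K$-invariant up to the cyclotomic character — more precisely $g(t) = \chi(g) t$, so inverting $t$ is $G_K$-equivariant as an operation even though $t$ itself is not fixed; invariance of the localized module is unaffected), then repeat the $H_K$-then-$\Gamma_K$ argument with $(\Btdaggerrig[1/t])^{H_K} = \Bdaggerrigk[1/t]$ and $(\Btdaggerlog[1/t])^{H_K} = \Bdaggerlogk[1/t]$. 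The main obstacle I anticipate is precisely the bookkeeping in the second step: making rigorous the claim that $H_K$-invariants of the diagonal $G_K$-module $\Btdaggerrig \otimes_{\Bdaggerrigk} D$ equal $\Bdaggerrigk \otimes_{\Bdaggerrigk} D = D$, which requires knowing that $D$ descends to an $H_K$-fixed lattice over a ring whose $H_K$-invariants one controls (this is Berger's $D^{(n)}$ construction cited earlier) and that $\Btdaggerrnrig$ is faithfully flat (or at least that the invariants behave well) over $\Bdaggerrndrigk$. Everything else is formal.
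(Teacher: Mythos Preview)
There is a genuine gap. Your argument hinges on the claim that $(\Btdaggerrig)^{H_K} = \Bdaggerrigk$, but this is false: one only has $(\Btdaggerrig)^{H_K} = \Btdaggerrigk$, the \emph{tilde} ring, which is strictly larger than $\Bdaggerrigk$. (Lemma~\ref{lembdaggerrigk}(c) concerns $\Bdaggerrig$, without the tilde, and does not help here.) Consequently, after the $H_K$-step your argument only yields
\[
(\Btdaggerrig \otimes_{\Bdaggerrigk} D)^{G_K} \;=\; (\Btdaggerrigk \otimes_{\Bdaggerrigk} D)^{\Gamma_K},
\]
and the entire content of the proposition---passing from $\Btdaggerrigk$-coefficients down to $\Bdaggerrigk$-coefficients under $\Gamma_K$-invariance---is missing from your proof.

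This descent is exactly what the paper's proof supplies, and it is not formal. The paper picks a $\Bdaggerrigk$-basis $\{e_i\}$ of $D$, writes a $G_K$-invariant element via a coefficient matrix $M$ with entries in $\Btdaggerrigk$, and then applies the Colmez--Berger ``decompletion'' operators $R_m$ (sections $\Btdaggerrigk \to \varphi^{-m}(\Bdaggerrigk)$, cf.\ \cite{berger02}, \S 2.6) together with the $\varphi$-structure on $D$ to produce, for $m \gg 0$, elements whose coefficient matrix $NQ$ has entries in $\Bdaggerrigk$ and which span a $K_0$-space of the correct dimension inside $D^{\Gamma_K}$. Your proposal contains no analogue of this step; the ``faithfully flat / invariants behave well'' remark at the end does not address it, since the inclusion $\Bdaggerrigk \hookrightarrow \Btdaggerrigk$ is precisely the one across which descent is nontrivial.
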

\begin{proof}
  We only treat the first case, as the proof of the others is similar.

  One has $(\Btdaggerrig \otimes_{\Bdaggerrigk} D)^{G_K} \subset
  (\Btdaggerrig \otimes_{\Bdaggerrigk} D)^{H_K} = \Btdaggerrigk
  \otimes_{\Bdaggerrigk} D$ since $H_K$ acts trivially on $D$ (it is a 
  free $\Bdaggerrigk$-module). 
  Let $\{ e_i \}_{1 \leq i \leq d}$ be a $\Bdaggerrigk$-basis of $D$ and
  $\{d_i\}_{1 \leq i \leq r}$ be a $K_0$-basis for $(\Btdaggerrig
  \otimes_{\Bdaggerrigk} D)^{G_K}$ and $M \in M_{r \times
    d}(\Btdaggerrigk)$ defined by the relation $(d_i) = M(e_i)$. $M$
  has rang $r$ (that is, the image of a basis of $D$ under $M$ form a
  free $\Bdaggerrigk$-module of rank $r$) and satisfies $\gamma_K(M)G
  = M$ (since the elements $d_i$ are fixed under $\gamma_K$), where $G
  \in \mbox{GL}_d(\Bdaggerrigk)$ is the matrix of $\gamma_K$ with
  respect to the basis $\{ e_i \}$.  The operator $R_m$ of
  Colmez/Berger (cf. loc.cit., \S 2.6) give $\gamma_K(R_m(M)) G -
  R_m(M) = 0$ for every $m \in \mb{N}$. Further $R_m(M) \overset{m \to
    \infty}{\lra} M$ and $N = \varphi^m(R_m(M)) \in \Bdaggerrigk$
  since $R_m$ is a section of $\varphi^{-m}(\B^{\dagger, p^k
    r}_{\textrm{rig}, K}) \subset \Bt^{\dagger,r}_{\textrm{rig},
    K}$. Hence, $\gamma(N) \varphi^m(G) = N$ and since the actions of
  $\varphi$ and $\Gamma_K$ commute on $D$ one has $\varphi(G) P =
  \gamma_K(P) G$, where $P \in M_{d}(\Bdaggerrigk)$ is the matrix of
  $\varphi$ with respect to the basis $\{ e_i \}$.  If one sets $Q =
  \varphi^{m-1}(P) \ldots \varphi(P) P$ then $\varphi^m(G) Q =
  \gamma_K(Q) G$ and hence $\gamma_K(N Q) G = NQ$, so that $NQ$
  determines $r$ elements in $D$ that are fixed under $\Gamma_K$. But
  since for $m$ big enough the matrix $M$ has rank $r$ and $P$ has
  full rank, since it is an injection and $\Bdaggerrigk \cdot
  \varphi(D) = D$, one sees that these elements give a rank
  $r$-submodule of $D$. Hence, the $K_0$-vectorspace generated by
  these elements is also of dimension $r$, whence the claim.
\end{proof}

Before stating the next result we recall the notion of a $p$-adic
differential equation.  If $D$ is any $(\varphi, \Gamma_K)$-module
over $\Bdaggerrigk$ it is known that the same definition as for
$\nabla$ gives rise to differential operator $\nabla_D: D \ra D$ that
commutes with the action of $\varphi$ and $\Gamma_K$ such that
$\nabla_D(\lambda x) = \nabla (\lambda) x + \lambda \nabla_D(x)$ (see
\cite{berger08b}, Proposition III.1.1). With this one may also
consider the operator $\partial_D = 1/t \cdot \nabla_D$ on $D[1/t]$.
A \textbf{$p$-adic differential equation} is a $(\varphi, \Gamma_K)$-module $D$
over $\Bdaggerrigk$ that is stable under the operator $\partial_D$.

If there is no confusion we will drop the index $D$ of the operators
$\nabla_D$ and $\partial_D$.

\begin{thm}
  \label{thmandre}
  Let $M$ be a $p$-adic differential equation equipped with a Frobenius. Then
  there exists a finite extension $L/K$ such that the natural map
  \[
    \Bdaggerlogl \otimes_{L'_0} (\Bdaggerlogl \otimes_{\Bdaggerrigk}
    D)^{\partial = 0} \ra \Bdaggerlogl \otimes_{\Bdaggerrigk} D.
  \]
  is an isomorphism.
\end{thm}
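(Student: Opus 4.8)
The plan is to reduce this to the $p$-adic local monodromy theorem (which, in the filtered setting relevant here, is a theorem of André and of Berger for equations admitting a Frobenius structure). Concretely, the statement asks us to trivialize $D$ over $\Bdaggerlogl$; this is exactly the kind of conclusion provided by the quasi-unipotence of $p$-adic differential equations with Frobenius.

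\begin{proof}
  The key input is the $p$-adic local monodromy theorem in the form due to André, Kedlaya and Mebkhout, combined with Berger's refinement (\cite{berger02}, cf. also \cite{berger08b}): any $p$-adic differential equation with Frobenius structure over $\Bdaggerrigk$ is \emph{quasi-unipotent}, i.e. there is a finite extension $L/K$ such that $D|_L$ becomes unipotent after adjoining $\log \pi$. More precisely, one obtains a basis of $\Bdaggerlogl \otimes_{\Bdaggerrigk} D$ over $\Bdaggerlogl$ on which $\Gamma_L$ acts trivially and $\nabla$ acts nilpotently in a way compatible with the monodromy operator $N$ on $\Bdaggerlogl$ (recall $N \log \pi = -p/(p-1)$).

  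First I would pick, using the local monodromy theorem, a finite extension $L/K$ such that $D|_L$ is semistable in the sense of Definition \ref{defisemicrys}, i.e.\ $\dim_{L_0} \Dst^L(D|_L) = \mathrm{rank}_{\Bdaggerrigk} D =: d$. This is the hard step, and it is precisely where the deep input (André–Kedlaya–Mebkhout plus Berger) is used; everything afterwards is formal linear algebra over the rings $\Bdaggerlogl$ and $\Bdaggerlogl[1/t]$. Next, by Proposition \ref{propgkinvariant} applied to $D|_L$ (a semistable $(\varphi,\Gamma_L)$-module over $\Bdaggerrigl$), one has
  \[
    (\Btdaggerlog[1/t] \otimes_{\Bdaggerrigk} D)^{G_L} = (\Bdaggerlogl[1/t] \otimes_{\Bdaggerrigk} D)^{\Gamma_L} = \Dst^L(D|_L),
  \]
  which is an $L_0'$-vector space (more precisely an $L_0$-vector space, and one works with $L_0'$ after the usual identification) of dimension $d$.

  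Then I would check that the map $(\partial = 0)$-invariants in the statement coincides with $\Dst^L(D|_L)$ up to the identification $L'_0$-structure: indeed $(\Bdaggerlogl \otimes_{\Bdaggerrigk} D)^{\partial = 0}$, being the kernel of $\partial_D = (1/t)\nabla_D$, consists of the horizontal sections, and since $\Bdaggerlogl$ has $(\Bdaggerlogl)^{\partial = 0} = L'_0$ one gets a $d$-dimensional $L'_0$-space; the horizontality together with $\Gamma_L$-invariance (automatic because $\nabla$ generates the Lie algebra action of $\Gamma_L$) places these sections inside $\Dst^L(D|_L)$, and a dimension count using Proposition \ref{propgkinvariant} and the semistability of $D|_L$ shows equality. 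Finally, to conclude that
  \[
    \Bdaggerlogl \otimes_{L'_0} (\Bdaggerlogl \otimes_{\Bdaggerrigk} D)^{\partial = 0} \ra \Bdaggerlogl \otimes_{\Bdaggerrigk} D
  \]
  is an isomorphism: both sides are free $\Bdaggerlogl$-modules of rank $d$ (the left because $(\partial=0)$-invariants have $L'_0$-dimension $d$, the right because $D$ has rank $d$ over $\Bdaggerrigk$ and $\Bdaggerlogl$ is faithfully flat over $\Bdaggerrigk$), so it suffices to show the map is surjective. Surjectivity follows by the standard argument: a $\Bdaggerlogl$-basis of $D|_L$ can be written in terms of the horizontal sections with matrix $M \in \mathrm{GL}_d(\Bdaggerlogl[1/t])$, and the quasi-unipotence (nilpotence of $\nabla$ on the chosen basis, solvability of the resulting differential system by exponentials of nilpotent matrices involving $\log \pi$ and powers of $t$, together with the positivity of the relevant Hodge–Tate weights which one can arrange by twisting) forces $M$ to lie in $\mathrm{GL}_d(\Bdaggerlogl)$. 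Thus the natural map is an isomorphism of $\Bdaggerlogl$-modules, as claimed.
\end{proof}
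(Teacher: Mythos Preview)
The paper's ``proof'' of this theorem is a single citation to \cite{andre02}: the statement \emph{is} the $p$-adic local monodromy theorem in the form proved by Andr\'e (and independently Kedlaya and Mebkhout). Your opening paragraph correctly identifies this, and had you stopped there you would match the paper exactly.

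The elaboration you give afterwards, however, is circular within the paper's logical order. You propose to first choose $L/K$ so that $D|_L$ is semistable and then deduce the trivialization over $\Bdaggerlogl$. But look at the proposition immediately following Theorem~\ref{thmandre}: the implication goes the other way. The paper \emph{uses} Theorem~\ref{thmandre} to prove that every de Rham $(\varphi,\Gamma_K)$-module is potentially semistable, by first passing to the $p$-adic differential equation $\Ndr(D)$ and then invoking Andr\'e's quasi-unipotence. So assuming potential semistability as your ``hard step'' and then deriving the isomorphism is backwards; you are assuming something equivalent to what you want to prove.

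There is also a smaller gap in your identification of $(\Bdaggerlogl\otimes D)^{\partial=0}$ with $\Dst^L(D|_L)$. You claim $\Gamma_L$-invariance is ``automatic because $\nabla$ generates the Lie algebra action of $\Gamma_L$'', but $\nabla(x)=0$ only gives triviality of the infinitesimal action; passing from $\log(\gamma)(x)=0$ to $\gamma(x)=x$ requires an argument (e.g.\ that $\gamma$ is in the image of the exponential, which may force a further finite extension), and this is not the same as the $\Gamma_L$-invariants-in-$[1/t]$ definition of $\Dst^L$. The correct statement at this point is really just that the horizontal sections form an $L_0'$-vector space of the right dimension, which is precisely the content of Andr\'e's theorem---again bringing you back to the citation.
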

\begin{proof}
  \cite{andre02}.
\end{proof}

Recall that a \textbf{$\nabla$-crystal} over $\Bdaggerrigk$ is a
  free $\Bdaggerrigk$-module equipped with an action of a Frobenius
  and a connection (also denoted by $\nabla$), compatible with
  $\nabla$ on $\Bdaggerrigk$, that commutes with the Frobenius.  A
  $\nabla$-crystal over $\Bdaggerrigk$ is called \textbf{unipotent} if
  it admits a filtration of sub-crystals such that each successive
  quotient has a basis consisting of elements in the kernel of
  $\nabla$. More generally, a $\nabla$-crystal $M$ is called
  \textbf{quasi-unipotent} if there exists a finite extension $L/K$
  such that $\Bdaggerrigl \otimes_{\Bdaggerrigk} M$ (which is a 
  $\nabla$-crystal over $\Bdaggerrigl$ in a natural way) is unipotent.

We note the following result, which is known by the
experts and may be proved as in the \'etale case (\cite{berger02},
Proposition 5.6):
\begin{prop}
  Every de Rham $(\varphi, \Gamma_K)$-module is potentially semi-stable.
\end{prop}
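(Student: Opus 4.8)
The plan is to follow the strategy of Berger's proof in the \'etale case (\cite{berger02}, Proposition 5.6), transporting each step to the setting of $(\varphi, \Gamma_K)$-modules over $\Bdaggerrigk$. So let $D$ be a de Rham $(\varphi, \Gamma_K)$-module. The first step is to observe that $D$, being de Rham, is in particular a $p$-adic differential equation in the sense defined just above: the operator $\nabla_D$ (equivalently $\partial_D = t^{-1}\nabla_D$) exists on $D[1/t]$, and one checks using the module $\Ndr(D)$ — which satisfies $\Ndr(D)[1/t] = D[1/t]$ and $\nabla_0(\Ndr(D)) \subset t\,\Ndr(D)$ — that $\Ndr(D)$ is stable under $\partial$; thus $\Ndr(D)$ is the $p$-adic differential equation attached to $D$, and it suffices to prove that $\Ndr(D)$ is potentially semi-stable, since $\Ndr(D)$ and $D$ have the same associated filtered objects up to twist (indeed $\Ndr(D)[1/t]=D[1/t]$, so $\Dst^K(\Ndr(D)) = \Dst^K(D)$ and likewise for $\Ddr^K$).

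Next I would invoke Theorem~\ref{thmandre} (the $p$-adic monodromy theorem of Andr\'e, or equivalently Kedlaya/Mebkhout): applied to the $p$-adic differential equation $\Ndr(D)$ equipped with its Frobenius, it produces a finite extension $L/K$ such that
\[
  \Bdaggerlogl \otimes_{L_0'} (\Bdaggerlogl \otimes_{\Bdaggerrigk} \Ndr(D))^{\partial = 0} \ra \Bdaggerlogl \otimes_{\Bdaggerrigk} \Ndr(D)
\]
is an isomorphism. In other words, after base change to $L$, the module $\Btdaggerlog \otimes \Ndr(D)$ becomes trivialized by $\partial$-horizontal sections over $\Bdaggerlogl$; this is exactly the statement that $\Ndr(D)|_L$ is a unipotent (in fact, after the log-extension, trivial) $\nabla$-crystal, i.e. $\Ndr(D)$ — and hence $D$ — is quasi-unipotent. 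So replacing $K$ by $L$, we may assume $\Btdaggerlog \otimes_{\Bdaggerrigk} D$ admits a basis of $\partial$-horizontal elements over $\Bdaggerlogk$.

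The final step is to deduce semistability from this trivialization, i.e.\ to check the dimension inequality $\dim_{K_0}\Dst^K(D|_L) = \operatorname{rank} D$. Here I would use Proposition~\ref{propgkinvariant}: once $D|_L$ is trivialized over $\Btdaggerlog$ by horizontal sections, the $G_L$-invariants $(\Btdaggerlog[1/t] \otimes_{\Bdaggerrigl} D|_L)^{G_L}$ agree with $(\Bdaggerlogl[1/t]\otimes_{\Bdaggerrigl} D|_L)^{\Gamma_L} = \Dstl(D|_L)$, and one extracts from the horizontal basis a $\Gamma_L$-stable $L_0'$-lattice of full rank inside it, exactly as in the proof of Proposition~\ref{propgkinvariant} where the operators $R_m$ of Colmez--Berger were used to descend $G_K$-invariant elements to honest elements of $D$. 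Counting dimensions then gives $\dim_{L_0'}\Dstl(D|_L) \geq \operatorname{rank} D$, and the reverse inequality is the content of the injectivity of $\alpha_{\mathrm{st}}$; hence equality, so $D|_L$ is semistable and $D$ is potentially semistable.

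The main obstacle I expect is the passage in the last paragraph from "$\partial$-trivial over the log-ring after extension" to the honest equality of dimensions $\dim_{L_0'}\Dstl(D|_L) = \operatorname{rank} D$: one must control the interaction of the monodromy operator $N$ on $\Btdaggerlog$ with the horizontal basis (so that $N$ acts on the finite-dimensional $L_0'$-space and the filtration behaves correctly), and produce the descent of horizontal sections to elements of $D[1/t]$ of the right rank. This is where the argument of Proposition~\ref{propgkinvariant}, suitably adapted to the log-ring and to $D[1/t]$ rather than $D$, does the real work; the rest is formal once Theorem~\ref{thmandre} is granted. In the \'etale case this is \cite{berger02}, Proposition 5.6, and the same proof applies verbatim once the ingredients above (existence of $\Ndr(D)$, Theorem~\ref{thmandre}, Proposition~\ref{propgkinvariant}) have been established in the present generality.
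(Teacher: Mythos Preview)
Your overall strategy coincides with the paper's: pass to the $p$-adic differential equation $\Ndr(D)$, apply Andr\'e's theorem (Theorem~\ref{thmandre}) to obtain a finite extension $L'/K$ over which $\Ndr(D)$ becomes unipotent, and then translate unipotence back into the dimension equality defining semistability. The reduction to $\Ndr(D)$ and the observation that $\Dst^K(\Ndr(D)) = \Dst^K(D)$ (since both only see $D[1/t]$) are correct and match the paper.

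Where your argument diverges from the paper --- and where it is not quite right --- is the mechanism you propose for the last step. You invoke Proposition~\ref{propgkinvariant} and the Colmez--Berger sections $R_m$ to ``extract from the horizontal basis a $\Gamma_L$-stable $L_0'$-lattice of full rank inside $\Dstl(D|_L)$''. But the $R_m$ are sections of $\varphi^{-m}$ used to descend from $\Btdaggerrigk$ to $\Bdaggerrigk$, i.e.\ to handle $H_K$-invariants; they do not convert $\partial$-horizontal elements into $\Gamma_L$-invariant ones. After Andr\'e's theorem you already have a basis $e_0,\ldots,e_{d-1}$ of $\Bdaggerlogl \otimes_{\Bdaggerrigk} \Ndr(D)$ lying in $(\Bdaggerlog_{L'}[1/t]\otimes D)^{\partial=0}$, so no $H_K$-descent is needed. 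What remains is a purely $\Gamma$-theoretic step: the $e_i$ span a finite-dimensional $L'_0$-vector space on which $\nabla = \log(\gamma)/\log_p\chi(\gamma)$ acts by zero (this is exactly what the paper extracts from \cite{berger02}, Proposition~5.5). Since $\gamma$ commutes with $\partial$, it preserves this space, and $\log(\gamma)=0$ on it; hence for any $\gamma$ in a sufficiently small open subgroup of $\Gamma_{L'}$ one has $\gamma = \exp(\log\gamma) = \mathrm{id}$ there. Choosing a finite extension $L/L'$ with $\Gamma_L$ contained in that subgroup gives $e_i \in (\Bdaggerlogl[1/t]\otimes D)^{\Gamma_L} = \Dstl(D|_L)$, and the dimension count follows. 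So the ``main obstacle'' you anticipated is real, but its resolution is this elementary exponentiation argument on a finite-dimensional space, not the $R_m$-machinery.
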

\begin{proof}
  One defines the (faithful, exact, ...) functor $D \mapsto \Ndr(D)$
  from the category of de Rham $(\varphi, \Gamma_K)$-modules into 
  the category of $p$-adic differential equations equipped with 
  a Frobenius. Since by Andr\'e's theorem  \ref{thmandre} one knows that 
  any such equation is quasi-unipotent, it suffices to show that
  $D$ is potentially semistable if and only if 
  $\Ndr(D)$ is quasi-unipotent.

  Now $D$ is potentially semistable if and only if there exists a
  finite extension $L/K$ such that
  \[
    \dim_{L_0} (\Bdaggerlogl[1/t] \otimes_{\Bdaggerrigk} D)^{\Gamma_L} = 
    \mbox{rank}_{\Bdaggerrigk} D =: d.
  \]
  This gives via \cite{berger02}, Proposition 5.5 a unipotent
  $\nabla$-subcrystal of $D|_L[1/t]$, which is nothing else but
  $\Ndr(D|_L) \cong \Bdaggerrigl \otimes_{\Bdaggerrigk} \Ndr(D)$.

  Conversely if $D|_{L'}[1/t]$ contains a unipotent
  $\nabla$-subcrystal of rank $d$ for some finite extension $L'/K$
  then the again by loc.cit. there exist elements $e_0, \ldots,
  e_{d-1}$ which generate an $L'_0$-vectorspace of dimension $d$ on
  which $\log(\gamma)$ acts trivially. Hence, there exists a finite
  extension $L/L'$ such that $\Gamma_L$ acts trivially on this basis,
  so that we obtain a basis of $(\Bdaggerlogl[1/t]
  \otimes_{\Bdaggerrigk} D)^{\Gamma_L}$ of the right dimension,
  i.e. $D$ is potentially semistable.  
\end{proof}

We briefly review the slope theory of $\varphi$-modules over
$\Bdaggerrigk$ or $\Bdaggerk$.

\begin{defi}
  Let $M$ is a $\varphi$-module over one of these rings. If $M$ is of
  rank $1$ and $v$ a generator, then $\varphi(v) = \lambda v$ for some
  $\lambda \in (\Bdaggerrigk)^\times = (\Bdaggerk)^\times$
  (cf. \cite{kedlaya05}; see also \cite{kedlaya08}, Hypothesis
  1.4.1. resp.  Example 1.4.2).  We define the \textbf{degree}
  $\deg(M)$ of $M$ to be $w(\lambda)$, where $w$ is the $p$-adic
  valuation of $\Bk$.  If $M$ is of rank $n$ then $\bigwedge^n M$ has
  rank $1$.  We define the \textbf{slope} $\mu(M)$ of $M$ as $\mu(M) =
  \deg(M) / \textrm{rk} M$.
\end{defi}
We remark that the definition of the degree (hence the slope) is
independent of the choice of the generator.  Under the equivalence of
Theorem \ref{thmbteq} we have the following correspondence of the
slope theory: If $D$ is a $(\varphi, \Gamma_K)$-module over
$\Bdaggerrigk$, one may consider the $\varphi$-module $\TD$ over
$\Btdaggerrig$. Then the two definitions of the slope for $D$
coincide. Hence, we have the notion of a $(\varphi, \Gamma_K)$-module
that is \textbf{pure} of some slope. The fundamental theorem is the
following result by Kedlaya:

\begin{thm} (Slope filtration theorem)~
  Let $M$ be a $\varphi$-module over $\Bdaggerrigk$. Then there exists
  a unique filtration $0 = M_0 \subset M_1 \subset \ldots \subset M_l
  = M$ by saturated $\varphi$-submodules whose successive quotients
  are pure with $\mu(M_1 / M_0) < \ldots < \mu(M_{l-1} / M_l)$.  If
  $M$ is a $(\varphi, \Gamma_K)$-module all $M_i$ are $(\varphi,
  \Gamma_K)$-submodules.
\end{thm}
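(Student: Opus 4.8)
The plan is to reduce the statement about $\varphi$-modules over $\Bdaggerrigk$ to Kedlaya's slope filtration theorem over $\Btdaggerrig$, already available via the equivalence of categories in Theorem \ref{thmbteq}, and then to check that the resulting filtration descends and is $\Gamma_K$-stable. First I would pass from $M$ to $\TD := \Btdaggerrig \otimes_{\Bdaggerrigk} M$, which is a $\varphi$-module over $\Btdaggerrig$; by the version of Kedlaya's theorem over the big Robba ring (\cite{kedlaya05}, as used in the discussion of $M_{a,h}$ above) there is a unique filtration $0 = \widetilde M_0 \subset \widetilde M_1 \subset \ldots \subset \widetilde M_l = \TD$ by saturated $\varphi$-submodules with successive quotients pure of strictly increasing slopes. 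The key descent input is Theorem \ref{thmbteq}: each step of the slope filtration over $\Btdaggerrig$ is, in particular, a sub-object in the category of $\varphi$-modules over $\Btdaggerrig$, and the equivalence (together with the compatibility of slopes noted just before the theorem) shows that there is a corresponding filtration $0 = M_0 \subset \ldots \subset M_l = M$ of $M$ by saturated $\varphi$-submodules over $\Bdaggerrigk$ with $\Btdaggerrig \otimes_{\Bdaggerrigk} M_i = \widetilde M_i$ and with the same slopes; saturation is preserved because it is a statement about the quotient $M/M_i$ being again a $\varphi$-module (torsion-free of the expected rank), which is detected after the faithfully flat base change to $\Btdaggerrig$.

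Next I would address uniqueness. Suppose $0 = M_0' \subset \ldots \subset M_{l'}' = M$ is another such filtration; base-changing to $\Btdaggerrig$ gives a filtration of $\TD$ by saturated $\varphi$-submodules with strictly increasing pure slopes, which by the uniqueness part of Kedlaya's theorem must coincide with $\{\widetilde M_i\}$. Since the base change functor of Theorem \ref{thmbteq} is an equivalence, hence faithful and essentially surjective, $M_i'$ and $M_i$ have the same image in the category over $\Btdaggerrig$ as submodules of $\TD$, and therefore $M_i' = M_i$ inside $M$ (a submodule of $M$ is determined by the submodule it generates after the extension of scalars, because the functor is fully faithful). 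This gives both $l' = l$ and the equality of the two filtrations.

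Finally, for the $\Gamma_K$-equivariance: if $M$ carries in addition a continuous semilinear action of $\Gamma_K$ commuting with $\varphi$, then for each $\gamma \in \Gamma_K$ the submodule $\gamma(M_i)$ is again a saturated $\varphi$-submodule (here one uses that $\gamma$ is a ring automorphism of $\Bdaggerrigk$ commuting with $\varphi$, so it carries $\varphi$-submodules to $\varphi$-submodules and preserves ranks, degrees and hence slopes), and the chain $0 = \gamma(M_0) \subset \ldots \subset \gamma(M_l) = M$ has successive quotients pure with the same strictly increasing slopes. By the uniqueness already established, $\gamma(M_i) = M_i$ for all $i$, i.e.\ each $M_i$ is a $(\varphi, \Gamma_K)$-submodule.

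The main obstacle I anticipate is not any of the formal descent bookkeeping but rather making sure that the slope invariants really transport correctly through Theorem \ref{thmbteq}: one must know that the slope of a $(\varphi,\Gamma_K)$-module over $\Bdaggerrigk$ equals the slope of the associated $\varphi$-module $\TD$ over $\Btdaggerrig$ (this is exactly the compatibility asserted in the paragraph preceding the theorem statement, and it rests on Kedlaya's comparison between the two Robba rings), and that ``saturated'' is a condition that is faithfully-flat local, so that it can be checked after $\otimes_{\Bdaggerrigk}\Btdaggerrig$. Once these two points are in place, the rest is a direct transcription of the $\Btdaggerrig$-statement along the equivalence of categories.
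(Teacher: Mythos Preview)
The paper does not give a proof at all; it simply refers to \cite{kedlaya08}, where Kedlaya proves the slope filtration \emph{directly} over the (small) Robba ring $\Bdaggerrigk$. Your argument, by contrast, tries to obtain it by descent from the big ring $\Btdaggerrig$, and this descent step has a genuine gap.

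The problem is your use of Theorem \ref{thmbteq}. That theorem is an equivalence between $(\varphi,\Gamma_K)$-modules over $\Bdaggerrigk$ and $(\varphi,G_K)$-modules over $\Btdaggerrig$; it is \emph{not} an equivalence between bare $\varphi$-modules on the two sides. For a $\varphi$-module $M$ over $\Bdaggerrigk$ with no $\Gamma_K$-action, the object $\Btdaggerrig\otimes_{\Bdaggerrigk} M$ carries no $G_K$-action, so Theorem \ref{thmbteq} says nothing about it. In fact the functor $M\mapsto \Btdaggerrig\otimes_{\Bdaggerrigk} M$ on $\varphi$-modules is far from fully faithful: over $\Btdaggerrig$ every $\varphi$-module splits as a direct sum of elementary modules $M_{a,h}$, while over $\Bdaggerrigk$ there are many nonsplit extensions. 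So you cannot conclude that the slope pieces $\widetilde M_i$ over $\Btdaggerrig$ come from $\varphi$-submodules $M_i\subset M$ over $\Bdaggerrigk$, nor that ``a submodule of $M$ is determined by the submodule it generates after extension of scalars.'' Proving that the slope filtration \emph{does} descend from $\Btdaggerrig$ to $\Bdaggerrigk$ is exactly the hard content of Kedlaya's papers, and it is not a formal consequence of faithful flatness or of any equivalence stated in this article.

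Your final paragraph, deducing $\Gamma_K$-stability of the $M_i$ from uniqueness of the filtration, is correct and is the standard argument; but it only kicks in once existence and uniqueness over $\Bdaggerrigk$ have been established by Kedlaya's methods.
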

\begin{proof}
  See \cite{kedlaya08}.
\end{proof}
\section{Exponential maps}

\subsection{Bloch-Kato exponential maps for $(\varphi, \Gamma_K)$-modules}

In this section we define short exact sequences associated to
$(\varphi, \Gamma_K)$-modules, generalizing the ``classical''
Bloch-Kato sequence (see \cite{bk93}) which one may use to study
cohomological questions relating to $p$-adic representations (i.e. the
slope zero case).  One interesting phenomenon that occurs in this more
general setting is that, in order to get the general versions of the
exponential maps, it is necessary to distinguish between the slope
$\leq 0$-case and the slope $> 0$-case.

We are interested in the long exact sequences for continuous
Galois-cohomology induced by these sequences. Let us briefly recall
the machinery.  Let $M$ be continuous $G_K$-module and define the
continuous imhomogeneous cochains in the usual way ($q \geq 0$):
\[
  C_{\textrm{cont}}^q(G_K, M) := C_{\textrm{cont}}^q(K, M) := 
  \left\{ x: G^{n} \longrightarrow M|~ x \mbox{ continuous} \right\}
\]\index{CqGK@$C_{\textrm{cont}}^q(G_K, ~)$}
with differential $\delta^q : C_{\textrm{cont}}^q(K, M) \ra C_{\textrm{cont}}^{q+1}(K, M)$ 
defined by
\begin{align*}
  \delta^q(x)(g_1, \ldots, g_{q+1}) = &~ g_1 x(g_2, \ldots, g_{q+1}) + 
  (-1)^{q+1}x(g_1, \ldots, g_q)  \\
  &+\sum_{i = 1}^q (-1)^i x(g_1, \ldots, g_{i-1}, g_i g_{i+1}, g_{i+2}, \ldots, g_{q+1}).
\end{align*}
By convention $C^{-i}(G_K, M) = 0$ for $i > 1$.
The continuous cochain complex is then defined via
\[
  C_{\textrm{cont}}^\bullet(K, M) := \left[ C_{\textrm{cont}}^0(K, M) 
  \overset{\delta^0}{\longrightarrow}  C_{\textrm{cont}}^1(K, M) 
  \overset{\delta^1}{\longrightarrow} \ldots \right],
\]
and one defines continuous cohomology via
\[
  H_{\textrm{cont}}^q(K,M) := H^q(C_{\textrm{cont}}^\bullet(K, M)).
\]\index{HqK@$H_{\textrm{cont}}^q(K, ~)$}

\begin{lem}
  If $0 \ra M' \longrightarrow M \overset{f}{\longrightarrow} M''
  \longrightarrow 0$ is an exact sequence of $G_K$-modules such that
  $f$ admits a continuous (but not necessarily $G_K$-equivariant)
  splitting, then continuous cohomology induces a long exact sequence
  \[
    \ldots \ra H_{\textrm{cont}}^i(K, M') \ra H_{\textrm{cont}}^i(K, M) \ra 
    H_{\textrm{cont}}^i(K, M'') \ra  H_{\textrm{cont}}^{i+1}(K, M') \ra \ldots
  \]
\end{lem}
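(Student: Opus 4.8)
The plan is to reduce the statement to the standard snake-lemma/long-exact-sequence mechanism by checking that the short exact sequence of $G_K$-modules remains exact after applying the continuous cochain functor $C^\bullet_{\textrm{cont}}(K, -)$. The only subtlety — and the whole point of the hypothesis on the splitting — is that continuous cohomology is not the derived functor of invariants on an abelian category, so one cannot invoke $\delta$-functoriality for free; one must verify by hand that
\[
  0 \lra C^\bullet_{\textrm{cont}}(K, M') \lra C^\bullet_{\textrm{cont}}(K, M) \overset{f_*}{\lra} C^\bullet_{\textrm{cont}}(K, M'') \lra 0
\]
is an exact sequence of complexes of abelian groups, and then pass to the associated long exact sequence in cohomology in the usual way.

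First I would treat injectivity of $C^q_{\textrm{cont}}(K, M') \to C^q_{\textrm{cont}}(K, M)$, which is immediate since $M' \hookrightarrow M$ is injective and composition with an injection is injective on cochains. Next, exactness in the middle: a continuous cochain $x \colon G_K^q \to M$ with $f \circ x = 0$ factors set-theoretically through $M' = \ker f$, and since $M'$ carries the subspace topology the resulting map $G_K^q \to M'$ is again continuous; conversely any cochain pushed forward from $M'$ obviously dies in $M''$. The essential step is surjectivity of $f_* \colon C^q_{\textrm{cont}}(K, M) \to C^q_{\textrm{cont}}(K, M'')$: given a continuous $y \colon G_K^q \to M''$, choose the continuous (set-theoretic, not $G_K$-equivariant) splitting $s \colon M'' \to M$ supplied by the hypothesis and set $x = s \circ y$. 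This is a continuous map $G_K^q \to M$ with $f \circ x = y$, so it is a continuous cochain lifting $y$. This is the only place the splitting hypothesis is used, and it is exactly what makes the argument go through despite continuous cohomology not being a derived functor — without a continuous splitting the lift $s \circ y$ need not be continuous.

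Having established the short exact sequence of cochain complexes, I would finish by invoking the standard homological algebra: a short exact sequence of complexes of abelian groups yields a long exact sequence in cohomology, with connecting maps $H^i_{\textrm{cont}}(K, M'') \to H^{i+1}_{\textrm{cont}}(K, M')$ constructed by the usual diagram chase (lift a cocycle in $M''$ to a cochain in $M$, apply the differential, observe it lands in the image of $M'$, and check the resulting class is a well-defined cocycle independent of choices). One should note in passing that the maps $C^q \to C^{q+1}$ genuinely commute with $f_*$ and the inclusion from $M'$ — this is clear from the explicit formula for $\delta^q$ since it only involves the $G_K$-action and the abelian group structure, both of which are respected by the $G_K$-equivariant maps $M' \to M \to M''$.

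**The main obstacle** is conceptual rather than computational: one must resist the temptation to quote $\delta$-functoriality and instead pinpoint that continuous group cohomology fails to be a derived functor, so the only genuine content is the termwise surjectivity on cochains, which is precisely why the continuous splitting is assumed. Everything else — injectivity, middle exactness, the construction of the connecting homomorphism — is the boilerplate long-exact-sequence argument and deserves at most a sentence each.
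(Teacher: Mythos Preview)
Your proof is correct and is exactly the standard argument the paper has in mind: the paper simply writes ``This is standard, see for example \cite{tate76}, \S 2'' without spelling anything out. What you have written is precisely the content of that reference --- verify that the continuous splitting makes $f_*$ surjective on cochains so that $0 \to C^\bullet_{\textrm{cont}}(K,M') \to C^\bullet_{\textrm{cont}}(K,M) \to C^\bullet_{\textrm{cont}}(K,M'') \to 0$ is exact, then apply the long exact sequence of a short exact sequence of complexes.
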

\begin{proof}
  This is standard, see for example \cite{tate76}, \S 2.
\end{proof}
If there is no possibility of confusion we will drop the subscript
``$\textrm{cont}$''. The splitting property in our setting will
be granted by the following

\begin{prop}
  \label{propsplit}
  If $f: B_1 \lra B_2$ be a linear continuous surjective map of
  $p$-adic Banach spaces, there exists a continuous splitting
  $s: B_2 \lra B_1$ of $f$, i.e. $f \circ s = \textrm{id}_{B_2}$.
\end{prop}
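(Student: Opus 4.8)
The plan is to reduce to a classical result in functional analysis: a continuous linear surjection between $p$-adic Banach spaces admits a continuous (linear) section. In fact the statement as phrased only asks for a continuous, not necessarily linear, splitting, which makes life easier; but I would aim to prove the cleanest version and invoke the Banach open mapping theorem. First I would recall that $B_1$ and $B_2$ are $p$-adic Banach spaces, i.e. complete normed $\mb{Q}_p$-vector spaces, and that by the open mapping theorem (valid over $\mb{Q}_p$, which is a complete non-archimedean field) the surjection $f$ is open; hence there is a constant $c > 0$ such that every $y \in B_2$ has a preimage $x \in B_1$ with $\|x\| \leq c \|y\|$.

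The key step is then a successive-approximation argument, entirely analogous to the proof that a surjection of $p$-adic Banach spaces is "almost split". Pick a basis-free approach: choose, for each $y$ in a suitable generating set, an approximate preimage, then correct the error, which lies in a smaller ball, and iterate; the geometric decay of the error norms (by a factor $\leq 1/p$ at each stage, after rescaling) guarantees convergence in the complete space $B_1$. To get an honest section defined on all of $B_2$ rather than just a generating set, one way is to fix a "Schauder-type" orthonormal basis or, more robustly, to build the section on a dense subspace and extend by continuity; alternatively, since $\mb{Q}_p$ is spherically complete one can appeal directly to the fact that $B_2$ has an orthogonal basis (Serre, or van der Put–Schikhof) and define $s$ on basis vectors with controlled norm, extending linearly and continuously. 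I would present the successive-approximation version since it is self-contained: define $s$ on a dense subspace by the limit of the correction process, check it is linear and bounded there, and extend uniquely to $B_2$.

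The main obstacle — really the only subtle point — is ensuring that the splitting can be chosen \emph{globally} and (if one wants it) \emph{linearly}, rather than just pointwise. The pointwise choice of a bounded preimage for each $y$ is immediate from openness, but patching these into a single continuous map requires either the orthonormal-basis structure of $p$-adic Banach spaces or the convergent iteration; I would make sure to state clearly which ambient hypotheses (completeness of $\mb{Q}_p$, hence the open mapping theorem, and the existence of orthogonal bases) are being used. For the application in this paper only continuity of $s$ is needed, so even a crude construction — pick a bounded set-theoretic section on a countable dense subset via openness, then take a limit — suffices, and I would remark that linearity comes for free from the construction if desired. Once the bounded linear (or merely continuous) section exists, there is nothing more to do.
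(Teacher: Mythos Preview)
Your proposal is correct: the open mapping theorem over $\mb{Q}_p$ together with the existence of orthonormal (Schauder) bases for $p$-adic Banach spaces over a discretely valued field gives a bounded linear section, and the successive-approximation variant you describe works equally well. The paper itself does not give an argument at all --- it simply cites Colmez (\cite{colmez98}, Proposition I.1.5, (iii)) --- so your sketch is strictly more informative than what appears in the text. The argument you outline is in fact the standard one (and essentially what one finds in Colmez or in Schneider's \emph{Nonarchimedean Functional Analysis}), so there is no substantive divergence in method, only in the level of detail supplied. One small remark: you invoke spherical completeness of $\mb{Q}_p$, but what is actually used for the orthonormal-basis result is that the valuation is discrete; spherical completeness is a consequence but not the operative hypothesis.
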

\begin{proof}
  See \cite{colmez98}, Proposition I.1.5, (iii).
\end{proof}

We define the following set $X$, which will be used in the next few statements:
\label{setx}
\[
  X := \{ (x,y,z) \in \TDlog[1/t] \oplus \TDlog[1/t] \oplus 
  \We(D) / \Wplusdr(D) |~ N(y) = (p\varphi - 1)(x) \}.
\]

\begin{lem}
  \label{lembkeins}
  Let $D$ be a $(\varphi, \Gamma)$-module over $\Bdaggerrigk$.  We
  assume $D$ is pure of slope $\mu(D) \leq 0$.  Then one has the
  following exact sequences of $G_K$-modules (cf. (\ref{mapexact}) for
  the definition of $\beta$):
 \begin{align*}
    0 \lra \Wpluse(D) \overset{f}{\lra} \We(D) & \overset{g}{\lra} 
    \Wdr(D) / \Wplusdr(D)\lra 0 \\
     x &\longmapsto \beta(x)
  \end{align*}
  \begin{align*}
    0 \lra \Wpluse(D) \overset{f}{\lra} \TD[1/t]
    &\overset{g}{\lra} \TD[1/t] \oplus
    \Wdr(D) / \Wplusdr(D) \lra 0 \\
    x &\longmapsto ((\varphi - 1)(x), \beta(x))
  \end{align*}
  \begin{align*}
    0 \lra  \Wpluse(D) \overset{f}{\lra} \TDlog[1/t] &\overset{g}{\lra} 
    X \lra 0 \\
    x &\longmapsto (N(x), (\varphi - 1)(x), \beta(x)) \\
  \end{align*}
  Additionally, each $g$ above 
  admits a continuous (not necessarily $G_K$-equivariant) splitting.
\end{lem}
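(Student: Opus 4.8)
The plan is to assemble the three sequences from Theorem~\ref{thmx0x1} together with a single surjectivity statement for $\varphi-1$. The first sequence is essentially a reformulation of Theorem~\ref{thmx0x1}: by construction the map $\beta\colon\We(D)\to\Wdr(D)/\Wplusdr(D)$ has kernel $\We(D)\cap\Wplusdr(D)=X^0(W)$, which Theorem~\ref{thmx0x1}(1) identifies with $\Wpluse(D)$, while the hypothesis that $D$ is pure of slope $\mu(D)\le 0$ forces $X^1(W)=\Wdr(D)/(\We(D)+\Wplusdr(D))=0$ by Theorem~\ref{thmx0x1}(2), i.e.\ $\beta$ is onto.

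The technical core for the other two rows is: if $D$ is pure of non-positive slope, then $\varphi-1$ is surjective on $\TD[1/t]$. I would obtain this by twisting. For $m\ge 0$ the rank-one $(\varphi,\Gamma_K)$-module $\Bdaggerrigk\cdot t^{-m}$ over $\Bdaggerrigk$, with $\varphi(t^{-m})=p^{-m}t^{-m}$ and $\gamma(t^{-m})=\chi(\gamma)^{-m}t^{-m}$, is pure of slope $-m$; hence $D_m:=D\otimes_{\Bdaggerrigk}\Bdaggerrigk t^{-m}$ is a $(\varphi,\Gamma_K)$-module pure of slope $\mu(D)-m\le 0$ with $\Btdaggerrig\otimes_{\Bdaggerrigk}D_m=t^{-m}\TD\subset\TD[1/t]$. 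Applying Theorem~\ref{thmx0x1} to the $B$-pair $\W(D_m)$ yields $t^{-m}\TD/(1-\varphi)\cong X^1(\W(D_m))=0$, and since $\TD[1/t]=\bigcup_{m\ge 0}t^{-m}\TD$ the surjectivity follows. I would also record the elementary behaviour of $N$ on $\TDlog[1/t]=\TD[1/t][\log\pi]$: as $N$ vanishes on $\TD[1/t]$ and $N(\log\pi)=-p/(p-1)$, one reads off $\ker N=\TD[1/t]$, surjectivity of $N$ on $\TDlog[1/t]$, and $\ker N\cap\ker(\varphi-1)=(\TD[1/t])^{\varphi=1}=\We(D)$.

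Granting these, the second and third rows are diagram chases. For the second row the kernel of $x\mapsto((\varphi-1)x,\beta(x))$ is $\We(D)\cap\Wplusdr(D)=\Wpluse(D)$; and given $(a,c)$ one picks $x_0$ with $(\varphi-1)x_0=a$, then $x_1\in\We(D)$ with $\beta(x_1)=c-\beta(x_0)$ (first row), so that $x_0+x_1$ is a preimage. For the third row, $N\circ(\varphi-1)=(p\varphi-1)\circ N$ shows $g$ maps into $X$; the kernel of $g$ is $\ker N\cap\ker(\varphi-1)\cap\ker\beta=\We(D)\cap\Wplusdr(D)=\Wpluse(D)$; and given $(x,y,z)\in X$ one chooses $w$ with $N(w)=x$, observes that $(\varphi-1)(w)-y\in\ker N=\TD[1/t]$ because $N(y)=(p\varphi-1)(x)$, subtracts $u\in\TD[1/t]$ with $(\varphi-1)u=(\varphi-1)(w)-y$ to fix the first two coordinates, and finally adds $v\in\We(D)$ with $\beta(v)=z-\beta(w-u)$ (first row) to obtain the required preimage.

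Finally, each $g$ is a continuous surjection of $p$-adic LF-spaces---$\TD[1/t]$, $\TDlog[1/t]$ and $X$ are countable locally convex inductive limits of Fr\'echet spaces, $\We(D)$ is Fr\'echet, and $\Wdr(D)/\Wplusdr(D)$ is an LB-space---hence open by the open mapping theorem, and a continuous $\mb{Q}_p$-linear section results from Proposition~\ref{propsplit} by choosing compatible sections along the defining filtrations. The two genuinely delicate points are this surjectivity of $\varphi-1$ after inverting $t$ and, on the topological side, upgrading Proposition~\ref{propsplit} from the Banach to the LF-setting; once one has also checked that $\beta$ extends to $\TD[1/t]$ and $\TDlog[1/t]$ through the maps $\iota_n$ and restricts to the $\beta$ of the first row on $\We(D)$, the rest is bookkeeping.
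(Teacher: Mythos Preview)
Your exactness arguments match the paper's: the first row is Theorem~\ref{thmx0x1}, the third row reduces to the second via $(\TDlog[1/t])^{N=0}=\TD[1/t]$ and surjectivity of $N$, and the second row comes down to surjectivity of $\varphi-1$ on $\TD[1/t]$. The paper simply quotes this last fact for arbitrary $\varphi$-modules over $\Btdaggerrig$, whereas your twist-by-$t^{-m}$ argument, reducing to $X^1(\W(D_m))=0$ via the slope hypothesis, is a correct and self-contained alternative that exploits Theorem~\ref{thmx0x1} a second time.

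Where you diverge more substantially is the continuous splitting. The paper does \emph{not} try to split $g$ globally as a surjection of LF-spaces; instead it builds the section coordinatewise, observing that $1-\varphi$ on $\TD[1/t]$ admits a continuous section (extracted from the proof of \cite{kedlaya05}, Proposition~2.1.5), that $N$ on $\TDlog[1/t]$ obviously does (it is essentially a shift in the $\log\pi$-variable), and that $\beta$ is handled by Proposition~\ref{propsplit}. This sidesteps precisely the point you flag as delicate: you would need a genuine LF-space analogue of Proposition~\ref{propsplit}, and ``choosing compatible sections along the defining filtrations'' is not automatic, since Banach-space sections at successive levels need not be compatible. Your approach is cleaner in spirit, but the paper's component-by-component splitting is what actually closes the argument without further topological work.
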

\begin{proof}
  The exactness of the first sequence is tautological, see Theorem
  \ref{thmx0x1}.  For the second recall that for a $\varphi$-module
  $M$ over $\Btdaggerrig$ the map $\varphi - 1 : M[1/t] \ra M[1/t]$ is
  surjective. This implies the exactness of the second sequence. For
  the exactness of the last sequence first observe that the map $g$ is
  well-defined. Recall that $N : \widetilde{D}_{\textrm{log}} \ra
  \widetilde{D}_{\textrm{log}}$ is extended linearly from the operator
  $N$ on $\Btdaggerlog$, so that
  \begin{equation}
    \label{eqN}
    N( \sum_{i \geq 0} d_i \log^i \pi) = - \sum_{i \geq 1} i \cdot d_i
    \log^{i-1} \pi
  \end{equation}
  for $\sum_{i \geq 0} d_i \log^i \pi \in \widetilde{D}_{\log}$.\!
  The exactness at $\TDlog[1/t]$ is clear since  from
    (\ref{eqN}) one has  $(\TDlog[1/t])^{N = 0} = \TD[1/t]$,
  so we only have to check the exactness at $X$.
  The surjectivity of $N: \Btdaggerlog[1/t] \ra \Btdaggerlog[1/t]$,
    which again follows from (\ref{eqN}),
  implies that
  it is enough to check that if $(0, y, z) \in X$ 
  then there exists $x' \in \TD[1/t]$ such that $g(x') = (0,y,z)$, which
  is nothing but exactness of the second sequence.

  The splitting property follows from Proposition \ref{propsplit} for the first
  sequence. For the remaining ones one has to observe that continuous 
  surjections $1 - \varphi$ and $N$ on $\widetilde{D}[1/t]$ have continuous 
  sections, which follows for example from the proof of Proposition 
  2.1.5 of \cite{kedlaya05} for the first map, and is obvious for the monodromy 
operator.
\end{proof}

\begin{lem}
  \label{lembkzwei}
  Let $D$ be a $(\varphi, \Gamma)$-module over $\Bdaggerrigk$.
  We assume $D$ is pure of slope $\mu(D) > 0$.
  Then one has the following exact sequences of $G_K$-modules
  (cf. (\ref{mapexact}) for the definition of $\beta$):
 \begin{align*}
    0 \lra \We(D) &\overset{f}{\lra} 
    \Wdr(D) / \Wplusdr(D) \overset{g}{\lra} \Wdr(D) / (\We(D) + \Wplusdr(D)) \lra 0 \\
     x &\longmapsto \overline{x}
  \end{align*}
 \begin{align*}
    0 \lra \TD[1/t] \overset{f}{\lra} \TD[1/t] \oplus & 
    \Wdr(D) / \Wplusdr(D) \overset{g}{\lra}  \Wdr(D) / (\We(D) + \Wplusdr(D)) 
    \lra 0 \\
    &f: x \longmapsto ((1 - \varphi)(x), \overline{x}) \\
    &g: (x,y) \longmapsto \overline{y}
 \end{align*}
 \begin{align*}
    0 \lra \TDlog[1/t] & \overset{f}{\lra} X
    \overset{g}{\lra} \Wdr(D) / (\We(D) + \Wplusdr(D)) \lra 0 \\
    &f: x \longmapsto (N(x), (\varphi - 1)(x), \overline{x}) \\
    &g: (x,y,z) \longmapsto \overline{z}
 \end{align*}
  Additionally, each $g$ above 
  admits a continuous (not necessarily $G_K$-equivariant) splitting.
\end{lem}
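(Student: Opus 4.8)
The plan is to mirror the structure of the proof of Lemma~\ref{lembkeins}, exploiting that the slope~$>0$ case is ``dual'' to the slope~$\leq 0$ case via Theorem~\ref{thmx0x1}. First I would recall that since $D$ is pure of slope $\mu(D) > 0$, Theorem~\ref{thmx0x1}(b) gives $X^0(W) = \Wpluse(D) = 0$, so the natural map $\We(D) \to \Wdr(D)/\Wplusdr(D)$ is injective with cokernel $X^1(W) = \Wdr(D)/(\We(D) + \Wplusdr(D))$; this is precisely the exactness of the first displayed sequence. For the second sequence, the key input is again that for a $\varphi$-module $M$ over $\Btdaggerrig$ the map $1 - \varphi: M[1/t] \to M[1/t]$ is surjective, and moreover that for $\mu(D) > 0$ one has $(\TD[1/t])^{\varphi = 1} = \Wpluse(D) = 0$, so $1 - \varphi$ is in fact \emph{bijective} on $\TD[1/t]$. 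Thus given $(x', y') \in \TD[1/t] \oplus \Wdr(D)/\Wplusdr(D)$ with $g(x',y') = \overline{y'} = 0$, we have $y' \in \We(D)$ (by exactness of the first sequence), and then setting $x = (1-\varphi)^{-1}(x' - y')$ — viewing $y' \in \We(D) \subset \TD[1/t]$ via $\beta$ — shows $(x',y')$ lies in the image of $f$; injectivity of $f$ follows from injectivity of $1-\varphi$ together with injectivity of $\beta$ on $\We(D)$.

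For the third sequence the argument is the same bootstrap as in Lemma~\ref{lembkeins}: one checks $f$ is well-defined using that $N(N(x)) = 0$ composed appropriately and that $(p\varphi - 1)$ intertwines correctly with $N$ on $\TDlog[1/t]$, using formula~(\ref{eqN}); exactness at $\TDlog[1/t]$ reduces to $(\TDlog[1/t])^{N=0} = \TD[1/t]$ combined with injectivity of $f$ for the second sequence; surjectivity of $g: X \to \Wdr(D)/(\We(D) + \Wplusdr(D))$ follows from surjectivity of the corresponding $g$ in the first sequence by lifting the first two coordinates arbitrarily (using surjectivity of $N$ on $\Btdaggerlog[1/t]$ to adjust); and exactness in the middle reduces, exactly as before, to the statement that if $(x,y,z) \in X$ with $\overline{z} = 0$ then $(x,y,z)$ comes from $\TDlog[1/t]$, which after subtracting an element of the image of the second sequence's $f$ becomes the claim that $(0, y, z) \in X$ with $\overline{z}=0$ lifts — and this is exactness of the second sequence here.

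For the splitting statement, I would invoke Proposition~\ref{propsplit}: all the spaces involved ($\We(D)$, $\TD[1/t]$, $\TDlog[1/t]$, $\Wdr(D)/\Wplusdr(D)$, $X$, and the quotient $\Wdr(D)/(\We(D)+\Wplusdr(D))$) are $p$-adic Banach spaces (or can be exhausted by such, with the maps being continuous surjections of Banach spaces at each finite level), so any continuous linear surjection among them admits a continuous section. One subtlety: $X$ and $\TDlog[1/t]$ are Fr\'echet/LF rather than Banach, so strictly one should observe that $\Wdr(D)/(\We(D)+\Wplusdr(D))$ is a finite-length object (an almost $\mb{C}_p$-representation, hence finite over an appropriate ring) and that the relevant maps restrict to continuous surjections of Banach spaces after passing to a suitable quotient, or else cite the same argument as in Lemma~\ref{lembkeins} that the continuous sections of $1-\varphi$ and $N$ on $\widetilde{D}[1/t]$ (from the proof of Proposition 2.1.5 of \cite{kedlaya05} and the obvious splitting of $N$) combine to give the section.

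I expect the main obstacle to be the verification that $f$ in the third sequence is well-defined, i.e.\ that $(N(x), (\varphi-1)(x), \beta(x)) \in X$ for all $x \in \TDlog[1/t]$ — this requires carefully tracking the monodromy relation $N\varphi = p\varphi N$ through the definition of $X$ (whose defining condition is $N(y) = (p\varphi - 1)(x)$) using~(\ref{eqN}), exactly the point that was already somewhat delicate in Lemma~\ref{lembkeins}. The rest is a diagram chase organized around the three exact sequences of $X^0, X^1$ from Theorem~\ref{thmx0x1} and the bijectivity of $1-\varphi$ on $\TD[1/t]$ in the positive-slope case.
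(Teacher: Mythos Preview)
Your proposal is correct and follows essentially the same approach as the paper: the paper's proof consists of exactly two sentences, deducing the first sequence tautologically from Theorem~\ref{thmx0x1} and declaring the rest ``analogous to the previous proposition,'' which is precisely the strategy you have outlined (including the splitting via Proposition~\ref{propsplit} and the sections of $1-\varphi$ and $N$). One small slip: your explicit candidate $x = (1-\varphi)^{-1}(x' - y')$ in the middle-exactness check for the second sequence does not literally satisfy $f(x) = (x',y')$; the correct chase uses bijectivity of $1-\varphi$ together with the first sequence to adjust by an element of $\We(D)$, but this is a routine fix within the same argument.
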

\begin{proof}
  The exactness of the first sequence is again tautological by Theorem
  \ref{thmx0x1}. The rest of the proof follows analoguously to the
  previous proposition.
\end{proof}

Putting everything together, we also see:
\begin{cor}
  \label{corbpairphigamma}
  Let $D$ be a $(\varphi, \Gamma_K)$-module over $\Bdaggerrigk$. Then one
  has the following exact sequence of $G_K$-modules:
  \begin{align*}
    0 \lra X^0(\TD) \overset{i}{\lra} &\TDlog[1/t] \overset{f}{\lra}
    X \overset{p}{\lra} X^1(\TD) \lra 0 \\
    i&: x \longmapsto x \\
    f&: x \longmapsto (N(x), (\varphi - 1)(x), \overline{x}) \\
    p&: (x,y,z) \longmapsto \overline{z}
  \end{align*}
\end{cor}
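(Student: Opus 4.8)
The plan is to decompose $D$ according to its slopes and to splice the third exact sequences of Lemma~\ref{lembkeins} and Lemma~\ref{lembkzwei}. By the slope filtration theorem, let $D_1 \subseteq D$ be the step of the slope filtration of $D$ comprising all graded pieces of slope $\leq 0$; then all slopes of $D_1$ are $\leq 0$ and all slopes of $D_2 := D/D_1$ are $> 0$, and since $D_1, D, D_2$ are free over $\Bdaggerrigk$ the exact sequence $0 \lra D_1 \lra D \lra D_2 \lra 0$ is split over $\Bdaggerrigk$. The proofs of Lemmas~\ref{lembkeins} and \ref{lembkzwei} use purity only through the vanishing of $X^1(\TD)$ (resp. $X^0(\TD)$), which by Theorem~\ref{thmx0x1} holds whenever all slopes are $\leq 0$ (resp. $> 0$); hence those lemmas apply to $D_1$ and $D_2$. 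Writing $M := \TDlog[1/t]$, $M_i := \Btdaggerlog[1/t] \otimes_{\Bdaggerrigk} D_i$, and $X(D_i)$ for the set $X$ attached to $D_i$, Lemma~\ref{lembkeins} applied to $D_1$ (with $X^0(\widetilde{D_1}) = \Wpluse(D_1)$ and $X^1(\widetilde{D_1}) = 0$) gives the exact sequence
\[
  0 \lra X^0(\widetilde{D_1}) \lra M_1 \xrightarrow{\,f_1\,} X(D_1) \lra 0 ,
\]
and Lemma~\ref{lembkzwei} applied to $D_2$ (with $X^0(\widetilde{D_2}) = 0$ and $X^1(\widetilde{D_2}) = \Wdr(D_2)/(\We(D_2) + \Wplusdr(D_2))$) gives
\[
  0 \lra M_2 \xrightarrow{\,f_2\,} X(D_2) \xrightarrow{\,p_2\,} X^1(\widetilde{D_2}) \lra 0 .
\]

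Next I would check that the functors involved are exact on $0 \lra D_1 \lra D \lra D_2 \lra 0$. Tensoring this split sequence with $\Btdaggerlog[1/t]$ yields the exact row $0 \lra M_1 \lra M \lra M_2 \lra 0$. Via Berger's equivalence it also gives a short exact sequence of $B$-pairs $0 \lra \W(D_1) \lra \W(D) \lra \W(D_2) \lra 0$; since $X^0(-)$ and $X^1(-)$ are the kernel and cokernel of the natural map $W_e \ra W_{\textrm{dR}}/W^+_{\textrm{dR}}$, the snake lemma produces the exact sequence
\[
  0 \to X^0(\widetilde{D_1}) \to X^0(\TD) \to X^0(\widetilde{D_2}) \to X^1(\widetilde{D_1}) \to X^1(\TD) \to X^1(\widetilde{D_2}) \to 0 ,
\]
so the vanishing of $X^0(\widetilde{D_2})$ and $X^1(\widetilde{D_1})$ gives canonical identifications $X^0(\TD) = X^0(\widetilde{D_1})$ and $X^1(\TD) = X^1(\widetilde{D_2})$. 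Finally, $X(-)$ is the kernel of the natural map $(x,y,z) \mapsto N(y) - (p\varphi - 1)(x)$ from $\widetilde{(-)}_{\textrm{log}}[1/t] \oplus \widetilde{(-)}_{\textrm{log}}[1/t] \oplus (\Wdr(-)/\Wplusdr(-))$ to $\widetilde{(-)}_{\textrm{log}}[1/t]$, which is surjective because $N$ is surjective there; being the kernel of a surjection between functors that are exact on our sequence (here one uses that $\We(-)$ takes $0 \lra D_1 \lra D \lra D_2 \lra 0$ to a short exact sequence), it too gives the exact row $0 \lra X(D_1) \lra X(D) \lra X(D_2) \lra 0$.

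Putting this together, the two-term complex $[M \xrightarrow{f} X(D)]$ sits in a short exact sequence of complexes with $[M_1 \xrightarrow{f_1} X(D_1)]$ and $[M_2 \xrightarrow{f_2} X(D_2)]$ — the vertical maps are the natural ones and commute with $f, f_1, f_2$ — and its long exact cohomology sequence, together with the surjectivity of $f_1$ and the injectivity of $f_2$, gives $\ker f \cong X^0(\widetilde{D_1}) \cong X^0(\TD)$ and $\mathrm{coker}\, f \cong X^1(\widetilde{D_2}) \cong X^1(\TD)$. It then remains to identify these with the maps $i$ and $p$ of the statement. For $\ker f$: if $f(x) = 0$ then $N(x) = 0$, so $x \in \TD[1/t]$ by $(\TDlog[1/t])^{N=0} = \TD[1/t]$ (see (\ref{eqN})); then $(\varphi - 1)(x) = 0$ forces $x \in \We(D)$ and $\overline x = 0$ forces $x \in \We(D) \cap \Wplusdr(D) = X^0(\TD) = \Wpluse(D)$, and conversely $\Wpluse(D) \subseteq \ker f$, so $\ker f$ is exactly the image of $i : x \mapsto x$. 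For $\mathrm{coker}\, f$: the induced surjection $X(D) \to \mathrm{coker}\, f$ is, after the identifications above, the composite $X(D) \to X(D_2) \xrightarrow{p_2} X^1(\widetilde{D_2}) = X^1(\TD)$, which sends $(x,y,z)$ to the class of $z$ in $\Wdr(D)/(\We(D) + \Wplusdr(D)) = X^1(\TD)$, that is, to $p(x,y,z) = \overline z$.

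The main obstacle is not a single deep ingredient — everything rests on the two Bloch--Kato lemmas, the slope filtration theorem, and Berger's equivalence — but rather this last compatibility check: one has to make sure that the abstract isomorphisms produced by the snake lemma coincide on the nose with the explicit maps $i : x \mapsto x$ and $p : (x,y,z) \mapsto \overline z$, which is the routine element chase just carried out, using the descriptions $X^0(\TD) = \Wpluse(D) = \We(D) \cap \Wplusdr(D)$ and $X^1(\TD) = \Wdr(D)/(\We(D) + \Wplusdr(D))$ from Theorem~\ref{thmx0x1}.
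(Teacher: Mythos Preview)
Your argument is correct and is precisely the kind of ``putting everything together'' that the paper leaves implicit: the paper gives no proof beyond that phrase, and your slope decomposition into $D_1$ (all slopes $\leq 0$) and $D_2$ (all slopes $> 0$), followed by applying Lemmas~\ref{lembkeins} and~\ref{lembkzwei} to these pieces and splicing via the snake lemma, is the natural way to carry this out. Your observation that those lemmas only require the vanishing of $X^1$ (resp.\ $X^0$) rather than purity is exactly right in view of Theorem~\ref{thmx0x1}(b), and the exactness checks for $M(-)$, $X(-)$, $\We(-)$ on the split sequence are all routine.
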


Following Nakamura, we now define for a $B$-pair $W = (W_e,
W^+_{\textrm{dR}})$ the following complex:
\[
  C^\bullet(G_K, W) := \textrm{cone}(C^\bullet(G_K, W_e) \lra
  C^\bullet(G_K, W_{\textrm{dR}} / W^+_{\textrm{dR}})),
\]
which is induced by the canonical inclusion $W_e \overset{i}{\lra}
W_{\textrm{dR}}$. That is, we have
\[
  C^i(G_K, W) = C^i(G_K, W_e) \oplus C^{i-1}(G_K, W_{\textrm{dR}} /
  W^+_{\textrm{dR}})
\] 
with differentials
\[
  \delta_C^i: C^i(G_K, W) \ni (a, b) \mapsto (\delta_{C^i(G_K, W_e)}^i(a), 
  i(a) - \delta_{C^i(G_K, W_e)}^{i-1}(b))
\]
More generally, one may define the following complexes:
\begin{align*}
  C^\bullet(G_K, W') &:= \textrm{cone}(C^\bullet(G_K, \TD[1/t]) 
  \xymatrix{\ar[r]^{(1-\varphi, i)} & } C^\bullet(G_K, \TD[1/t] \oplus 
  W_{\textrm{dR}} / W^+_{\textrm{dR}})), \\
  C^\bullet(G_K, W'') &:= \textrm{cone}(C^\bullet(G_K, \TDlog[1/t]) 
  \xymatrix{ \ar[r]^{(N, 1-\varphi, i)} & } C^\bullet(G_K, X)),
\end{align*}

We recall:

\begin{lem}
  \label{lemcone}
  Let $0 \ra A \overset{f}{\ra} B \overset{g}{\ra} C \ra 0$ be a short exact
  sequence of continuous $G_K$-modules such that $g$ admits a continuous, but 
  not necessarely $G_K$-equivariant, splitting. 
  We write (by abuse of notation)
  \begin{align*}
    \textrm{cone}(g) &:= \textrm{cone}(C^\bullet(G_K, B) 
    \overset{g_*}{\lra} C^\bullet(G_K, C)) \\
    \textrm{cone}(f) &:= \textrm{cone}(C^\bullet(G_K, A) 
    \overset{f_*}{\lra} C^\bullet(G_K, B)).
  \end{align*}
  \begin{enumerate}
    \item The natural map of complexes
      \[
        \xymatrix{ C^\bullet(G_K, A): \ar[d] & C^0(G_K, A) \ar[d]^{f}
          \ar[r] & C^1(G_K, A) \ar[r] \ar[d]^{(f,0)} & \ldots
          \\ 
          \textrm{cone}(g): & C^0(G_K,
          B) \ar[r] & C^1(G_K, B) \oplus C^0(G_K, C) \ar[r] & \ldots }
      \]
      is a quasi-isomorphism that is compatible with the long exact sequence, i.e.
      the following diagram is commutative:
      \[
        \xymatrix@C=15pt{
          \ldots \ar[r] & H^i(G_K, A) \ar[r] \ar[d] & H^i(G_K, B) \ar[r] \ar@{=}[d] & 
          H^i(G_K, C) \ar@{=}[d] \ar[r]^\delta  & H^{i+1}(G_K, A) \ar[r] \ar[d] & \ldots \\
          \ldots \ar[r] & H^i(\textrm{cone}(g)) \ar[r] & H^i(G_K, B) \ar[r] & H^i(G_K, C) 
          \ar[r]^\delta  & H^{i+1}(\textrm{cone}(g)) \ar[r] & \ldots
        }
      \]
    \item The natural map of complexes
      \[
        \xymatrix{
          C^\bullet(G_K, C)[-1] : & 0 = C^{-1}(G_K, C) \ar[r] & C^0(G_K, C) \ar[r] 
          & \ldots \\
          \textrm{cone}(f): \ar[u]  & C^0(G_K, A) \ar[r] \ar[u]^0 & 
          C^1(G_K, A) \oplus  C^0(G_K, B) \ar[u]^{(0, g)} \ar[r] & \ldots
        }
      \]
      is a quasi-isomorphism that is compatible with the long exact sequence, i.e.
      the following diagram is commutative:
      \[
        \xymatrix@C=15pt{
          \ldots \ar[r] & H^i(G_K, A) \ar[r] \ar@{=}[d] & H^i(G_K, B) \ar[r] \ar@{=}[d] & 
          H^i(G_K, C) \ar[r]^\delta  & H^{i+1}(G_K, A) \ar[r] \ar@{=}[d] & \ldots \\
          \ldots \ar[r] & H^{i}(G_K, A) \ar[r] & H^i(G_K, B) \ar[r]^\delta & 
          H^{i+1}(\textrm{cone}(f)) \ar[u] \ar[r]^\delta  & H^{i+1}(G_K, A) \ar[r] & \ldots
        }
      \]
  \end{enumerate}
\end{lem}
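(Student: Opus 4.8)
The statement is the standard homological lemma identifying the mapping cone of a map induced by one leg of a short exact sequence with the cohomology of the other term, together with the compatibility of the respective long exact sequences. I would treat the two parts in parallel, as they are dual in a precise sense. For part (a): since $g$ admits a continuous set-theoretic splitting, the sequence $0 \to C^\bullet(G_K, A) \to C^\bullet(G_K, B) \to C^\bullet(G_K, C) \to 0$ of complexes is (degreewise) exact, so $C^\bullet(G_K, A)$ is quasi-isomorphic to the shifted cone $\textrm{cone}(g_*)[-1]$; unwinding the cone convention used in the excerpt (where $C^i(\textrm{cone}(g)) = C^{i+1}(G_K, B) \oplus C^i(G_K, C)$ with the displayed differential) shows the natural comparison map $x \mapsto f(x)$ in degree $0$ and $x \mapsto (f(x), 0)$ in higher degrees is exactly this quasi-isomorphism. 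Part (b) is the mirror image: the same short exact sequence of complexes shows $C^\bullet(G_K, C)[-1]$ is quasi-isomorphic to $\textrm{cone}(f_*)$, and the map $0$ in degree $0$, $(0,g)$ in higher degrees, realizes the comparison.

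The mechanical core is therefore: (i) check that the two displayed vertical maps are genuine maps of complexes (i.e. commute with the differentials), which is a direct computation using the explicit cone differential $\delta_C^i(a,b) = (\delta^i a,\ i(a) - \delta^{i-1} b)$ and the cochain differentials $\delta^q$ from the excerpt; (ii) check that each is a quasi-isomorphism, which follows from the five lemma applied to the long exact cohomology sequences, or more directly by observing that the cone of the comparison map is the cone of an exact sequence of complexes and hence acyclic; and (iii) verify the commutativity of the two displayed ladder diagrams relating the long exact sequence of the original short exact sequence to the long exact sequence of the cone — here one must be careful that the connecting homomorphism $\delta$ appearing in the cone's long exact sequence is, under the identifications, literally the same map (up to sign, depending on the cone sign convention) as the connecting map of the original sequence. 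I would phrase this as: the long exact sequence of the triangle $A \to B \to \textrm{cone}(f) \to A[1]$ and the long exact sequence of $0 \to A \to B \to C \to 0$ agree because the quasi-isomorphism $C[-1] \simeq \textrm{cone}(f)$ intertwines the respective boundary maps, which one can see at the cochain level.

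The one genuinely substantive input — already isolated in the excerpt as a hypothesis — is the existence of a \emph{continuous} splitting of $g$; without it the sequence of continuous-cochain complexes need not be exact (continuous cochains into $A$ need not surject onto pairs whose image in $C^\bullet(G_K,C)$ vanishes). Given the splitting, exactness in each degree is immediate, so this is not really an obstacle here but rather a bookkeeping point to flag. I expect the only place requiring genuine care is sign-tracking in the cone differential and in the boundary map, so that the claimed ladder diagrams commute \emph{on the nose} rather than up to sign; this is routine but error-prone, and I would do it once in part (a) and then invoke the evident symmetry (reversing the roles of $f$ and $g$, and of ``cone'' and ``shifted cone'') for part (b). I would then cite a standard reference for the general cone formalism (e.g. Weibel) and present only the explicit verification of the two comparison maps and one boundary-map computation.

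\begin{proof}
Since $g$ admits a continuous splitting, the sequence of complexes of continuous cochains
\[
  0 \lra C^\bullet(G_K, A) \overset{f_*}{\lra} C^\bullet(G_K, B) \overset{g_*}{\lra} C^\bullet(G_K, C) \lra 0
\]
is exact in each degree. We prove (a); (b) follows by the same argument with the roles of $f$ and $g$ (and of ``cone'' and ``shifted cone'') interchanged.

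\emph{The comparison map is a map of complexes.} With the convention $C^i(\textrm{cone}(g)) = C^{i+1}(G_K, B) \oplus C^i(G_K, C)$ and differential $\delta_C^i(a,b) = (\delta^{i+1} a,\ g_*(a) - \delta^i b)$, consider the map $\varphi_0 = f\colon C^0(G_K, A) \to C^0(G_K, B)$ and $\varphi_i = (f_*, 0)\colon C^i(G_K, A) \to C^{i+1}(G_K, B) \oplus C^i(G_K, C)$ for $i \geq 1$. In degree $\geq 1$ one has $\delta_C^i(\varphi_i(a)) = \delta_C^i(f_*(a), 0) = (\delta^{i+1} f_*(a),\ g_* f_*(a)) = (f_*(\delta^i a), 0) = \varphi_{i+1}(\delta^i a)$, using $g_* f_* = 0$ and that $f_*$ commutes with $\delta$; the degree-$0$ compatibility is checked similarly. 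Hence $\varphi_\bullet$ is a morphism of complexes.

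\emph{It is a quasi-isomorphism.} The mapping cone of $\varphi_\bullet$ is, up to the usual shift, the total complex of the double complex built from $f_*\colon C^\bullet(G_K, A) \to C^\bullet(G_K, B)$ and $g_*\colon C^\bullet(G_K, B) \to C^\bullet(G_K, C)$; since the rows of that double complex are the short exact sequences above, the total complex is acyclic, so $\varphi_\bullet$ is a quasi-isomorphism. (Equivalently, apply the five lemma to the two long exact sequences appearing in the statement.)

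\emph{Compatibility of the long exact sequences.} The long exact cohomology sequence of the distinguished triangle $C^\bullet(G_K, A) \to C^\bullet(G_K, B) \to \textrm{cone}(g) \to C^\bullet(G_K, A)[1]$ reads, after using the quasi-isomorphism $\varphi_\bullet$ to identify $H^i(\textrm{cone}(g))$ in the appropriate way,
\[
  \ldots \ra H^i(\textrm{cone}(g)) \ra H^i(G_K, B) \ra H^i(G_K, C) \overset{\delta}{\ra} H^{i+1}(\textrm{cone}(g)) \ra \ldots,
\]
and the boundary map $\delta$ is computed at the cochain level by exactly the recipe of the snake lemma for the short exact sequence of complexes above; that recipe is the definition of the connecting homomorphism of $0 \to A \to B \to C \to 0$. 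Chasing this through $\varphi_\bullet$ gives the commutative ladder asserted in (a). Part (b) is obtained by the dual argument, realizing $C^\bullet(G_K, C)[-1]$ as quasi-isomorphic to $\textrm{cone}(f_*)$ via the map which is $0$ in degree $0$ and $(0, g_*)$ in higher degrees; the verification that this is a morphism of complexes and a quasi-isomorphism, and the compatibility with boundary maps, are identical after swapping the roles of $f$ and $g$. For the general formalism of mapping cones and the associated long exact sequences we refer to \cite{tate76}, \S 2.
\end{proof}
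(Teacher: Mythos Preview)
Your proposal is correct in spirit and considerably more detailed than the paper's own proof, which reads in its entirety: ``This is left as an exercise, see for example \cite{weibel}, 1.5.8.'' So there is nothing to compare at the level of strategy; you have simply carried out the exercise the author declined to write up.

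One bookkeeping slip: you state the convention $C^i(\textrm{cone}(g)) = C^{i+1}(G_K, B) \oplus C^i(G_K, C)$, but the paper (see the definition of $C^\bullet(G_K, W)$ just before the lemma, and the displayed diagram in part (a) itself) uses $C^i(\textrm{cone}(g)) = C^i(G_K, B) \oplus C^{i-1}(G_K, C)$. Your map $\varphi_i$ should therefore land in $C^i(G_K, B) \oplus C^{i-1}(G_K, C)$, matching the diagram in the statement. Your verification that $\varphi_\bullet$ is a chain map goes through unchanged once the indices are corrected, so this is a cosmetic fix rather than a gap. Also, the paper's reference for the cone formalism is \cite{weibel} rather than \cite{tate76}; the latter is cited only for the existence of the long exact sequence in continuous cohomology.
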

\begin{proof}
  This is left as an exercise, see for example \cite{weibel}, 1.5.8.
\end{proof}

\begin{lem}
  We have canonical quasi-isomorphisms 
  \[
    C^\bullet(G_K, W) \cong C^\bullet(G_K, W') \cong C^\bullet(G_K, W'').
  \]
\end{lem}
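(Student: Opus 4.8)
The plan is to build the two quasi-isomorphisms directly from the short exact sequences established in Lemmas~\ref{lembkeins} and~\ref{lembkzwei}, by applying Lemma~\ref{lemcone} to each of the three sequences appearing there, and then to match up the resulting cones of $C^\bullet(G_K,f)$ and $C^\bullet(G_K,g)$ with the complexes $C^\bullet(G_K,W)$, $C^\bullet(G_K,W')$, $C^\bullet(G_K,W'')$. One must however split the argument according to the sign of the slopes, because the roles of $f$ and $g$ are interchanged between the two lemmas, and then reduce the general case via the slope filtration theorem. So the first step is: assume $D$ is pure of slope $\leq 0$. Then the three sequences of Lemma~\ref{lembkeins} all have the shape $0 \to \Wpluse(D) \xrightarrow{f} E \xrightarrow{g} F \to 0$ with $\Wpluse(D) = X^0(\TD)$, where $E$ runs through $\We(D)$, $\TD[1/t]$, $\TDlog[1/t]$ and $F$ runs through $\Wdr(D)/\Wplusdr(D)$, $\TD[1/t]\oplus \Wdr(D)/\Wplusdr(D)$, $X$. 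Since each $g$ admits a continuous splitting, Lemma~\ref{lemcone}(1) gives quasi-isomorphisms $C^\bullet(G_K, X^0(\TD)) \xrightarrow{\sim} \mathrm{cone}(g_*)$ in all three cases. But $\mathrm{cone}(g_*)$ for the first sequence is precisely $C^\bullet(G_K,W)$ by definition (the cone of $C^\bullet(G_K,\We(D)) \to C^\bullet(G_K,\Wdr(D)/\Wplusdr(D))$), for the second it is $C^\bullet(G_K,W')$, and for the third it is $C^\bullet(G_K,W'')$. Hence all three are quasi-isomorphic to the common source $C^\bullet(G_K, X^0(\TD))$, which yields the chain of quasi-isomorphisms in this case.

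Next I would treat the slope $> 0$ case. Now Lemma~\ref{lembkzwei} presents the three sequences in the form $0 \to E \xrightarrow{f} F \xrightarrow{g} \Wdr(D)/(\We(D)+\Wplusdr(D)) \to 0$, with $\Wdr(D)/(\We(D)+\Wplusdr(D)) = X^1(\TD)$, and here it is $f$ rather than $g$ that plays the structural role: $F$ is $\Wdr(D)/\Wplusdr(D)$, resp. $\TD[1/t]\oplus\Wdr(D)/\Wplusdr(D)$, resp. $X$, so that $\mathrm{cone}(f_*)$ is $C^\bullet(G_K,W)$, $C^\bullet(G_K,W')$, $C^\bullet(G_K,W'')$ respectively (note $\mathrm{cone}$ of $C^\bullet(G_K,\We(D)) \to C^\bullet(G_K,\Wdr(D)/\Wplusdr(D))$ is $C^\bullet(G_K,W)$, etc., exactly as in the definitions). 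Applying Lemma~\ref{lemcone}(2) to each sequence gives a quasi-isomorphism $C^\bullet(G_K, X^1(\TD))[-1] \xrightarrow{\sim} \mathrm{cone}(f_*)$ in all three cases, so again the three complexes $C^\bullet(G_K,W)$, $C^\bullet(G_K,W')$, $C^\bullet(G_K,W'')$ share a common quasi-isomorphism type, hence are quasi-isomorphic to each other.

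For the general case I would use Kedlaya's slope filtration theorem: $\TD = \Btdaggerrig \otimes_{\Bdaggerrigk} D$ has a filtration by $(\varphi,G_K)$-submodules with pure successive quotients. I would argue that all three constructions $W \mapsto C^\bullet(G_K,W)$, $C^\bullet(G_K,W')$, $C^\bullet(G_K,W'')$ are compatible with short exact sequences of $(\varphi,\Gamma_K)$-modules — each is the total complex of a short double complex built from the exact functors $D \mapsto \We(D)$, $D \mapsto \TD[1/t]$, $D \mapsto \TDlog[1/t]$ and the (right-)exact cokernel functors $X^1$, $X$, $\Wdr/\Wplusdr$; exactness of these on the relevant categories follows from Theorem~\ref{thmx0x1}, Proposition~\ref{psiexact} and the fact that $\varphi-1$, $N$ are surjective on the $[1/t]$-modules. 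Then a standard five-lemma / spectral sequence induction on the length of the slope filtration reduces the claim to the two pure cases already handled; concretely, one compares the long exact cohomology sequences attached to a short exact sequence $0 \to D' \to D \to D'' \to 0$, using the pure cases for $D'$ and $D''$ to conclude for $D$.

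The main obstacle will be the bookkeeping in the general (mixed-slope) step: one must check carefully that the three cone-complexes are genuinely functorial and carry short exact sequences (not merely distinguished triangles) so that the dévissage goes through, and in particular that the maps $\We(D) \to \Wdr(D)/\Wplusdr(D)$, $\TD[1/t] \to \TD[1/t]\oplus\Wdr(D)/\Wplusdr(D)$, $\TDlog[1/t] \to X$ used to form the cones are compatible with the inclusions induced by a submodule $D' \subset D$. Once that compatibility is in place, the rest is a routine application of Lemma~\ref{lemcone} together with the two pure cases; I would not expect any further difficulty, since the explicit splittings needed for Lemma~\ref{lemcone} are exactly those produced at the end of the proofs of Lemmas~\ref{lembkeins} and~\ref{lembkzwei}.
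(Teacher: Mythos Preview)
Your proposal is correct and follows essentially the same route as the paper: apply Lemma~\ref{lemcone} to the three short exact sequences of Lemmas~\ref{lembkeins} and~\ref{lembkzwei} in the pure-slope case (so that all three cones are identified with $C^\bullet(G_K,X^0(\TD))$ resp.\ $C^\bullet(G_K,X^1(\TD))[-1]$), and then d\'evisser via Kedlaya's slope filtration for general $D$. The paper additionally remarks that the inclusions $\We(D)\subset\TD[1/t]\subset\TDlog[1/t]$ induce direct canonical maps among the three complexes, but the substance of the argument is the same.
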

\begin{proof}
  Let $W = \W(D)$.  Observe that the inclusions $\We(D) \subset
  \TD[1/t] \subset \TDlog [1/t]$ and $\Wdr(D)$ induce canonical maps
  on these complexes.  If $W = \W(D)$ with $D$ pure of some slope the
  statement then follows from Lemmas \ref{lembkeins}, \ref{lembkzwei}
  and \ref{lemcone}.
  
  For general $D$ we are by Kedlaya's slope filtration theorem reduced
  to the case of an exact sequence $0 \ra D_1 \ra D \ra D_2 \ra 0$
  such that the statement is true for $D_1, D_2$, hence the claim
  follows by considering the long exact sequences associated to this.
\end{proof}

With this statement and the properties of the cone we obtain a 
long exact sequence of cohomology groups:
\[
  \ldots \ra H^i(G_K, W) \ra H^i(G_K, \TDlog[1/t]) \ra H^i(G_K, X) \overset{\delta}{\ra}
  H^{i+1}(G_K, W) \ra \ldots
\]

With these exact sequences in mind we suggest the following
\begin{defi}
  Let $D$ be a $(\varphi, \Gamma_K)$-module over $\Bdaggerrigk$. The transition map 
  \[
    \exp_{K, D}: H^0(K, X) \ra H^1(K, \W(D))
  \]\index{expKD@$\exp_{K,D}$}
  from the exact sequence above is called \textbf{generalized Bloch-Kato 
    exponential map} for $D$. 
\end{defi}

\begin{rem}
  Let $D$ be an \'etale $(\varphi, \Gamma_K)$-module, so that 
  $D = \Ddaggerrigk(V)$ for some $p$-adic representation $V$ of $\Gamma_K$.
  Then since the slope of $D$ is equal to zero, 
  the first exact sequence in Lemma \ref{lembkeins} computes to
  \[
    0 \lra V \ra \Be \otimes_{\mb{Q}_p} V \lra \Bdr / \Bplusdr 
    \otimes_{\mb{Q}_p} V \lra 0
  \]
  This is nothing but the usual Bloch-Kato short exact sequence 
  associated to the $p$-adic representation $V$.
\end{rem}


Recall that if $D$ is any $(\varphi, \Gamma_K)$-module over $\Bdaggerrigk$
the map $\varphi - 1: \TD[1/t] \ra \TD[1/t]$ is surjective. If $x \in \TD$
we write $(\varphi - 1)^{-1}(x)$ for a choice of an element $y \in 
\TD[1/t]$ such that $(\varphi - 1)(y) = x$.
We want to consider the following maps: 
\begin{align*}
  \alpha: \TD &\lra \We(D),~~ x \mapsto \left\{ 
  \begin{array}{cc}
    x,~ \varphi(x) = x,\\
    0,~ \textrm{otherwise}.
  \end{array}
  \right.\\
  \beta: \TD &\lra \Wdr(D) / \Wplusdr(D),~~ x \mapsto 
  \iota_n ((\varphi - 1)^{-1}(x)),
\end{align*}
where the second map is well-defined due to the discussion in
\cite{berger09}, Remark 3.4.  $\alpha$ and $\beta$ are continuous and
fit into the following commutative diagram of $G_K$-modules:
\[
  \xymatrix{
    0 \ar[r] & \TD^{\varphi = 1} \ar[r] \ar@{=}[d] & \TD \ar[r]^{\varphi - 1} 
    \ar[d]^\alpha & 
    \TD \ar[r] \ar[d]^\beta &  \TD / (\varphi - 1) \TD \ar[r] \ar@{=}[d]
    & 0  \\
    0 \ar[r] & X^0(\TD) \ar[r] & \We(D) \ar[r] & \Wdr(D) / \Wplusdr(D) \ar[r]
    & X^1(\TD) \ar[r] & 0,
  }
\]
where we use the identifications for $X^0$ and $X^1$ from Theorem \ref{thmx0x1}.

\begin{prop}
  One has a quasi-isomorphism
  \[
    \textrm{cone}(C^\bullet(G_K, \TD) \overset{\varphi - 1}{\lra}
    C^\bullet(G_K, \TD)) \cong C^\bullet(G_K, \W(D))
  \]
  that is functorial in $D$.
\end{prop}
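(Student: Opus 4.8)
The plan is to write both sides as the total complex of $C^\bullet(G_K,-)$ applied to a two-term complex of continuous $G_K$-modules, and to exhibit a quasi-isomorphism between those two-term complexes. By construction $\textrm{cone}(C^\bullet(G_K,\TD)\overset{\varphi-1}{\lra}C^\bullet(G_K,\TD))$ is $\textrm{Tot}$ of $C^\bullet(G_K,-)$ applied to the complex $[\TD\overset{\varphi-1}{\lra}\TD]$ in degrees $0,1$, and likewise $C^\bullet(G_K,\W(D))\cong C^\bullet(G_K,W')$ (by the earlier ``canonical quasi-isomorphisms'' lemma) is $\textrm{Tot}$ of $C^\bullet(G_K,-)$ applied to $[\TD[1/t]\overset{(1-\varphi,i)}{\lra}\TD[1/t]\oplus\Wdr(D)/\Wplusdr(D)]$ in degrees $0,1$, where $i$ is the composite of $\iota_n$ with the projection. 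The elementary point is that $\iota_n(\TD)\subset\Wplusdr(D)$ for $n\geq n(D)$ (using $\TD=\Btdaggerrnrig\otimes_{\Bdaggerrnrigk}D^{(n)}$ and $\iota_n(\Btdaggerrnrig)\subset\Bplusdr$), so $i$ vanishes on $\TD$ and hence the inclusion $\TD\hookrightarrow\TD[1/t]$ in degree $0$ together with $x\mapsto(x,0)$ in degree $1$ is a genuine morphism of two-term complexes $[\TD\overset{1-\varphi}{\lra}\TD]\to[\TD[1/t]\overset{(1-\varphi,i)}{\lra}\TD[1/t]\oplus\Wdr(D)/\Wplusdr(D)]$ (and $[\TD\overset{\varphi-1}{\lra}\TD]\cong[\TD\overset{1-\varphi}{\lra}\TD]$ via the sign of the differential). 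Applying $\textrm{Tot}\circ C^\bullet(G_K,-)$ turns this into the desired morphism $\textrm{cone}(\varphi-1)\to C^\bullet(G_K,\W(D))$, and it is functorial in $D$ since the inclusion $\TD\subset\TD[1/t]$, the maps $1-\varphi$ and $i$, and the comparison $C^\bullet(G_K,W')\cong C^\bullet(G_K,\W(D))$ all are; conceptually this morphism realises the commutative diagram displayed just before the proposition.

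Next I would check that this morphism of two-term complexes is a quasi-isomorphism of complexes of $G_K$-modules, i.e.\ induces isomorphisms on $H^0$ and $H^1$. On $H^0$ it is the natural inclusion $\TD^{\varphi=1}=\Wpluse(D)\to\We(D)\cap\Wplusdr(D)=X^0(\TD)$, which is an isomorphism by Theorem \ref{thmx0x1}(a); on $H^1$ it is the map $\TD/(\varphi-1)\TD\to\Wdr(D)/(\We(D)+\Wplusdr(D))=X^1(\TD)$ induced by $\iota_n$, which is again the canonical isomorphism of Theorem \ref{thmx0x1}(a). Equivalently, the mapping cone of this morphism, formed in complexes of continuous $G_K$-modules, is acyclic, that is,
\[
  0\lra\TD\lra\TD\oplus\TD[1/t]\lra\TD[1/t]\oplus\Wdr(D)/\Wplusdr(D)\lra 0
\]
is a short exact sequence of continuous $G_K$-modules.

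It then remains to transport this quasi-isomorphism through $\textrm{Tot}\circ C^\bullet(G_K,-)$, and this is where the real work is. It is enough to show $\textrm{Tot}\,C^\bullet(G_K,-)$ of the three-term complex above is acyclic, and for that it suffices that the surjection $\TD\oplus\TD[1/t]\to\TD[1/t]\oplus\Wdr(D)/\Wplusdr(D)$ admit a continuous (not necessarily $G_K$-equivariant) section: then the long exact sequence lemma preceding Proposition \ref{propsplit} gives exactness of $\cdots\to H^i(G_K,\TD)\to H^i(G_K,\TD\oplus\TD[1/t])\to H^i(G_K,\TD[1/t]\oplus\Wdr(D)/\Wplusdr(D))\to H^{i+1}(G_K,\TD)\to\cdots$, which is precisely the asserted acyclicity, whence the morphism of the first paragraph is a quasi-isomorphism. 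Producing this continuous splitting is the crux: all three spaces are $p$-adic (Fr\'echet, resp.\ LF) spaces and the surjection is continuous, so by the open mapping theorem it is strict and one invokes Proposition \ref{propsplit} together with the LF-space refinements already used in the proof of Lemma \ref{lembkeins} (cf.\ \cite{colmez98}, \cite{kedlaya05}); continuity of $1-\varphi$ and $i$ is clear. I expect the point that really needs care is the strictness of this surjection (equivalently, the closedness of images and open-mapping behaviour of $1-\varphi$ and $i$ in this topological setting), everything else being formal. No purity hypothesis on $D$ is needed, since Theorem \ref{thmx0x1}, the structure theory of $D^{(n)}$, and the comparison $C^\bullet(G_K,W')\cong C^\bullet(G_K,\W(D))$ hold for arbitrary $(\varphi,\Gamma_K)$-modules; alternatively one could first reduce to the pure case via Kedlaya's slope filtration, exactly as in the proof of the preceding lemma.
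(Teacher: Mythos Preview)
Your proposal is correct and follows essentially the same strategy as the paper: exhibit a morphism of two-term complexes of $G_K$-modules that is a quasi-isomorphism (via the identifications $\TD^{\varphi=1}=X^0(\TD)$ and $\TD/(\varphi-1)=X^1(\TD)$ of Theorem~\ref{thmx0x1}(a)), then pass to $C^\bullet(G_K,-)$ and invoke the resulting map of long exact sequences together with the five lemma.

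The only difference is in the choice of target model. The paper maps $[\TD\overset{\varphi-1}{\to}\TD]$ directly to $[\We(D)\to\Wdr(D)/\Wplusdr(D)]$ via the ad hoc maps $\alpha,\beta$ displayed just before the proposition, and then appeals to \cite{weibel}, Exercise~1.5.9 to obtain the commutative ladder of long exact sequences. You instead route through the $W'$ model $[\TD[1/t]\overset{(1-\varphi,i)}{\to}\TD[1/t]\oplus\Wdr(D)/\Wplusdr(D)]$, where the comparison map is simply the inclusion $\TD\hookrightarrow\TD[1/t]$ in degree~$0$ and $x\mapsto(x,0)$ in degree~$1$ (using that $\iota_n(\TD)\subset\Wplusdr(D)$). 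This is arguably cleaner: it sidesteps the issue that the map $\alpha$ as written in the paper is not linear, and it makes the continuous-splitting requirement more explicit. On the other hand, the paper's route avoids invoking the preliminary quasi-isomorphism $C^\bullet(G_K,W)\cong C^\bullet(G_K,W')$. One small imprecision in your write-up: the induced map on $H^1$ is not literally ``induced by $\iota_n$'' (which would be zero on $\TD$) but rather $x\mapsto\iota_n((\varphi-1)^{-1}x)$, i.e.\ exactly the map $\beta$ from the paper; you correctly identify it as the canonical isomorphism of Theorem~\ref{thmx0x1}(a).
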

\begin{proof}
  We denote by $A^\bullet$ the complex on the left hand side of the
  statement.  One checks that the commutativity of the preceeding
  diagram and the cohomological version of \cite{weibel}, Exercise
  1.5.9 show that one has a commutative diagram
  \[\!\!\!\!\!
    \xymatrix@C=10pt{
      \ldots \ar[r] & H^n(G_K, \TD^{\varphi = 1}) \ar[r] \ar@{=}[d] & 
      H^n(A^\bullet) \ar[r] \ar[d] & H^{n-1}(G_K, \frac{\TD}{(\varphi - 1)\TD}) 
      \ar[r] 
      \ar@{=}[d] &  H^{n+1}(G_K, \TD^{\varphi = 1}) \ar[r] \ar@{=}[d] & \ldots \\
      \ldots \ar[r] & H^n(G_K, X^0(\TD)) \ar[r] & H^n(G_K, \W(D)) \ar[r] & 
      H^{n-1}(G_K, X^1(\TD)) \ar[r] & H^{n+1}(G_K, X^0(\TD)) \ar[r] & \ldots
    }
  \]
  which gives the proof.
\end{proof}

Recall the following property of continuous cohomology: If 
$f: M^\bullet \ra N^\bullet$ is map of complexes of continuous $G$-modules 
for some profinite group $G$ one has an identification of complexes
\begin{equation}
  \label{eqcone}
  C^\bullet_{\textrm{cont}}(G, \textrm{cone}(M^\bullet \overset{f}{\ra} 
  N^\bullet)) = \textrm{cone} \left( C^\bullet_{\textrm{cont}}(G, M^\bullet) 
  \overset{f_*}{\ra} C^\bullet_{\textrm{cont}}(G, N^\bullet)
  \right)
\end{equation}
(cf. the discussion in \cite{neko}, 3.4.1.3, 3.4.1.4; it holds in this 
general setting).

We recall that in the derived category of $\Bdaggerrigk$-modules, the
complex $C^\bullet_{\varphi, \gamma}$ is also represented by
\[
  R \Gamma(K, D) = R \Gamma_{\textrm{cont}}(\Gamma_K, \textrm{cone}
  \left[ D \overset{\varphi - 1}{\lra} D \right]) \cong
  \textrm{cone}\left[ R\Gamma_{\textrm{cont}}(\Gamma_K, D) \overset{\varphi - 1}{
      \lra} R\Gamma_{\textrm{cont}}(\Gamma_K, D) \right],
\]
cf. \cite{pottharst12}, section 3.3, where the last identification is
due to (\ref{eqcone}).

The following is then a 
generalization of Proposition \ref{propgkinvariant}:
\begin{prop}
  One has an isomorphism
  \[
    R\Gamma(K, D) \cong R\Gamma(K, \Btdaggerrigk \otimes_{\Bdaggerrigk} D)
  \]
  that is functorial in $D$.
\end{prop}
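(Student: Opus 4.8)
The plan is to reduce the statement to the analogous comparison already established at the level of the big rings, namely the quasi-isomorphism $C^\bullet(G_K, W) \cong C^\bullet(G_K, W'')$ together with the quasi-isomorphism $\mathrm{cone}(C^\bullet(G_K, \TD) \overset{\varphi-1}{\ra} C^\bullet(G_K, \TD)) \cong C^\bullet(G_K, \W(D))$ just proven. The key observation is that $\Btdaggerrigk \otimes_{\Bdaggerrigk} D = \TD^{H_K}$, so that $R\Gamma(K, \TD)$ computed via the Herr-style complex of $\Btdaggerrigk \otimes_{\Bdaggerrigk} D$ should be identified with $R\Gamma_{\mathrm{cont}}(G_K, \TD)$, at which point the two constructions become manifestly the same. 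So the proof splits into two independent pieces: (i) a ``$\Gamma_K$ versus $H_K$'' descent statement identifying the Herr complex of $\TD^{H_K}$ with continuous $G_K$-cohomology of $\TD$; and (ii) patching this with the cone descriptions on both sides.

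First I would spell out (i). By the analogue of Proposition \ref{propgkinvariant} (more precisely its derived refinement, which is what we are proving), one has $R\Gamma_{\mathrm{cont}}(H_K, \TD) \cong \TD^{H_K} = \Btdaggerrigk \otimes_{\Bdaggerrigk} D$ concentrated in degree $0$ — here one uses that $H_K$ acts trivially on $D$ and the vanishing of higher $H_K$-cohomology of $\Btdaggerrigk$-type modules, a standard Tate-Sen / almost étale descent input (cf. the references to \cite{liu08}, \cite{kedlaya09} used for the cohomology of $(\varphi,\Gamma_K)$-modules). Applying the Hochschild–Serre spectral sequence for $1 \to H_K \to G_K \to \Gamma_K \to 1$ then gives
\[
  R\Gamma_{\mathrm{cont}}(G_K, \TD) \cong R\Gamma_{\mathrm{cont}}(\Gamma_K, \TD^{H_K}) = R\Gamma_{\mathrm{cont}}(\Gamma_K, \Btdaggerrigk \otimes_{\Bdaggerrigk} D).
\]
The same argument applied with $\TD$ replaced by $D$ (viewed inside $\Btdaggerrigk \otimes_{\Bdaggerrigk} D$ via the inclusion $D \subset \TD$, noting $D^{H_K} = D$) is exactly the content of the derived Proposition \ref{propgkinvariant}, so in fact the two sides of the desired isomorphism both compute $R\Gamma_{\mathrm{cont}}(\Gamma_K, -)$ of the respective modules with $\varphi-1$ coned off.

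Next I would assemble (ii): both $R\Gamma(K, D)$ and $R\Gamma(K, \Btdaggerrigk \otimes_{\Bdaggerrigk} D)$ are, by the recalled identity $R\Gamma(K, -) = \mathrm{cone}[\,R\Gamma_{\mathrm{cont}}(\Gamma_K, -) \overset{\varphi-1}{\lra} R\Gamma_{\mathrm{cont}}(\Gamma_K, -)\,]$ and equation (\ref{eqcone}), the cone of $\varphi-1$ on $R\Gamma_{\mathrm{cont}}(\Gamma_K, D)$ resp. $R\Gamma_{\mathrm{cont}}(\Gamma_K, \Btdaggerrigk \otimes_{\Bdaggerrigk} D)$. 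The inclusion $D \hookrightarrow \Btdaggerrigk \otimes_{\Bdaggerrigk} D$ is $\varphi$- and $\Gamma_K$-equivariant, so it induces a map of the two cone complexes, compatible with the $\varphi-1$ structure; by (i) (the derived Proposition \ref{propgkinvariant}) the underlying map $R\Gamma_{\mathrm{cont}}(\Gamma_K, D) \to R\Gamma_{\mathrm{cont}}(\Gamma_K, \Btdaggerrigk \otimes_{\Bdaggerrigk} D)$ is a quasi-isomorphism, and coning off $\varphi-1$ on both sides preserves this. Functoriality in $D$ is automatic since every construction in sight — restriction $D \mapsto \TD$, the Herr complex, the cone — is functorial.

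The main obstacle is really the input (i): establishing that $R\Gamma_{\mathrm{cont}}(H_K, \TD)$ is concentrated in degree $0$ with value $\Btdaggerrigk \otimes_{\Bdaggerrigk} D$, i.e. the higher-cohomology vanishing. This is where the Fréchet (LF-space) topology on $\Btdaggerrig$ and $\Bdaggerrigk$ must be handled carefully, since these are not Banach spaces and one cannot directly quote Proposition \ref{propsplit}; one writes $\Btdaggerrig$ as a projective limit of the Banach rings $\Bt_I$ and reduces to a continuity/exactness statement at each finite level, then passes to the limit controlling the higher $\varprojlim$-terms. Everything else is bookkeeping with cones and long exact sequences of the kind already done repeatedly in this section.
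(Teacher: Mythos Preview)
Your reduction in step (ii) is correct: it suffices to show that the natural map
\[
  R\Gamma_{\mathrm{cont}}(\Gamma_K, D) \lra R\Gamma_{\mathrm{cont}}(\Gamma_K, \Btdaggerrigk \otimes_{\Bdaggerrigk} D)
\]
is a quasi-isomorphism, and then take the cone of $\varphi-1$. The problem is that your step (i) does not prove this. What (i) establishes, via vanishing of higher $H_K$-cohomology of $\TD$ and Hochschild--Serre, is that
\[
  R\Gamma_{\mathrm{cont}}(G_K, \TD) \cong R\Gamma_{\mathrm{cont}}(\Gamma_K, \Btdaggerrigk \otimes_{\Bdaggerrigk} D).
\]
This is a descent from $G_K$ to $\Gamma_K$ on the \emph{tilde} side; it says nothing about the comparison between $D$ and $\Btdaggerrigk \otimes_{\Bdaggerrigk} D$. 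When you then write ``by (i) (the derived Proposition~\ref{propgkinvariant}) the underlying map $R\Gamma_{\mathrm{cont}}(\Gamma_K, D) \to R\Gamma_{\mathrm{cont}}(\Gamma_K, \Btdaggerrigk \otimes_{\Bdaggerrigk} D)$ is a quasi-isomorphism'', you are invoking exactly the statement to be proved. Applying your $H_K$-argument with $\TD$ replaced by $D$ yields only the tautology $R\Gamma_{\mathrm{cont}}(G_K, D) \cong R\Gamma_{\mathrm{cont}}(\Gamma_K, D)$, since $H_K$ already acts trivially on $D$; it does not produce the missing comparison. Equivalently, your route would prove the proposition only if you already knew Corollary~\ref{corwd}, but that corollary is deduced \emph{from} this proposition.

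The paper closes this gap by a genuinely different mechanism: a decompletion argument in the style of Colmez--Sen--Tate (following \cite{andiov}, Appendix~I, and \cite{pottharst12}). One uses Berger's normalised trace maps $R_m : \Btdaggerrigk \otimes D \to \Btdaggerrigk \otimes D$ (sections of $\varphi^{-m}$) to obtain a $\Gamma_K$-equivariant splitting
\[
  \Btdaggerrigk \otimes_{\Bdaggerrigk} D \;\cong\; D \;\oplus\; (1-R_0)\bigl(\Btdaggerrigk \otimes_{\Bdaggerrigk} D\bigr),
\]
and then shows that $\gamma_K - 1$ acts invertibly on the complement $(1-R_0)(\Btdaggerrigk \otimes D)$, so that its continuous $\Gamma_K$-cohomology vanishes. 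This is the substantive analytic input (controlling the action of $\Gamma_K$ on the ``non-decompleted'' part), and it is absent from your proposal. Your discussion of Fr\'echet versus Banach issues at the end is pertinent in spirit, but it is aimed at the wrong target: the hard step is not the $H_K$-acyclicity of $\TD$, it is the invertibility of $\gamma_K-1$ on the complement of $D$ inside $\Btdaggerrigk \otimes D$.
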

\begin{proof}
  The proof is similar to \cite{pottharst12}, Proposition 3.8.
  It suffices to show that that the natural map
  \[
    R\Gamma_{\textrm{cont}}(\Gamma_K, D) \lra 
    R\Gamma_{\textrm{cont}}(\Gamma_K, \Btdaggerrigk \otimes_{\Bdaggerrigk} D)
  \]
  is an isomorphism, since applying $\textrm{cone}\left[ \bullet
    \overset{\varphi - 1}{\lra} \bullet\right]$ induces the morphism
  in the statement again due to (\ref{eqcone}).  We apply the
  techniques of \cite{andiov}, Appendix I and use the notation there,
  as follows: Let $\widetilde{\Lambda} := \Btdaggerrrig$, $\mc{G} =
  G_K,~ \mc{H} = H_K$ so that $d = 0$. Further, $\mc{H}' = \mc{H}$,
  $\Lambda^{(i)}_{m, \mc{H}'} = \varphi^{-m}( \B^{\dagger, p^m
    r}_{\textrm{rig}, K})$ (since $i = 0$ is the only possible choice)
  and the maps $\tau^{(i)}_{m, \mc{H}'}$ correspond to the maps $R_m:
  \Bt^{\dagger, r}_{\textrm{rig}, K} \ra \varphi^{-m}( \B^{\dagger,
    p^m r}_{\textrm{rig}, K})$ (cf. \cite{berger02}, Proposition
  2.32).  As in \cite{andiov}, section 7.6, the maps $R_m$ induce maps
  (by the usual process of taking the direct limit over all
  sufficiently big $r$) $R_m : \Btdaggerrigk \otimes_{\Bdaggerrigk} D
  \ra \Btdaggerrigk \otimes_{\Bdaggerrigk} D$ for $m \geq 0$, and as
  in loc.cit.  one obtains a decomposition of $\Gamma_K$-modules
  \[
    \Btdaggerrigk \otimes_{\Bdaggerrigk} D \cong 
    (1 - R_m)(\Btdaggerrigk \otimes_{\Bdaggerrigk} D) \oplus 
    (\Btdaggerrigk \otimes_{\Bdaggerrigk} D)^{R_m = 1}.
  \]
  By construction of the map $R_m$ it is clear that $(\Btdaggerrigk
  \otimes_{\Bdaggerrigk} D)^{R_0 = 1} = D$. Furthermore, as in the
  proof of loc.cit., Proposition 7.7, one may infer that $\gamma_K -
  1$ acts invertibly on $(1 - R_0)(\Btdaggerrigk
  \otimes_{\Bdaggerrigk} D)$, so that $R\Gamma_\textrm{cont}(\Gamma_K,
  (1 - R_0)(\Btdaggerrigk \otimes_{\Bdaggerrigk} D)) = 0$, which gives
  the claim.
\end{proof}

Putting everything together, we see:
\begin{cor}
  \label{corwd}
  One has an isomorphism 
  \[
    R\Gamma(K, D) \cong R\Gamma(G_K, \W(D)).
  \]
  that is functorial in $D$.
\end{cor}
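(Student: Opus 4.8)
The plan is to compose the two preceding propositions by means of the Hochschild--Serre decomposition for the normal subgroup $H_K \subset G_K$ with quotient $\Gamma_K$. Recall first that, by \cite{pottharst12}, \S 3.3 together with the identification (\ref{eqcone}), the complex $R\Gamma(K, D)$ is represented in the derived category by $\textrm{cone}[\,R\Gamma_{\textrm{cont}}(\Gamma_K, D) \overset{\varphi-1}{\lra} R\Gamma_{\textrm{cont}}(\Gamma_K, D)\,]$, and the same formula, with $D$ replaced by $\Btdaggerrigk \otimes_{\Bdaggerrigk} D$, is what was used above to make sense of $R\Gamma(K, \Btdaggerrigk \otimes_{\Bdaggerrigk} D)$. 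Likewise, applying (\ref{eqcone}) to the first of the two preceding propositions gives
\[
  R\Gamma(G_K, \W(D)) \cong \textrm{cone}\bigl[\,R\Gamma_{\textrm{cont}}(G_K, \TD) \overset{\varphi-1}{\lra} R\Gamma_{\textrm{cont}}(G_K, \TD)\,\bigr].
\]

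The key point I would isolate is the $H_K$-acyclicity of $\TD = \Btdaggerrig \otimes_{\Bdaggerrigk} D$: namely that $R\Gamma_{\textrm{cont}}(H_K, \TD)$ is concentrated in degree $0$, where it equals $\TD^{H_K} = \Btdaggerrigk \otimes_{\Bdaggerrigk} D$. The identification of $H^0$ is immediate from $(\Btdaggerrig)^{H_K} = \Btdaggerrigk$ and the freeness of $D$ over $\Bdaggerrigk$; the vanishing of $H^i(H_K, \TD)$ for $i \geq 1$ is a Tate--Sen type statement for $\Btdaggerrig$, which one may cite from \cite{pottharst12} or \cite{liu08}, or extract from \cite{andiov}, Appendix I (the descent machinery applied with $\widetilde\Lambda = \Btdaggerrrig$, $\mc{H} = \mc{H}' = H_K$, $d = 0$, exactly as in the proof of the preceding proposition). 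Since $\varphi$ commutes with both $H_K$ and $\Gamma_K$ on $\TD$, this identification is $\varphi$-equivariant.

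Granting that, the Hochschild--Serre spectral sequence for $H_K \triangleleft G_K$ degenerates and produces a functorial, $\varphi$-equivariant isomorphism $R\Gamma_{\textrm{cont}}(G_K, \TD) \cong R\Gamma_{\textrm{cont}}(\Gamma_K, \Btdaggerrigk \otimes_{\Bdaggerrigk} D)$. Applying $\textrm{cone}[\,\bullet \overset{\varphi-1}{\lra} \bullet\,]$ and invoking the two cone descriptions above yields $R\Gamma(G_K, \W(D)) \cong R\Gamma(K, \Btdaggerrigk \otimes_{\Bdaggerrigk} D)$, and the right-hand side is identified with $R\Gamma(K, D)$ by the generalization of Proposition \ref{propgkinvariant} established just above. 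Each isomorphism in this chain is functorial in $D$, so the composite is as well.

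The main obstacle is precisely the $H_K$-acyclicity of $\TD$: this is the only step carrying genuine $p$-adic Hodge theory content (it is classical in the \'etale case and in general rests on the Tate--Sen formalism for $\Btdaggerrig$), whereas everything else is a formal manipulation of cones, of (\ref{eqcone}), and of the Hochschild--Serre spectral sequence.
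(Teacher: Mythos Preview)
Your proposal is correct and follows essentially the same route as the paper: both arguments hinge on the $H_K$-acyclicity of $\TD$ (the paper phrases this as the quasi-isomorphism $\Btdaggerrigk \cong R\Gamma_{\textrm{cont}}(H_K, \Btdaggerrig)$ together with freeness of $D$), then use Hochschild--Serre to pass from $G_K$- to $\Gamma_K$-cohomology, and finally invoke the two preceding propositions and the cone identity (\ref{eqcone}). The only cosmetic difference is the order in which the cone and the $H_K$/$\Gamma_K$ decomposition are applied.
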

\begin{proof}
  We observe that the natural map 
  \begin{equation}
    \label{eqbtdaggerrigk}
    \Btdaggerrigk \cong R\Gamma_{\textrm{cont}}(H_K, \Btdaggerrig) 
  \end{equation}
  is a quasi-isomorphism. This, together with the preceeding 
  isomorphisms implies
  \begin{align*}
    R\Gamma(K, D) &\cong R\Gamma_{\textrm{cont}}(\Gamma_K, 
    \textrm{cone}(D \overset{\varphi - 1}{\lra} D)) \\
    &= R\Gamma_{\textrm{cont}}(\Gamma_K, 
    \textrm{cone}(\Btdaggerrigk \otimes D \overset{\varphi - 1}{\lra} 
    \Btdaggerrigk \otimes D)) \\
    &= R\Gamma_{\textrm{cont}}(\Gamma_K, R\Gamma_{\textrm{cont}}(H_K, 
    \textrm{cone}(\TD \overset{\varphi - 1}{\lra} \TD))) \\
    &\overset{(*)}{=} R\Gamma_{\textrm{cont}}(G_K, \textrm{cone}(\TD 
    \overset{\varphi - 1}
           {\lra} \TD)) \\
    &= R\Gamma(G_K, \W(D)).
  \end{align*}
  where ($\ast$) holds since the natural map $H^i(G_K / H_K,
  \TD^{H_K}) \ra H^i(G_K, \TD)$ is an isomorphism, since again
  $H^n(H_K, \TD) = 0$ for $n > 0$ due to (\ref{eqbtdaggerrigk}): for
  $i = 1$ this follows from the five term exact sequence in low
  degree, which extends in this case for continuous cohomology
  similarly as in e.g. \cite{nsw}, \S 6, to higher degrees by
  induction.
\end{proof}

\begin{cor}
  \label{remnaka}
  $H^i(G_K, \W(D)) = 0$ for $i \not= 0, 1, 2$ and $H^i(G_K, \W(D))$ is 
  a finite-dimensional $\mb{Q}_p$-vectorspace.
\end{cor}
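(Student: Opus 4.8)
The plan is to deduce this directly from the comparison isomorphism just established together with Liu's finiteness theorem. First I would invoke Corollary \ref{corwd}, which gives a functorial isomorphism $R\Gamma(K, D) \cong R\Gamma(G_K, \W(D))$ in the derived category; passing to cohomology yields $H^i(G_K, \W(D)) \cong H^i(K, D)$ for all $i \in \mb{Z}$. Then I would appeal to Liu's result (\cite{liu08}, Theorems 0.1 and 0.2), recalled above, according to which each $H^i(K, D)$ is a finite-dimensional $\mb{Q}_p$-vector space and vanishes for $i \neq 0, 1, 2$. Combining these two facts gives both assertions at once.

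There is essentially no obstacle here: all the real work has been done in establishing Corollary \ref{corwd}, and the only point to be slightly careful about is that the isomorphism there is an isomorphism of complexes (or at least in the derived category of $\mb{Q}_p$-vector spaces), so that it genuinely transports the cohomology in each degree, including the vanishing outside degrees $0,1,2$. One could alternatively phrase the argument purely on the level of the cone complexes appearing in the preceding propositions, but routing through Corollary \ref{corwd} and Liu's theorem is the cleanest path.
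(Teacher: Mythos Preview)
Your proposal is correct and matches the paper's own proof: the paper likewise invokes Corollary~\ref{corwd} and then a finiteness/vanishing result for $H^i(K,D)$ (citing \cite{kedlaya09}, Theorem~8.1, rather than \cite{liu08}, but to the same effect). There is nothing to add.
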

\begin{proof}
  This follows from the preceeding Corollary and \cite{kedlaya09}, 
  Theorem 8.1.
\end{proof}

We wish to give a more explicit description of the isomorphisms on
cohomology which we will need in the characterizing property of the
big exponential map, where actually only the map for the $H^1$'s will
be important for us. Hence, we may only sketch certain steps for the
higher cohomology groups (that is, $H^2$).

We briefly describe how one may interpret, in the slope $\leq 0$-case,
the cohomology group $H^1(G_K, \Wpluse(D))$ as extensions of
$\mb{Q}_p$ by $\Wpluse(D)$. So let $c \in H^1(G_K, \Wpluse(D))$ and
consider the exact sequence of $G_K$-modules
\[
  0 \lra \Wpluse(D) \lra E_c \lra \mb{Q}_p \lra 0
\]
where $E_c = \mb{Q}_p \oplus \Wpluse(D)$ as $\mb{Q}_p$-vectorspace and
$G_K$ acts on $E_c$ via
\[
  \sigma (a, m) = (a, \sigma m + a c_\sigma). 
\]
Since $c$ is a $1$-cocycle one has
\[
  \sigma (\tau(a,x)) = \sigma (a, \tau x + a c_\tau) = (a, \sigma \tau x +
  a \sigma c_\tau + c_\sigma) = \sigma \tau (a, x),
\]
so that one has a well-defined map $Z^1(K, D) \ra
\textrm{Ext}(\mb{Q}_p, \Wpluse(D))$.  $E_c$ is trivial if and only if
there exists an element $1 \in E_c$ such that $g 1 = 1$ for all $g$,
i.e.
\[
  1 = (1, x), ~~~ g1 - 1 = (0, gx - x + c_g) = 0,
\]
so that $c_g = (1-g)x$ is a coboundary, which implies that the above map 
factors through $B^1(K, D)$. The fact that this map is an isomorphism
can be checked as in the $p$-adic representation case.

\begin{prop}
  \label{propbkeins}
  Suppose we are in the situation of Lemma \ref{lembkeins}.
  Then the complex $C^\bullet_{\varphi, \gamma_K}(K, D)$ (functorially) 
  computes the cohomology of $C^\bullet_{\textrm{cont}}(G_K, X^0(D))$.
\end{prop}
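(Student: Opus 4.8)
The plan is to identify the Herr complex $C^\bullet_{\varphi,\gamma_K}(K,D)$ with $C^\bullet_{\textrm{cont}}(G_K, X^0(D))$ by composing two identifications that are essentially already at hand. Recall that $C^\bullet_{\varphi,\gamma_K}(K,D)$ represents $R\Gamma(K,D)$ in the derived category. First I would invoke Corollary \ref{corwd}, which gives a functorial isomorphism $C^\bullet_{\varphi,\gamma_K}(K,D)\cong C^\bullet(G_K,\W(D))$; by definition the right-hand side is $\textrm{cone}\bigl(C^\bullet(G_K,\We(D))\lra C^\bullet(G_K,\Wdr(D)/\Wplusdr(D))\bigr)$, the map being induced by $\We(D)\hookrightarrow\Wdr(D)\twoheadrightarrow\Wdr(D)/\Wplusdr(D)$.

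Second, since we are in the situation of Lemma \ref{lembkeins} --- $D$ pure of slope $\mu(D)\le 0$ --- the first exact sequence of that lemma,
\[
  0\lra \Wpluse(D)\lra \We(D)\lra \Wdr(D)/\Wplusdr(D)\lra 0,
\]
is a genuine short exact sequence of continuous $G_K$-modules whose surjection admits a continuous splitting; here $\Wpluse(D)=X^0(\TD)=X^0(D)$ by Theorem \ref{thmx0x1}, and the reason this is a \emph{short} exact sequence is precisely that purity of slope $\le 0$ forces $X^1(\TD)=0$, so that the four-term fundamental sequence of Corollary \ref{corbpairphigamma} collapses. Applying Lemma \ref{lemcone}(1) to it yields a natural quasi-isomorphism $C^\bullet_{\textrm{cont}}(G_K, X^0(D))\xrightarrow{\sim}\textrm{cone}\bigl(C^\bullet(G_K,\We(D))\lra C^\bullet(G_K,\Wdr(D)/\Wplusdr(D))\bigr)=C^\bullet(G_K,\W(D))$ compatible with the long exact sequences. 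Composing with the isomorphism of the first step gives the asserted functorial identification $C^\bullet_{\varphi,\gamma_K}(K,D)\cong C^\bullet_{\textrm{cont}}(G_K, X^0(D))$, and in particular an isomorphism on cohomology. Alternatively one can stay on the $(\varphi,\Gamma)$-module side: by the proposition above identifying $C^\bullet(G_K,\W(D))$ with $\textrm{cone}\bigl(C^\bullet(G_K,\TD)\overset{\varphi-1}{\lra}C^\bullet(G_K,\TD)\bigr)$, and using that in slope $\le 0$ the operator $\varphi-1$ is surjective on $\TD$ with kernel $\TD^{\varphi=1}=X^0(D)$, Lemma \ref{lemcone}(1) applies to $0\to X^0(D)\to\TD\overset{\varphi-1}{\to}\TD\to 0$ just as well.

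Since the whole argument is an assembly of results established above, I do not expect a hard computational step. The one thing to be attentive to is the word \emph{functorially} in the statement: one must check that the comparison quasi-isomorphism produced by Lemma \ref{lemcone}(1) is the evident natural map of complexes, hence functorial in the short exact sequence, and that the chain of isomorphisms underlying Corollary \ref{corwd} was constructed to be functorial in $D$. The only genuinely substantive input --- and the reason the hypothesis of Lemma \ref{lembkeins} cannot be dropped --- is the vanishing $X^1(\TD)=0$ in the slope $\le 0$ case, which is what turns the cone defining $C^\bullet(G_K,\W(D))$ into the cohomology of $X^0(D)$ on the nose.
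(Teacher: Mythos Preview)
Your proof is correct and takes a more abstract route than the paper. Where you derive the quasi-isomorphism in one stroke from Corollary \ref{corwd} and Lemma \ref{lemcone}(1), the paper works degree by degree: it identifies $H^0$ via Proposition \ref{propgkinvariant}, constructs for $H^1$ the explicit Cherbonnier--Colmez cocycle
\[
  h^1_{K,D}((x,y))(\sigma) = \log^0_p(\chi(\gamma))\Bigl(\frac{\sigma-1}{\gamma_K-1}\,y - (\sigma-1)b\Bigr),\qquad (\varphi-1)b=x,
\]
and handles $H^2$ by a Hochschild--Serre argument (in fact invoking Corollary \ref{corwd} and Lemma \ref{lemcone} there as well). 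What your approach buys is economy; what the paper's buys is the explicit formula for the isomorphism on $H^1$, which is indispensable later in the proof of Theorem \ref{thmsecond}. The paper actually flags this just before the proposition: the point is not merely the abstract comparison --- which, as you observe, is essentially already contained in Corollary \ref{corwd} --- but making the map on $H^1$ explicit. So your argument settles the statement as written, but for the downstream interpolation property one still has to unwind the abstract quasi-isomorphism at the cochain level to recover $h^1_{K,D}$; the paper simply does that directly.
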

\begin{proof}
  We may assume that $\Gamma_K$ is pro-cyclic with generator $\gamma_K$.
  First we have 
  \[
    H^0(K,D) = D^{\Gamma_K, \varphi = 1} = \TD^{G_K, \varphi = 1} = X^0(D)^{G_K} = 
    H^0(G_K, X^0(D)).
  \]
  thanks to Proposition \ref{propgkinvariant}.
  
  For $H^1$ we apply the construction of Cherbonnier/Colmez
  (\cite{cc99}). To wit, let $(x,y) \in H^1(K, D)$ and pick $b \in
  \TD$ such that $(\varphi - 1) b = x$. Then
  \[
    h^1_{K, D}((x,y)) = \log^0_p(\chi(\gamma)) \cdot
    \left( 
    \sigma \longmapsto \frac{\sigma - 1}{\gamma_K - 1} y  - (\sigma - 1) b
    \right)
  \]
  defines a 1-cocycle with values in $\TD$ but one easily checks that
  $(\varphi - 1) h^1_{K, D}((x,y)) = 0$ so that we actually have a cocycle
  in $H^1(G_K, X^0(D))$. Injectivity and surjectivity now follow in the same way
  as in loc.cit. if one uses the description of extensions of 
  $\mb{Q}_p$ by $X^0(D)$ given above, so that we obtain the isomorphism
  in the $H^1$-case.

  For $H^2$ one can show that since $X^0(D)$ is an almost 
  $\mb{C}_p$-representation that one has a Hochschild-Serre
  spectral sequence $H^i(\Gamma_K, H^j(H_K, X^0(D))) \Rightarrow H^{i+j}(G_K, 
  X^0(D))$ associated to the exact sequence
  $1 \ra H_K \ra G_K \ra \Gamma_K \ra 1$. Since the cohomology on the left 
  vanishes for $j$ or $i$ greater or equal to $2$ one has with 
  the fact that $H^3(G_K, X^0(D)) = 0$ 
 \[
    H^2(G_K, X^0(D)) \cong H^1(\Gamma_K, H^1(H_K, X^0(D))).
  \]
  Now the exact sequence $0 \ra X^0(D) \ra \TD \overset{\varphi - 1}{\ra}
  \TD \ra 0$ of $G_K$-modules gives rise to a sequence
  \[
    \ldots \lra \TD^{H_K} \overset{\varphi - 1}{\lra} \TD^{H_K} \lra
    H^1(H_K, X^0(D)) \lra 0,
  \]
  since $H^1(H_K, \TD) = H^1(H_K, \Btdaggerrig \otimes D) \cong
  H^1(H_K, \Btdaggerrig)^d = 0$. Hence, by Iwasawa theory
  \[
    H^2(G_K, X^0(D)) \cong \TD^{H_K} / (\varphi - 1, \gamma_K - 1).
  \]
    Looking at the quasi-isomorphisms in Corollary \ref{corwd} one sees that 
    using Lemma \ref{lemcone}, 
    since we are in the $X^1(D) = 0$-case, 
    the map $H^2(K,D) \ra H^2(G_K, X^0(D))$ is given by the 
    canonical inclusion of finite-dimensional 
    $\mb{Q}_p$-vectorspaces 
  \[
    H^2(K, D) = D / (\varphi - 1, \gamma_K - 1) \subset 
    \TD^{H_K} / (\varphi - 1, \gamma_K - 1) = H^2(G_K, X^0(D)), 
  \]
  that are of the same dimension.  This gives the description of the
  map for $H^2$.
\end{proof}

\begin{lem}
  \label{leminfres}
  Let $D$ be a $(\varphi, \Gamma_K)$-module over $\Bdaggerrigk$ and
  assume that $\Gamma_K$ is pro-cyclic with generator $\gamma_K$.
  Then one has an exact sequence
  \[
  \begin{array}{ccccccccc}
    0 & \lra & \frac{D^{\varphi = 1}}{(\gamma_K - 1)} & \overset{f}{\lra} 
    & H^1(K, D) & \overset{g}{\lra} & 
    \left( \frac{D}{\varphi - 1} \right)^{\Gamma_K} & \lra & 0 \\
    & & y & \longmapsto & (0, y) \\
    & & & & (x, y) & \longmapsto & x
  \end{array}
  \]
\end{lem}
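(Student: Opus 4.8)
The plan is to extract the sequence directly from the Herr complex $C^\bullet_{\varphi,\gamma_K}(D)$ computing $H^i(K,D)$. Recall that $C^0 = D$, $C^1 = D\oplus D$, $C^2 = D$ with $d_1(x) = ((\varphi-1)x,(\gamma_K-1)x)$ and $d_2(x,y) = (\gamma_K-1)x - (\varphi-1)y$ (we may and do assume $\Gamma_K$ is pro-cyclic, so $\Delta_K$ is trivial and $D' = D$). Then by definition $H^1(K,D) = \ker d_2 / \operatorname{im} d_1$, i.e. an element of $H^1$ is represented by a pair $(x,y)\in D\oplus D$ with $(\gamma_K-1)x = (\varphi-1)y$, modulo pairs $((\varphi-1)z,(\gamma_K-1)z)$.

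First I would define $g: H^1(K,D)\to (D/(\varphi-1)D)^{\Gamma_K}$ by $(x,y)\mapsto \bar x$. This is well-defined: the cocycle condition $(\gamma_K-1)x = (\varphi-1)y$ says exactly that $\bar x$ is fixed by $\gamma_K$ in $D/(\varphi-1)D$, and a coboundary $((\varphi-1)z,(\gamma_K-1)z)$ has first component in $(\varphi-1)D$, so the class is unchanged. Surjectivity is the only slightly nontrivial point: given $x\in D$ with $(\gamma_K-1)x\in(\varphi-1)D$, choose any $y$ with $(\varphi-1)y = (\gamma_K-1)x$; then $(x,y)$ is a cocycle mapping to $\bar x$. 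For the kernel: $g(x,y) = 0$ means $x = (\varphi-1)z$ for some $z$; subtracting the coboundary $d_1(z)$ we may replace $(x,y)$ by $(0,y-(\gamma_K-1)z)$, and the cocycle condition now reads $(\varphi-1)(y-(\gamma_K-1)z) = 0$, so the class lies in the image of the map $f: D^{\varphi=1}/(\gamma_K-1)D^{\varphi=1}\to H^1(K,D)$, $y\mapsto$ class of $(0,y)$. Finally I would check $f$ is well-defined (if $y = (\gamma_K-1)w$ with $w\in D^{\varphi=1}$ then $(0,y) = d_1(-w)$ is a coboundary) and injective: if $(0,y) = d_1(z) = ((\varphi-1)z,(\gamma_K-1)z)$ then $(\varphi-1)z = 0$, so $z\in D^{\varphi=1}$ and $y = (\gamma_K-1)z$ lies in $(\gamma_K-1)D^{\varphi=1}$.

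The main obstacle, such as it is, is bookkeeping rather than mathematics: one must be careful that the subtractions by coboundaries are legitimate and that in the kernel computation the element $z$ produced lies in the right place so that $y-(\gamma_K-1)z$ is genuinely $\varphi$-fixed, which is forced by the cocycle identity. Exactness at $H^1(K,D)$ then amounts to the two inclusions $\operatorname{im} f\subseteq\ker g$ (immediate, since $f(y)$ has first component $0\in(\varphi-1)D$) and $\ker g\subseteq\operatorname{im} f$ (the coboundary-subtraction argument above). This sequence is of course the truncation of the usual $\varphi$--$\gamma$ long exact sequence, or equivalently the low-degree five-term sequence attached to the $\operatorname{cone}$ description $R\Gamma(K,D) = \operatorname{cone}(R\Gamma_{\mathrm{cont}}(\Gamma_K,D)\xrightarrow{\varphi-1}R\Gamma_{\mathrm{cont}}(\Gamma_K,D))[-1]$ recalled earlier in the section; one could alternatively deduce it formally from that, but the direct cocycle computation is shortest and makes the explicit formulas for $f$ and $g$ transparent, which is what is needed later.
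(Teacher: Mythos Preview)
Your proposal is correct and follows essentially the same approach as the paper: both work directly with the Herr complex, define $f$ and $g$ by the same formulas, and verify exactness via the coboundary-subtraction argument $(x,y)\sim(0,y-(\gamma_K-1)z)$ when $x=(\varphi-1)z$. Your write-up is in fact more careful than the paper's in spelling out why $y-(\gamma_K-1)z$ lands in $D^{\varphi=1}$ and why $f$ is injective (minor sign slip: for well-definedness of $f$ you want $d_1(w)$ rather than $d_1(-w)$, but this is cosmetic).
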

\begin{proof}
  Recall that by definition
  \[
    H^1(K,D) = \{ (x, y) \in D \oplus D|~ (\gamma_K - 1) x = (\varphi
    - 1) y\} / \{ ((\varphi - 1)z, (\gamma_K - 1) z)|~ z \in D \},
  \]
  so that the first map is well-defined an injective. One checks that the
  map $g$ is well-defined and if $x \in D / (\varphi - 1)$ 
  such that $(\gamma_K - 1) x \in (\varphi - 1)D$ then there
  exists an $y \in D$ such that $(x, y) \in H^1(K, D)$ and 
  $g(x,y) = x$. Obviously $g \circ f = 0$. Let $g(x, y) = 0$ so that 
  $x = (\varphi - 1) z$ for some $z \in D$, so that
  $(x, y) \sim (0, y - (\gamma_K - 1)z)$ in $H^1(K, D)$. Hence, 
  $(x,y)$ is in the image of $f$.
\end{proof}

We remark that this sequence is nothing but the short exact sequence 
associated to the inflation-restriction sequence if $D$ is \'etale, i.e.,
\[
  0 \lra H^1(\Gamma_K, V^{H_K}) \lra H^1(G_K, V) \lra H^1(H_K,
  V^{\Gamma_K}) \lra 0,
\]
see for example \cite{colmez04}, section 5.2.

\begin{prop}
  \label{propbkzwei}
  Suppose we are in the situation of Lemma \ref{lembkzwei}.
  Then the complex $C^\bullet_{\varphi, \gamma_K}(K, D)$ computes the cohomology of
  $C_D^\bullet := C^\bullet_{\textrm{cont}}(G_K, X^1(D))[1]$
\end{prop}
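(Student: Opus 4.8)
The plan is to run the argument of Proposition~\ref{propbkeins} "in the mirror", interchanging the roles of $X^0$ and $X^1$. In the situation of Lemma~\ref{lembkzwei} one has $\mu(D) > 0$, so Theorem~\ref{thmx0x1} gives $X^0(\TD) = 0$ and the fundamental exact sequence of Corollary~\ref{corbpairphigamma} collapses to the short exact sequence of $G_K$-modules
\[
  0 \lra \TDlog[1/t] \overset{f}{\lra} X \overset{p}{\lra} X^1(\TD) \lra 0,
\]
which is precisely the third sequence of Lemma~\ref{lembkzwei}; in particular $p$ admits a continuous splitting.

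First I would note that, by definition, $C^\bullet(G_K, W'') = \textrm{cone}(C^\bullet(G_K, \TDlog[1/t]) \overset{f_*}{\lra} C^\bullet(G_K, X))$. Applying Lemma~\ref{lemcone}(2) to the short exact sequence above (with $A = \TDlog[1/t]$, $B = X$, $C = X^1(D)$) yields a quasi-isomorphism identifying $C^\bullet(G_K, W'')$ with $C_D^\bullet$, compatible with the long exact cohomology sequences; the degree shift is exactly the one built into $C_D^\bullet$. Chaining this with the quasi-isomorphisms $C^\bullet_{\varphi, \gamma_K}(K, D) = R\Gamma(K,D) \cong R\Gamma(G_K, \W(D)) = C^\bullet(G_K, W) \cong C^\bullet(G_K, W'')$ (Corollary~\ref{corwd} together with the lemma identifying $C^\bullet(G_K, W)$, $C^\bullet(G_K, W')$ and $C^\bullet(G_K, W'')$) already gives $C^\bullet_{\varphi, \gamma_K}(K, D) \cong C_D^\bullet$.

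For the applications (the characterizing property of the big exponential map) I would then make the induced maps on cohomology explicit, in parallel with Proposition~\ref{propbkeins}. In degree $0$ there is nothing to do: $H^0(K,D) = D^{\varphi = 1, \gamma_K = 1} \subset \TD^{\varphi = 1} = X^0(\TD) = 0$. For $H^1$ I would invoke Lemma~\ref{leminfres}: since $D^{\varphi = 1} = 0$ it gives an isomorphism $H^1(K,D) \overset{\sim}{\lra} (D/(\varphi - 1)D)^{\Gamma_K}$, $(x,y) \mapsto [x]$; and using $\TD^{H_K} = \Btdaggerrigk \otimes_{\Bdaggerrigk} D$, the vanishing $H^n(H_K, \TD) = 0$ for $n > 0$ (from (\ref{eqbtdaggerrigk})), and the isomorphism $R\Gamma_{\textrm{cont}}(\Gamma_K, D) \cong R\Gamma_{\textrm{cont}}(\Gamma_K, \TD^{H_K})$ proved earlier, one identifies $(D/(\varphi - 1)D)^{\Gamma_K}$ with $(\TD/(\varphi - 1)\TD)^{G_K} = H^0(G_K, X^1(\TD))$. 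For $H^2$ I would use that $X^1(D)$ is an almost $\mb{C}_p$-representation (Berger), the Hochschild--Serre spectral sequence for $1 \to H_K \to G_K \to \Gamma_K \to 1$, and again $H^n(H_K, \TD) = 0$ for $n > 0$, to reduce to $H^2(G_K, X^1(\TD)) \cong H^1(\Gamma_K, \TD^{H_K}/(\varphi - 1)\TD^{H_K})$, which by the same $\Gamma_K$-cohomology comparison equals $D/((\varphi - 1)D + (\gamma_K - 1)D) = H^2(K, D)$.

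The main obstacle is essentially bookkeeping. The only genuinely new input compared with Proposition~\ref{propbkeins} is that here the fundamental sequence realizes $X^1(D)$ as a \emph{quotient} rather than $X^0(D)$ as a \emph{sub}, so one lands in case~(2) of Lemma~\ref{lemcone} instead of case~(1); this is exactly what produces the degree shift in $C_D^\bullet$, and keeping the shift and sign conventions coherent along the chain of quasi-isomorphisms is the point requiring care. Making the $H^1$- and $H^2$-identifications genuinely explicit (rather than merely abstract) is slightly delicate, but all the analytic ingredients --- the continuous splittings, the vanishing $H^n(H_K, \TD) = 0$ for $n>0$, the $\Gamma_K$-cohomology comparison, and the almost $\mb{C}_p$-property of $X^1(D)$ --- are already available.
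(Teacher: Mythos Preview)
Your proposal is correct and follows essentially the same route as the paper: both arguments use the vanishing $X^0(\TD)=0$ in the slope $>0$ case, invoke Lemma~\ref{lemcone} together with Corollary~\ref{corwd} to obtain the abstract quasi-isomorphism, and then make the maps explicit via Lemma~\ref{leminfres} for $H^1$ and the Hochschild--Serre spectral sequence for $H^2$. One small slip: in your $H^2$ paragraph you write $H^2(G_K, X^1(\TD))$ where you mean $H^1(G_K, X^1(\TD))$ (the shift in $C_D^\bullet$ is precisely what identifies $H^2(K,D)$ with $H^1(G_K,X^1(D))$, as the paper also records).
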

\begin{proof}
  We may assume that $\Gamma_K$ is procyclic with generator $\gamma_K$.
  Since the slope of $D$ is $>0$ one has $X^0(D) = 0$, so that
  $D^{\varphi = 1} = 0$ since $D^{\varphi = 1} \subset \TD^{\varphi =
    1} = 0$, so that $H^0(K, D) = 0$. The same holds tautologically
  for $H^0(C_D^\bullet)$.

  For the case of the $H^1$'s observe that since $X^0(D) = 0$ Lemma
  \ref{leminfres} implies that the canonical map $H^1(K, D) \ra (D /
  (\varphi - 1))^{\Gamma_K}),~ \overline{(x,y)} \mapsto \overline{x},$
  is an isomorphism.  From Theorem \ref{thmx0x1} we also know that
  $X^1(D) = \TD / (\varphi - 1)$. Hence, 
  from Corollary \ref{corwd} and Lemma \ref{lemcone} we have that the map 
  \[
    H^0(G_K, X^1(D)) = \left( \frac{\TD}{\varphi - 1} \right)^{G_K} =
    \left( \frac{D}{\varphi - 1} \right)^{\Gamma_K} \cong ~  H^1(K, D).
  \]
  gives the identification.

  For $H^2$ one has similarly as in the slope $\leq 0$-case a
  Hochschild-Serre spectral sequence $H^i(\Gamma_K, $ $ H^j(H_K,
  X^1(D))) \Rightarrow H^{i+j}(G_K, X^1(D))$. From the exact sequence
  in low degree terms one then has
  \[
    0 \ra H^1(\Gamma_K, H^0(H_K, \TD / (\varphi - 1)) \ra H^1(G_K, \TD
    / (\varphi - 1)) \ra H^0(\Gamma_K, H^1(H_K, \TD / (\varphi - 1)).
  \]
  From the sequence $0 \lra \TD \overset{\varphi - 1}{\lra} \TD \ra
  X^1(D) \lra 0$ one infers the vanishing of $H^1(H_K, $ $X^1(D))$ since
  $H^1(H_K, \TD) = H^2(H_K, TD) = H^2(H_K, \Btdaggerrig)^d = 0$. Hence,
  we see
  \[
    H^1(G_K, X^1(D)) = H^1(\Gamma_K, H^0(H_K, \TD / (\varphi - 1)) = 
    \TD^{H_K} / (\varphi - 1, \gamma_K - 1).
  \]
  so that again by Corollary \ref{corwd} and Lemma \ref{lemcone} the
  canonical inclusion of finite-dimensional $\mb{Q}_p$-vectorspaces
  \[
    H^2(K, D) = D / (\varphi - 1, \gamma_K - 1) \subset 
    \TD^{H_K} / (\varphi - 1, \gamma_K - 1) = H^1(G_K, X^1(D)), 
  \]
  gives the description of the map for $H^2$.
\end{proof}

Finally we describe how one may piece together the isomorphisms
$H^i(K,D) \overset{h^1}{\lra} H^i(K, \W(D))$ in the general case
(where we only make the case $H^1$ explicit, which is all we need for
the application to Perrin-Riou's exponential map): If $(x,y) \in
H^1(K, D)$ write $x = (\varphi - 1)(b') + s(b'')$, 
where $s : \widetilde{D} / (\varphi - 1) \widetilde{D} \ra \widetilde{D}$ is a 
continuous splitting of the natural projection (which exists thanks to 
Proposition \ref{propsplit}), $b' \in \widetilde{D}$ and $b'' \in 
\widetilde{D} / (\varphi - 1) \widetilde{D}$.
Putting the two
constructions together, we may consider the tuple
\begin{align}
  \begin{array}{c}
  h^1(x,y)) := \left(\log^0_p(\chi(\gamma)) \cdot
    \left( 
    \sigma \longmapsto \frac{\sigma - 1}{\gamma_K - 1} y  - (\sigma - 1) b'
    \right), (0,0,\varphi^{-n}((\varphi - 1)^{-1}(s(b'')))
  \right) \\
  ~~~~\in C^1(G_K, \TDlog) \oplus C^0(G_K, X),
  \label{eqheins}
  \end{array}
\end{align} 
and one sees that actually $h^i((x,y)) \in H^1(K, \W(D))$, which gives the 
description of the isomorphism in the general case
by the properties of the mapping cone.

We will briefly describe, similarly as in the slope $\leq 0$-case before,
how one may interpret the cohomology group $H^1(G_K, \We(D))$ as
extensions of $\Be$ by $\We(D)$ (note however that we do not make any 
assumptions about the slopes of $D$). So let $c \in H^1(G_K, \We(D))$
and consider the exact sequence of $G_K$-modules
\[
  0 \lra \We(D) \lra E_c \lra \Be \lra 0,
\]
where $E_c = \Be \oplus \We(D)$ as a $\Be$-module with $G_K$-action 
$\sigma (a, x) = (\sigma a, \sigma x + \sigma a \cdot c_\sigma)$. One has
\[
  \sigma (\tau(a,x)) = \sigma (\tau a, \tau x + \tau a \cdot c_\tau) = 
  (\sigma \tau a, \sigma \tau x +
  \sigma \tau a \cdot \sigma c_\tau + \sigma \tau a \cdot c_\sigma) = \sigma \tau (a, x),
\]
so that one has a well-defined map $Z^1(K, \We(D)) 
\ra \textrm{Ext}(\mb{Q}_p, \Wpluse(D))$. $E_c$ is trivial if and only if 
there exists an element $1 \in E_c$ such that $g 1 = 1$ for all $g$, i.e.
\[
  1 = (1, x), ~~~ g1 - 1 = (0, gx - x + c_g) = 0,
\]
so that $c_g = (1-g)x$ is a coboundary, which implies that the above map 
factors through $B^1(K, \We(D))$. The fact that this map is an isomorphism
can be checked as before.

\begin{prop}
  \label{cohomwe}
  Let $D$ be a $(\varphi, \Gamma_K)$-module over $\Bdaggerrigk$.  Then
  the complex $C^\bullet_{\varphi, \gamma_K}(K, $ $D[1/t])$ computes
  the cohomology of $C^\bullet_{\textrm{cont}}(G_K, \We(D))$.
\end{prop}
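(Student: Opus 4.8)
The plan is to run the argument of Corollary \ref{corwd}, but with the $B$-pair $\W(D)$ replaced throughout by its $W_e$-component $\We(D) = (\TD[1/t])^{\varphi = 1}$. The starting observation is that, for an arbitrary $(\varphi, \Gamma_K)$-module $D$ over $\Bdaggerrigk$, there is a short exact sequence of topological $G_K$-modules
\[
  0 \lra \We(D) \lra \TD[1/t] \overset{\varphi - 1}{\lra} \TD[1/t] \lra 0 ,
\]
the surjectivity of $\varphi - 1$ being the fact (recalled in the proof of Lemma \ref{lembkeins}) that $\varphi - 1$ is surjective on $M[1/t]$ for any $\varphi$-module $M$ over $\Btdaggerrig$, and this surjection moreover admits a continuous splitting, again by the proof of Proposition 2.1.5 of \cite{kedlaya05}. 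Hence the sequence induces a long exact sequence in continuous $G_K$-cohomology, which together with the cone identity (\ref{eqcone}) gives a quasi-isomorphism
\[
  C^\bullet_{\textrm{cont}}(G_K, \We(D)) \cong \textrm{cone}\Big( R\Gamma_{\textrm{cont}}(G_K, \TD[1/t]) \overset{\varphi - 1}{\lra} R\Gamma_{\textrm{cont}}(G_K, \TD[1/t]) \Big) .
\]

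It then remains to identify $R\Gamma_{\textrm{cont}}(G_K, \TD[1/t])$, compatibly with $\varphi - 1$, with the $\Gamma_K$-cohomology of $D[1/t]$. For this I would rerun the two steps behind Corollary \ref{corwd} after inverting $t$. First, the $R_m$-operator decomposition of Colmez/Berger used in the generalization of Proposition \ref{propgkinvariant} gives, applied to $D[1/t]$, an isomorphism $R\Gamma_{\textrm{cont}}(\Gamma_K, D[1/t]) \cong R\Gamma_{\textrm{cont}}(\Gamma_K, (\TD[1/t])^{H_K})$, where $(\TD[1/t])^{H_K} = \Btdaggerrigk[1/t] \otimes_{\Bdaggerrigk[1/t]} D[1/t]$ since $D$ is free and $H_K$ acts trivially on it. Second, again because $H_K$ acts trivially on $D$ one has $\TD[1/t] \cong (\Btdaggerrig[1/t])^d$ as $H_K$-modules, and inverting $t$ in the quasi-isomorphism (\ref{eqbtdaggerrigk}) --- legitimate because $\Btdaggerrig[1/t] = \varinjlim_n t^{-n}\Btdaggerrig$ is a strict inductive limit, so continuous $H_K$-cohomology commutes with the colimit --- yields $H^{>0}(H_K, \TD[1/t]) = 0$, whence the Hochschild-Serre spectral sequence for $1 \to H_K \to G_K \to \Gamma_K \to 1$ collapses, extending to all degrees exactly as in the proof of Corollary \ref{corwd}, to $R\Gamma_{\textrm{cont}}(G_K, \TD[1/t]) \cong R\Gamma_{\textrm{cont}}(\Gamma_K, (\TD[1/t])^{H_K})$. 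Combining the two and feeding the result back into the displayed cone, then using (\ref{eqcone}) once more together with the description of the Herr complex recalled before Corollary \ref{corwd} (via \cite{pottharst12}, \S 3.3), one gets
\[
  C^\bullet_{\textrm{cont}}(G_K, \We(D)) \cong \textrm{cone}\Big( R\Gamma_{\textrm{cont}}(\Gamma_K, D[1/t]) \overset{\varphi - 1}{\lra} R\Gamma_{\textrm{cont}}(\Gamma_K, D[1/t]) \Big) \cong R\Gamma(K, D[1/t]) ,
\]
and since $D[1/t]$ is a $\mb{Q}_p$-vector space the torsion subgroup $\Delta_K$ contributes nothing in positive degrees, so the right-hand complex is represented by $C^\bullet_{\varphi, \gamma_K}(K, D[1/t])$, which is the claim; all the identifications are functorial in $D$.

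The one point that genuinely needs care, and which I expect to be the main obstacle, is that the two inputs borrowed from the slope-zero theory survive the localization at $t$: that the $R_m$-decomposition and the acyclicity $H^{>0}(H_K, \Btdaggerrig) = 0$ remain valid for $\Btdaggerrig[1/t]$, and that the continuous Banach-space splitting of $\varphi - 1$ (Proposition \ref{propsplit}) passes to the strict inductive limit $\TD[1/t] = \varinjlim_n t^{-n}\TD$. Granting these formal facts, the argument is pure bookkeeping with mapping cones, entirely parallel to Corollary \ref{corwd}.
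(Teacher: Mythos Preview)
Your argument is correct and is essentially the paper's own: the paper also says the proof ``is similar to the ones before; in fact, one may reduce to the case of Corollary~\ref{corbpairphigamma} by taking direct limits'', i.e.\ one runs the machinery of Corollary~\ref{corwd} after localizing at $t$, exactly as you do. The formal worries you flag (that the $R_m$-decomposition, the $H_K$-acyclicity of $\Btdaggerrig$, and the continuous section of $\varphi-1$ survive the direct limit $\TD[1/t]=\varinjlim_n t^{-n}\TD$) are precisely what ``taking direct limits'' absorbs, and the paper does not spell them out either.

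The only substantive difference is one of emphasis: the paper's proof is not primarily interested in the abstract quasi-isomorphism but in the \emph{explicit} cocycle formula on $H^1$, namely that for $(x,y)\in H^1(K,D[1/t])$ and a chosen $b\in\TD[1/t]$ with $(\varphi-1)b=x$ one has
\[
  h^1_{K,D}((x,y))(\sigma)=\log^0_p(\chi(\gamma))\Big(\frac{\sigma-1}{\gamma_K-1}\,y-(\sigma-1)b\Big)\in\We(D),
\]
together with the verification (via the extension interpretation of $H^1(G_K,\We(D))$) that this induces a bijection. This explicit description is used later, e.g.\ in the proof of Proposition~\ref{propdualexp} and Theorem~\ref{thmsecond}. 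Your abstract cone argument yields the quasi-isomorphism but not this formula directly; if you want to match the paper's later use, you should extract it by unwinding your cone identification at degree $1$.
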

\begin{proof}
  The proof is similar to the ones before; in fact, one may reduce to
  the case of Corollary \ref{corbpairphigamma}  by taking direct limits
  (see also \cite{naka12}, Theorem 4.5). We are interested in the
  explicit description of the maps.
  From Proposition \ref{propgkinvariant} again we have:
  \[
    H^0(K, D[1/t]) = D[1/t]^{\varphi = 1, \Gamma_K} = 
    \TD[1/t]^{\varphi = 1, G_K} = H^0(G_K, \We(D)).
  \]
  For $H^1$ we apply the same construction as in Proposition
  \ref{propbkeins}. So let $(x,y) \in H^1(K, D[1/t])$ and pick $b \in
  \TD[1/t]$ such that $(\varphi - 1) b = x$. Then
  \[
    h^1_{K, D}((x,y)) = \log^0_p(\chi(\gamma)) \cdot
    \left( 
    \sigma \longmapsto \frac{\sigma - 1}{\gamma_K - 1} y  - (\sigma - 1) b
    \right)
  \]
  defines a 1-cocycle with values in $\TD[1/t]$ which lies actually in $\We(D)$.
  Injectivity and surjectivity now follow in the same way
  as in loc.cit. if one uses the description of extensions of 
  $\Be$ by $\We(D)$ given above, so that we obtain the isomorphism
  in the $H^1$-case.

  The case of the $H^2$'s follows in the same way as in Proposition 
  \ref{propbkeins}.
\end{proof}

\begin{prop}
  \label{propwdr}
  One has an identification $H^0(K, \Wdr(D)) = \Ddr^K(D)$
\end{prop}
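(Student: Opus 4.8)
The plan is to unwind the definitions on both sides and show they produce the same $K$-vector space inside $\Ddif(D)$. On the left, $\Wdr(D) = \Bdr \otimes_{\iota_n, \Bdaggerrnrigk} D^{(n)}$ (independent of $n \gg 0$), so $H^0(K, \Wdr(D)) = \Wdr(D)^{G_K}$. On the right, by definition $\Ddr^K(D) = (\Ddif(D))^{\Gamma_K}$, where $\Ddif(D) = \varinjlim_n \Ddifn(D)$ and $\Ddifn(D) = K_n((t)) \otimes_{\iota_n, \Bdaggerrnrigk} D^{(n)}$. So the whole content is a compatibility between ``taking $G_K$-invariants of the $\Bdr$-scalar extension'' and ``taking $\Gamma_K$-invariants of the $K_\infty((t))$-scalar extension.''

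First I would recall the standard fact (from Fontaine/Berger, e.g. \cite{berger02}) that $\Bdr^{H_K} = K_\infty((t))\,\widehat{\phantom{x}}$ — more precisely that $(\Bdr)^{H_K}$ is the completion of $K_\infty((t))$ for its natural topology, and that the inclusion $\Ddifn(D) \hookrightarrow \Bdr \otimes_{\iota_n,\Bdaggerrnrigk} D^{(n)}$ (coming from $K_n((t)) \hookrightarrow \Bdr$, which is well-defined since $t \in \Bplusdr$ and $\iota_n$ lands in $\Bplusdr$) identifies, after taking $H_K$-invariants on the right, $\Ddif(D)$ with a dense subspace of $\Wdr(D)^{H_K}$. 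Then I would take $\Gamma_K$-invariants: $\Wdr(D)^{G_K} = (\Wdr(D)^{H_K})^{\Gamma_K}$. The key point is that an element of $\Wdr(D)^{H_K}$ fixed by $\Gamma_K$ already lies in the (non-completed) $\Ddif(D)$; this is because $D^{(n)}$ is a finite free $\Bdaggerrnrigk$-module, so $\Wdr(D) = \Bdr \otimes_{\iota_n} D^{(n)}$ is a finite free $\Bdr$-module, and a $\Gamma_K$-invariant element of a finite-dimensional $\Bdr$-vector space with a semilinear $G_K$-action lies in the $K$-span of a basis coming from $D^{(n)}$ — one runs the usual descent argument (choosing a basis of $D^{(n)}$, writing the invariant as a vector over $\Bdr$, using that $\Gamma_K$-invariance plus the structure forces coordinates into $K_\infty((t))$ and then into finitely many $K_n$). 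Concretely, this is the same dévissage as in Proposition \ref{propgkinvariant}, transported to the $\Bdr$-setting via the operators $R_m$, or alternatively one invokes that $H^0(\Gamma_K, \Ddif(D)) = H^0(\Gamma_K, \widehat{\Ddif(D)})$ because the cohomology of the quotient $\widehat{\Ddif(D)}/\Ddif(D)$ vanishes.

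The main obstacle I expect is precisely this density/completeness issue: a priori $\Wdr(D)^{H_K}$ could be strictly larger than $\Ddif(D)$ (it is the $t$-adic or Fréchet completion), so one must argue that passing to $\Gamma_K$-invariants kills the difference. I would handle it by the finite-freeness reduction above: fix $n \geq n(D)$ and a $\Bdaggerrnrigk$-basis $e_1, \dots, e_d$ of $D^{(n)}$; then $\{1 \otimes e_i\}$ is a $\Bdr$-basis of $\Wdr(D)$ and a $K_n((t))$-basis of $\Ddifn(D)$. Given $v \in \Wdr(D)^{G_K}$, write $v = \sum a_i (1 \otimes e_i)$ with $a_i \in \Bdr$; the $G_K$-action on $\{1 \otimes e_i\}$ is via a matrix with entries in $\Ddifn(D)$ (since the $\Gamma_K$-action on $D^{(n)}$ is defined over $\Bdaggerrnrigk$ and $\iota_n$ is $\Gamma_K$-equivariant into $K_n[[t]]$, while $H_K$ acts trivially on $D^{(n)}$), so $G_K$-invariance of $v$ becomes a linear system over $\Ddifn(D)$ for the vector $(a_i)$; since that system has a solution over $\Bdr$ and its coefficients lie in $\Ddifn(D) \subset (\Bdr)^{H_K}$, and since $(\Bdr)^{H_K,\Gamma_K} = K$ while more generally the $\Gamma_K$-finite part of $(\Bdr)^{H_K}$ is $K_\infty((t))$, one concludes $a_i \in K_\infty((t))$, hence (by finiteness, increasing $n$) $a_i \in K_n((t))$ for some $n$, i.e. $v \in \Ddif(D)^{G_K} = \Ddr^K(D)$. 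The reverse inclusion $\Ddr^K(D) \subseteq H^0(K, \Wdr(D))$ is immediate from the $G_K$-equivariant inclusion $\Ddif(D) \hookrightarrow \Wdr(D)$ together with $(\Ddif(D))^{\Gamma_K} = (\Ddif(D))^{G_K}$ (as $H_K$ acts trivially on $\Ddifn(D)$ by construction). This gives equality.
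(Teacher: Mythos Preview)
Your approach is correct and matches the paper's: both take $H_K$-invariants first (using that $D^{(n)}$ is free with trivial $H_K$-action), invoke that $(\Bplusdr)^{H_K}$ is the completion of $K_\infty[[t]]$ with $(\Bplusdr)^{G_K}=K$ (the paper cites \cite{cc99}, Proposition~IV.1.1(i)), and then pass to $\Bdr=\varinjlim t^{-n}\Bplusdr$; you are simply more explicit than the paper about why $\Gamma_K$-invariants of the completed object already land in the non-completed $\Ddif(D)$. The paper also records an alternative one-line route via \cite{fontaine03}, Theorem~2.14,~B)~i).
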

\begin{proof}
    From \cite{cc99}, Proposition IV.1.1 (i) we know that
    $K_\infty[[t]]$ is dense in $(\Bplusdr)^{H_K}$, and the inclusion 
    is compatible the action of $\Gamma_K$. Also one has
    $(\Bplusdr)^{G_K} = K_\infty[[t]]^{\Gamma_K} = K$. Since $D$ is free as a
    $\Bdaggerrigk$-module with trivial $H_K$-action, we see that
    $(\Bplusdr \otimes D)^{G_K} = ((\Bplusdr)^{H_K} \otimes D)^{\Gamma_K} =
    \Dplusdif(D)^{\Gamma_K}$. Since 
    $\Bdr = \varinjlim_{n \geq 0} 1/t^n \cdot \Bplusdr$ and 
    $K_\infty((t)) = \varinjlim_{n \geq 0} 1/t^n \cdot K_\infty[[t]]$ the
    claim follows, since taking invariants is compatible with direct limits.

    Alternatively, the claim also follows from \cite{fontaine03}, Theorem
    2.14, B) i).
\end{proof}

We shall make use of the following considerations. Let $D$ be a
semi-stable $(\varphi, \Gamma_K)$-module over $\Bdaggerrigk$ and
consider the following complex $\mf{C}_{\textrm{st}}(K, D)$
(concentrated in degrees 0, 1, 2):
\begin{equation}
  \begin{array}{cccccc}
     & \Dst^K(D) & \ra & \Dst^K(D) \oplus
    \Dst^K(D) \oplus \Ddr^K(D) / \textrm{Fil}^0 \Ddr^K(D) & \ra &
    \Dst^K(D) \\ & x & \mapsto & (N(x), (\varphi - 1)(x), \beta(x))
    \\ & & & (x,y,z) & \mapsto & N(x)- (p \varphi - 1)(y).
  \end{array} \label{eqh1mfc}
\end{equation}\index{CstKD@$\mf{C}_{\textrm{st}}(K,D)$}
Then an element in $H^1(\mf{C}_{\textrm{st}}(K, D))$ can be considered
as an element in $H^0(K, X)$ and hence be mapped via the exponential
map to $H^1(K, \W(D))$.

We shall give two maps which will be important in the construction of the
dual exponential map for de Rham $(\varphi, \Gamma_K)$-modules. 

First we remark that the canonical inclusion $D \ra \Wdr(D)$ factors
via $D \ra D[1/t]$.  This allows us to describe a map $H^1(K, D) \ra
H^1(G_K, \Wdr(D))$ explicitly via the composition of the canonical map
$H^1(K, D) \ra H^1(K, D[1/t])$, the identification $H^1(K, D[1/t])
\overset{\sim}{\ra} H^1(G_K, \We(D))$ (cf. Proposition \ref{cohomwe})
and the canonical map $H^1(K, \We(D)) \lra H^1(K, $ $\Wdr(D))$.

Secondly, we show that the map
\begin{equation}
  \label{katodual}
  \Ddr^K(D) \lra H^1(G_K, \Wdr(D)),~~~ x \longmapsto [ g \mapsto \log 
  (\chi(\overline{g})) x ] 
\end{equation}
which generalizes Kato's formula of \cite{kato93}, \S II.1, 
is an isomorphism, which may be proved as follows.  First observe that
\[
H^1(G_K, \Bdr \otimes D) \cong H^1(G_K, \Bdr \otimes_{K} \Ddr^K(D)) =
H^1(G_K, \Bdr) \otimes_K \Ddr^K(D).
\]
From \cite{fontouy}, Proposition
5.25, one knows that $K = H^0(G_K, \Bdr) \ra H^1(G_K, \Bdr),$ $x
\mapsto x \cdot \log \chi$ is an isomorphism. This gives the claim.

\begin{defi}
  The \textbf{generalized Bloch-Kato dual exponential map}
  $\exp^*_{K, D^*(1)}$\index{expstarKD@$\exp^*_{K, D^*(1))}$} 
  is the composition of the above maps $H^1(K, D) \ra
  H^1(G_K, \Wdr(D))$ with the inverse of the isomorphism $ \Ddr^K(D)
  \overset{\sim}{\ra} H^1(G_K, \Wdr(D))$.
\end{defi}

Of course, in the \'etale case this is nothing but the dual exponential map
considered by Kato in \cite{kato93}. But even in this more general case
this map has the desired property with respect to adjunction via pairings.
First recall that one may define the $K$-bilinear perfect pairing
$[~,~]_{K, D}$ by the natural map
\[
  [~,~]_{K, D}: \Ddr^K(D) \times \Ddr^K(D^*(1)) \overset{\textrm{ev}}{\lra}
  \Ddr^K(\Bdaggerrigk(1)) \lra K.
\]\index{.pairingddr@$[~,~]_{K,D}$}

For the next proposition we note that Nakamura uses a different
definition of the dual exponential map (see \cite{naka12}, section
2.4), 
  which we briefly recall (we refer to loc.cit for the proofs): one may 
  define the cohomology
  groups $H^i(K, \Ddif(D))$ by $H^i_{\textrm{cont}}(\Gamma_K,
  \Ddif(D))$, which is computed by the complex
  \[
    C^\bullet_{\gamma, \Delta}(\Ddif(D)): ~
    \Ddif(D) \overset{\gamma - 1}{\lra} \Ddif(D).
  \]
  Since the natural map $K_\infty((t)) \otimes_{K} \Ddr^K(D) \ra
  \Ddif(D)$ is an isomorphism one has an identification
  \[
    g_D: \Ddr^K(D) \overset{\sim}{\lra} H^1(K, \Ddif(D)),~~
    x \mapsto \overline{(\log \chi(\gamma))1 \otimes x}.
  \]
  The second definition of $\exp^*_{K, D}$ is then given by the
  composition of the map $H^1(K,D) \ra H^1(K, \Ddif(D)),~ [(x,y)]
  \mapsto \iota_n(y)$ (for $n$ big enough) and the inverse of $g_D$.
  Since $H^i(H_K, $ $\Bdr) = 0$ for $i > 0$ the five term exact
  sequence gives $H^1(G_K, \Wdr(D)) \cong H^1(\Gamma_K, \Bdr^{H_K}
  \otimes D)$. Using the same argument as in Proposition \ref{propwdr}
  one sees that the natural map $H^1(K, \Ddif(D)) \ra H^1(G_K,
  \Wdr(D))$ is an isomorphism. Further, the natural map $H^1(K,D) \ra
  H^1(G_K, $ $\Wdr(D))$ defined before is also given by $[(x,y)] \mapsto
  \iota_n(y)$.  Hence, using all these identifications one obtains
  a commutative diagram
  \[
  \xymatrix@C=30pt{
    H^1(K,D) \ar@{=}[d] \ar[r] & H^1(K, \Ddif(D)) 
    \ar[d]^{\sim} & \Ddr^K(D) \ar[l]_-{\sim} \ar[d]^{\sim} \\
    H^1(K,D) \ar[r] & H^1(G_K, \Wdr(D)) & \ar[l]_-{\sim} H^0(G_K, \Wdr(D)),
  }
  \]
  which shows that the two definitions of $\exp^*$ coincide.

\begin{prop}
  \label{propadjoint}
  Let $D$ be a de Rham $(\varphi, \Gamma_K)$-module over
  $\Bdaggerrigk$ and let $x \in \Ddr^K(D)$ and $y \in H^1(K,
  D^*(1))$. Then
  \[
    \langle \exp_{K, D}(x), y \rangle_{K, D} = \textrm{Tr}_{K / \mb{Q}_p}
    [ x, \exp^*_{K, D}(y) ]_{K, D}
  \]
\end{prop}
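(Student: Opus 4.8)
The plan is to reduce the identity to a cochain–level computation of cup products on the complexes $C^\bullet_{\textrm{cont}}(G_K,-)$, using the explicit cocycle descriptions obtained above. First I would record that all the identifications entering the definitions of $\exp_{K,D}$ and $\exp^*_{K,D}$ are compatible with cup products: this is Liu's theorem in the \'etale case, and for $B$-pairs it follows from the functoriality in Corollary \ref{corwd} together with the observation that, under the isomorphism $R\Gamma(K,D) \cong R\Gamma(G_K,\W(D))$, the Herr cup products (\ref{cupproducts}) correspond to the cup products of continuous $G_K$-cochains — which one checks on the explicit cocycles (\ref{eqheins}) and of Proposition \ref{cohomwe}, keeping to the conventions of \cite{herr01} fixed after (\ref{cupproducts}). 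In particular $\langle\,,\,\rangle_{K,D}$ is the composite of the cup product $H^1(G_K,\W(D)) \times H^1(G_K,\W(D^*(1))) \to H^2(G_K,\W(\Bdaggerrigk(1)))$, the evaluation map $\W(D)\otimes\W(D^*(1))\to\W(\Bdaggerrigk(1))$, and the identification $H^2(G_K,\W(\Bdaggerrigk(1))) = H^2(G_K,\mb{Q}_p(1)) \cong \mb{Q}_p$.

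Next I would make $\exp_{K,D}(x)$ explicit. By Proposition \ref{propwdr} we have $x \in \Ddr^K(D) = H^0(K,\Wdr(D))$, and $x$ enters $\exp_{K,D}$ only through its image $\overline{x}$ in the summand $\Wdr(D)/\Wplusdr(D)$ of $X$: the triple $(0,0,\overline{x})$ is a $G_K$-invariant element of $X$ (Corollary \ref{corbpairphigamma}), hence a class in $H^0(K,X)$, and $\exp_{K,D}(x)$ is its image under the connecting homomorphism $H^0(K,X) \to H^1(K,\W(D))$ coming from the mapping-cone description of $R\Gamma(G_K,\W(D))$. The core of the argument is the standard compatibility of cup products with this connecting homomorphism: pairing the fundamental four-term sequence for $D$ (Corollary \ref{corbpairphigamma}, resp. Lemmas \ref{lembkeins} and \ref{lembkzwei} on the pure pieces) against the one for $D^*(1)$ via the evaluation map, one obtains that $\langle\exp_{K,D}(x),y\rangle_{K,D}$ is computed — up to the sign dictated by the \cite{herr01} conventions — as a cup product of $\overline{x}$ with the image of $y$ under the natural "de Rham realization" map $H^1(K,D^*(1)) \to H^1(K,D^*(1)[1/t]) \xrightarrow{\sim} H^1(G_K,\We(D^*(1))) \to H^1(G_K,\Wdr(D^*(1)))$ — precisely the map appearing in the definition of $\exp^*_{K,D}$.

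By construction of $\exp^*_{K,D}$ and the generalization (\ref{katodual}) of Kato's formula, that image equals the class of the cocycle $g \mapsto \log(\chi(\overline{g}))\cdot\exp^*_{K,D}(y)$ in $H^1(G_K,\Wdr(D^*(1)))$, with $\exp^*_{K,D}(y) \in \Ddr^K(D^*(1)) = H^0(G_K,\Wdr(D^*(1)))$. The problem is thereby reduced to evaluating the cup product of $x \in H^0(G_K,\Wdr(D))$ against the class $[g \mapsto \log(\chi(\overline{g}))\exp^*_{K,D}(y)]$ in $H^2(G_K,\mb{Q}_p(1)) \cong \mb{Q}_p$, via the evaluation pairing $\Wdr(D)\otimes\Wdr(D^*(1)) \to \Wdr(\Bdaggerrigk(1))$. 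Writing $H^i(G_K,\Bdr\otimes D) = H^i(G_K,\Bdr)\otimes_K\Ddr^K(D)$ reduces this to a one-dimensional computation: by \cite{fontouy}, Proposition 5.25, the map $K = H^0(G_K,\Bdr)\to H^1(G_K,\Bdr)$, $\lambda \mapsto \lambda\log\chi$, is an isomorphism, and — exactly as in Kato's computation \cite{kato93}, \S II.1 — the resulting $\mb{Q}_p$-valued pairing on $K \times K$ is $(\lambda,\mu)\mapsto\textrm{Tr}_{K/\mb{Q}_p}(\lambda\mu)$. Feeding in $x$ and $\exp^*_{K,D}(y)$ together with the evaluation pairing $\Ddr^K(D)\times\Ddr^K(D^*(1))\to\Ddr^K(\Bdaggerrigk(1))\to K$ produces exactly $\textrm{Tr}_{K/\mb{Q}_p}[x,\exp^*_{K,D}(y)]_{K,D}$.

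For a general de Rham $D$ — where, according to the slope, one uses Lemma \ref{lembkeins} or Lemma \ref{lembkzwei} on the pure subquotients and Kedlaya's slope filtration to pass to arbitrary $D$ — the identity follows by d\'evissage, since both sides are additive along short exact sequences of $(\varphi,\Gamma_K)$-modules and $\exp$, $\exp^*$ and the two pairings are compatible with the corresponding connecting maps. I expect the main obstacle to be the bookkeeping of signs and of the precise normalizations in the cup-product/connecting-homomorphism compatibility and in the identification $R\Gamma(K,D)\cong R\Gamma(G_K,\W(D))$ of Corollary \ref{corwd}; once these are matched against the \cite{herr01} conventions fixed after (\ref{cupproducts}), the remaining steps are formally identical to the \'etale case treated by Kato, and to Nakamura's computation for $(\varphi,\Gamma)$-modules in \cite{naka12}, whose definition of $\exp^*$ was shown above to agree with ours.
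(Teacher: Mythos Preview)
The paper does not give its own proof of this proposition: its entire proof is the one-line citation ``See \cite{naka12}, Proposition 2.16.'' So there is nothing substantive to compare your argument against on the paper's side.

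Your sketch is a reasonable outline of how such an adjunction is actually proved --- essentially the classical Bloch--Kato/Kato argument transported to the $B$-pair setting: realize $\exp_{K,D}$ as a connecting map, use the standard compatibility of cup products with connecting homomorphisms to shift the pairing onto the de Rham side, invoke the isomorphism (\ref{katodual}) to identify the image of $y$, and finish with the one-dimensional computation over $\Bdr$ that produces $\textrm{Tr}_{K/\mb{Q}_p}$. This is, in spirit, what Nakamura does; indeed you yourself note at the end that the remaining steps are ``formally identical \ldots\ to Nakamura's computation.'' Given that the paper has just shown (in the paragraph preceding the proposition) that Nakamura's $\exp^*$ agrees with the one defined here, simply citing \cite{naka12} is the economical choice the author made.

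One comment on your sketch itself: the d\'evissage along the slope filtration in your last paragraph is unnecessary. The adjunction identity is a statement about de Rham cohomology classes and cup products, and none of the ingredients --- the connecting map defining $\exp$, the map (\ref{katodual}), or the pairings --- see the slope decomposition; the four-term sequence of Corollary~\ref{corbpairphigamma} is available for arbitrary $D$, not just pure ones. The genuine work, as you correctly flag, is the bookkeeping of signs and normalizations in the cup-product/connecting-map compatibility and in transporting Herr's cup products along the quasi-isomorphism of Corollary~\ref{corwd}. That is exactly the content of Nakamura's Proposition~2.16, and your sketch does not carry it out in detail --- but neither does the paper.
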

\begin{proof}
  See \cite{naka12}, Proposition 2.16.
\end{proof}

\begin{prop}
  \label{propdualexp}
  Let $D$ be a semi-stable $(\varphi, \Gamma_K)$-module over $\Bdaggerrigk$.
  Let $y \in D^{\psi = 1}$ and consider $y$ as $y \in
  (\Bdaggerlogk[1/t] \otimes_F \Dst^K(D))^{N = 0, \psi = 1}$ via the
  comparison isomorphism. Then for $n \gg 0$
  \[
    \exp^*_{V^*(1)}(h^1_{D, K_n}(y)) = p^{-n} \varphi^{-n}(y)(0).
  \]	
\end{prop}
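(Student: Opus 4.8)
The statement is the $(\varphi,\Gamma_K)$-module analogue of the classical explicit formula for the dual exponential map on Iwasawa-cohomology classes (Kato, Benois, Berger), so the plan is to imitate the representation-theoretic proof, working directly with the $B$-pair $\W(D)$. Throughout fix $n$ large enough that $n\geq n(D)$, that $\Gamma_{K_n}$ is pro-cyclic with a topological generator $\gamma_{K_n}$ (with $\chi(\gamma_{K_n})$ a $1$-unit, so $\log^0_p(\chi(\gamma_{K_n}))=\log_p\chi(\gamma_{K_n})$), and that $\iota_n$ is defined on all elements occurring below; since $D$ is semi-stable it is de Rham, so the dual exponential map is available. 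By the very definition of $\exp^*_{K_n,D^*(1)}$ (and the commutative diagram preceding the statement), it suffices to prove that the image of $h^1_{D,K_n}(y)$ under $H^1(K_n,D)\to H^1(K_n,D[1/t])\xrightarrow{\sim}H^1(G_{K_n},\We(D))\to H^1(G_{K_n},\Wdr(D))$ equals the class of the cocycle $\sigma\mapsto \log\chi(\sigma)\cdot p^{-n}\varphi^{-n}(y)(0)$, and then to invert Kato's isomorphism $(\ref{katodual})$.

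First I would make the cocycle explicit. Using the comparison between the $\psi$- and the $\varphi$-versions of the Herr complex, the class of $y\in D^{\psi=1}$ in $H^1(K_n,D)=H^1(C^\bullet_{\varphi,\gamma_{K_n}}(D))$ is represented by a pair $(x_n,y_n)$ produced from $y$ by the Cherbonnier--Colmez recipe (cf.\ \cite{cc99}); feeding this pair into Proposition \ref{cohomwe}, $h^1_{D,K_n}(y)\in H^1(G_{K_n},\We(D))$ is represented by
\[
  c_\sigma \;=\; \log^0_p(\chi(\gamma_{K_n}))\Bigl(\tfrac{\sigma-1}{\gamma_{K_n}-1}\,\widetilde{y}_n-(\sigma-1)\,b\Bigr),
\]
with values in $\We(D)=(\TD[1/t])^{\varphi=1}$, where $\widetilde{y}_n\in\TD[1/t]$ is the image of $y_n$ and $(\varphi-1)b=x_n$ with $b\in\TD[1/t]$. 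Applying $\iota_n\colon\TD[1/t]\to\Wdr(D)$, the term $(\sigma-1)\iota_n(b)$ is a coboundary (for the fixed element $\iota_n(b)\in\Wdr(D)$) and hence vanishes in $H^1(G_{K_n},\Wdr(D))$.

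Next I would analyse $\iota_n(\widetilde{y}_n)$ through the comparison isomorphism, which identifies $\Wdr(D)$ with $\Bdr\otimes_F\Dst^{K_n}(D)$. Writing its $t$-expansion as $\iota_n(\widetilde{y}_n)=m_0+t\,r$ with $m_0\in\Ddr^{K_n}(D)=\Wdr(D)^{G_{K_n}}$ and $r\in\Wplusdr(D)$, the tail contributes the cocycle $\sigma\mapsto\tfrac{\sigma-1}{\gamma_{K_n}-1}(t\,r)$, which takes values in $t\Wplusdr(D)$; since $H^1(G_{K_n},t\Wplusdr(D))=0$ (by Tate's computation $H^i(G_{K_n},\mb{C}_p(j))=0$ for $j\neq0$ together with a dévissage along the Hodge--Tate pieces $t^i\Wplusdr(D)/t^{i+1}\Wplusdr(D)$, $i\geq1$), this is again a coboundary. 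On the invariant part $\tfrac{\sigma-1}{\gamma_{K_n}-1}m_0=\tfrac{\log\chi(\sigma)}{\log\chi(\gamma_{K_n})}m_0$, so the representative collapses to $\sigma\mapsto\log\chi(\sigma)\,m_0$; inverting $(\ref{katodual})$ then gives $\exp^*_{K_n,D^*(1)}(h^1_{D,K_n}(y))=m_0$, and it remains to identify the constant term $m_0=p^{-n}\varphi^{-n}(y)(0)$.

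The identification of $m_0$, that is, tracking the factor $p^{-n}$, is the heart of the matter and the step I expect to be the main obstacle. Conceptually the $p^{-n}$ is forced by the relation $\psi^{n}=p^{-n}\varphi^{-n}\circ\textrm{Tr}_{K_n/K}$ between $\psi$ and $\varphi$ on $\TD[1/t]$: a $\psi$-fixed element must, at the $n$-th finite level, carry exactly this normalisation, and this is what propagates through the Cherbonnier--Colmez pair $(x_n,y_n)$ to yield $\iota_n(\widetilde{y}_n)(0)=p^{-n}\varphi^{-n}(y)(0)$; carrying this out rigorously requires writing $(x_n,y_n)$ down explicitly enough and computing $\iota_n$ of its second component (using $\varphi^{-n}(t)=p^{-n}t$ and $N(y)=0$). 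Alternatively one may bypass this bookkeeping by twisting $D$ by a suitable rank-one $(\varphi,\Gamma_K)$-module to reduce to the case where $D=\Ddaggerrigk(V)$ is \'etale with $V$ semi-stable, where the formula is the classical one, all the maps involved being compatible with such twists and with the functor $D\mapsto\W(D)$.
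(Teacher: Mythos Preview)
Your overall strategy—make the cocycle explicit, push it into $\Wdr(D)$, discard the coboundary $(\sigma-1)b$, peel off the $t$-positive tail using invertibility of $\gamma_{K_n}-1$ on $t^k K_n\otimes\Dst^K(D)$ for $k\neq 0$, and then invert Kato's isomorphism—is exactly the route the paper takes. The paper argues the tail step slightly differently (via invertibility of $\gamma_{K_n}-1$ on each $t^k$-graded piece rather than a Tate-style vanishing of $H^1(G_{K_n},t\Wplusdr(D))$), but this is cosmetic.

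Where you diverge from the paper is in the bookkeeping, and this is precisely where you locate the ``main obstacle''. You pass through an auxiliary pair $(x_n,y_n)$ in the $\varphi$-Herr complex and then try to identify $\iota_n(\tilde y_n)(0)$, expecting a $p^{-n}$ to emerge there from the $\psi$-to-$\varphi$ comparison. The paper does not do this: it works with $y$ itself, and since $\iota_n=\varphi^{-n}$ by definition, one has $\iota_n(y)=\varphi^{-n}(y)\in K_n((t))\otimes\Dst^K(D)$ with constant term $\varphi^{-n}(y)(0)$ on the nose. No $\psi^n=p^{-n}\varphi^{-n}\circ\mathrm{Tr}$ identity, and no reduction to the \'etale case, is needed.

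The factor $p^{-n}$ in the paper's argument comes instead from the very last step you describe. After reducing to the constant term the cocycle is $\sigma\mapsto \frac{\sigma-1}{\gamma_{K_n}-1}\varphi^{-n}(y)(0)$; on $\Gamma_{K_n}$-invariants the operator $\frac{\sigma-1}{\gamma_{K_n}-1}$ acts as $\frac{\log_p\chi(\bar\sigma)}{\log_p\chi(\gamma_{K_n})}$, and with the standard choice $\gamma_{K_n}=\gamma_K^{p^n}$ one has $\log_p\chi(\gamma_{K_n})=p^n\log_p\chi(\gamma_K)$. Combined with the normalisation factor in the definition of $h^1_{K_n,D}$, this produces the $p^{-n}$. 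So the step you flagged as the heart of the matter is in fact immediate, while the $p^{-n}$ you are hunting for has migrated to the place where you already (correctly) compute $\frac{\sigma-1}{\gamma_{K_n}-1}m_0=\frac{\log\chi(\sigma)}{\log\chi(\gamma_{K_n})}m_0$—just keep track of $\log\chi(\gamma_{K_n})$ there rather than cancelling it against the normalisation prematurely.
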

\begin{proof}
  As before we have 
  \[
    h^1_{D, K_n}(y)(\sigma) = \frac{\sigma - 1}{\gamma_{K_n} - 1} y - 
    (\sigma - 1) b,
  \]
  with $(\gamma_{K_n} - 1) (\varphi - 1)b = (\varphi - 1)y$ for some
  $b \in \TD[1/t]$. 
  Further Let
  $n$ be big enough so that we may embed this cocycle into $\Bdr
  \otimes D$, hence $\varphi^{-n}(y) \in K_n((t)) \otimes \Dst^K(D)$
  and
  we may consider $\varphi^{-n}(b)$ as an element in $\Bdr \otimes
  D$. Since $\gamma_{K_n} t = \chi(\gamma_{K_n}) t$ the action of $\gamma_{K_n} -
  1$ is invertible on $t^k K_n \otimes \Dst^K(D)$ for every $k \not=
  0$. Putting this together we see that $h^1_{ D, K_n}$ is equivalent
  in $H^1(K_n, \Bdr \otimes D)$ to
  \[
    \sigma \longmapsto \frac{\sigma - 1}{\gamma_{K_n} - 1}
    (\varphi^{-n}(y))(0).
  \]
    $\sigma$ acts via its image $\overline{\sigma} \in \Gamma^n_K$ (trivially)
    on $K_n$. Furthermore, if $n_i \in \mb{Z}$ is
  a sequence such that $\overline{\sigma} = \lim_{i \to \infty} \gamma_{K_n}^{n_i}$
  one checks by going to the limit that 
  \[
    \frac{\sigma - 1}{\gamma_{K_n} - 1} \frac{\log_p \chi(\gamma_{K_n})}
         {\log_p \chi(\overline{\sigma})}
  \]
  acts trivially on $K_n$. Hence, the above cycle is equivalent to 
  \[
  \sigma \longmapsto p^{-n} \log (\chi(\overline{\sigma})) 
  (\varphi^{-n}(y))(0)
  \]
  The claim follows now from formula (\ref{katodual}).
\end{proof}

\subsection{Perrin-Riou exponential maps for $(\varphi, \Gamma_K)$-modules}

We make the following definitions:
\begin{defi}
  Let $M$ be a $(\varphi, N)$-module over $F$.
  Define $\Ndr(M) = (\Bdaggerlogk \otimes_F M)^{N = 0}$, where
  $N = 1 \otimes N + N \otimes 1$ on $\Bdaggerlogk \otimes_F M$.
  \index{NdrM@$\Ndr(M)$}
\end{defi}
If $D$ is a semi-stable $(\varphi, \Gamma_K)$-module over
$\Bdaggerrigk$ then $\Ndr(\Dst^K(D)) = \Ndr(D)$ 
(see Definition \ref{defi_ndr}).
\begin{defi}
  Let $D$ be a de Rham $(\varphi, \Gamma_K)$-module over $\Bdaggerrigk$.
  \begin{enumerate}
    \item Let $\Dinftyg(D)$\index{Dinftyg@$\Dinftyg(D)$} be the
      submodule of elements $g \in \Ndr(D)^{\psi = 0}$ such that there
      exists an $r \in \mb{Z}$ such that the equation $(1 - p^r
      \varphi) G = \partial^r(g)$ has a solution in $G \in
      \Ndr(D)^{\psi = p^r}$.
    \item Let $\Dinftyf(D)$\index{Dinftyf@$\Dinftyf(D)$} be the
      submodule of elements $g \in \Ndr(D)^{\psi = 0}$ such that there
      exists a family $(G_k)_{k \in \mb{Z}}$ of elements $G_k \in
      \Ndr(D)$ with $\partial (G_k) = G_{k+1}$ and an $r \in \mb{Z}$
      such that $(1 - p^r \varphi) G = \partial^r(g)$
    \item Let $\Dinftye(D)$\index{Dinftye@$\Dinftye(D)$} be the
      submodule of elements $g \in \Ndr(D)^{\psi = 0}$ such that the
      equation $(1 - p^r \varphi) G = \partial^r(g)$ has a solution in
      $G \in \Ndr(D)^{\psi = p^r}$ for every $r \in \mb{Z}$.
  \end{enumerate}
\end{defi}
We first note that if $D \lra D'$ is a morphism of
two de Rham $(\varphi, \Gamma_K)$-modules over $\Bdaggerrigk$ then
this induces a map of $\Gamma_K$-modules $\Dinftyast(D) \ra
\Dinftyast(D')$. Also, one clearly has
\[
  \Dinftye(D) \subset \Dinftyf(D) \subset \Dinftyg(D) \subset 
  \Ndr(D)^{\psi = 0}.
\]
By the above definition one may also define the modules
$\Dinftyast(~)$ by starting with a $(\varphi, \N)$-module.

\begin{defi}
  Let $D$ be a de Rham $(\varphi, \Gamma_K)$-module over
  $\Bdaggerrigk$. We say that $D$ is of \textbf{Perrin-Riou-type} (or
  of PR-type) if $D$ is semistable and $K_0 = K'_0$.
\end{defi}


\begin{lem}
  \label{deltasurj}
  The map $\partial: \Bdaggerlogk \ra \Bdaggerlogk$ is surjective.
\end{lem}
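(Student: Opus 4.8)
The plan is to reduce the statement to the fact that $\partial$ is surjective on $\Bdaggerrigk$ \emph{up to a one-dimensional defect}, and then to absorb that defect using the adjoined variable $\log\pi$. Recall that $\partial$ is extended to $\Bdaggerlogk = \Bdaggerrigk[\log\pi]$ as a derivation with $\partial(\log\pi) = \partial(\pi)/\pi = (1+\pi)/\pi$ (equivalently $\nabla(\log\pi) = t(1+\pi)/\pi$), so that for $f \in \Bdaggerrigk$ and $k \geq 0$
\[
  \partial\!\left(f(\log\pi)^k\right) = \partial(f)(\log\pi)^k + k\,f\,\frac{1+\pi}{\pi}\,(\log\pi)^{k-1}.
\]

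First I would invoke the relevant fact about the Robba ring: the cokernel of $\partial \colon \Bdaggerrigk \ra \Bdaggerrigk$ is one-dimensional over the field of constants $K_0'$, and is generated by the class of $(1+\pi)/\pi$ — equivalently by the class of $1/\pi$, since $1 = \partial(t)$. For $K = \mb{Q}_p$ this is the classical residue computation; the general case follows by regarding $\Bdaggerrigk$ as a regular differential module over $\Bdaggerrigqp$ (cf. \cite{berger02}). Hence for every $x \in \Bdaggerrigk$ there are $f \in \Bdaggerrigk$ and $c \in K_0'$ with $x = \partial(f) + c(1+\pi)/\pi$, and since $c(1+\pi)/\pi = \partial(c\log\pi)$ this gives at once the base case $\Bdaggerrigk \subseteq \partial(\Bdaggerlogk)$.

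The main step is then an induction on the $\log\pi$-degree: assume every element of $\Bdaggerlogk$ of $\log\pi$-degree $\le m$ lies in $\partial(\Bdaggerlogk)$, and let $h = \sum_{i=0}^{m+1} h_i (\log\pi)^i$. Writing $h_{m+1} = \partial(f) + c\,(1+\pi)/\pi$ with $f \in \Bdaggerrigk$, $c \in K_0'$, and setting
\[
  g := f(\log\pi)^{m+1} + \frac{c}{m+2}(\log\pi)^{m+2} \in \Bdaggerlogk,
\]
the formula above (together with $\partial(c) = 0$) yields $\partial(g) = h_{m+1}(\log\pi)^{m+1} + (m+1)\,f\,(1+\pi)/\pi\,(\log\pi)^m$, so that $h - \partial(g)$ has $\log\pi$-degree $\le m$ and is therefore a $\partial$-image by the induction hypothesis; hence $h \in \partial(\Bdaggerlogk)$.

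The crux — and the only subtle point — is the inductive step. A naive attempt to cancel the leading term $h_{m+1}(\log\pi)^{m+1}$ by an element of the same degree fails precisely because $\partial$ is not surjective on $\Bdaggerrigk$. The device is to allow a solution of $\log\pi$-degree one higher with a \emph{constant} leading coefficient $c/(m+2)$: differentiating $(\log\pi)^{m+2}$ produces the term $(m+2)(\log\pi)^{m+1}(1+\pi)/\pi$ in degree $m+1$, which is exactly what compensates the residue obstruction of $h_{m+1}$. Everything else is routine bookkeeping of the $\log\pi$-degree, so the genuine input is the one-dimensionality of $\mathrm{coker}(\partial\colon\Bdaggerrigk\to\Bdaggerrigk)$ and the fact that this cokernel is generated by the class of $(1+\pi)/\pi$, which is a boundary once $\log\pi$ is adjoined.
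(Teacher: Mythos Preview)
Your argument is correct and unpacks exactly what the paper's one-line proof defers to (\cite{berger02}, Proposition~4.4): surjectivity of $\partial$ on $\Bdaggerrigk$ up to a residue class, with the obstruction $(1+\pi)/\pi$ absorbed via $\partial(\log\pi)$, and then induction on the $\log\pi$-degree. The approaches coincide; you have simply made explicit the ``integration of power-series'' that the paper cites.
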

\begin{proof}
  This amounts to an integration of power-series, cf. \cite{berger02},
  Proposition 4.4.
\end{proof}

\begin{lem}
  \label{deltakernel}
  Suppose $K_0 = K'_0$. Then the kernel of $\partial$ on $\Bdaggerlogk$ is equal
  to $K_0$.
\end{lem}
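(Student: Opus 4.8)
The plan is to exploit that $\Bdaggerlogk = \Bdaggerrigk[\log\pi]$ is a polynomial ring over $\Bdaggerrigk$ and that $\partial$ is a derivation of it. First I would record the value $\partial(\log\pi) = (1+\pi)/\pi$: since $\pi = [\varepsilon]-1$ and $t = \log[\varepsilon]$, one has $[\varepsilon] = \exp(t)$ in $\Bplusdr$ and $\partial$ acts as $\tfrac{d}{dt}$ (so that $\partial(\pi) = 1+\pi$), whence $\partial(\log\pi) = \tfrac{d}{dt}\log(e^t-1) = \tfrac{e^t}{e^t-1} = \tfrac{1+\pi}{\pi}$, which lies in $\Bdaggerrigk$ because $\pi$ is invertible there. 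Writing a general element as $x = \sum_{i=0}^m a_i\log^i\pi$ with $a_i \in \Bdaggerrigk$ and $a_m \neq 0$ when $m \geq 1$, the Leibniz rule gives
\[
  \partial(x) = \sum_{i=0}^m\Bigl(\partial(a_i) + (i+1)a_{i+1}\tfrac{1+\pi}{\pi}\Bigr)\log^i\pi \qquad (a_{m+1}:=0),
\]
and since $\log\pi$ is transcendental over $\Bdaggerrigk$ the equation $\partial(x) = 0$ is equivalent to the vanishing of every coefficient.

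The coefficient of $\log^m\pi$ reads $\partial(a_m) = 0$, so the first task is to identify $\ker(\partial\mid_{\Bdaggerrigk})$, and this is precisely the point at which the hypothesis $K_0 = K_0'$ enters. I would deduce $\ker(\partial\mid_{\Bdaggerrigk}) = K_0$ by reducing via Lemma \ref{lembdaggerrigk} to the Robba ring $\Bdaggerrigf$ over the unramified field $F = K_0$ — using that $\partial$ is $\Gamma_F$-equivariant, so that $\textrm{Tr}_{\Bdaggerrigk/\Bdaggerrigf}$ carries a $\partial$-horizontal element of $\Bdaggerrigk$ to one of $\Bdaggerrigf$ — and invoking the well-known computation that the kernel of $\partial = (1+\pi)\tfrac{d}{d\pi}$ on $\Bdaggerrigf$ is just the constant field $F$. (If $K_0 \neq K_0'$ this fails, since $K_0' \subset \Bdaggerrigk$ is already $\partial$-horizontal; that is the reason for the hypothesis.) Hence $a_m \in K_0$.

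It remains to exclude $m \geq 1$. If $m \geq 1$ and $a_m \neq 0$, the coefficient of $\log^{m-1}\pi$ gives $\partial(a_{m-1}) = -m\,a_m\,\tfrac{1+\pi}{\pi}$; dividing by $-m\,a_m \in K_0^\times$ (permissible as $\partial$ is $K_0$-linear) this would place $(1+\pi)/\pi$ in the image of $\partial$ on $\Bdaggerrigk$. The main obstacle is therefore the claim that $(1+\pi)/\pi \notin \partial(\Bdaggerrigk)$, i.e. that its class in $\Bdaggerrigk/\partial(\Bdaggerrigk)$ — the ``residue'' — is nonzero; I would prove this by the same reduction to $\Bdaggerrigqp$ followed by an explicit Laurent-series computation (solving $\partial(\sum_k c_k\pi^k) = \pi^{-1}+1$ forces $c_{-k}$ to behave like $1/k$, which fails the convergence condition defining the Robba ring), in the spirit of the power-series argument behind Lemma \ref{deltasurj}. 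Granting the claim, $m = 0$, so $x = a_0 \in \Bdaggerrigk$ with $\partial(a_0) = 0$, and the previous step gives $x \in K_0$; the reverse inclusion $K_0 \subset \ker\partial$ being obvious, this proves $\ker(\partial\mid_{\Bdaggerlogk}) = K_0$.
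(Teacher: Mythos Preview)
Your overall scaffold matches the paper's: expand in powers of $\log\pi$, use transcendence to read off coefficient-by-coefficient equations, first determine $\ker(\partial\mid_{\Bdaggerrigk})$, then show $(1+\pi)/\pi$ is not in the image of $\partial$ on $\Bdaggerrigk$.

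There is, however, a genuine gap in your reduction for $\ker(\partial\mid_{\Bdaggerrigk})$. Knowing that $\textrm{Tr}_{\Bdaggerrigk/\Bdaggerrigf}(f)$ is horizontal, hence lies in $F$, does not force $f$ itself into $K_0$: the trace is a single linear condition on $f$. To make this line work you would at least need all Galois conjugates of $f$ to be horizontal (which requires $K_\infty/F_\infty$ Galois and that $\partial$ commute with this Galois action, neither of which you verify), and then conclude that the minimal polynomial of $f$ over $\Bdaggerrigf$ has coefficients in $F$. The paper carries out precisely this minimal-polynomial step, but directly and without any Galois hypothesis: pick $P \in \Bdaggerrigf[X]$ with $P(f)=0$ and $P'(f)\neq 0$ (possible since $\Bdaggerrigk/\Bdaggerrigf$ is finite separable by Proposition~\ref{propbdaggergal} and Lemma~\ref{lembdaggerrigk}), differentiate to get $\partial f = -(\partial P)(f)/P'(f)$; then $\partial f = 0$ forces $(\partial P)(f)=0$, hence $\partial P = 0$ by minimality of $P$, hence $P \in F[X]$, hence $f$ is algebraic over $K_0$, hence $f \in K_0' = K_0$.

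For the second sub-step your explicit Laurent-series residue computation can be pushed through (after a reduction to $\Bdaggerrigf$, which again needs justification), but the paper's argument is a one-liner that avoids any reduction or computation: if $\partial f = (1+\pi)/\pi$ with $f \in \Bdaggerrigk$, then $\partial(\log\pi - f) = 0$, so by the first step $\log\pi = f + c$ with $c \in K_0$, contradicting the transcendence of $\log\pi$ over $\Bdaggerrigk$. This reuses the first step as a black box and needs nothing further.
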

\begin{proof}
  Let $f \in \Bdaggerrigk$. Due to Proposition \ref{propbdaggergal} and
  Lemma \ref{lembdaggerrigk} there is a polynomial $P$ in $\Bdaggerrigf$
  such that $P(f) = 0$ and $P'(f) \not= 0$. Then $\partial(f) = -
  (\partial P)(f) / P'(f)$, so that $\partial(f) = 0$ if and only if
  $f \in K_0$.

  Now suppose $f = \sum_{i=1}^r f_i \log^i \pi \in \Bdaggerlogk$ and
  $\partial(f) = 0$.  Since $\log \pi$ is a transcendent element over
  any $\Bdaggerrigk$ this gives rise to relations $\partial(f_i) +
  (j+1) \frac{\pi+1}{\pi} f_{i+1} = 0$ with $f_{r+1} = 0$. For $i = r$
  this implies $f_r = \lambda \in K_0$, hence $\partial(f_{r-1}) =
  -\lambda r \frac{\pi + 1}{\pi}$. Suppose there exists an $f \in
  \Bdaggerrigk$ with $\partial (f) = \frac{1 + \pi}{\pi}$.  Then
  $\partial (\log \pi - f) = 0$, so that $\log \pi = f + a$ with $a
  \in K_0$, a contradiction to the transcendency property of $\log
  \pi$.  Hence, $\frac{\pi + 1}{\pi}$ is not an element in the image of
  $\partial$ on $\Bdaggerrigk$, and we obtain $\lambda = 0$. By
  recurrence this shows that the kernel of $\partial$ on
  $\Bdaggerlogk$ is contained in $K_0$.
\end{proof}

Let again $D$ be a de Rham $(\varphi, \Gamma)$-module over $\Bdaggerrigk$.

\begin{lem}
  \label{deltasurjd}
  Let $D$ be of PR-type.
  Then the map $\partial: \Bdaggerlogk \otimes \Ndr(D) \ra \Bdaggerlogk \otimes \Ndr(D)$
  is surjective.
\end{lem}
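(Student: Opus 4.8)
The plan is to trivialise $\Ndr(D)$ over the larger ring $\Bdaggerlogk$ and thereby reduce the assertion to the scalar surjectivity of Lemma~\ref{deltasurj}. First I would record that, $D$ being of PR-type, it is in particular semi-stable, so $\Ndr(D) = \Ndr(\Dst^K(D))$ is a $p$-adic differential equation equipped with a Frobenius (its $\partial$-stability is exactly the property $\nabla_0(\Ndr(D))\subset t\Ndr(D)$ noted above), and hence Theorem~\ref{thmandre} applies to $M=\Ndr(D)$. Because $D$ is semi-stable — not merely potentially semi-stable — the $\nabla$-crystal $\Ndr(D)$ is already unipotent over $\Bdaggerrigk$; this is the ``$L=K$'' case of the argument showing de Rham modules are potentially semi-stable, and it allows one to take $L=K$ in Theorem~\ref{thmandre}. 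At this point the hypothesis $K_0 = K_0'$ enters, guaranteeing $L_0' = K_0$, and one obtains a $\Bdaggerlogk$-linear isomorphism
\[
  \Bdaggerlogk\otimes_{K_0}V \overset{\sim}{\lra} \Bdaggerlogk\otimes_{\Bdaggerrigk}\Ndr(D), \qquad V := \bigl(\Bdaggerlogk\otimes_{\Bdaggerrigk}\Ndr(D)\bigr)^{\partial = 0},
\]
where $V$ is a $K_0$-vector space of dimension $d := \textrm{rank}_{\Bdaggerrigk}D$ (via $\Ndr(D)[1/t] = D[1/t]$ it may be identified with $\Dst^K(D)$, but this identification will not be needed).

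Next I would transport $\partial$ through this isomorphism, which sends $f\otimes v\mapsto f\cdot v$. By construction every $v\in V$ is killed by $\partial$, hence, for $f\in\Bdaggerlogk$, the Leibniz rule gives $\partial(f\cdot v) = \partial(f)\cdot v + f\cdot\partial(v) = \partial(f)\cdot v$. Thus, under the above identification, $\partial$ on $\Bdaggerlogk\otimes_{\Bdaggerrigk}\Ndr(D)$ becomes $\partial\otimes\textrm{id}_V$ on $\Bdaggerlogk\otimes_{K_0}V$. The conclusion is then immediate: fixing a $K_0$-basis $v_1,\dots,v_d$ of $V$, an arbitrary element of $\Bdaggerlogk\otimes_{K_0}V$ is uniquely $\sum_{j=1}^d f_j\otimes v_j$ with $f_j\in\Bdaggerlogk$; given a prescribed $\sum_j g_j\otimes v_j$, Lemma~\ref{deltasurj} supplies $f_j\in\Bdaggerlogk$ with $\partial(f_j)=g_j$ for each $j$, and then $\partial\bigl(\sum_j f_j\otimes v_j\bigr) = \sum_j g_j\otimes v_j$.

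I expect the one genuinely delicate step to be establishing the displayed isomorphism with $V$ of the full rank $d$ over $K_0$. This is where both hypotheses on $D$ are needed: semi-stability, to run André's theorem with $L=K$ (equivalently, to know $\Ndr(D)$ is unipotent over $\Bdaggerrigk$), and $K_0 = K_0'$, to ensure the space of horizontal sections is defined over $K_0$ rather than over a strictly larger unramified field. Some care is required because the horizontal sections must be taken inside $\Bdaggerlogk\otimes_{\Bdaggerrigk}\Ndr(D)$ and not merely inside $\Bdaggerlogk[1/t]\otimes_{\Bdaggerrigk}\Ndr(D)$; checking that passing to horizontal sections does not lose rank, and that the resulting $K_0$-structure is compatible with $\partial$, is the substance of the argument. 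Everything after that is formal.

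As an alternative that sidesteps the structure theory of differential equations, one can argue by dévissage along a unipotent filtration $0=M_0\subset M_1\subset\dots\subset M_r=\Ndr(D)$ by $\nabla$-subcrystals. Each $M_i$ is saturated in $\Ndr(D)$ (the successive quotients being free over the Bézout domain $\Bdaggerrigk$), hence $\nabla(M_i)\subset t\Ndr(D)\cap M_i = tM_i$ and $M_i$ is $\partial$-stable; surjectivity of $\partial$ then propagates through the short exact sequences $0\to M_{i-1}\to M_i\to M_i/M_{i-1}\to 0$ by the usual right-exactness diagram chase, and for a subquotient possessing a $\nabla$-horizontal basis the operator $\partial$ on $\Bdaggerlogk\otimes_{\Bdaggerrigk}(M_i/M_{i-1})$ is diagonal with diagonal entries equal to $\partial$ on $\Bdaggerlogk$, so one again reduces to Lemma~\ref{deltasurj}.
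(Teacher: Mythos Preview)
Your argument is correct and lands in exactly the same place as the paper, but you take a detour that the hypothesis ``PR-type'' was designed to avoid. The paper's proof is one line: since $D$ is semi-stable one has by definition $\Ndr(D) = (\Bdaggerlogk \otimes_{K_0} \Dst^K(D))^{N=0}$, and therefore
\[
  \Bdaggerlogk \otimes_{\Bdaggerrigk} \Ndr(D) \;=\; \Bdaggerlogk \otimes_{K_0} \Dst^K(D),
\]
after which Lemma~\ref{deltasurj} applied coordinatewise in a $K_0$-basis of $\Dst^K(D)$ finishes immediately. In other words, your abstract $V$ of horizontal sections is concretely $\Dst^K(D)$ from the start (you note this yourself in parentheses), and the displayed isomorphism you flag as ``the one genuinely delicate step'' is actually the trivial step: one does not need Andr\'e's theorem, nor the unipotent filtration, because semi-stability hands you the horizontal trivialisation for free. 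What your route buys is a template that would still work under weaker hypotheses (potentially semi-stable, or a general $p$-adic differential equation with Frobenius), at the cost of invoking much heavier machinery than the lemma requires; the paper's route exploits the PR-type assumption to make the whole thing a tautology.
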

\begin{proof}
  We have 
  \[
    \Bdaggerlogk \otimes_{\Bdaggerrigk} \Ndr(D) = \Bdaggerlogk \otimes_{K_0} \Dst^K(D),
  \]
  whence the claim follows from the Lemma above.
\end{proof}

\begin{prop}
  Let $D$ be of PR-type. The map 
  \[
    \partial: \Ndr(D)^{\psi = 0} \lra \Ndr(D[1])^{\psi = 0}(1)
  \]
  is an isomorphism of $\Gamma_K$-modules.
\end{prop}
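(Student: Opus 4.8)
The plan is to reduce the target to $\Ndr(D)^{\psi=0}$ with a $\chi$-twisted $\Gamma_K$-action, and then the statement to the bijectivity on $\Ndr(D)$ of the operators $i+p\partial$ for $1\le i\le p-1$. Since $D$ is of PR-type it is semi-stable, so $\Ndr(D)=\Ndr(\Dst^K(D))$, and the Tate twist $D[1]$ of the associated filtered $(\varphi,N)$-module changes only $\varphi$ (to $p\varphi$) and the filtration, leaving the $\Bdaggerrigk$-module, the $\Gamma_K$-action and the monodromy $N$ unchanged; hence $\Ndr(D[1])=\Ndr(D)$ with $\varphi_{\Ndr(D[1])}=p\,\varphi_{\Ndr(D)}$, so $\psi_{\Ndr(D[1])}=p^{-1}\psi_{\Ndr(D)}$ and $\Ndr(D[1])^{\psi=0}=\Ndr(D)^{\psi=0}$ as sets, the extra $(1)$ merely twisting the $\Gamma_K$-action by $\chi$. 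So it is enough to show that $\partial=\tfrac1t\nabla$ is a $\mb{Q}_p$-linear bijection of $\Ndr(D)^{\psi=0}$ with $\partial(\gamma x)=\chi(\gamma)\gamma(\partial x)$; the latter relation is immediate from $\gamma(1/t)=\chi(\gamma)^{-1}(1/t)$ and the centrality of $\nabla$. For well-definedness, $\partial(\Ndr(D))\subseteq\Ndr(D)$ because $\nabla_0(\Ndr(D))\subseteq t\,\Ndr(D)$, and from the decomposition $\Ndr(D)=\bigoplus_{i=0}^{p-1}(1+\pi)^i\varphi(\Ndr(D))$ together with $\partial(1+\pi)=1+\pi$ and $\partial\varphi=p\varphi\partial$ one computes (up to the sign of $i$, immaterial here)
\[
  \partial\bigl((1+\pi)^i\varphi(y)\bigr)=(1+\pi)^i\varphi\bigl(i\,y+p\,\partial(y)\bigr),
\]
which shows $\psi\circ\partial=p\,\partial\circ\psi$ (so $\partial$ preserves $\psi=0$) and, because $\Ndr(D)^{\psi=0}=\bigoplus_{i=1}^{p-1}(1+\pi)^i\varphi(\Ndr(D))$, that on the $i$-th summand $\partial$ is the operator $i+p\partial$ on $\Ndr(D)$ transported by the injective map $\varphi$.

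Everything thus comes down to showing $i+p\partial$ is bijective on $\Ndr(D)$ for $1\le i\le p-1$. Set $E:=\Bdaggerlogk\otimes_{\Bdaggerrigk}\Ndr(D)=\Bdaggerlogk\otimes_{K_0}\Dst^K(D)$ (Lemma~\ref{deltasurjd}); on $E$ the operator $\partial$ is $\partial_{\Bdaggerlogk}\otimes 1$ and commutes with the total monodromy $N$ (one checks $[\partial_{\Bdaggerlogk},N_{\Bdaggerlogk}]=0$ on $\Bdaggerrigk$ and on $\log\pi$, using $\partial(\log\pi)=(1+\pi)/\pi$, while $\partial$ acts trivially on $\Dst^K(D)$). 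Hence $\Ndr(D)=E^{N=0}$ is $\partial$-stable and fits into the exact sequence $0\to\Ndr(D)\to E\xrightarrow{N}E\to 0$; applying the snake lemma to it with $i+p\partial$ acting vertically, if $i+p\partial$ is bijective on $E$ then it has trivial kernel and is surjective on $\Ndr(D)$. As $\Dst^K(D)$ is finite free over $K_0$ and $i+p\partial=(i+p\partial_{\Bdaggerlogk})\otimes 1$ on $E$, this reduces to bijectivity of $i+p\partial$ on $\Bdaggerlogk=\Bdaggerrigk[\log\pi]$, which one gets from bijectivity on $\Bdaggerrigk$ by a finite descending induction on the degree in $\log\pi$. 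Finally, on $\Bdaggerrigk$ (where $\partial=(1+\pi)\,d/d\pi$) the only formal solution of $(i+p\partial)f=0$ is a multiple of $(1+\pi)^{-i/p}=\sum_n\binom{-i/p}{n}\pi^n$, and since $i/p\notin\mb{Z}_p$ one has $v_p\!\left(\binom{-i/p}{n}\right)=-n-v_p(n!)\to-\infty$ linearly in $n$, so this series converges only on a disk bounded away from $|\pi|=1$ and does not belong to the Robba ring $\Bdaggerrigk$ — whence injectivity; surjectivity on $\Bdaggerrigk$ is the power-series integration argument behind Lemma~\ref{deltasurj} (\cite{berger02}, Proposition~4.4), the denominators $i+pn$ ($n\in\mb{Z}$) that occur being $p$-adic units of constant absolute value $p$ and so harmless for convergence.

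The main obstacle is exactly this last surjectivity — integrating the perturbed operator $i+p\partial$ while remaining in the Robba ring — and there I would invoke the technique of Lemma~\ref{deltasurj}. It should be stressed that the hypothesis $\psi=0$ is indispensable: on all of $\Ndr(D)$ the bare operator $\partial$ is far from invertible — its kernel contains $\ker(N\mid\Dst^K(D))\ne 0$ and its cokernel is $\Dst^K(D)/N(\Dst^K(D))$, nonzero unless $D$ is crystalline — and it is only after restricting to $\psi=0$, where $\partial$ is replaced by the operators $i+p\partial$ with $i\ne 0$, that $\Dst^K(D)$ drops out of the kernel (because $\psi=\varphi^{-1}$ is injective on $\Dst^K(D)$, so $\Dst^K(D)\cap\Ndr(D)^{\psi=0}=0$) and the resonant solution $(1+\pi)^{-i/p}$ falls outside the Robba ring, making $\partial$ an isomorphism.
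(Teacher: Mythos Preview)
Your proof is correct but takes a more computational route than the paper's. The paper simply invokes Lemma~\ref{deltasurj} ($\partial$ surjective on $\Bdaggerlogk$) and Lemma~\ref{deltakernel} ($\ker\partial=K_0$) and refers to Perrin-Riou: from the exact sequence $0\to K_0\to\Bdaggerlogk\xrightarrow{\partial}\Bdaggerlogk\to 0$ one tensors with $\Dst^K(D)$ to get $0\to\Dst^K(D)\to E\xrightarrow{\partial}E\to 0$, and then argues directly on the $\psi=0$ part --- injectivity because $\Dst^K(D)\cap E^{\psi=0}=0$ (as $\psi$ acts as $\varphi^{-1}$ there, hence injectively), surjectivity by lifting any $y\in E^{\psi=0}$ to $x\in E$ with $\partial x=y$ and correcting $x$ by $\varphi(\psi x)\in\varphi(\Dst^K(D))$, which lies in $\ker\partial$; passage to $N=0$ is then routine. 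You instead decompose $\Ndr(D)^{\psi=0}=\bigoplus_{i=1}^{p-1}(1+\pi)^i\varphi(\Ndr(D))$ and reduce to bijectivity of $i+p\partial$ on $\Bdaggerrigk$ for each $1\le i\le p-1$, which forces you to re-derive analogues of both lemmas for the perturbed operator (your $(1+\pi)^{-i/p}$ argument for injectivity, and a fresh integration argument for surjectivity). This is valid and makes the mechanism transparent --- the eigenvalue shift by $i\ne 0$ is precisely what kills the constant kernel --- but it duplicates work that the two cited lemmas already encapsulate.

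Two small points: the denominators $i+pn$ are $p$-adic units of absolute value $1$, not $p$; and your snake-lemma step needs the surjectivity of $N$ on $E=\Bdaggerlogk\otimes_{K_0}\Dst^K(D)$, which you should state follows from the surjectivity of $N$ on $\Bdaggerlogk$ together with the nilpotence of $N$ on $\Dst^K(D)$.
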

\begin{proof}
  With our preparations,
  namely, Lemma \ref{deltasurj} and Lemma \ref{deltakernel},
  this proof works the same as in \cite{perrin01}, 
  Proposition 2.2.3.
\end{proof}

Obviously the operator $\partial$ induces a map of $\Gamma_K$-modules
\[
  \partial: \Ndr(D)^{\psi = 1} \ra \Ndr(D[1])^{\psi = 1}(1)
\]
which
however is in general neither injective nor surjective.  This should
be contrasted with the \'etale case where $D^{\psi = 1} =
\Ddaggerrig(V)^{\psi = 1} = H^1(K, V \otimes_{\mb{Q}_p} \mc{H}
(\Gamma_K))$ and the fact that $\partial$ in this setting
corresponds to the Tate-twist isomorphism.

For a semistable $(\varphi, \Gamma_K)$-module consider the following complex:
\[
  \mf{C}_K(D):~~  0 \ra \Dst^K(D) \overset{\delta_0}{\longrightarrow} \Dst^K(D) 
  \times \Dst^K(D) \overset{\delta_1}{\longrightarrow} \Dst^K(D) \ra 0
\]\index{CKD@$\mf{C}_K(D)$}
with
\begin{align*}
  \delta_0(\nu) &= (N \nu, (1 - \varphi) \nu), \\
  \delta_1(\lambda, \mu) &= N \mu - (1 - p \varphi) \lambda.
\end{align*}
Hence,
\begin{align*}
  H^0(\mf{C}_K(D)) &= \Dst^K(D)^{\varphi = 1, N = 0}, \\
  H^1(\mf{C}_K(D)) &= \{ (\lambda, \mu) \in \Dst^K(D) \times \Dst^K(D)|~ 
  N \mu = (1 - p\varphi) \lambda \} / \delta_0(\Dst^K(D)), \\
  H^2(\mf{C}_K(D)) &= \Dst^K(D) / (N, 1 - p\varphi) \Dst^K(D).
\end{align*}
One also checks that 
\begin{equation}
  \begin{array}{ccccccccc}
    0 & \lra & \frac{\Dst^K(D)^{N = 0}}{ (\varphi - 1) \Dst^K(D)^{N = 0}}
    & \lra & H^1(\mf{C}_K(D)) & \lra & 
    \frac{\Dst^K(D)}{N \Dst^K(D))^{\varphi = p^{-1}}} &
    \lra  & 0 \\
    & & \mu  & \longmapsto & (0, \mu)  \\
    & & & & (\lambda, \mu) & \longmapsto & \lambda
  \end{array} \label{eqh1ck}
\end{equation}
furnishes an exact sequence for $H^1(\mf{C}(D))$.

We see that 
$H^0(\mf{C}(D(k))) = 0$ for $k \gg 0$ resp. $k \ll 0$
since the groups $\Dst(D(k))^{\varphi = 1}$ and 
$(\varphi - 1) \Dst(D(k))$ vanish for those $k$. Similarly, 
$H^1(\mf{C}(D(k))) = 0$ for $k \gg 0$ resp. $k \ll 0$.

Now let $D$ be a de Rham $(\varphi, \Gamma_K)$-module and fix a finite
extension $L/K$ such that $D|_L$ is semistable with $L_0 = L_0'$.

\begin{lem}
  \label{lemndrpsi1}
  Let $k \in \mb{N}$.
%
  Then one has an exact sequence of $\Gamma_K$-modules
  \begin{align*}
    0 \ra \bigoplus_{-k \leq i < 0} H^0(\mf{C}(D|_L(-i)))(i) \cap 
    \Ndr(D(k))^{\psi = 1}(-k))  \ra \Ndr(D(k))^{\psi = 1}(-k) \\
    \overset{\partial^k}{\longrightarrow} \Ndr(D)^{\psi = 1} 
    \overset{\widetilde{\mc{R}}_D}{\ra}
    \bigoplus_{-k \leq i < 0} H^1(\mf{C}(D|_L(-i)))(i) 
  \end{align*}
\end{lem}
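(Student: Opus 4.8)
The plan is to prove the sequence by a dévissage on $k$, applying $\partial$ one Tate twist at a time and splicing the resulting elementary exact sequences, in the manner of \cite{perrin01}, Proposition~2.2.2. Since $D$ is only de Rham, I would first restrict to the fixed finite extension $L/K$, over which $D|_L$ is semistable with $L_0 = L_0'$, hence of PR-type, so that the preceding proposition applies to every Tate twist of $D|_L$. Using the identity $\Ndr(\Dst^L(D|_L)) = \Ndr(D|_L)$, the comparison $\Ndr(D|_L) = \Bdaggerrigl \otimes_{\Bdaggerrigk} \Ndr(D)$, and Galois descent along $(\Bdaggerrigl)^{\textrm{Gal}(L_\infty/K_\infty)} = \Bdaggerrigk$ (Proposition~\ref{propbdaggergal}), together with the fact that $\psi$ and $\partial$ commute with the action of $\textrm{Gal}(L_\infty/K_\infty)$, one gets $\Ndr(D(j))^{\psi = 1} = (\Ndr(D|_L(j))^{\psi = 1})^{\textrm{Gal}(L_\infty/K_\infty)}$ compatibly with $\partial$. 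Thus it suffices to produce the analogous sequence over $L$ (that is, for a semistable module of PR-type) and then pass to the $K$-rational source, as described in the last paragraph.

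Over $L$, the heart of the matter is the elementary step: for $0 \leq j < k$ there is an exact sequence of $\Gamma_K$-modules
\[
  0 \lra H^0(\mf{C}(D|_L(j+1)))(-j-1) \lra \Ndr(D|_L(j+1))^{\psi = 1}(-j-1) \overset{\partial}{\lra} \Ndr(D|_L(j))^{\psi = 1}(-j) \overset{\rho_j}{\lra} H^1(\mf{C}(D|_L(j+1)))(-j-1).
\]
The preceding proposition gives an isomorphism $\partial \colon \Ndr(D|_L(j+1))^{\psi = 0}(-j-1) \overset{\sim}{\ra} \Ndr(D|_L(j))^{\psi = 0}(-j)$. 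Comparing the exact sequences $0 \to \Ndr(-)^{\varphi = 1} \to \Ndr(-)^{\psi = 1} \overset{1-\varphi}{\to} \Ndr(-)^{\psi = 0}$ at the twists $j+1$ and $j$ via the operator identity $\partial\varphi = p\varphi\partial$ (immediate from $\varphi(t) = pt$ and $\nabla\varphi = \varphi\nabla$), a snake-lemma argument identifies the kernel of $\partial$ on the $\psi=1$-part with $\Ndr(D|_L(j+1))^{\varphi = 1}$ and the obstruction to surjectivity with the cokernel governed by the Frobenius–monodromy data of $D|_L(j+1)$. Since $\Bdaggerlogl \otimes_{\Bdaggerrigk} \Ndr(D|_L) = \Bdaggerlogl \otimes_{L_0} \Dst^L(D|_L)$ (as in the proof of Lemma~\ref{deltasurjd}), the first of these is $\Dst^L(D|_L)^{\varphi = p^{j+1},\, N = 0}$, i.e. $H^0(\mf{C}(D|_L(j+1)))$ up to the twist, and the cokernel is identified with $H^1(\mf{C}(D|_L(j+1)))$ through the exact sequence (\ref{eqh1ck}). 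This is formally the computation of \cite{perrin01}, with Lemmas~\ref{deltasurj}, \ref{deltakernel} and \ref{deltasurjd} replacing Perrin-Riou's integration-of-power-series arguments.

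Splicing the $k$ elementary sequences for $j = k-1, \ldots, 0$ — composing the maps $\partial$ and checking, as in loc.\ cit., that the subspaces $H^0(\mf{C}(D|_L(-i)))(i)$ sit in direct sum inside $\Ndr(D|_L(k))^{\psi=1}(-k)$ (they lie in distinct $\varphi$-eigenspaces) and that iterated $\partial$ annihilates them — yields the sequence over $L$. Finally I would descend to $K$ by restricting the source to $\textrm{Gal}(L_\infty/K_\infty)$-invariants: this group is finite and all modules in sight are $\mb{Q}_p$-vector spaces, so $H^1(\textrm{Gal}(L_\infty/K_\infty), -) = 0$; hence the image of $\partial^k$ over $K$ equals the invariants of the image over $L$, the kernel over $K$ is exactly $\bigoplus_{-k \leq i < 0}(H^0(\mf{C}(D|_L(-i)))(i) \cap \Ndr(D(k))^{\psi = 1}(-k))$, and $\widetilde{\mc{R}}_D$ is the restriction to $\Ndr(D)^{\psi = 1}$ of the composite of the $\rho_j$'s (whose target is the full $L$-cohomology, not its invariants, since an obstruction over $K$ pulls back to one over $L$). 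The main obstacle is precisely this identification of $\widetilde{\mc{R}}_D$ together with exactness at $\Ndr(D)^{\psi=1}$ — the assertion that an element of $\Ndr(D)^{\psi = 1}$ lies in the image of $\partial^k$ if and only if all its intermediate $\mf{C}$-obstructions vanish — which is where the de Rham hypothesis is genuinely used (it turns $\Ndr$ into a base change of $\Dst$, reducing the solvability of $\partial G = g$ under the $\psi$-constraints to the integration problem of Lemmas~\ref{deltasurj}–\ref{deltasurjd}) and where one must reproduce Perrin-Riou's explicit analysis, with careful bookkeeping of the $\Gamma_K$-action on each term.
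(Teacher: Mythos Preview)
Your proposal is correct and follows essentially the same route as the paper: both defer to Perrin-Riou's argument, carrying it out over $L$ where $D|_L$ is of PR-type and then relating back to $K$ via the inclusion $\Ndr(D) \subset \Ndr(D|_L)$. The paper's proof is in fact terser than yours --- it simply cites \cite{perrin01}, Lemma~2.2.5 (note: not Proposition~2.2.2 as you write) and postpones the description of $\widetilde{\mc{R}}_D$ to the discussion immediately following the lemma, where it is defined as $(1-\varphi)$ composed with the coefficient-extraction map $\mc{R}_D$ built from a complete solution, rather than as the splice of your elementary obstruction maps $\rho_j$. Your step-by-step d\'evissage and the explicit Galois-descent argument (using finiteness of $\textrm{Gal}(L_\infty/K_\infty)$ in characteristic zero to kill $H^1$) are more detailed than what the paper records, but they amount to the same construction; the two descriptions of $\widetilde{\mc{R}}_D$ agree once one unwinds how the coefficients $(\lambda_j,\mu_j)$ of a complete solution encode exactly the successive obstructions to lifting through $\partial$.
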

\begin{proof}
  The proof may be done in an analogous way as in \cite{perrin01},
  Lemma 2.2.5.  We give a description of the map
  $\widetilde{\mc{R}}_D$ following the definition of a map $\mc{R}_D$
  (cf. equation (\ref{mcrd})) since the constructions which give rise
  to it will be important later on. We just briefly mention
  that this map depends on the inclusion 
  $\Ndr(D) \subset \Ndr(D|_L)$ which is induced by the inclusion 
  $D \subset D|_L$.
\end{proof}

From the lemma we see that, by considering the possible eigenvalues for
$\varphi$,
\begin{align}
  \label{dinftye}
  \Dinftye(D) &= \partial^h ( 1 - p^{-h} \varphi) \Ndr(D(h))^{\psi =
    1},\\ \Dinftyg(D) &= \partial^{-h} ( 1 - p^{h} \varphi)
  \Ndr(D(-h))^{\psi = 1}
  \label{dinftyg}
\end{align}
for $h \gg 0$ since the $H^i(\mf{C}(D))$, $i = 0, 1$, vanish in this
case.  More precisely, for \'etale $(\varphi, \Gamma_K)$-module one
has the following:
\begin{lem}
  \label{lemhetale}
  Let $D = \Ddaggerrig(V)$ for a $p$-adic representation $V$ that is
  de Rham.  Let $h \geq 1$ be such that $\Fil^{-h} \Ddr^K(D) =
  \Ddr^K(D)$.  Then $\Dinftyg(D) = \partial^{-(h+1)} ( 1 - p^{h+1}
  \varphi) \Ndr(D(-(h+1)))^{\psi = 1}$.
\end{lem}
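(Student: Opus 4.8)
Fix the finite extension $L/K$ as before Lemma \ref{lemndrpsi1}, so that $D|_L$ is semistable with $L_0 = L_0'$. The plan is to upgrade the qualitative statement ``$h'\gg 0$'' of (\ref{dinftyg}) to the explicit value $h'=h+1$; the new input will be a vanishing result for the obstruction groups occurring in Lemma \ref{lemndrpsi1}. Choose $M\gg 0$ with $\Dinftyg(D)=\partial^{-M}(1-p^{M}\varphi)\Ndr(D(-M))^{\psi=1}$, and apply Lemma \ref{lemndrpsi1} to the twisted module $D(-M)$ with $k=M-(h+1)$. Since $(D(-M))(k)=D(-(h+1))$, the lemma produces an exact sequence whose middle term is $\Ndr(D(-(h+1)))^{\psi=1}$ (up to a Tate twist, irrelevant for the identity of submodules), whose map $\partial^{k}$ lands in $\Ndr(D(-M))^{\psi=1}$, and whose obstruction terms are the groups $H^{0}$ and $H^{1}$ of the complexes $\mf{C}\big((D(-M))|_L(-i)\big)=\mf{C}\big(D|_L(j)\big)$ with $-M<j\le-(h+1)$. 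So it suffices to show $H^{0}(\mf{C}(D|_L(j)))=H^{1}(\mf{C}(D|_L(j)))=0$ for every integer $j\le-(h+1)$; granting this, $\partial^{k}\colon\Ndr(D(-(h+1)))^{\psi=1}\overset{\sim}{\lra}\Ndr(D(-M))^{\psi=1}$.

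For the vanishing I would combine the exact sequence (\ref{eqh1ck}) with the Tate-twist identity of Lemma \ref{lemtatetwist}, namely $\Dst^{L}(D|_L(j))=\Dst^{L}(D|_L)[-j]$, on which $\varphi$ acts as $p^{-j}\varphi$. This realizes $H^{0}(\mf{C}(D|_L(j)))$ as $\Dst^{L}(D|_L)^{N=0,\ \varphi=p^{j}}$, and exhibits $H^{1}(\mf{C}(D|_L(j)))$ as an extension of $(\Dst^{L}(D|_L)/N)^{\varphi=p^{j-1}}$ by $\Dst^{L}(D|_L)^{N=0}/(p^{j}\varphi-1)$; all three vanish as soon as neither $p^{j}$ nor $p^{j-1}$ is an eigenvalue of $\varphi$ on $\Dst^{L}(D|_L)$. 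Here the \'etale hypothesis is essential: $D=\Ddaggerrig(V)$ \'etale forces $V|_{G_L}$ to be semistable and $\Dst^{L}(D|_L)=\Dst^{L}(V|_{G_L})$ to be a \emph{weakly admissible} filtered $(\varphi,N)$-module over $L_0$, while $\Fil^{-h}\Ddr^{K}(D)=\Ddr^{K}(D)$ says exactly that all Hodge--Tate weights of $D$ are $\le h$; hence $D(-h)$ has Hodge--Tate weights $\le 0$ (it is positive in the sense of Proposition \ref{propdpositive}), so $\Dst^{L}(D|_L(-h))=\Dst^{L}(D|_L)[h]$ has all Hodge slopes $\ge 0$, and the Newton-above-Hodge inequality for weakly admissible modules forces all its Frobenius slopes to be $\ge 0$. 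Untwisting, every eigenvalue $\alpha$ of $\varphi$ on $\Dst^{L}(D|_L)$ satisfies $v_{p}(\alpha)\ge -h$. For $j\le-(h+1)$ we have $j-1<j<-h\le v_{p}(\alpha)$, so neither $p^{j}$ nor $p^{j-1}$ is an eigenvalue, and the obstruction groups vanish.

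It remains to assemble the pieces. Using the isomorphism $\partial^{k}$ just obtained, the commutation relation $\partial^{k}(1-p^{r}\varphi)=(1-p^{r+k}\varphi)\partial^{k}$ (a consequence of $\partial\varphi=p\varphi\partial$, itself immediate from $\varphi(t)=pt$ and $\varphi\nabla_D=\nabla_D\varphi$), and (\ref{dinftyg}) with $h'=M$, one computes
\begin{align*}
  \Dinftyg(D) &= \partial^{-M}(1-p^{M}\varphi)\,\Ndr(D(-M))^{\psi=1} \\
  &= \partial^{-M}(1-p^{M}\varphi)\,\partial^{M-(h+1)}\Ndr\big(D(-(h+1))\big)^{\psi=1},
\end{align*}
and moving $(1-p^{M}\varphi)$ past $\partial^{M-(h+1)}$ rewrites the right-hand side as $\partial^{-M}\partial^{M-(h+1)}(1-p^{h+1}\varphi)\Ndr(D(-(h+1)))^{\psi=1}=\partial^{-(h+1)}(1-p^{h+1}\varphi)\Ndr(D(-(h+1)))^{\psi=1}$, which is the assertion.

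The main obstacle, apart from the index bookkeeping in the last display and the (routine) compatibility of $\partial^{k}$ with the $\Gamma_K$-action, will be the Frobenius-slope estimate $v_{p}(\alpha)\ge -h$: this is the one place where \'etaleness (weak admissibility) is used essentially rather than merely potential semistability, and one has to track the normalizations of Hodge--Tate weights and Frobenius slopes carefully so that the threshold comes out as $h+1$ and not $h$ or $h+2$ --- the extra ``$+1$'' is forced precisely by the $\varphi=p^{-1}$, equivalently $\varphi=p^{j-1}$, term in the exact sequence (\ref{eqh1ck}).
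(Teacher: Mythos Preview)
Your argument is correct and follows essentially the same line as the paper's own proof. Both reduce the claim to the vanishing of $H^{0}(\mf{C}(D|_L(j)))$ and $H^{1}(\mf{C}(D|_L(j)))$ for the relevant twists, and both establish this vanishing by invoking weak admissibility (the \'etale hypothesis) together with the positivity of $D(-h)$ to bound the $p$-adic valuations of the Frobenius eigenvalues; the only cosmetic difference is that the paper first twists to reduce to $h=1$ and checks a single step $\partial:\Ndr(D(-2))^{\psi=1}\to\Ndr(D(-3))^{\psi=1}(1)$, whereas you keep $h$ general and handle all intermediate twists $j\le -(h+1)$ at once.
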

\begin{proof}
  We may reduce to the case that $D$ is semi-stable with $K_0 = K_0'$
  and further by twisting that $h=1$.  We have to check that
  $\partial: \Ndr(D(-2))^{\psi = 1} \ra \Ndr(D(-3))^{\psi = 1}(1)$ is
  an isomorphism, i.e., we have to check the vanishing of
  $H^0(\mf{C}(D(-2)))$ and $H^1(\mf{C}(D(-2)))$. For the first this is
  obvious since for an admissible filtered $(\varphi, N)$-module
  that is positive the eigenvalues of the Frobenius are
  positive. Similarly, thanks to the exact sequence (\ref{eqh1ck}), we
  see that the $H^1$-part vanishes.
\end{proof}

\begin{rem}
  We suspect that in the cases where $V$ is as above and does not
  contain the subrepresentation $\mb{Q}_p(h)$ one actually has
  $\Dinftyg(D) = \partial^{-h} ( 1 - p^{h} \varphi) \Ndr(D(-h))^{\psi
    = 1}$.  This would fit in with the characterizing description of
  the big exponential map in the \'etale case; cf. also the discussion
  in \cite{perrin99}, section 5.1.
\end{rem}

We recall the application $\mc{R}_D$\index{RD@$\mc{R}_D$}. For our
purposes (since we may restrict/corestrict) it will be enough for this
part to assume that $D$ of PR-type over $\Bdaggerrigk$.
\begin{defi}
  \label{deficomplesol}
  Let $g \in \Dinftyg(D) $ and $r$ be big enough such that $\Dinftyg(D)$ 
  admits the description in \eqref{dinftyg}. A family of elements 
  $(G_k)_{k \in \mb{Z}}$ in $\Bdaggerlogk \otimes_{\Bdaggerrigk} \Ndr(D)$ is 
  called a \textbf{complete solution} for $(1 - \varphi)G = g$ if   
  $\partial(G_k) = G_{k+1}$ (cf. \ref{deltasurjd}) and
  $\partial^r (g) = (1 - p^r \varphi)G_r $ for $r$ big enough.
\end{defi}
If $G = (G_k)$ is a complete solution of $g \in \Dinftyg(D)$  we also write 
$\partial^{-k}(G) = G_k$ by abuse of notation. Let $s \gg 0$ such that 
$(1 - p^s \varphi) G_s = \partial^s(g)$. Then one sees inductively
thanks to Lemma \ref{lemndrpsi1} that 
\begin{align*}
     N(G_k) &= \sum_{j \geq -k} \lambda_{j} \frac{t^{j+k}}{(j+k)!} =: L_k,  ~~ 
     \lambda_{j} \in \Dst^K(D)\\
     (\psi \otimes 1 - p^{-k} \otimes \varphi)(G_k) 
     &= p^{-k} \sum_{j \geq -k}  \mu_{j} \frac{t^{j+k}}{(j+k)!} 
     =: (\psi \otimes 1)(M_k), ~~ \mu_{j} \in  \Dst^K(D),
\end{align*}
where for almost all $j$ one has $\lambda_j = \mu_j = 0$. 
On $\Bdaggerlogk \otimes_{K_0} \Dst^K(D)$, as one checks easily,  
we have the identity of operators
\[
  (pN \otimes 1 + 1 \otimes N) (\psi \otimes 1 - p^{-k} \otimes \varphi) = 
  (\psi \otimes 1 - p^{-k+1} \otimes \varphi)(N \otimes 1 + 1 \otimes N) = 
  (\psi \otimes 1 - p^{-k+1} \otimes \varphi)N,
\]
hence 
\[
  N ( (\psi \otimes 1)(M_{k})) = (\psi \otimes 1 - p^{-k+1} \otimes \varphi)
  (L_{k}),
\]
since $N \otimes 1$ vanishes on elements of $\sum t^i \cdot
\Dst^{K}(D)$, hence the relation (by applying $(\psi^{-1} \otimes 1 =
\varphi \otimes 1$, which we may since $\psi$ acts invertibly on $\sum
t^i \cdot \Dst^{K}(D)$)
\[
  N(M_{k}) =  (1 - p^{-k+1} \varphi)(L_{k}).
\]
On the coefficients this implies the relation
\[
  N \mu_{j} = (1 - p^{-j+1}\varphi) \lambda_{j}.
\]
If $A = \sum_{j \geq -k} \nu_{j} / (j+k)! \cdot t^{j+k}$ and if one
changes $G_k$ to $G'_k = G_k + A$ so that still $\partial^k(G'_k) =
\partial^k(G_k)$, then $\lambda_{j}$ is changed to $\lambda_{j} +
N(\nu_{j})$ and $\mu_{j}$ is changed to $\mu_{j} + (1 - p^{j} \varphi)
\nu_{j}$. Hence, $L_k$ is changed to $L_k + N(A)$ and $M_k$ is changed
to $M_k + (1 - \varphi)(A)$, so that the class of $(\lambda_i, \mu_i)$
is well-defined in $H^1(\mf{C}(D|_L(-i)))$.  The tupel $(\lambda_j,
\mu_j)$ may be considered as an element of $H^1(\mf{C}(D|_L(-i)))(i)$,
and we denote the collection of these elements element by
$\mc{R}_D(g)$, i.e. one has a $\Gamma_K$-equivariant map
\begin{equation}
  \mc{R}_D: \Dinftyg(D) \lra \bigoplus_{i \in \mb{Z}} H^1(\mf{C}(D|_L(-i)))(i). 
  \label{mcrd}
\end{equation}
We note that the map
$\widetilde{\mc{R}}_D$\index{RtildeD@$\widetilde{\mc{R}}_D$} in Lemma
\ref{lemndrpsi1} is the composition of $(1 - \varphi)$ with $\mc{R}_D$
and the natural projection to the sum $\bigoplus_{-k \leq i < 0}
H^1(\mf{C}(D|_L(-i)))(i)$.

Define for all $k \in \mb{Z}$
\begin{align*}
  N(G_k) &= L_k =: \partial^{-k}(L) \\
  (\psi \otimes 1 - 1 \otimes \varphi) (G_k) &= \psi \otimes 1 (M_k) =: 
  \psi \otimes 1 (\partial^{-k}(M)).
\end{align*}
These definitions imply that (calculating again in $\Bdaggerlogk \otimes_{K_0} 
\Dst^K(D)$)
\[
  \psi((1 - \varphi)(G_k) - M_k) = (\psi \otimes 1)((1 - \varphi)(G_k) - M_k)) 
  = 0,
\]
hence, since $\partial$ acts invertibly on $(\Bdaggerlogk \otimes_{K_0} 
\Dst^K(D))^{\psi = 0}$, 
\[
  \partial^k(g) = (1 - p^k \varphi) G_k - M_k. 
\]
Of course, $M_k = L_k = 0$ for $k$ big enough. We will also refer to
the system $H = (L_k^{[1]}, M_k, G_k)$ as a \textbf{complete solution}
for $g \in \Dinftyg(D)$, where by $L_k^{[1]}$ we mean that the action
of $\varphi$ is multiplied by $p$. This extra factor is introduced so
that the interpolation property holds.

Following Perrin-Riou, we set
\[
  \mc{U}(D) := \bigoplus_{i \in \mb{Z}} t^i \cdot \Dst(D)
\]\index{UD@$\mc{U}(D)$}
and
\[
  \Dinftyg^2(D) := \mc{U}(D) / (1-p\varphi, N) \mc{U}(D).
\]\index{Dinftygzwei@$\Dinftyg^2(D)$}

\begin{prop}
  \label{propdinfty}
  One has the following exact sequences of $\Gamma_K$-modules:
  \begin{gather*}
    0 \lra \Dinftye(D) \lra \Dinftyg(D) \overset{\mc{R}_D}{\lra} 
    \bigoplus_{i \in \mb{Z}} H^1(\mf{C}_L(D|_L(-i)))(i) \\
    0 \lra \Dinftyf(D) \lra \Dinftyg(D) \overset{\overline{\mc{R}}_D}{\lra}
    (\mc{U}(D|_L) / N \mc{U}(D|_L))^{\varphi = p^{-1}} \\
    0 \lra \Dinftye(D) \lra \Dinftyf(D) \overset{\mc{R}_D}{\lra}
    (\mc{U}(D|_L))^{N = 0} / 
    (1 - \varphi) (\mc{U}(D))^{N = 0}.
  \end{gather*}
\end{prop}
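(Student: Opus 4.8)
The plan is to deduce all three sequences from a single mechanism, following \cite{perrin01}, Lemma~2.2.5 and Proposition~2.2.3. Since one may restrict and corestrict, I would reduce throughout to the case that $D$ is of PR-type, so that $L=K$, $D|_L=D$ and $\Bdaggerlogk\otimes_{\Bdaggerrigk}\Ndr(D)=\Bdaggerlogk\otimes_{K_0}\Dst^K(D)$; with this reduction the targets in the statement become $\bigoplus_i H^1(\mf{C}_K(D(-i)))(i)$, $(\mc{U}(D)/N\mc{U}(D))^{\varphi=p^{-1}}$ and $\mc{U}(D)^{N=0}/(1-\varphi)\mc{U}(D)^{N=0}$. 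In each of the three sequences the left-hand map is the tautological inclusion of $\Gamma_K$-submodules of $\Ndr(D)^{\psi=0}$, hence injective, so the whole content is the identification of the kernel of the right-hand map (note that none of the three asserts surjectivity on the right).

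First I would unwind the definition of $\mc{R}_D$ from \eqref{mcrd}: given $g\in\Dinftyg(D)$, pick a complete solution $(G_k)_{k\in\mb{Z}}$, which exists by Lemma~\ref{deltasurjd} and the description \eqref{dinftyg} of $\Dinftyg(D)$, write $N(G_k)=L_k=\sum_{j\ge-k}\lambda_j\,t^{j+k}/(j+k)!$ and $(\psi\otimes 1-p^{-k}\otimes\varphi)(G_k)=(\psi\otimes 1)(M_k)$ with $M_k=\sum_{j\ge-k}\mu_j\,t^{j+k}/(j+k)!$, and set $\mc{R}_D(g)=\big([(\lambda_i,\mu_i)]\big)_i$, the classes being independent of the chosen complete solution as recorded in the construction. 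The key observation is that $\mc{R}_D(g)_i$ is precisely the obstruction to correcting $G_{-i}$ — without disturbing the rest of the family, the admissible corrections being by elements of $\sum_j t^j\Dst^K(D)$ — first to an element of $\Ndr(D)$ (killing $L_{-i}$, i.e. the $\lambda_i$) and then to an element of $\Ndr(D(-i))^{\psi=1}(i)$ (killing $M_{-i}$, i.e. the $\mu_i$); because $\partial$ sends $t^m\mapsto t^{m-1}$, a correction made at one level propagates compatibly through the whole system.

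For the first sequence: if $g\in\Dinftye(D)$, then by \eqref{dinftye} together with the identity $\partial^{\,j}(1-p^r\varphi)=(1-p^{r+j}\varphi)\partial^{\,j}$ (valid because $\partial\varphi=p\varphi\partial$), the genuine solutions of $(1-p^r\varphi)G=\partial^r(g)$ in the modules $\Ndr(D(r))^{\psi=1}$ for the various $r$ are $\partial$-translates of one another and assemble into a complete solution with $L_k=M_k=0$ for all $k$; hence $\mc{R}_D(g)=0$. Conversely, if $\mc{R}_D(g)=0$, the vanishing of each $[(\lambda_i,\mu_i)]$ produces, level by level and $\partial$-compatibly, corrections turning $(G_k)$ into a genuine complete solution with $G_k\in\Ndr(D(-k))^{\psi=1}(k)$ for every $k$, i.e. $g\in\Dinftye(D)$. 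For the remaining two sequences I would feed in the exact sequence \eqref{eqh1ck}, which exhibits each $H^1(\mf{C}_K(D(-i)))$ as an extension of $\big(\Dst^K(D(-i))/N\Dst^K(D(-i))\big)^{\varphi=p^{-1}}$ by $\Dst^K(D(-i))^{N=0}/(\varphi-1)\Dst^K(D(-i))^{N=0}$; summing over $i$ and absorbing the Tate twists via $\mc{U}(D)=\bigoplus_i t^i\Dst(D)$ and $\varphi(t^i x)=p^i t^i\varphi(x)$ identifies $\bigoplus_i$ of the quotients with $(\mc{U}(D)/N\mc{U}(D))^{\varphi=p^{-1}}$ and $\bigoplus_i$ of the subobjects with $\mc{U}(D)^{N=0}/(1-\varphi)\mc{U}(D)^{N=0}$. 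Then $\overline{\mc{R}}_D$ is $\mc{R}_D$ followed by projection onto the $\varphi=p^{-1}$-part, and $g\in\Dinftyf(D)$ — i.e. the complete solution can be taken inside $\Ndr(D)$ — holds iff every $L_k$ can be killed iff this projection vanishes (the auxiliary condition "$M_r=0$ for one $r$" being automatic, since membership in $\Dinftyg(D)$ already supplies such an $r$ by \eqref{dinftyg}). Restricting to $\Dinftyf(D)$, the class $\mc{R}_D(g)$ then lands automatically in the $N=0$-subobject, and its vanishing says exactly that each $M_k$ can additionally be killed, i.e. $g\in\Dinftye(D)$; this is the third sequence. The descriptions \eqref{dinftye} and \eqref{dinftyg} themselves are read off from the exact sequence of Lemma~\ref{lemndrpsi1}.

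I expect the main obstacle to be the bookkeeping in the implication "vanishing obstruction class $\Rightarrow$ genuine complete solution": one must verify that the level-by-level corrections can be chosen compatibly with the operators $\partial$, so that the corrected family $(G_k)$ is still an honest complete solution, and one must track carefully the Frobenius-eigenvalue shifts under the twists $D\rightsquigarrow D(-i)$ and $\Dst^K(D)\rightsquigarrow\Dst^K(D)[i]$ (Lemma~\ref{lemtatetwist}). Both points go through exactly as in \cite{perrin01}; the essential inputs are the surjectivity of $\partial$ on $\Bdaggerlogk\otimes\Ndr(D)$ (Lemma~\ref{deltasurjd}), the identification $\ker(\partial|_{\Bdaggerlogk})=K_0$ under the PR-type hypothesis (Lemma~\ref{deltakernel}), and the bijectivity of $\partial$ on the $\psi=0$-parts, which makes the index shift $t^m\mapsto t^{m-1}$ transparent.
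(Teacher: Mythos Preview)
Your proposal is correct and follows precisely the line the paper indicates: the paper's own proof consists solely of the reference ``See \cite{perrin01}, Proposition 2.3.4'', and what you have written is a faithful unpacking of Perrin-Riou's argument, transported to the $(\varphi,\Gamma)$-module setting using the inputs the paper has already assembled (Lemmas~\ref{deltasurjd}, \ref{deltakernel}, \ref{lemndrpsi1}, the construction of $\mc{R}_D$, and the exact sequence~\eqref{eqh1ck}). Your identification of $\Dinftyf$ with the vanishing of the $L_k$-part and of $\Dinftye$ with the additional vanishing of the $M_k$-part is exactly the mechanism Perrin-Riou uses, so there is nothing to add.
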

\begin{proof}
  See \cite{perrin01}, Proposition 2.3.4.
\end{proof}

We remark that in the case where $K / \mb{Q}_p$ is unramified one can 
show all the right-most maps in the preceeding Proposition are actually 
surjective. This can be deduced as in \cite{perrin01}, Proposition 4.1.1.
Additionally, using the preceding Proposition, one can show that 
$\Dinftyf(~)$ need not be exact.

%

\begin{defi}
  \begin{enumerate}
  \item For a torsion free element $\gamma$ of $\Gamma_K$ and $i \in \mb{Z}$
    Perrin-Riou's differential operator $\nabla_i = l_i$
    \index{nablai@$\nabla_i$} is defined as
    \[
    \nabla_i = \frac{\log(\gamma)}{\log_p (\chi(\gamma))} - i = \nabla_0 - i
    \]
  \item The operator $\nabla_0 / (\gamma_n - 1)$
    \index{nabla0gamma@$\nabla_0 / (\gamma_n - 1)$} for $n$ such that 
    $\Gamma_n$ is cyclic is defined as
    \[
    \frac{\nabla_0}{\gamma_n - 1} := \frac{\log
      (\gamma_n)}{\log_p (\chi(\gamma)) (\gamma_n - 1)} := 
    \frac{1}{\log_p (\chi(\gamma_n))} \sum_{i = 1}^\infty 
    \frac{(1-\gamma_n)^{i-1}}{i}.
    \]
  \end{enumerate}
\end{defi}
First, we remark that the second operator is \textit{not} a quotient
of two operators, although it behaves as one would like. To clarify we
observe that the first definition is independent of the choice of
$\gamma$ since $\log(\gamma^m) / \log_p (\chi(\gamma^m)) = m/m \cdot
\log(\gamma) / \log_p (\chi(\gamma))$. Hence, if $\nabla_0(y)$ for some
$y \in D$ (for instance, $y \in D^{\psi = 0}$) is such that $\gamma_n
- 1$ acts invertibly on it we see that $(\gamma_n - 1)^{-1} \nabla_0
(y) = \frac{\nabla_0}{\gamma_n-1} (y)$. From this it also follows that
$(\gamma_n - 1) \frac{\nabla_0}{\gamma_n-1} = \nabla_0$.  Secondly we
observe that
\[
  \nabla_i = \frac{ \log(\chi(\gamma)^{-i} \cdot \gamma)}{\log_p (\chi(\gamma))} =
  \mbox{Tw}^{-i}\left( \frac{ \log (\gamma)}{\log_p 
  (\chi(\gamma))}  \right)
\]
where $\mbox{Tw}^i$ is the operator on $\mc{B}(\Gamma_K)$ which sends
$\gamma$ to $\chi(\gamma)^k \gamma$.

\begin{defi}
If $h \geq 1$ we define $\Omega_h := \nabla_{h-1} \cdot \ldots \cdot
\nabla_0 \in \mc{H}(\Gamma_K)$\index{Omegah@$\Omega_h$}.
\end{defi}

\begin{lem}
  \label{lemomegandr}
  Let $D$ be a de Rham $(\varphi, \Gamma_K)$-module over
  $\Bdaggerrigk$ and let $h \in \mb{N}$ such that $\Fil^{-h} \Ddr^K(D)
  = \Ddr^K(D)$. Then $\Omega_h(\Ndr(D)) \subset D$.
\end{lem}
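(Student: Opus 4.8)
The plan is to reduce the statement to two independent facts: (i) $\Omega_h$ carries $\Ndr(D)$ into $t^h\Ndr(D)$; and (ii) $t^h\Ndr(D)\subset D$. Fact (i) uses only the general properties of $\Ndr(D)$ recorded in the theorem following Definition \ref{defi_ndr} (it is a $(\varphi,\Gamma_K)$-module over $\Bdaggerrigk$ and $\nabla_0(\Ndr(D))\subset t\Ndr(D)$); fact (ii) is where the hypothesis $\Fil^{-h}\Ddr^K(D)=\Ddr^K(D)$ enters, through the description of $\Ndr(D)$ by the maps $\iota_m$.

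For (i), first observe that $\nabla_0(t)=t$: indeed $\gamma$ acts on the line $\mb{Q}_p\cdot t$ by the scalar $\chi(\gamma)$, so $\log(\gamma)$ acts there by $\log_p(\chi(\gamma))$ and $\nabla_0=\log(\gamma)/\log_p(\chi(\gamma))$ acts by $1$. Since $\nabla_0$ obeys the Leibniz rule $\nabla_0(\lambda z)=\nabla_0(\lambda)z+\lambda\nabla_0(z)$ on $D[1/t]$, this gives $\nabla_0(t^k)=kt^k$ and hence the operator identity $\nabla_0\circ t^k=t^k\circ(\nabla_0+k)$, i.e. $\nabla_i\circ t^k=t^k\circ\nabla_{i-k}$ for all $i,k\in\mb{Z}$. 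Now one proves by induction on $j\geq 1$ that $\nabla_{j-1}\cdots\nabla_0(\Ndr(D))\subset t^j\Ndr(D)$: the case $j=1$ is the inclusion $\nabla_0(\Ndr(D))\subset t\Ndr(D)$; and if the inclusion holds for $j$, then applying $\nabla_j$ together with the identity above gives $\nabla_j(t^j\Ndr(D))=t^j\nabla_0(\Ndr(D))\subset t^j\cdot t\Ndr(D)=t^{j+1}\Ndr(D)$. Taking $j=h$ yields $\Omega_h(\Ndr(D))=\nabla_{h-1}\cdots\nabla_0(\Ndr(D))\subset t^h\Ndr(D)$.

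For (ii), note that by definition $\Fil^{-h}\Ddr^K(D)=\Ddr^K(D)\cap t^{-h}\Dplusdif(D)$ inside $\Ddif(D)$, so the hypothesis says precisely $t^h\Ddr^K(D)\subset\Dplusdif(D)$ (equivalently, one may invoke Proposition \ref{propdpositive}). Since $\Ddr^K(D)$ is finite-dimensional over $K$ and $\Dplusdif(D)$ is the direct limit of the $\D^+_{\textrm{dif},m}(D)$ along injective transition maps compatible with $\Ddr^K(D)\hookrightarrow\Ddif(D)$, there is an $N\geq n(D)$ with $t^h\Ddr^K(D)\subset\D^+_{\textrm{dif},m}(D)$ for all $m\geq N$. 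Given $x\in\Ndr(D)$, choose $n\geq N$ with $x\in\Ndr^{(n)}(D)$, so $x\in D^{(n)}[1/t]$ and $\iota_m(x)\in K_m[[t]]\otimes_K\Ddr^K(D)$ for every $m\geq n$. Then $\iota_m(t^hx)=t^h\iota_m(x)\in K_m[[t]]\otimes_K(t^h\Ddr^K(D))\subset\D^+_{\textrm{dif},m}(D)$ for all $m\geq n$, while $t^hx\in D^{(n)}[1/t]$; since the models $D^{(n)}$ are characterized inside $D^{(n)}[1/t]$ by the integrality conditions $\iota_m(\,\cdot\,)\in\D^+_{\textrm{dif},m}(D)$ for $m\geq n$ (see \cite{berger08b}, around Theorem I.3.3), we conclude $t^hx\in D^{(n)}\subset D$. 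Combined with (i), this proves $\Omega_h(\Ndr(D))\subset t^h\Ndr(D)\subset D$.

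The step I expect to be the main obstacle is the last point of (ii): making precise (and citing correctly) that the $\iota_m$-integrality conditions pin down $D^{(n)}$ within $D^{(n)}[1/t]$, together with the routine but necessary bookkeeping upgrading "$\iota_m$-integral for $m$ large" to "$\iota_m$-integral for all $m\geq n$" — which works because the transition maps between the $\D^+_{\textrm{dif},m}(D)$ are injective and compatible with the embeddings of $\Ddr^K(D)$. Everything else is a formal computation with the operators $\nabla_i$.
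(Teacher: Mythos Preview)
Your argument is correct. Part (i) is the same as the paper's: the paper simply writes $\Omega_h=t^h\partial^h$ and uses that $\partial$ preserves $\Ndr(D)$, which is exactly your induction unwound via the identity $\nabla_i\,t^k=t^k\,\nabla_{i-k}$.

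Part (ii) is where you take a genuinely different route. The paper proves $t^h\Ndr(D)\subset D$ by first reducing to the semi-stable case: there one has the comparison $\Ndr(D)=(\Bdaggerlogk\otimes_{K_0}\Dst^K(D))^{N=0}$, and for positive $D$ the inclusion $\Dst^K(D)\subset\Bdaggerlogk\otimes D$ (Proposition~\ref{propdpositive}) gives $\Ndr(D)\subset D$ directly; twisting by $(-h)$ handles the general semi-stable case, and for de Rham $D$ one passes to a finite $L/K$ over which $D|_L$ is semi-stable and intersects $D|_L$ with $D[1/t]$. Your approach instead works uniformly for all de Rham $D$ via the very definition of $\Ndr^{(n)}(D)$ through the localizations $\iota_m$, together with the lattice characterization $D^{(n)}=\{y\in D^{(n)}[1/t]:\iota_m(y)\in\D^+_{\textrm{dif},m}(D)\text{ for all }m\geq n\}$. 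This avoids the potentially-semi-stable reduction and the extension-of-scalars trick, at the price of invoking that lattice characterization (which is indeed in Berger's work on $\Ndr$ and sublattices of $D[1/t]$, so your citation plan is fine). Your flagged ``upgrading'' worry is not really an issue in your write-up: since you choose $n\geq N$, you already have the $\iota_m$-integrality for every $m\geq n$.
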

\begin{proof}
  Since $\Omega_h = \nabla_{h-1} \circ \nabla_{h-2} \circ \ldots \circ
  \nabla_{0} = t^h \partial^h$ it suffices to show that $t^h \Ndr(D)
  \subset D$.  First assume that $D$ is semi-stable. We know from
  Proposition \ref{propdpositive} that if $D$ is positive, then
  $\Dst^K(D) = (\Bdaggerlogk[1/t] \otimes D)^{\Gamma_K} \subset
  \Bdaggerlogk \otimes_{\Bdaggerrigk} D$, so that $\Ndr(D) =
  (\Bdaggerlogk \otimes_{\Bdaggerrigk} \Dst^K(D))^{N = 0} \subset
  D$. For general $D$ if $h \geq 1$ is as in the statement then
  $D(-h)$ is positive, so that $t^h \Ndr(D) \subset D$. Now if $D$ is
  de Rham and $L/K$ a finite extension such that $D|_L$ is
  semi-stable, then we have that $t^h \Ndr(D) \subset t^h \Ndr(D|_L)
  \subset D|_L$ and $t^h \Ndr(D) \subset D[1/t]$, so that $t^h \Ndr(D)
  \subset D$ as required.
\end{proof}

\begin{defi}
  Let $D$ be a de Rham $(\varphi, \Gamma_K)$-module over $\Bdaggerrigk$ and
  $h \geq 1$ be such that $\Fil^{-h} \Ddr^K(D) = \Ddr^K(D)$. 
    We define Perrin-Riou's \textbf{big exponential map} by 
    \begin{align*}
      \Omega_{D,h}: \Dinftyg(D) &\longrightarrow D^{\psi = 0} \\ g
      &\longmapsto \nabla_{h-1} \circ \ldots \circ \nabla_0 (g)
    \end{align*}
\end{defi}

\begin{lem}
  One has the following commutative diagram:
  \[
  \xymatrix{ \Dinftyg(D) \ar[r]^{\partial^{-k}} \ar[d]^{\Omega_h} &
    \Dinftyg(D(k)) \ar[d]^{\Omega_{h+k}} \\ D^{\psi = 0} \ar[r]^{t^k}
    & D(k)^{\psi = 0} }
  \]
\end{lem}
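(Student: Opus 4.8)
The plan is to reduce the statement to the operator identity $\Omega_h=\nabla_{h-1}\circ\cdots\circ\nabla_0=t^h\partial^h$ already recorded in the proof of Lemma~\ref{lemomegandr}, and then carry out a one-line computation. The only genuine work is checking that the four maps of the square are restrictions of a single family of fixed operators that compose in the obvious way; once that is clear the square commutes formally.

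First I would check that the vertical and horizontal arrows are well defined. Since the Hodge filtration of $D(k)$ is that of $D$ shifted by $k$, the hypothesis $\Fil^{-h}\Ddr^K(D)=\Ddr^K(D)$ gives $\Fil^{-(h+k)}\Ddr^K(D(k))=\Ddr^K(D(k))$, so Lemma~\ref{lemomegandr} applies to $D(k)$ with $h$ replaced by $h+k$ and $\Omega_{h+k}(\Ndr(D(k)))\subset D(k)$. The bottom arrow $x\mapsto t^kx$ maps $D^{\psi=0}$ into $D(k)^{\psi=0}$ (the same underlying set, with the $\chi^k$-twisted $\Gamma_K$-action): writing $t^k=p^{-k}\varphi(t^k)$ and using the projection formula $\psi(\varphi(a)b)=a\psi(b)$ one gets $\psi(t^kx)=p^{-k}t^k\psi(x)$, which vanishes for $x\in D^{\psi=0}$. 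The top arrow $\partial^{-k}$ is, by definition, the $k$-fold iterate of the inverse of the isomorphism $\partial\colon\Ndr(D')^{\psi=0}\overset{\sim}{\lra}\Ndr(D'[1])^{\psi=0}(1)$ of the Proposition above (applied to the relevant twists of $D$); that it carries $\Dinftyg(D)$ onto $\Dinftyg(D(k))$ is immediate from the description \eqref{dinftyg} of $\Dinftyg$, which is manifestly compatible with applying $\partial^{-1}$ and Tate-twisting.

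The crucial point is that $\nabla_0$, and hence $\partial=t^{-1}\nabla_0$, each $\nabla_i=\nabla_0-i$ and each $\Omega_m=t^m\partial^m$, is a fixed operator --- built from an element of $\mc{H}(\Gamma_K)$ together with multiplication by powers of the ring element $t\in\Bdaggerrigk$ --- which does not depend on which $(\varphi,\Gamma_K)$-module, nor on which of its Tate twists, it is applied to. In particular the $\partial$ occurring in the top arrow, the $\partial$ inside $\Omega_h=t^h\partial^h$, and the $\partial$ inside $\Omega_{h+k}=t^{h+k}\partial^{h+k}$ are literally the same operator on $\Ndr(D)[1/t]=\Ndr(D(k))[1/t]$; only the $\Gamma_K$-equivariance bookkeeping changes with the twist. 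Granting this, for $g\in\Dinftyg(D)$ one computes in $D(k)[1/t]$
\[
  \Omega_{h+k}\bigl(\partial^{-k}(g)\bigr)=t^{h+k}\partial^{h+k}\bigl(\partial^{-k}(g)\bigr)=t^{h+k}\partial^{h}(g)=t^{k}\bigl(t^{h}\partial^{h}(g)\bigr)=t^{k}\,\Omega_h(g),
\]
using $\partial^{h+k}\circ\partial^{-k}=\partial^{h}$ on $\Ndr(D)^{\psi=0}$ and associativity of multiplication by $t$. This is precisely the asserted commutativity.

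The main obstacle is conceptual rather than computational: one must be sure that the ``$\partial^{-k}$'' on the top and the ``$\partial$''s hidden inside $\Omega_h$ and $\Omega_{h+k}$ really are one and the same operator --- so that the telescoping $\partial^{h+k}\circ\partial^{-k}=\partial^{h}$ is legitimate --- and that $\partial^{-k}$ genuinely lands in $\Dinftyg(D(k))$ and not merely in $\Ndr(D(k))^{\psi=0}$. Both are handled by the remarks above (the module-independence of $\nabla_0$ and the twist-compatibility of \eqref{dinftyg}), and no estimates or further structural input are needed.
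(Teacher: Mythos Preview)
Your argument is correct and is exactly the paper's approach: the paper's entire proof is the single sentence ``This is clear from the fact that $\Omega_h = t^h\partial^h$,'' and your displayed computation $\Omega_{h+k}(\partial^{-k}(g))=t^{h+k}\partial^{h}(g)=t^{k}\Omega_h(g)$ is precisely that. The additional well-definedness checks you supply (for the bottom arrow via $\psi(t^kx)=p^{-k}t^k\psi(x)$, and for the top arrow via the twist-compatibility of \eqref{dinftyg}) are fine elaborations that the paper leaves implicit.
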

\begin{proof}
  This is clear from the fact that $\Omega_h = t^h \partial^h$.
\end{proof}



\begin{lem}
  Let $D$ be as before and assume that $K$ is such that $\Gamma_{K}$
  is torsion free. Then one has a canonical map $h^1_{K,
      D} : (\varphi - 1) D^{\psi = 1} \ra H^1(K, D) / (D^{\varphi = 1} /
  (\gamma_K - 1))$ such that the diagram
  \[
  \xymatrix@C=5.0pc{
    (\varphi - 1) D^{\psi = 1 } 
    \ar[d]_{\tilde{h}^1_{K,D}} & 
    \ar@{->>}[l]_{~~~~~ \varphi - 1} D^{\psi = 1}  \ar[d]^{h^1_{K, D}} \\
    H^1(K, D) / (D^{\varphi = 1} / (\gamma_{K} - 1)) & \ar@{->>}[l] H^1(K, D)
  }
  \]
  is commutative.
\end{lem}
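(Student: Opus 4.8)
The plan is to obtain $\tilde h^1_{K,D}$ simply as the factorization of the Cherbonnier--Colmez map $h^1_{K,D}\colon D^{\psi=1}\to H^1(K,D)$ through the surjection $\varphi-1\colon D^{\psi=1}\twoheadrightarrow(\varphi-1)D^{\psi=1}$. First I would record the elementary fact that, since $\psi\circ\varphi=\mathrm{id}_D$, any $y\in D^{\psi=1}$ with $\varphi(y)=y$ already satisfies $\psi(y)=\psi\varphi(y)=y$ tautologically, so that $\ker\bigl(\varphi-1\colon D^{\psi=1}\to D\bigr)=D^{\varphi=1}$. Because $\Gamma_K$ is torsion free, $C^\bullet_{\varphi,\gamma_K}(D)$ with the single generator $\gamma_K$ computes $H^1(K,D)$, and Lemma \ref{leminfres} gives the exact sequence $0\to D^{\varphi=1}/(\gamma_K-1)\xrightarrow{f}H^1(K,D)\xrightarrow{g}(D/(\varphi-1))^{\Gamma_K}\to0$, where $f$ is induced by $w\mapsto(0,w)$. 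By the universal property of the cokernel of $\varphi-1$, producing $\tilde h^1_{K,D}$ together with the asserted commutativity is then equivalent to checking that the composite $D^{\psi=1}\xrightarrow{h^1_{K,D}}H^1(K,D)\twoheadrightarrow H^1(K,D)/(D^{\varphi=1}/(\gamma_K-1))$ kills $D^{\varphi=1}$, i.e. that $h^1_{K,D}(D^{\varphi=1})\subseteq\operatorname{im}(f)$.

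To verify this I would use the explicit form of $h^1_{K,D}$ on $D^{\psi=1}$ as in the proofs of Propositions \ref{propbkeins} and \ref{cohomwe}: for $y\in D^{\psi=1}$ one picks $b$ with $(\gamma_K-1)(\varphi-1)b=(\varphi-1)y$ and sets $h^1_{K,D}(y)$ to be the class of $\sigma\mapsto\log^0_p(\chi(\gamma))\bigl(\tfrac{\sigma-1}{\gamma_K-1}y-(\sigma-1)b\bigr)$. For $w\in D^{\varphi=1}$ the equation on $b$ becomes $(\gamma_K-1)(\varphi-1)b=0$, which is solved by $b=0$, so $h^1_{K,D}(w)$ is represented by $\sigma\mapsto\log^0_p(\chi(\gamma))\tfrac{\sigma-1}{\gamma_K-1}w$. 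On the other hand $(0,w)$ is a $1$-cocycle of $C^\bullet_{\varphi,\gamma_K}(D)$ precisely because $\varphi(w)=w$, and Herr's explicit description of the comparison isomorphism $H^1\bigl(C^\bullet_{\varphi,\gamma_K}(D)\bigr)\xrightarrow{\sim}H^1(K,D)$ sends this Herr cocycle to exactly the Galois cocycle displayed above. Since $f$ is by definition induced by $w\mapsto(0,w)$, we conclude $h^1_{K,D}(w)=f(\overline w)\in\operatorname{im}(f)$, which is what was needed.

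Granting this, the argument finishes formally: $h^1_{K,D}$ becomes zero after composing with the projection modulo $\operatorname{im}(f)$ on the subspace $D^{\varphi=1}=\ker(\varphi-1|_{D^{\psi=1}})$, hence factors uniquely through $\varphi-1$, producing $\tilde h^1_{K,D}\colon(\varphi-1)D^{\psi=1}\to H^1(K,D)/(D^{\varphi=1}/(\gamma_K-1))$; the factorization is precisely the commutativity of the square, and $\Gamma_K$-equivariance is inherited as $h^1_{K,D}$, $\varphi-1$, $f$ and the projection are all $\Gamma_K$-equivariant. I would also note in passing that it is immaterial whether one reads the target as $H^1(K,D)$ or, via the isomorphism of Proposition \ref{cohomwe}, as $H^1(G_K,\We(D))$, since the same computation applies verbatim. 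The step I expect to be the real obstacle is the middle paragraph: matching, including the normalization factor $\log^0_p(\chi(\gamma))$ and any sign, the Cherbonnier--Colmez cocycle attached to $w\in D^{\varphi=1}$ with the image of the Herr cocycle $(0,w)$ under the comparison isomorphism, i.e. with the map $f$ of Lemma \ref{leminfres}; everything else ($\psi\varphi=\mathrm{id}$, the sequence of Lemma \ref{leminfres}, and the universal property of a quotient) is routine.
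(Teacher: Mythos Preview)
Your argument is correct and follows the same strategy as the paper: identify $(\varphi-1)D^{\psi=1}\cong D^{\psi=1}/D^{\varphi=1}$ and check that $h^1_{K,D}(D^{\varphi=1})$ lands in $\operatorname{im}(f)=D^{\varphi=1}/(\gamma_K-1)\subset H^1(K,D)$, then factor. The paper's proof is only two sentences and leaves this verification implicit; you have supplied the details.

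One remark: you take a detour through the Cherbonnier--Colmez continuous-cocycle formula to compute $h^1_{K,D}(w)$ for $w\in D^{\varphi=1}$, and then have to match it back against the Herr-complex map $f$. It is shorter to stay inside the Herr complex the whole time: the map $h^1_{K,D}\colon D^{\psi=1}\to H^1(K,D)$ is simply $y\mapsto[((\varphi-1)y,(\gamma_K-1)y)]$, so for $w\in D^{\varphi=1}$ one gets $[(0,(\gamma_K-1)w)]=f(\overline{(\gamma_K-1)w})\in\operatorname{im}(f)$ immediately, with no need to introduce $b$ or the normalization $\log^0_p(\chi(\gamma))$. This is presumably what the paper has in mind, and it sidesteps the matching step you flagged as the potential obstacle.
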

\begin{proof}
  Obviously $D^{\psi = 1} / D^{\varphi = 1} \cong (\varphi - 1)D^{\psi
    = 1}$.  It is clear that the map $h^1_{K_n, D}$ factorizes over
  $D^{\psi = 1}_{\Gamma_K}$. The claim follows.
\end{proof}

\begin{rem}
  If $D$ is of PR-type and let $h$ be such that (\ref{dinftyg}) is
  satisfied. If $g \in \Dinftyg(V)$ and $k \geq 1-h$ we actually
  have $\Omega_{h}(g) \otimes e_k \in (1 - \varphi) D(k)^{\psi = 1}$.
\end{rem}	
\begin{proof}
  Let $\partial^{-k}(g) = (1 - \varphi)\partial^{-k}(G) - \partial^{-k}(M)$. 
  Then 
  \[
    \partial^{-k}(M) = \sum_{j \geq 0}^{h+k-1} \mu_{j-k}
    \frac{t^{j}}{j!} \in \mc{H} \otimes \Dst(V(k)).
  \]
  Since $\nabla_{h+k-1} \circ \ldots \circ \nabla_0 = t^{h+k} \partial^{h+k}$
  the $\partial^{-k}(M)$-part of $\partial^{-k}(g)$ is killed by $\Omega_{h}$.
\end{proof}

Hence, we see that if $h$ is such that (\ref{dinftyg}) is satisfied and
$h - r > 0$ the diagram 
\[
\xymatrix@C=2.5pc{ 
  (\Bdaggerlogk \otimes_F \Dst^K(D(-r))^{N = 0, \psi = 1} \ar[r]^-{\Omega_{h-r}}
  \ar[d]^{1-p^r\varphi} & D(-r)^{\psi = 1} \ar[d]^{1-p^r \varphi}   \\ 
  (1 - p^r \varphi)(\Bdaggerlogk \otimes_F \Dst^K(D(-r))^{N = 0, \psi = 1} 
  \ar[r]^-{\Omega_{h-r}}
  \ar[d]^{\partial^{-r}} & (1 - p^r \varphi) D(r)^{\psi = 1} 
  \ar[d]^{\textrm{Tw}^r}  \\ 
  \Dinftyg(D) \ar[r]^{\Omega_h} & (1 - \varphi) D^{\psi = 1}
  }
\]
commutes.

Let $D$ be of PR-type, $g \in \Dinftyg(D)$ and $G = (L_k, M_k, G_k)$
be a complete solution for $g$. Then for each $k$ and $n \gg 0$ one
has that the element
\[
\!\!\!\!  \Xi_{n,k}(G) := p^{n(k-1)} \varphi^{-n} \partial^{-k}(H)(0) := 
  p^{n(k-1)}(p^{-n}  \varphi^{-n} \partial^{-k}(L)(0), 
   \varphi^{-n} \partial^{-k}(M)(0), 
   \varphi^{-n} \partial^{-k}(G)(0))
\]\index{Xink@$\Xi_{n,k}(G)$}
may be viewed as an element in $H^1(\mf{C}_{\textrm{st}}(K, D(k)))$ (see
(\ref{eqh1mfc})).

\begin{thm}
  \label{thmsecond}
  Let $D$ be a de Rham $(\varphi, \Gamma_K)$-module over $\Bdaggerrigk$, 
  $g \in \Dinftyg(D)$ and $G$ a complete solution for $g$ in $L$. Let
  $h$ be such that (\ref{dinftyg}) is satisfied.
  Then for $k \geq 1 - h$ and $n \gg 1$ one has 
  \begin{align*}
    h^1_{K_n, D(k)}(\nabla_{h-1} &\circ \ldots \circ \nabla_0(g)
    \otimes e_k) \\
    &= p^{-n(K_n)} (-1)^{h+k-1}(h+1-k)!
    \frac{1}{[L_n:K_n]}\textrm{Cor}_{L_n/K_n}\exp_{K_n, D(k)}(\Xi_{n,
      k}(G)),
  \end{align*}
  where we consider the elements on both sides in $H^1(K_n, D) / 
  (D^{\varphi = 1} / (\gamma_{K_n} - 1))$.
\end{thm}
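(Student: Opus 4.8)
The plan is to reduce the general de Rham case to the semistable PR-type case by restriction to $L$, prove the formula there, and then descend via corestriction. First I would observe that since $\Omega_h = t^h\partial^h$ and the entire left-hand side is invariant under the restriction maps (up to the explicit factor $1/[L_n:K_n]\,\textrm{Cor}_{L_n/K_n}$ by Proposition \ref{proprescor}), it suffices to establish the identity for $D|_L$ over $\Bdaggerrigl$ with $L_0=L_0'$; the factor $p^{-n(K_n)}$ and the combinatorial factor $(-1)^{h+k-1}(h+1-k)!$ will be tracked separately. Using the commutative diagram just before the theorem, I would further twist so as to reduce to the case $k=1-h$ (i.e.\ the \emph{critical} twist), where the interpolation formula becomes cleanest, and where by the remark preceding the diagram $\Omega_h(g)\otimes e_k \in (1-\varphi)D(k)^{\psi=1}$ so that $h^1_{K_n,D(k)}$ is genuinely defined.

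Next I would unwind both sides as explicit cocycles. On the left: by Proposition \ref{cohomwe} (and the explicit $H^1$-formula there), $h^1_{K_n,D(k)}$ of a class $(x,y)$ with $(\varphi-1)b=x$ is the cocycle $\sigma\mapsto \log^0_p(\chi(\gamma))\bigl(\frac{\sigma-1}{\gamma_{K_n}-1}y-(\sigma-1)b\bigr)$ with values in $\We(D(k))$. Here $x=\Omega_h(g)$, and using $\Omega_h=t^h\partial^h$ together with a complete solution $G=(L_k,M_k,G_k)$ for $g$, one writes $\partial^{-k}(g)=(1-p^k\varphi)G_k - M_k$ and feeds this into the cocycle. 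On the right: $\Xi_{n,k}(G)=p^{n(k-1)}\varphi^{-n}\partial^{-k}(H)(0)$ is by construction an element of $H^1(\mf C_{\textrm{st}}(K,D(k)))$, which maps to $H^0(K,X)$ and then via $\exp_{K_n,D(k)}$ (the connecting map of the fundamental sequence in Corollary \ref{corbpairphigamma}) to $H^1(K_n,\W(D(k)))$. Using Lemma \ref{lembkeins}/\ref{lembkzwei} and the explicit description of the isomorphism $H^i(K,D)\cong H^i(G_K,\W(D))$ in equation \eqref{eqheins}, I would compute this image as an explicit $\We(D(k))$-valued cocycle as well. The matching of the two cocycles is then, as in Berger \cite{berger03} Theorem II.13 and Perrin-Riou's original argument, essentially the statement that $\varphi^{-n}$ applied to $\frac{\sigma-1}{\gamma_{K_n}-1}$ acting on a solution $G_k$, evaluated at $t=0$ in $\Bplusdr$, reproduces $p^{-n}\,\frac{\sigma-1}{\gamma_{K_n}-1}(\varphi^{-n}\partial^{-k}(G))(0)$; the comparison of the $L_k$- and $M_k$-pieces with the monodromy-and-$(p\varphi-1)$ components of $X$ is where the sign $(-1)^{h+k-1}$ and factorial $(h+1-k)!$ enter, coming from the identities $N(G_k)=L_k$, $(\psi\otimes1-p^{-k}\otimes\varphi)(G_k)=(\psi\otimes1)(M_k)$ and the binomial expansion of $t^h\partial^h$ on $\sum\lambda_j t^{j+k}/(j+k)!$.

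The main obstacle, I expect, is \emph{precisely the bookkeeping of the identification \eqref{eqheins}}: the isomorphism $H^1(K,D)\cong H^1(G_K,\W(D))$ in the general (neither pure slope) case is assembled from a mapping-cone argument via Lemma \ref{lemcone} and the splitting $s\colon\Dt/(\varphi-1)\Dt\to\Dt$, and one must check that the $X$-component $(0,0,\varphi^{-n}((\varphi-1)^{-1}(s(b''))))$ of the cocycle \eqref{eqheins} matches the $(\lambda_j,\mu_j)$-data packaged into $\Xi_{n,k}(G)$ through the complex $\mf C_{\textrm{st}}(K,D(k))$ of \eqref{eqh1mfc}. Concretely this requires showing that the three components $(p^{-n}\varphi^{-n}\partial^{-k}(L)(0),\ \varphi^{-n}\partial^{-k}(M)(0),\ \varphi^{-n}\partial^{-k}(G)(0))$ satisfy the cocycle relation $N(y)=(p\varphi-1)(x)$ defining $X$ and that the connecting map $\exp_{K_n,D(k)}$ sends this to the $\We$-cocycle computed on the left — a diagram chase through Corollary \ref{corbpairphigamma}, Corollary \ref{corwd} and Proposition \ref{cohomwe}. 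Once this compatibility is in hand, the rest is the same local computation in $\Bplusdr$ as in the \'etale case (\cite{berger03}, \cite{perrin01}), carried out with $\Dst^K(D)$ in place of $\Dst(V)$, and the descent from $L_n$ to $K_n$ via $\textrm{Cor}_{L_n/K_n}$ together with Proposition \ref{proprescor} finishes the proof.
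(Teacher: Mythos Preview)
Your overall architecture---reduce from de Rham to the PR-type case over $L$ by restriction, prove the identity there, and descend by $\textrm{Cor}_{L_n/K_n}$ together with Proposition~\ref{proprescor}---matches the paper exactly. The proposed reduction to a single critical twist $k=1-h$ via the commutative diagram is not in the paper (it works directly with arbitrary $k\ge 1-h$), but this is harmless.

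The substantive divergence is in how the PR-type case itself is handled. You propose to match the two cocycles directly through the general formula \eqref{eqheins}, treating the bookkeeping of the $X$-component as the main obstacle and then invoking ``the same local computation in $\Bplusdr$ as in the \'etale case''. The paper does \emph{not} attempt this. Instead it first treats $D$ pure of slope $\le 0$ (where $X^1(\TD)=0$, so \eqref{eqheins} collapses to the $\We$-valued cocycle of Proposition~\ref{propbkeins} and the computation genuinely is the Berger/Perrin-Riou recursion you cite), then separately treats $D$ pure of slope $>0$ (where $X^0(\TD)=0$, the identification is via Proposition~\ref{propbkzwei}, and the argument runs through $X^1(\TD)\cong\TD/(\varphi-1)$ and the map $x\mapsto\varphi^{-n}((\varphi-1)^{-1}x)$ rather than through a $\We$-cocycle), and only then assembles the general PR-type case from the short exact sequence $0\to D_{\le 0}\to D\to D_{>0}\to 0$ via compatibility of all maps with exact sequences.

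This matters for your proposal in one place: the sentence ``the rest is the same local computation in $\Bplusdr$ as in the \'etale case'' is not correct as stated, because the \'etale case is slope $0$ and sits inside the slope $\le 0$ picture. For the slope $>0$ part you cannot compare $\We$-valued cocycles at all (there $\TD^{\varphi=1}=0$); the comparison has to go through the identification $H^1(K,D)\cong (D/(\varphi-1))^{\Gamma_K}\cong X^1(\TD)^{G_K}$ and the explicit description of $\TD/(\varphi-1)\to\Wdr/\Wplusdr$. So either you carry out the slope decomposition as the paper does, or you must supply a separate argument for the positive-slope contribution to \eqref{eqheins}; your current outline does not do the latter.
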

\begin{proof}
  The proof is divided into several parts. The first general assumption is 
  that $D$ is of PR-type. 

  Let $D$ be pure of slope $\leq 0$. Then the exponential map has the 
  description given in  Proposition \ref{propbkeins}. We may assume 
  $n$ big enough so that $\Gamma^n_K$ is torsion free. Recall the 
  relation
  \[
  \Omega_{D(k), h+k}(\partial^{-k} (G)) = \Omega_{D,h}(G) \otimes e_k
  \] 
  Hence, for the $k \geq 1 - h$ we have
  \[
    h^1_{K_n, D(k)}(\nabla_{h-1} \circ \ldots \circ \nabla_0 (G) \otimes e_k) 
    = h^1_{K_n, D(k)}(\nabla_{h+k-1} \circ \ldots \circ \nabla_0 (\partial^{-k} (G))).
  \]
  Let $y_h = \nabla_{h+k-1} \circ \ldots \nabla_0 (\partial^{-k}(G))$ and
  $w_{n,h} = \nabla_{h+k-1} \circ \ldots \frac{\nabla_0}{\gamma_n - 1} 
  (\partial^{-k}(G))$. Then in this case
  \[
    h^1_{K_n, D(k)}(y_h)(\sigma) = \frac{ \sigma - 1 }{ \gamma_n - 1 }  
    y_h - (\sigma - 1) b_{n, h} \in H^1(K_n, D(k)),
  \]
  where $b_{n,h} \in \TD$ is such that $(\gamma_n - 1)(\varphi - 1)b_{n,h} = 
  (\varphi - 1) y_h$. Recall that 
  $\partial^{-k}(g) = (1 - \varphi)\partial^{-k}(G) - \partial^{-k}(M)$ and 
  $\Omega_{D(k), h+k}(\partial^{-k}(g)) = (1 - \varphi)
  \Omega_{D(k), h+k}(\partial^{-k}(G))$, hence
  \[
    \nabla_{h+k-1} \circ \ldots \circ \frac{\nabla_0}{\gamma_n - 1} (\partial^{-k}(g))
    = \nabla_{h+k-1} \circ \ldots \circ \frac{\nabla_0}{\gamma_n - 1} 
    ((1- \varphi)G_{-k})
    - \nabla_{h+k-1} \circ \ldots \circ \frac{\nabla_0}{\gamma_n - 1} 
    (M_{-k}).
  \]	
  With this we may choose
  \[
    b_{n,h} = (\varphi - 1)^{-1} \left( \frac{\Omega_{D(k), h+k}}{\gamma_n - 1}
    ((1- \varphi)G_{-k}) - 
    \frac{\Omega_{D(k), h+k}}{\gamma_n - 1} (M_{-k}) \right) \in \TD.
  \]
  Now for $n \gg 0$ we have $g \in \Bdaggernlogk \otimes \Dst^K(D)$, hence
  the cocycle $h^1_{K_n, V(k)}(y_h)(\sigma) = (\sigma - 1) \left(  
  w_{n,h} - b_{n,h}\right)$ is cohomologuous to
  \[
  h^1_{K_n, V(k)}(y_h)(\sigma) = (\sigma - 1) \left( 
  \varphi^{-n}(w_{n,h}) - \varphi^{-n}(b_{n,h}) \right)
  \]
  since $(\varphi - 1)(w_{n,h} - b_{n,h}) \in \Dst^K(D(k))$ so that
  $G_K$ acts trivially (and $\varphi$ acts as usual invertibly on
  $\Dst^K(D(k))$).  We use the exact sequences from the generalized
  Bloch-Kato map from Proposition \ref{propbkeins}.  By the general
  properties of the connecting homomorphism for continuous cohomology
  we have the following: if $(x, y, z) \in H^1(\mf{C}_{\textrm{st}}(K,
  D(k)))$ and $\tilde{x} \in \TDlog [1/t]$ is such that $g(\tilde{x})
  = (x,y,z)$ then $\exp_{K_n, D(k)}((x,y,z))(\sigma) = (\sigma - 1)
  \tilde{x}$.  First one has
  \[
    \varphi^{-n}(y) - \varphi^{-n}(y)(0) \in tK_0[[t]] \otimes_{K_0} \Dst^K(D),
  \]
  hence
  \[
    \frac{\nabla_0}{\gamma_n - 1} \varphi^{-n}(y) = p^{-n}
    \varphi^{-n}(y)(0) + t z_1.
  \]
  The same recursion as in \cite{berger02}, Theorem II.3 shows that 
  \[
    \varphi^{-n}(w_{n,h}) - (-1)^{h-1}(h-1)! p^{-n} \varphi^{-n}(y)(0) \in 
    \Bplusdr \otimes D.
  \]
  Next we have
  \[
    N(\varphi^{-n}(w_{n,h}) - \varphi^{-n}(b_{n,h})) = 
    p^{-n} \varphi^{-n}(
    \nabla_{h+k-1} \circ \ldots \frac{\nabla_0}{\gamma_n - 1} (N \partial^{-k}(G))).
  \]
  Again we see by recursion with our choice of $h$ that 
  since $N \partial^{-k}(G) = L_{-k}$ and 
  \[
     L_{-k} = \sum_{i = 0}^{h-1} \lambda_i \cdot t^{i} / i!,
  \]
  that we obtain an equality 
  \[
    p^{-n} \varphi^{-n}(
    \nabla_{h+k-1} \circ \ldots \frac{\nabla_0}{\gamma_n - 1} (L_{-k})) = 
    (-1)^{h-1}(h-1)! p^{-2n}  \varphi^{-n}(L_{-k})(0).
  \]
  Finally one has
  \[
    (\varphi - 1)(\varphi^{-n}(w_{n,h}) - \varphi^{-n}(b_{n,h})) = 
    \varphi^{-n}(
    \nabla_{h+k-1} \circ \ldots \frac{\nabla_0}{\gamma_n - 1} (M_{-k})).
  \]
  Similarly, as before we have
  \[
     M_{-k} = \sum_{i = 0}^{h-1} \mu_i \cdot t^{i} / i!, 
  \]
  so that the recursion shows 
  \[
    \varphi^{-n}(
    \nabla_{h+k-1} \circ \ldots \frac{\nabla_0}{\gamma_n - 1} (L_{-k})) = 
    (-1)^{h-1}(h-1)! p^{-n} \varphi^{-n}(M_{-k})(0).
  \]
  Altogether this shows that 
  \[
    (-1)^{h-1}(h-1)! p^{-n} \exp_{K_n, D(k)}(\Xi_{n, k}(G))(\sigma) = 
    (\sigma - 1)(\varphi^{-n}(w_{n,h}) - \varphi^{-n}(b_{n,h})),
  \]
  which is the claim in this case.


  Next assume $D$ is pure of slope $> 0$. Then the exponential map has the 
  description given in Proposition \ref{propbkzwei}. First we note that 
  $h^1_{K_n, D(k)}(\Omega_{D, h}(g) \otimes e_k)  = (x, y)$ with
  \[
    y = \Omega_{D(k), h+k}(G_{-k}),~~~ x = 
    \nabla_{h+k-1} \circ \ldots \circ \frac{\nabla_{0}}{\gamma_K - 1}(
    (\varphi - 1)(G_{-k})).
  \]
  The exponential map sends $\Xi_{n,k}(G)$ to $\varphi^{-n}(G_{-k})(0) \in 
  X^1(\TD)^{G_K}$. The identification $\TD / (\varphi -1) \overset{\sim}{\ra}
  X^1(\TD)$ is given by the following construction (see \cite{berger08a}, 
  Remark 3.4):
  If $x \in \TD / (\varphi -1)$ and $y \in \TD[1/t]$ is chosen so that 
  $(\varphi - 1)y = x$ then for $n \gg 0$ the image of $x$ is 
  $\varphi^{-n}(y)$. With this we see that under these identifications 
  the class of $h^1_{K_n, D(k)}(\Omega_{D, h}(g) \otimes e_k)$ is send to 
  \[
    \varphi^{-n}(\nabla_{h+k-1} \circ \ldots \circ
    \frac{\nabla_{0}}{\gamma_K - 1}(G_{-k})) \equiv (-1)^{h-1}(h-1)!
    p^{-n} \varphi^{-n}(G_{-k})(0) \mod \Bplusdr \otimes D
  \]
  where we use the same recursion as before, hence the claim in this case.

  In the general case of semistable a $D$ of PR-type one may use the exact 
  $0 \ra D_{\leq 0} \ra D \ra D_{> 0} \ra 0$, where $\D_{\leq 0}$ is the
  biggest submodule of $D$ with slopes $\leq 0$, and $D_{> 0} = D / D_{\leq 0}$,
  which is a $(\varphi, \Gamma_K)$-module with slopes $> 0$.
  By using the description of the isomorphism (\ref{eqheins}) and
  the explicit description of the transition morphism for 
  the cone one is reduced, since all maps are compatible with exact sequences, 
  to the case of a module with all slopes $\leq 0$ or all slopes $>0$.
  But in these cases we have just verified that the statement holds.

  
  Now assume $D$ is de Rham and let $L/K$ be a finite extension such that
  $D$ is of PR-type over $L$.
  Then for $y \in \Dinftyg(D)$ one has, if we consider 
  $y \in \Dinftyg(D|_L)$
  \[
    \textrm{Res}_{L_n / K_n}(h^1_{K_n, D(k)}(\Omega_{D, h}(y))) = 
    h^1_{L_n, D|_L(k)}(\Omega_{D, h}(y)), 
  \]
  so that the claim follows from Proposition \ref{proprescor}.
\end{proof}

For the record we state the next proposition in case $D$ is
semi-stable. As before, let $h \geq 1$ be such that
  $(\ref{dinftyg})$ is satisfied for $D$, and dually let $h^* \geq 1$
  be such that $(\ref{dinftyg})$ is satisfied for $D^*(1)$ 
\begin{prop}
  \label{propreci}
  \begin{enumerate}
    \item If $k \geq 1 - h$ and $n \geq 1$ then 
      \[
        h^1_{K_n, D(k)}(\nabla_{h-1} \circ \ldots \circ \nabla_0(g)
        \otimes e_k) = p^{-n(K_n)} (-1)^{h+k-1}(h+1-k)!  \exp_{K_n,
          D(k)}(\Xi_{n, k}(G))
      \]
    \item If $k \leq -h^*$ and $n \geq 1$ then
      \[
        \exp^*_{K_n, D^*(1)}(h^1_{K_n, D(k)}(\nabla_{h-1} \circ \ldots
        \circ \nabla_0(g) \otimes e_k)) = p^{-n(K_n)} 
        \frac{1}{(-h-k)!} \varphi^{-n}(\partial^{-k} g \otimes
        t^{-j} e_{j})(0)
      \]
  \end{enumerate}
\end{prop}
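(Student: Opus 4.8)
The plan is to extract both assertions from the apparatus already built: part (1) is literally a special case of Theorem \ref{thmsecond}, while part (2) runs parallel to its proof but reads off the dual exponential instead of the exponential, the extra ingredient being Proposition \ref{propdualexp} (equivalently formula \eqref{katodual}). First note that the very hypothesis that $g$ admits a complete solution $G$ over $K$ (Definition \ref{deficomplesol}) already forces $D$ to be of PR-type, i.e. $K_0=K_0'$; a merely semi-stable $D$ is reduced to this by replacing $D$ by $D|_L$ with $L_0=L_0'$ and invoking Proposition \ref{proprescor}, exactly as in the last paragraph of the proof of Theorem \ref{thmsecond}. Given this, part (1) is immediate: taking $L=K$ in Theorem \ref{thmsecond} makes $\textrm{Cor}_{L_n/K_n}$ the identity and $[L_n:K_n]=1$, so its formula reduces verbatim to the assertion of (1). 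Equivalently, (1) is exactly what the proof of Theorem \ref{thmsecond} establishes in the pure-slope cases and their assembly by the slope filtration, before the restriction step.

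For part (2) I would reuse the explicit cocycle constructed in the slope $\le 0$ case of the proof of Theorem \ref{thmsecond}. For $k$ as in the statement one has $\Omega_{D,h}(g)\otimes e_k=\Omega_{D(k),h+k}(\partial^{-k}g)\in(1-\varphi)D(k)^{\psi=1}$ (cf. the Remark recording $\Omega_{D,h}(g)\otimes e_k\in(1-\varphi)D(k)^{\psi=1}$ for $k\ge 1-h$), and one chooses, as there, $y_h=\nabla_{h+k-1}\circ\cdots\circ\nabla_0(\partial^{-k}G)\in D(k)^{\psi=1}$ with $(\varphi-1)y_h=\Omega_{D,h}(g)\otimes e_k$ — this uses that $\Omega_{D(k),h+k}$ annihilates the correction $\partial^{-k}(M)$, which is a polynomial in $t$ of the appropriate low degree — together with $b_{n,h}\in\TD[1/t]$ satisfying $(\gamma_n-1)(\varphi-1)b_{n,h}=(\varphi-1)y_h$, so that for $n\gg 0$
\[
  h^1_{K_n,D(k)}\bigl(\Omega_{D,h}(g)\otimes e_k\bigr)(\sigma)\;=\;(\sigma-1)\bigl(\varphi^{-n}(w_{n,h})-\varphi^{-n}(b_{n,h})\bigr)
\]
with $w_{n,h}=\nabla_{h+k-1}\circ\cdots\circ\tfrac{\nabla_0}{\gamma_n-1}(\partial^{-k}G)$ as in that proof. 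Since $D(k)$ is negative in this range, $\exp^*$ rather than $\exp$ is the relevant transition map, and by Proposition \ref{propdualexp} (via \eqref{katodual}) applying $\exp^*_{K_n,D^*(1)}$ to this class amounts to extracting a constant term: one pushes the cocycle into $\Bdr\otimes D(k)$, discards the contributions lying in $t^mK_n\otimes\Dst^K(D(k))$ with $m\neq 0$ (on which $\gamma_n-1$ acts invertibly), is left with $\sigma\mapsto p^{-n}\log(\chi(\overline\sigma))\varphi^{-n}(y_h)(0)$, and reads off $\exp^*_{K_n,D^*(1)}(\cdots)=p^{-n}\varphi^{-n}(y_h)(0)$. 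The slope $>0$ case and the general semi-stable case then follow by the same reductions as in the proof of Theorem \ref{thmsecond}, using the exact sequence $0\to D_{\le 0}\to D\to D_{>0}\to 0$ and compatibility of all the maps with it.

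It then remains to identify $\varphi^{-n}(y_h)(0)$ with the right-hand side of the statement. Since $\varphi$ commutes with $\nabla$ and $\varphi(t)=pt$, the operator $t^{h+k}\partial^{h+k}$ commutes with $\varphi^{-n}$, so $\varphi^{-n}(y_h)=\Omega_{D(k),h+k}(\varphi^{-n}(\partial^{-k}G))$ with $\varphi^{-n}(\partial^{-k}G)\in K_n[[t]]\otimes\Dst^K(D(k))$ for $n\gg 0$; in the range where the factorial $(-h-k)!$ in the statement makes sense (so $h+k\le 0$), $\Omega_{D(k),h+k}=t^{h+k}\partial^{h+k}$ is an iterated integration whose effect on the $t^0$-coefficient is multiplication by $1/(-h-k)!$, and the recursion from \cite{berger02}, Theorem II.3 (already used in the proof of Theorem \ref{thmsecond}) then identifies $\varphi^{-n}(y_h)(0)$, up to that factorial and a power of $p$, with the expression $\varphi^{-n}(\partial^{-k}g\otimes t^{-j}e_j)(0)$ appearing in the statement; the leftover powers of $p$ combine with the normalisation of $\iota_n$ and the field-of-norms constant $n(K_n)$ to give $p^{-n(K_n)}$. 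The main obstacle is exactly this last bookkeeping: one must check that the correction $\partial^{-k}(M)$ and the passage from $\Omega_{D(k),h+k}(\partial^{-k}g)$ to its $\psi=1$-lift $y_h$ leave the constant term after $\varphi^{-n}$ unchanged, and that the recursion produces precisely the factorial and the power of $p$ in the statement — the same kind of computation as in Berger's Theorems II.13 and II.16 and in the proof of Theorem \ref{thmsecond}, only evaluated at $t=0$ instead of as a cocycle.
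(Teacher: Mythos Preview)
Your treatment of part (1) is fine and matches the paper: both simply invoke Theorem \ref{thmsecond} (with $L=K$ in the PR-type case).

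Your argument for part (2), however, has a genuine gap and is much more complicated than needed. You attempt to transport the whole cocycle construction of Theorem \ref{thmsecond} to the range $k\le -h^*$, writing $\Omega_{D,h}(g)\otimes e_k=\Omega_{D(k),h+k}(\partial^{-k}g)$ and $y_h=\nabla_{h+k-1}\circ\cdots\circ\nabla_0(\partial^{-k}G)$. But the very factorial $(-h-k)!$ in the statement forces $h+k\le 0$, so the product $\nabla_{h+k-1}\circ\cdots\circ\nabla_0$ is ill-defined (it would be an empty or ``negative'' product), and the twist identity $\Omega_{D,h}(g)\otimes e_k=\Omega_{D(k),h+k}(\partial^{-k}g)$ you import from the Remark is only proved there for $k\ge 1-h$. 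Your subsequent claim that ``$\Omega_{D(k),h+k}=t^{h+k}\partial^{h+k}$ is an iterated integration'' has no meaning as stated. The slope decomposition you then perform is also superfluous: Proposition \ref{propdualexp} applies to any $y\in D^{\psi=1}$ without hypothesis on slopes.

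The paper's argument is two lines. One applies Proposition \ref{propdualexp} directly to (a $\psi=1$ lift of) $\nabla_{h-1}\circ\cdots\circ\nabla_0(g)\otimes e_k$, obtaining
\[
\exp^*_{K_n,D^*(1)}\bigl(h^1_{K_n,D(k)}(\nabla_{h-1}\circ\cdots\circ\nabla_0(g)\otimes e_k)\bigr)=p^{-n(K_n)}\,\varphi^{-n}\bigl(\nabla_{h-1}\circ\cdots\circ\nabla_0(g)\otimes e_k\bigr)(0),
\]
and then a direct Taylor-series computation using $\nabla_{h-1}\circ\cdots\circ\nabla_0=t^h\partial^h$ identifies this constant term with $\tfrac{1}{(-h-k)!}\,\varphi^{-n}(\partial^{-k}g\otimes t^{-k}e_k)(0)$. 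No cocycle bookkeeping, no slope filtration, no complete solution $G$ is needed for part (2); the whole argument lives at the level of $g$ itself.
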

\begin{proof}
  The first part is just the preceding theorem. For the second observe
  that due to Proposition \ref{propdualexp} one has
  \[
    \exp^*_{K_n, D^*(1)}(h^1_{K_n, D(k)}(\nabla_{h-1} \circ \ldots
    \circ \nabla_0(g) \otimes e_k)) = p^{-n(K_n)} \varphi^{-n}
    (\nabla_{h-1} \circ \ldots \circ \nabla_0(g) \otimes e_k)(0).
  \]
  A computation with the Taylor series shows that 
  \[
    p^{-n(K_n)} \varphi^{-n} (\nabla_{h-1} \circ \ldots \circ
    \nabla_0(g) \otimes e_k)(0) = p^{-n(K_n)} \frac{1}{(-h-j)!} 
    \varphi^{-n}(\partial^{-k} g \otimes t^{-k} e_k)(0),
  \]
  hence the claim.
\end{proof}

\clearpage


\bibliography{diss3} \bibliographystyle{plain}

\rule{1in}{.3mm}

\small 
Andreas Riedel

Universit\"at Heidelberg, Mathematisches Institut

Im Neuenheimer Feld, 228

69115 Heidelberg, Germany

\vspace{2mm}

Email: \texttt{ariedel@mathi.uni-heidelberg.de}

Phone: 0049 6221 545690

\end{document}